\newtheorem{theorem}{Theorem}[section]
\newtheorem{definition}{Definition}[section]
\newtheorem{lemma}{Lemma}[section]
\newtheorem{remark}{Remark}[section]
\newtheorem{proposition}{Proposition}[section]
\numberwithin{equation}{section}
\newcommand{\R}{{\mathbb R}}
\begin{document}
\title[Hypersonic Similarity for the Two Dimensional Steady Potential Flow]
{Hypersonic Similarity for the Two Dimensional Steady Potential Flow with Large Data}
\author{Jie Kuang }
\address{ Wuhan Institute of Physics and Mathematics,
Chinese Academy of Sciences, Wuhan 430071,
P.R.China}
\email{\tt jkuang@wipm.ac.cn}

\author{Wei Xiang}
\address{Department of Mathematics
City University of Hong Kong
Kowloon, Hong Kong, People¡¯s Republic of China}
\email{\tt  weixiang@cityu.edu.hk}

\author{Yongqian Zhang}
\address{School of Mathematical Sciences, Fudan University, Shanghai 200433, China}
\email{\tt  yongqianz@fudan.edu.cn}

\keywords{Hypersonic similarity, hypersonic flow, large data, steady Euler equations, irrotational flow, Glimm scheme, BV solutions, Riemann problem.}

\subjclass[2010]{35B07, 35B20, 35D30; 76J20, 76L99, 76N10}
\date{\today}

\begin{abstract}
In this paper, we establish the first rigorous mathematical global result on the validation of the hypersonic similarity,
which is also called the Mach-number independence principle, for the two dimensional steady potential flow.
The hypersonic similarity is equivalent to the Van Dyke's similarity theory, that if the hypersonic similarity parameter $K$ is fixed, the shock solution structures (after scaling) are consistent, when the Mach number of the flow is sufficiently large. One of the difficulty is that after scaling, the solutions are usually of large data since the perturbation of the hypersonic flow is usually not small related to the sonic speed.
In order to make it, we first employ the modified Glimm scheme to construct the approximate solutions with large data and find fine structure of the elementary wave curves to obtain the global existence of entropy solutions with large data, for fixed $K$ and sufficiently large Mach number of the incoming flow $M_{\infty}$. 
Finally, we further show that for a fixed hypersonic similarity parameter $K$, if the Mach number $M_{\infty}\rightarrow\infty$, the solutions obtained above approach to the solution of the corresponding initial-boundary value problem of the hypersonic small-disturbance equations. Therefore, the Van Dyke's similarity theory is first verified rigorously.
\end{abstract}

\maketitle

\section{Introduction and Main result}\setcounter{equation}{0}
The flow is called hypersonic when the Mach number of the flow is bigger than five.
Since 1940s, there are many studies on the hypersonic flow (see \cite{tsien} for example) due to many applications in areodynamics and engineering. The main difficulty on the study of the hypersonic flow is that the density is relatively very small compared to the speed, so like the fluids behaviour near the vacuum, all the characteristics are close to each other and the shock layer is thin. On the other hand, there is one important feature of the hypersonic flow, which is called the hypersonic similarity. This property is of great significance on both the theoretical and experimental research of the thin shock layer for the hypersonic flow (see \cite{anderson} for more details).

\par Let $\theta$ be the wedge angle and let $M_{\infty}$ be the Mach number of the incoming flow (see Fig.\ref{fig1.1}). Define the similarity parameter (see (127.3) in Landau-Lifschitz \cite[Page 482]{Landau} for more details),
\begin{equation}
K=M_{\infty}\theta.
\end{equation}
Physically, the hypersonic similarity means that for a fixed similarity parameter $K$, the flow structures are similar under scaling if the Mach number $M_{\infty}$ is sufficiently large. Actually, after scaling, the flows with the same similarity parameter $K$ are governed approximately by the same equation, which is called the hypersonic small-disturbance equations and was first developed by Tsien \cite{tsien} for the two-dimensional steady irrotational flow and the three-dimensional axially symmetric steady flow. Recently, Qu-Yuan-Zhao \cite{QYZ} studied a different problem, the hypersonic limit, in which there is no hypersonic similarity structures since the wedge angle $\theta$ is fixed such that the similarity parameter $K$ changes for all $M_{\infty}$ and tends to the infinity for the hypersonic limit $M_{\infty}\rightarrow\infty$.

\vspace{5pt}
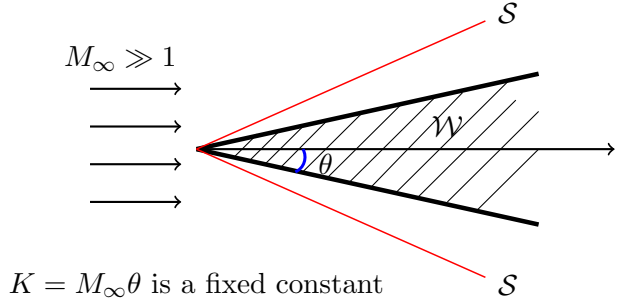
\begin{figure}[ht]
	\begin{center}
		\begin{tikzpicture}[scale=1]
		\draw [thick][->](-2,1)--(3.5,1);
		\draw [line width=0.06cm] (-2,1) --(2.5,2);
		\draw [line width=0.06cm] (-2,1) --(2.5,0);
		\draw [line width=0.02cm][red] (-2,1) --(1.8,2.7);
		\draw [line width=0.02cm][red] (-2,1) --(1.8,-0.7);
		\draw [thick][->](-3.4,1.8)--(-2.2,1.8);
		\draw [thick][->](-3.4,1.3)--(-2.2,1.3);
		\draw [thick][->](-3.4,0.8)--(-2.2,0.8);
		\draw [thick][->](-3.4,0.3)--(-2.2,0.3);
		\draw [thin](-1.5,0.9)--(-1.2,1.2);
		\draw [thin](-1.3,0.85)--(-0.9,1.25);
		\draw [thin](-1.1,0.80)--(-0.6,1.3);
		\draw [thin](-0.9,0.75)--(-0.3,1.36);
		\draw [thin](-0.6,0.7)--(0.2,1.5);
		\draw [thin](-0.3,0.6)--(0.7,1.6);
		\draw [thin](0,0.55)--(1.15,1.70);
		\draw [thin](0.3,0.5)--(1.5,1.75);
		\draw [thin](0.6,0.40)--(2.1,1.9);
		\draw [thin](0.9,0.35)--(2.2,1.65);
		\draw [thin](1.3,0.3)--(2.5,1.5);
		\draw [thin](1.7,0.20)--(2.5,1);
		\draw [line width=0.04cm][blue] (-0.6,1)to [out=-60, in=20](-0.7,0.7);
		\node at (2.1,2.8) {$\mathcal{S}$};
		\node at (2.1,-0.8) {$\mathcal{S}$};
		\node at (-3.0,2.2) {$M_{\infty}\gg1$};
		\node at (-0.3,0.8) {$\theta$};
		\node at (-2.0, -0.8) {$K=M_{\infty}\theta $ is a fixed constant};
		\node at (1.3, 1.3) {$\mathcal{W}$};
		\end{tikzpicture}
	\end{center}
	\caption{Hypersonic flow past over a slender wedge}\label{fig1.1}
\end{figure}

\par The hypersonic small-disturbance equations and the hypersonic similarity are derived as follows.
Suppose the hypersonic flow is governed by 
\begin{eqnarray}\label{eq:1.1}
\left\{
\begin{array}{llll}
\partial_{x}(\rho u)+\partial_{y}(\rho v)=0,\\[5pt]
\partial_{x}v-\partial_{y}u=0,
\end{array}
     \right.
\end{eqnarray}
where the density $\rho$ and the velocity $(u,v)$ satisfy the following Bernoulli's law:
\begin{eqnarray}\label{eq:1.2}
\frac{1}{2}(u^2+v^2)+\frac{\rho^{\gamma-1}}{\gamma-1}=B_{\infty}:=\frac{1}{2}U_{\infty}^2+\frac{\rho_{\infty}^{\gamma-1}}{\gamma-1}.
\end{eqnarray}

\par For the problem of the hypersonic flow onto a solid slender-body with boundary $y=\pm\tau b_{0}x$, without loss of the generality, let us only consider the lower half space domain, \emph{i.e.}, in the region that $x\geq0$ and $y\leq\tau b_0x$ with a fixed constant $b_0<0$ in Fig. \ref{fig1.1}. The incoming flows are given by
\begin{eqnarray}\label{eq:1.3}
(\rho, u, v)\big|_{x= 0, y\leq0}=\big(\rho_{0}, u_{0}, v_{0}\big)(y).
\end{eqnarray}
Along the boundary, the flow satisfies the impermeable slip boundary condition,
\emph{i.e.},
\begin{eqnarray}\label{eq:1.4}
(u, v)\cdot(\tau b_{0},-1)=0.
\end{eqnarray}
Let $U_{\infty}$ be a sufficiently large number. Let
$$
a_{\infty}:=\tau M_{\infty}=\tau U_{\infty}\rho_{\infty}^{\frac{1-\gamma}{2}}.
$$
Obviously, if $K$ is fixed, then $a_{\infty}$ is fixed too. So $a_{\infty}$
is also called the hypersonic similarity parameter (see Chapter 4 in \cite{anderson}).
As done in \cite{anderson,dyke}, we define the following scaling:
\begin{equation}\label{eq:1.5}
x=\bar{x},\quad y=\tau\bar{y}, \quad u=U_{\infty}(1+\tau^2\bar{u}) ,\quad v=U_{\infty}\tau \bar{v},\quad \rho=\rho_{\infty}\bar{\rho},
\end{equation}
and substitute \eqref{eq:1.5} into equations \eqref{eq:1.1} and \eqref{eq:1.2} to obtain
\begin{equation}\label{eq:1.6}
\begin{cases}
\partial_{\bar{x}}\big(\bar{\rho} (1+\tau^2\bar{u})\big)+\partial_{\bar{y}}(\bar{\rho} \bar{v})=0,\\[5pt]
\partial_{\bar{x}}\bar{v}-\partial_{\bar{y}}\bar{u}=0,\\[5pt]
\bar{u}+\frac{1}{2}(\bar{v}^2+\tau^2\bar{u}^2)+\frac{\bar{\rho}^{\gamma-1}-1}{(\gamma-1)a_{\infty}^2}=0.
\end{cases}
\end{equation}

\vspace{5pt}
\begin{figure}[ht]
\begin{center}
\begin{tikzpicture}[scale=1.15]
\draw [line width=0.03cm][->] (-2,0) --(3.5,0);
\draw [line width=0.03cm][->] (-1,-2.7) --(-1,1);
\draw [line width=0.07cm](-1,0) --(3,-1);
\draw [line width=0.07cm](-1,0) --(-1,-2.7);
\draw [line width=0.02cm][red] (-1,0) --(2.8,-1.8);
\draw [line width=0.04cm][blue] (0.3,0)to [out=-60, in=20](0.2,-0.3);
\draw [thick][->](-2.5,-1.2)to[out=30, in=-160](-1.1,-1.2);
\draw [thick][->](-2.5,-1.6)to[out=30, in=-160](-1.1,-1.6);
\draw [thick][->](-2.5,-2.0)to[out=30, in=-160](-1.1,-2.0);
\node at (3.4,-0.2) {$\bar{x}$};
\node at (-0.8,1) {$\bar{y}$};
\node at (-1.2,-0.2) {$O$};
\node at (3.3,-1) {$\Gamma$};
\node at (3.1,-1.9) {$\bar{\mathcal{S}}$};
\node at (-0.9,-2.9) {$\mathcal{I}$};
\node at (1.4,-0.25) {$\bar{\theta}=\arctan b_{0}$};
\node at (1.0,-1.5) {$\Omega$};
\node at (-1.9,-0.8) {$(\bar{\rho}_{0}, \bar{u}_{0},\bar{ v}_{0})$};
\end{tikzpicture}
\end{center}
\caption{Hypersonic similarity law }\label{fig1.2}
\end{figure}
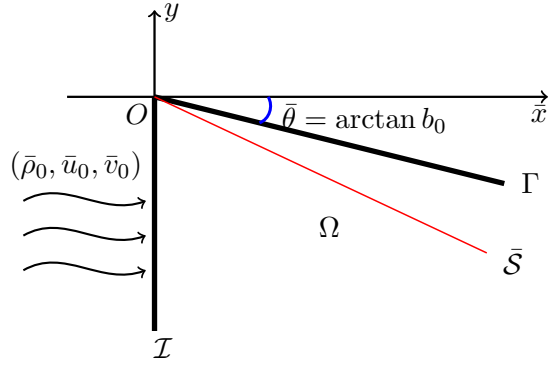

The solid boundary is now given by $\bar{y}=b_{0}\bar{x}$.
Then, the corresponding fluid domain and its boundary are given by (see Fig.\ref{fig1.2})
\begin{eqnarray*}
\Omega=\{(\bar{x}, \bar{y}): \bar{x}>0,\ \bar{y}<b_{0}\bar{x} \},
\quad \Gamma=\{(\bar{x}, \bar{y}): \bar{x}>0,\ \bar{y}=b_{0}\bar{x} \}.
\end{eqnarray*}
The unit normal of $\Gamma$ is
$\mathbf{n}=\mathbf{n}(\bar{x},b_{0}\bar{x})=\frac{(b_{0},-1)}{\sqrt{1+b^{2}_{0}}}$.
Initial condition \eqref{eq:1.3} becomes
\begin{eqnarray}\label{eq:1.7}
(\bar{\rho}, \bar{u}, \bar{v})\big|_{\mathcal{I}}=\big(\bar{\rho}_{0}, \bar{u}_{0},\bar{ v}_{0}\big)(\bar{y}),
\qquad \mathcal{I}=\{\bar{x}=0,\ \bar{y}\leq0\}.
\end{eqnarray}
Along $\Gamma$, condition \eqref{eq:1.4} now becomes
\begin{eqnarray}\label{eq:1.8}
\big((1+\tau^{2}\bar{u}), \bar{v}\big)\cdot \mathbf{n}\big|_{\Gamma}=0.
\end{eqnarray}

Physically, the hypersonic similarity is, for a fixed similarity parameter $a_{\infty}$, the structure of solutions of \eqref{eq:1.6}-\eqref{eq:1.8} is persistent if $M_{\infty}$ large (or $\tau$ is small). Mathematically, the structure of solutions of \eqref{eq:1.6}-\eqref{eq:1.8} should be investigated by the simpler equation via neglecting the terms involving $\tau^{2}$, that is the hypersonic small-disturbance equations
\begin{equation}\label{eq:1.9}
\begin{cases}
\partial_{\bar{x}}\bar{\rho}+\partial_{\bar{y}}(\bar{\rho} \bar{v})=0, \\[5pt]
\partial_{\bar{x}}\bar{v}-\partial_{\bar{y}}\bar{u}=0, \\[5pt]
\bar{u}+\frac{1}{2}\bar{v}^2+\frac{\bar{\rho}^{\gamma-1}-1}{(\gamma-1)a_{\infty}^2}=0,
\end{cases}
\end{equation}
with initial data \eqref{eq:1.7} and boundary condition that
\begin{eqnarray}\label{eq:1.10}
\bar{v}\big|_{\Gamma}=b_{0}.
\end{eqnarray}

It is also called the Van Dyke's similarity theory.
So if the Van Dyke's similarity theory can be justified rigorously, then the study of the two-dimensional steady hypersonic flow can be much simplified by studying of the hypersonic small-disturbance equaitons \eqref{eq:1.9}, because we do not face the difficulty that the characteristics are so close. On the other hand, since for the hypersonic flow, the perturbation of the velocity $(\bar{u}, \bar{v})$ is usually not small related to the sonic speed, so the solutions of \eqref{eq:1.6} and \eqref{eq:1.9} are usually with large data in the physical applications.

\par In this paper, we are going to show the Van Dyke's similarity theory rigorously. First, since the flow concerned moves
along the wedge from left to right, \emph{i.e.}, $1+\tau^2\bar{u}>0$, then from the third equation of \eqref{eq:1.6}, we have
\begin{eqnarray}\label{eq:1.11}
\begin{split}
\bar{u}(\bar{\rho}, \bar{v}; \tau^{2})=\frac{1}{\tau^{2}}\Big(\sqrt{1-t\tau^{2}}-1\Big),
\end{split}
\end{eqnarray}
where
\begin{eqnarray}\label{eq:1.12}
\begin{split}
t=\frac{2\big(\bar{\rho}^{\gamma-1}-1\big)}{(\gamma-1)a^{2}_{\infty}}+\bar{v}^{2}.
\end{split}
\end{eqnarray}

\par Then, substituting \eqref{eq:1.11} into the first two equations of \eqref{eq:1.6}, we get
\begin{equation}\label{eq:1.13}
\begin{cases}
\partial_{\bar{x}}\big(\bar{\rho} (1+\tau^2\bar{u})\big)+\partial_{\bar{y}}(\bar{\rho} \bar{v})=0, &\quad\mbox{in}\quad \Omega, \\[5pt]
\partial_{\bar{x}}\bar{v}-\partial_{\bar{y}}\bar{u}=0, &\quad\mbox{in}\quad \Omega.
\end{cases}
\end{equation}
Similarly, substituting the third equation in \eqref{eq:1.9} into the second equation in \eqref{eq:1.9}, we have
\begin{equation}\label{eq:1.14}
\begin{cases}
\partial_{\bar{x}}\bar{\rho}+\partial_{\bar{y}}(\bar{\rho} \bar{v})=0,&\quad\mbox{in}\quad \Omega, \\[5pt]
\partial_{\bar{x}}\bar{v}+\partial_{\bar{y}}\big(\frac{1}{2}\bar{v}^2+\frac{\bar{\rho}^{\gamma-1}-1}{(\gamma-1)a_{\infty}^2}\big)=0,
&\quad\mbox{in}\quad \Omega,
\end{cases}
\end{equation}
where $(\bar{\rho},\bar{v})$ satisfies the initial condition \eqref{eq:1.17} and the boundary condition \eqref{eq:1.10}.

To unify equations \eqref{eq:1.13} and \eqref{eq:1.14}, we rewrite $(\bar{\rho}, \bar{v})$ as $(\bar{\rho}^{(\tau)}, \bar{v}^{(\tau)})$, where \eqref{eq:1.14} corresponds to the case that $\tau=0$. Let $U^{(\tau)}=(\bar{\rho}^{(\tau)}, \bar{v}^{(\tau)})$ and
\begin{eqnarray}\label{eq:1.15}
W(U^{(\tau)}, \tau^{2})=\Big(\bar{\rho}^{(\tau)}\big(1+\tau^{2}\bar{u}^{(\tau)}\big), \bar{v}^{(\tau)}\Big),\quad
F(U^{(\tau)}, \tau^{2})=\Big(\bar{\rho}^{(\tau)} \bar{v}^{(\tau)}, -\bar{u}^{(\tau)}\Big).
\end{eqnarray}

\par Then, equations \eqref{eq:1.13} and \eqref{eq:1.14} can be rewritten as
\begin{eqnarray}\label{eq:1.16}
\partial_{\bar{x}}W(U^{(\tau)}, \tau^{2})+\partial_{\bar{y}}F(U^{(\tau)}, \tau^{2})=0,
\end{eqnarray}
with the initial condition
\begin{eqnarray}\label{eq:1.17}
U^{(\tau)}\big|_{\mathcal{I}}=U_{0}(y),
\end{eqnarray}
and the boundary condition
\begin{eqnarray}\label{eq:1.18}
\Big((1+\tau^{2}\bar{u}^{(\tau)}),\bar{v}^{(\tau)}\Big)\cdot(-b_{0},1)\Big|_{\Gamma}=0.
\end{eqnarray}

Now, we will introduce the definition of the entropy solutions of problem \eqref{eq:1.16}--\eqref{eq:1.18}.
\begin{definition}[Entropy solutions]\label{def:1.1}
A weak solution $U^{(\tau)}\in \big(BV_{loc}(\Omega)\cap L^{1}_{loc}(\Omega)\big)^2$ of the initial-boundary value problem \eqref{eq:1.16}--\eqref{eq:1.18} in $\Omega\subset \R^2_+$ is called an entropy solution, if for any convex entropy pair $(\mathcal{E},\mathcal{Q})$, that is, $\nabla \mathcal{Q}(W^{(\tau)},\tau)=\nabla \mathcal{E}(W^{(\tau)}, \tau^{2})\nabla F(U(W^{(\tau)}),\tau^{2})$ and $\nabla^2\mathcal{E}(W^{(\tau)},\tau^{2})\ge 0$, the entropy inequality holds: For any $\phi \in C_0^{\infty}(\R^2)$ with $\phi\ge 0$,
\begin{eqnarray}\label{eq:1.19}
\begin{split}
&\iint_{\Omega}\Big(\mathcal{E}(W^{(\tau)},\tau^{2})\partial_{\bar{x}}\phi+ \mathcal{Q}(W^{(\tau)}, \tau^{2})\partial_{\bar{y}}\phi\Big)dxdy
+ \int^{0}_{-\infty}\mathcal{E}(W^{(\tau)}_{0}, \tau^{2})\phi(0,y)dy\\[5pt]
&\qquad+\int_{\Gamma}(\mathcal{E}(W^{(\tau)},\tau^{2}),\mathcal{Q}(W^{(\tau)},\tau^{2}))\cdot\mathbf{n}ds\geq 0,
\end{split}
\end{eqnarray}
where $W^{(\tau)}_{0}=W(U_{0},\tau^{2})$ and $\mathbf{n}$ is the unit inner normal on boundary $\Gamma$.
\end{definition}

The main result in this paper is stated as follows.

\begin{theorem}[Main theorem]\label{thm:1.1}
Suppose that $\rho_{*}$ and $\rho^{*}$ are two constant states with $0<\rho_{*}<\rho^{*}<\infty$ and
$\bar{\rho}_{0}$ be the given initial density satisfying that $\bar{\rho}_{0}\in [\rho_{*},\rho^{*}]$.
There exist constants $C$, $\gamma_{0}\in (1, 2)$ and $\varepsilon_{0}>0$ such that for any $\gamma\in [1,\gamma_{0}]$
and $\tau\in (0,\varepsilon_{0})$, if
\begin{eqnarray}\label{eq:1.20}
(\gamma-1+\tau^{2})\Big(T.V.\big\{(\bar{\rho}_{0}, \bar{v}_{0}):\ (-\infty, 0]\big\}+\|b_{0}\|_{L^{\infty}}\Big)\leq C,
\end{eqnarray}
then initial-boundary value problem \eqref{eq:1.16}--\eqref{eq:1.18} admits a global entropy solutions
$(\bar{\rho}^{(\tau)}, \bar{v}^{(\tau)})$ with bounded total variations defined for all $\bar{x}>0$, \emph{i.e.},
\begin{eqnarray}\label{eq:1.21}
\sup_{\bar{x}>0}T.V.\big\{(\bar{\rho}^{(\tau)}, \bar{v}^{(\tau)})(\bar{x},\cdot); (-\infty, b_{0}\bar{x}]\big\}
+\sup_{\bar{x}>0}\|(\bar{\rho}^{(\tau)}, \bar{v}^{(\tau)})(\bar{x},\cdot)\|_{L^{\infty}((-\infty, b_{0}\bar{x}])}\leq \tilde{C},
\end{eqnarray}
where the constant $\tilde{C}>0$ is independent of $\gamma-1$ and $\tau$.
Moreover, as $\tau\rightarrow0$,
\begin{eqnarray}\label{eq:1.22}
(\bar{\rho}^{(\tau)}, \bar{v}^{(\tau)})\rightarrow (\bar{\rho}^{(0)}, \bar{v}^{(0)})=(\bar{\rho}, \bar{v}), \qquad\mbox{in}\quad
 L^{1}(\Omega\cap B_{\bar{R}}(0)),
\end{eqnarray}
for any $\bar{R}>0$, where $B_{\bar{R}}(O)=\big\{(\bar{x},\bar{y}): \bar{x}^{2}+\bar{y}^{2}\leq \bar{R}\big\}$ and $(\bar{\rho}, \bar{v})$ is the entropy solution of the initial-boundary value problem \eqref{eq:1.16}--\eqref{eq:1.18} with $\tau=0$, which satisfies that
\begin{eqnarray}\label{eq:1.23}
\sup_{\bar{x}>0}T.V.\big\{(\bar{\rho}, \bar{v})(\bar{x},\cdot); (-\infty, b_{0}\bar{x}]\big\}
+\sup_{\bar{x}>0}\|(\bar{\rho}, \bar{v})(\bar{x},\cdot)\|_{L^{\infty}((-\infty, b_{0}\bar{x}])}<\infty.
\end{eqnarray}
\end{theorem}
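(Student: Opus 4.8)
The plan is to separate the statement into two parts: for each fixed $\tau\in(0,\varepsilon_0)$, global existence together with a BV bound uniform in $(\gamma-1,\tau)$ via a modified Glimm scheme; and then a compactness argument sending $\tau\to0$. I would first recast \eqref{eq:1.16} as the quasilinear system $\partial_{\bar{x}}U^{(\tau)}+A(U^{(\tau)},\tau^{2})\,\partial_{\bar{y}}U^{(\tau)}=0$ with $A=(\nabla_{U}W)^{-1}\nabla_{U}F$, verify strict hyperbolicity by computing its two eigenvalues $\lambda_{1}<\lambda_{2}$ and a corresponding basis of right eigenvectors, and then solve the Riemann problem, constructing for each family the rarefaction curve and the admissible (Rankine--Hugoniot plus entropy) shock curve, each parametrized by wave strength and depending smoothly on $(\gamma-1,\tau^{2})$. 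The essential preparatory work---the \emph{fine structure of the elementary wave curves}---is to expand these curves in the wave strength and to exhibit that the second-order coefficients, which are exactly those driving the wave interactions, carry a factor $(\gamma-1+\tau^{2})$ and are therefore small under the hypotheses of the theorem.

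Using these expansions I would establish the interior interaction estimate that when two waves $\alpha,\beta$ collide the outgoing strengths satisfy $\sum_{i}|\varepsilon'_{i}|\le|\alpha|+|\beta|+C(\gamma-1+\tau^{2})|\alpha||\beta|$, together with the boundary-reflection estimate along $\Gamma$: an incoming second-family wave produces a reflected first-family wave whose strength is bounded by a reflection coefficient $K_{r}$ times the incoming strength, plus a term controlled by the local variation of $b_{0}$. The crucial point is that $K_{r}$ can be made strictly less than $1$, uniformly in $(\gamma-1,\tau)$, by a suitable choice of $\gamma_{0}\in(1,2)$ and $\varepsilon_{0}>0$. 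I would then construct approximate solutions $U^{(\tau)}_{\Delta\bar{x}}$ by a Glimm scheme adapted to the wedge geometry, solving interior Riemann problems on a fixed mesh and a lateral Riemann problem at $\Gamma$ incorporating the reflection above. Introducing a Glimm-type functional $\mathcal{G}(\bar{x})=V(\bar{x})+C_{*}Q(\bar{x})$, with $V$ the total variation of the wave strengths on a mesh curve and $Q$ the interaction potential, augmented by a boundary term accounting for reflected waves, I would show, using condition \eqref{eq:1.20}, that $\mathcal{G}$ is non-increasing across each mesh step; this yields the uniform bound \eqref{eq:1.21} with $\tilde{C}$ independent of $\gamma-1$ and $\tau$.

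A standard Glimm argument (Helly's theorem together with equidistribution of the random sampling sequence) then produces, for a.e.\ sampling sequence and as the mesh tends to zero, a limit $U^{(\tau)}$ that is a weak solution satisfying \eqref{eq:1.21}; the entropy inequality \eqref{eq:1.19}, boundary term included, follows from the consistency of the scheme with convex entropy pairs and the admissibility of the construction at $\Gamma$. For the hypersonic limit, the uniformity of \eqref{eq:1.21} in $\tau$ gives the family $\{U^{(\tau)}\}$ uniform BV and $L^{\infty}$ bounds in $\bar{y}$, and the equation supplies the corresponding modulus of continuity in $\bar{x}$, so Helly's theorem extracts a subsequence converging in $L^{1}_{loc}(\Omega)$ to some $(\bar{\rho},\bar{v})$. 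To identify the limit I would pass $\tau\to0$ in the weak formulation: by \eqref{eq:1.11}--\eqref{eq:1.12}, $\tau^{2}\bar{u}^{(\tau)}\to0$ and $\bar{u}^{(\tau)}\to-\tfrac{1}{2}\bar{v}^{2}-\tfrac{\bar{\rho}^{\gamma-1}-1}{(\gamma-1)a_{\infty}^{2}}$ pointwise a.e., so that $W(U^{(\tau)},\tau^{2})\to(\bar{\rho},\bar{v})$ and $F(U^{(\tau)},\tau^{2})\to F(U^{(0)},0)$ in $L^{1}_{loc}$; hence the limit solves the $\tau=0$ system \eqref{eq:1.14} with the entropy inequality preserved under $L^{1}$ convergence, and \eqref{eq:1.23} is inherited from \eqref{eq:1.21}.

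The main obstacle is the third step, namely closing the Glimm functional estimate uniformly in $\gamma-1$ and $\tau$ in the genuinely large-data regime. Since the total variation in \eqref{eq:1.20} need not be small, ordinary linearization is insufficient: the quadratic interaction terms must instead be absorbed by exploiting the factor $(\gamma-1+\tau^{2})$ coming from the fine structure of the wave curves, balanced against the total variation through \eqref{eq:1.20}. Controlling the boundary reflection along $\Gamma$---so that $\mathcal{G}$ does not grow as waves strike the wedge---is the most delicate point, and it is precisely this requirement, together with strict hyperbolicity of $A$, that dictates the admissible ranges $\gamma\in[1,\gamma_{0}]$ and $\tau\in(0,\varepsilon_{0})$.
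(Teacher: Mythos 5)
Your overall architecture matches the paper's: a modified Glimm scheme whose interaction errors carry the factor $(\gamma-1+\tau^{2})$ coming from the fine structure of the wave curves in Riemann-invariant coordinates, a weighted Glimm functional closed under hypothesis \eqref{eq:1.20}, and then Helly compactness in $\tau$ with identification of the limit through the weak/entropy formulation. However, there is one genuine gap at the step you yourself identify as the most delicate: the boundary reflection. You assert that the reflection coefficient $K_{r}$ "can be made strictly less than $1$, uniformly in $(\gamma-1,\tau)$, by a suitable choice of $\gamma_{0}$ and $\varepsilon_{0}$." For this system that is false. Even in the borderline case $\gamma=1$, $\tau=0$, a $1$-shock of strength $\nu$ striking $\Gamma$ together with an approaching $2$-shock $\beta$ produces a reflected $2$-shock with $|\beta'|\le K_{b0}|\nu|+|\beta|+\cdots$ where $K_{b0}=1+C_{b0}$ and $C_{b0}=\sup g'(\xi_{2})/(1-g'(\xi_{1}))>0$ is \emph{not} small in the large-data regime (the derivative $g'$ of the shock curve is bounded away from $0$ precisely because the waves are large); see Lemmas \ref{lem:3.6} and \ref{lem:3.7}. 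So the boundary genuinely amplifies wave strength, and no choice of $\gamma_{0},\varepsilon_{0}$ removes this. The mechanism that actually closes the estimate is a \emph{weighted} linear part $L(J)=\mathscr{K}_{b}L_{1}(J)+L_{2}(J)$, with the weight $\mathscr{K}_{b}$ on the incident ($1$-)family squeezed as in \eqref{eq:4.3g} between the boundary amplification constants $K_{b0},K_{b1}$ from below and the interior damping constants $1/\delta$, $1+C_{0}$ from above (the latter coming from the cancellation cases of Lemma \ref{lem:3.5}); the consistency of this squeeze is a nontrivial structural fact about the wave curves, not a smallness consequence. Without this weight your functional does not decrease when waves hit the wedge, and the argument does not close.

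A secondary, more technical point: your functional $\mathcal{G}=V+C_{*}Q$ carries no factor $(\gamma-1+\tau^{2})$ in front of the interaction potential, whereas the paper uses $F(J)=L(J)+4C_{*}(\gamma-1+\tau^{2})Q(J)$. Since $Q=O(L^{2})$ and $L$ is \emph{not} small here, the unweighted version would force you to assume $(\gamma-1+\tau^{2})\,L^{2}$ small rather than the stated hypothesis $(\gamma-1+\tau^{2})\,L$ small, i.e., you would prove a weaker theorem than \eqref{eq:1.20} claims. Placing the factor $(\gamma-1+\tau^{2})$ on $Q$ keeps $F\approx L$ and lets the single smallness condition $C_{*}(\gamma-1+\tau^{2})F(O)\le C_{7}$ be exactly what \eqref{eq:1.20} provides. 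The remainder of your outline (existence and invariant regions for the interior and boundary Riemann problems with large data, equidistributed sampling, passage to the limit $\tau\to0$ using \eqref{eq:1.11}--\eqref{eq:1.12} to identify $W$ and $F$ in the weak formulation) is consistent with the paper's Sections 2 and 4.
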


\begin{remark}\label{rem:1.1}
When $\tau=0$, the convex entropy pair $(\mathcal{E}(W^{(\tau)},\tau^{2}), \mathcal{Q}(W^{(\tau)},\tau^{2}))$ can be taken of the form
\begin{eqnarray}\label{eq:1.24}
\mathcal{E}(W^{(0)},0)=\frac{\rho v^{2}}{2}+\frac{\rho^{\gamma}}{a_{\infty }\gamma (\gamma-1)},\quad \ \
\mathcal{Q}(W^{(0)},0)=v\mathcal{E}(W^{(0)},0).
\end{eqnarray}
So the entropy solution $(\bar{\rho},\bar{v})$ of problem \eqref{eq:1.16}--\eqref{eq:1.18} with $\tau=0$
satisfies the entropy inequality
\begin{eqnarray}\label{eq:1.25}
\partial_{\bar{x}}\mathcal{E}(W^{(0)},0)+\partial_{\bar{y}}\mathcal{Q}(W^{(0)},0)\leq 0,
\end{eqnarray}
in the distribution sense.
\end{remark}

\begin{remark}\label{rem:1.2}
Once the solution $(\bar{\rho}^{(\tau)},\bar{v}^{(\tau)})$ of problem \eqref{eq:1.16}--\eqref{eq:1.18} is obtained,
it is easy to obtain the solutions $(\bar{\rho}^{(\tau)},\bar{u}^{(\tau)},\bar{v}^{(\tau)})$ of problem
\eqref{eq:1.6}--\eqref{eq:1.8} by solving $\bar{u}^{(\tau)}$ directly from equation \eqref{eq:1.11}.
Therefore, in this paper, we are devoted to showing Theorem \ref{thm:1.1}.
\end{remark}

\par In this paper, we will give the first rigorous mathematical proof on the Van Dyke's similarity theory.
More precisely, we will prove that solution $U^{(\tau)}$ of the initial boundary value problem \eqref{eq:1.16}--\eqref{eq:1.18} with large data
has a limit $U$ as $\tau\rightarrow 0$, where $U$ is a solution of the initial boundary value problem \eqref{eq:1.14}, \eqref{eq:1.17} and \eqref{eq:1.10}, \emph{i.e.}, problem \eqref{eq:1.16}--\eqref{eq:1.18} with $\tau=0$. To achieve this, we first establish the global
existence of entropy solutions of the initial boundary value problem \eqref{eq:1.16}--\eqref{eq:1.18}
for fixed $\tau$ with large data.

The main difficulty is that we can't apply the results in \cite{ns1,ns2, temple} directly, because equations \eqref{eq:1.16} is different from the ones that considered in \cite{ns1,ns2, temple}. Moreover, the boundary condition \eqref{eq:1.18}
is Neumann type which is also different from the one studied in \cite{ns2}, which is the Dirichlet boundary type.
As far as we know, there is no result on the steady supersonic Euler flow with large data. In order to deal with it, we first need to study fine structures of the elementary wave curves carefully and then derive the local wave interaction estimates. Fortunately, we find the fine structures to allow us to establish the wave interaction estimates as well as the estimates of the elementary waves reflection on the boundary. Based on them, we can choose weights $\mathscr{K}_{b}$ and $C_{*}$ (see \eqref{eq:4.3g} below) to construct a modified Glimm's type functional and then shows it monotonicity decreasing. Now, we can follow the standard arguments to show the global existence of entropy solutions of the initial-boundary value problem \eqref{eq:1.16}--\eqref{eq:1.18} with uniformly bound in the BV norm independent of $\tau$ provided that \eqref{eq:1.20} holds. Finally, by the uniformly bounds, we can further extact a subsequence to show that its limt as $\tau\rightarrow0$ is actually a entropy solution of problem \eqref{eq:1.16}--\eqref{eq:1.18} with $\tau=0$. It justifies the Van Dyke's similarity theory rigorously.

\par There are many literatures on the global existence of the entropy solutions of small data in the BV space for the one dimensional hyperbolic conservation laws since J. Glimm's original paper \cite{glimm} in 1960s.
There are also many literatures on the BV solutions of the two dimensional steady supersonic Euler flow with small data
(see \cite{cky2,ckz, cl, czz, kyz, xzz, zh1, zh2}).
However, there are few results on the global existence of weak solutions with large data due to the nonlinearity of the system. As far as we know, only systems with special structures can be dealt with. One of the most important example is the one dimensional isothermal gas dynamic system. The global existence of the entropy solutions of this system with large data has been proved by Nishida \cite{nishida} in 1968. Then Nishida-Smoller extended the existence result to the isentropic case with the assumption that $\gamma$ is sufficiently close to $1$ in \cite{ns1,ns2}.
Later on, the existence result was extended to the non-isentropic case by Liu in \cite{liu}. Recently, Askura-Corli \cite{asakura, ac} proved these results by using the wave-front tracking method and see also \cite{bressan, ccz2, cxz} for the related results.

\par The rest of this paper is organized as follows. In Section 2, we study some basic structure for system \eqref{eq:1.16} near $\tau=0$, including the Riemann invariants, the fine properties of the elementary wave curves, as well as the the solutions for the Riemann problem including the boundary. As a byproducts, we also give some basic structure for system \eqref{eq:1.14},
(\emph{i.e.} $\tau=0$) involving the Riemann invariants, the fine properties of the elementary wave curves, as well as the solutions for the Riemann problem including the boundary. Section 3 is devoted to the analysis of the local wave interaction estimates of various type. In Section 4, we construct the approximate solutions by the modified Glimm scheme, introduce the modified Glimm-type functional by choosing some weights, and then show that it is a decreasing functional, which leads to the global existence of the entropy solutions to the initial-boundary value problem \eqref{eq:1.16}--\eqref{eq:1.18} with large data by a standard procedure. Finally, we show that as $\tau\rightarrow0$, it approaches to the solutions of the initial-boundary value problem \eqref{eq:1.14}, \eqref{eq:1.17} and \eqref{eq:1.10}. 

\emph{Finally, we remark that in what follows, for the notational simplicity, we will denote $U^{(\tau)}=(\bar{\rho}^{(\tau)},\bar{v}^{(\tau)})$ and $(\bar{x}, \bar{y})$ as $U=(\rho, v)$ and $(x,y)$, respectively.}

\section{Riemann Problem of the initial-boundary value problem \eqref{eq:1.16}-\eqref{eq:1.18}}\setcounter{equation}{0}
In this section, we will study the basic structure of system \eqref{eq:1.16} and then consider the corresponding Riemann solutions.

\subsection{Riemann Invariants and the Shock Curves of equations \eqref{eq:1.16}}
In this subsection, we study some basic structures of the Riemann solutions of system \eqref{eq:1.16} of large data. 
By direct computation, the eigenvalues of system \eqref{eq:1.16} are
\begin{eqnarray}\label{eq:2.2}
\begin{split}
&\lambda_{-}(U, \tau^{2})=\frac{v\sqrt{1-t\tau^{2}}-a^{-1}_{\infty}\rho^{\frac{\gamma-1}{2}}
\sqrt{1-(\gamma-1)^{-1}(\gamma+1)a^{-2}_{\infty}\rho^{\gamma-1}\tau^{2}}}{1-(t+a^{-2}_{\infty}\rho^{\gamma-1})\tau^{2}},\\[5pt]
&\lambda_{+}(U, \tau^{2})=\frac{v\sqrt{1-t\tau^{2}}+a^{-1}_{\infty}\rho^{\frac{\gamma-1}{2}}
\sqrt{1-(\gamma-1)^{-1}(\gamma+1)a^{-2}_{\infty}\rho^{\gamma-1}\tau^{2}}}{1-(t+a^{-2}_{\infty}\rho^{\gamma-1})\tau^{2}},
\end{split}
\end{eqnarray}
and the corresponding right eigenvectors are
\begin{eqnarray}\label{eq:2.3}
\begin{split}
&r_{-}(U, \tau^{2})=\Big(-a^{2}_{\infty}\rho^{-\frac{\gamma-1}{2}}\big(\lambda_{-}(U,\tau^{2})+\partial_{v}u(\rho, v, \tau^{2})\big), \ a^{2}_{\infty}\rho^{-\frac{\gamma-1}{2}}\partial_{\rho}u(\rho, v, \tau^{2})\Big),\\[5pt]
&r_{+}(U, \tau^{2})=\Big(-a^{2}_{\infty}\rho^{-\frac{\gamma-1}{2}}\big(\lambda_{+}(U,\tau^{2})+\partial_{v}u(\rho, v, \tau^{2})\big), \ a^{2}_{\infty}\rho^{-\frac{\gamma-1}{2}}\partial_{\rho}u(\rho, v, \tau^{2})\Big).
\end{split}
\end{eqnarray}

\par For $u(\rho, v, \tau^{2})$, we have the following lemma.
\begin{lemma}\label{lem:2.1}
If $\gamma>1$, then we have 
\begin{eqnarray}\label{eq:2.4}
\begin{split}
\partial_{\rho}u(\rho, v, \tau^{2})=-\frac{\rho^{\gamma-2}}{a^{2}_{\infty}\sqrt{1-t \tau^{2}}},\ \ \ \
\partial_{v}u(\rho, v, \tau^{2})=-\frac{v}{\sqrt{1-t \tau^{2}}},
\end{split}
\end{eqnarray}
and
\begin{eqnarray}\label{eq:2.5}
\begin{split}
&\partial^{2}_{\rho\rho}u(\rho, v, \tau^{2})
=-\frac{(\gamma-2)\rho^{\gamma-3}\Big(1-\big(t-(\gamma-2)^{-1}a^{-2}_{\infty}\rho^{\gamma-1}\big)\tau^{2}\Big)}
{a^{2}_{\infty}(1-\tau^{2}t)^{3/2}},\\[5pt]
&\partial^{2}_{\rho v}u(\rho, v, \tau^{2})=-\frac{\rho^{\gamma-2}v \tau^{2}}{a^{2}_{\infty}(1-\tau^{2}t)^{3/2}},\ \ \
\partial^{2}_{v v}u(\rho, v, \tau^{2})=-\frac{1-2(\gamma-1)^{-1}a^{-2}_{\infty}(\rho^{\gamma-1}-1)\tau^{2}}{(1-\tau^{2}t)^{3/2}},
\end{split}
\end{eqnarray}
where $t$ is defined by \eqref{eq:1.12}.
\end{lemma}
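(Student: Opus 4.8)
The plan is to regard $u$ as a composite function: from \eqref{eq:1.11}, $u$ depends on $(\rho,v)$ only through the single scalar $t$ defined in \eqref{eq:1.12}, so the whole computation reduces to the chain rule once the derivatives of $t$ are recorded. First I would compute the derivatives of the inner variable, namely $\partial_{\rho}t=2\rho^{\gamma-2}/a_{\infty}^{2}$ and $\partial_{v}t=2v$, directly from \eqref{eq:1.12}, together with $\partial_{t}u=-\tfrac{1}{2}(1-t\tau^{2})^{-1/2}$, which follows by differentiating \eqref{eq:1.11} in $t$ (note the explicit prefactor $\tau^{-2}$ cancels). The two first-order formulas in \eqref{eq:2.4} are then immediate via $\partial_{\rho}u=\partial_{t}u\,\partial_{\rho}t$ and $\partial_{v}u=\partial_{t}u\,\partial_{v}t$.

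For the second derivatives in \eqref{eq:2.5} I would differentiate the expressions in \eqref{eq:2.4} once more, keeping track of both the explicit $\rho$- or $v$-dependence and the implicit dependence through $t$. For $\partial^{2}_{\rho\rho}u$ one differentiates $-\rho^{\gamma-2}a_{\infty}^{-2}(1-t\tau^{2})^{-1/2}$ in $\rho$, producing one term from the power $\rho^{\gamma-2}$ and one from the radical via $\partial_{\rho}t$; factoring out $(1-t\tau^{2})^{-3/2}$ and then $(\gamma-2)\rho^{\gamma-3}$ and collecting the $\tau^{2}$ terms yields the stated bracket $1-(t-(\gamma-2)^{-1}a_{\infty}^{-2}\rho^{\gamma-1})\tau^{2}$. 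The mixed derivative $\partial^{2}_{\rho v}u$ is the easiest: since $\rho^{\gamma-2}$ is $v$-independent, only the radical contributes, giving $-\rho^{\gamma-2}v\tau^{2}a_{\infty}^{-2}(1-t\tau^{2})^{-3/2}$ after inserting $\partial_{v}t=2v$.

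The computation of $\partial^{2}_{vv}u$ is where I expect the only genuine subtlety, and even that is mild: differentiating $-v(1-t\tau^{2})^{-1/2}$ gives $-(1-t\tau^{2})^{-1/2}-\tau^{2}v^{2}(1-t\tau^{2})^{-3/2}$, and after pulling out $(1-t\tau^{2})^{-3/2}$ the bracket reads $1-\tau^{2}(t-v^{2})$. The key observation is that, by the definition \eqref{eq:1.12}, $t-v^{2}=2(\rho^{\gamma-1}-1)/((\gamma-1)a_{\infty}^{2})$, which converts the bracket into the form displayed in \eqref{eq:2.5}. Thus no step exceeds elementary calculus; the only care required is the bookkeeping of the explicit-versus-implicit $\rho$-dependence in $\partial^{2}_{\rho\rho}u$ and the use of the definition of $t$ to simplify the $v^{2}$ term in $\partial^{2}_{vv}u$.
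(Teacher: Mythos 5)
Your proposal is correct and follows essentially the same route as the paper: both compute $\partial_{\rho}t$, $\partial_{v}t$, use the identity $1+\tau^{2}u=\sqrt{1-t\tau^{2}}$ (equivalently the cancellation of the $\tau^{-2}$ prefactor) to obtain \eqref{eq:2.4}, and then differentiate those expressions once more to get \eqref{eq:2.5}. Your explicit simplification of the $\partial^{2}_{vv}u$ bracket via $t-v^{2}=2(\rho^{\gamma-1}-1)/((\gamma-1)a_{\infty}^{2})$ is exactly the bookkeeping the paper leaves implicit, and it checks out.
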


\begin{proof}
First, by \eqref{eq:1.12}, we have
\begin{eqnarray*}
\begin{split}
\frac{\partial t}{\partial \rho}=\frac{2\rho^{\gamma-2}}{a^{2}_{\infty}},\ \  \ \
\frac{\partial t}{\partial v}=2v.
\end{split}
\end{eqnarray*}
From $u$, we also get that
\begin{eqnarray*}
\begin{split}
1+\tau^{2}u(\rho, v, \tau^{2})=\sqrt{1-t\tau^{2}}.
\end{split}
\end{eqnarray*}
So it follows that
\begin{eqnarray*}
\begin{split}
\tau^{2}\partial_{\rho}u(\rho, v, \tau^{2})=-\frac{1}{2}(1-t\tau^{2})^{-1/2}\tau^{2}\frac{\partial t}{\partial \rho},
\ \ \ \tau^{2}\partial_{v}u(\rho, v, \tau^{2})=-\frac{1}{2}(1-t\tau^{2})^{-1/2}\tau^{2}\frac{\partial t}{\partial v},
\end{split}
\end{eqnarray*}
which gives $\partial_{\rho}u(\rho, v, \tau^{2})$ and $\partial_{v}u(\rho, v, \tau^{2})$, respectively.
With 
\eqref{eq:2.4},
we can further take derivatives with respect to $\rho$, $v$ to derive \eqref{eq:2.5}. This completes the proof of the lemma.
\end{proof}

\begin{remark}\label{rem:2.1}
By Lemma \ref{lem:2.1} and \eqref{eq:2.2}, we have that
\begin{eqnarray}\label{eq:2.6}
\begin{split}
&\lambda_{\pm}(U, \tau^{2})+\partial_{v}u(\rho, v, \tau^{2})\\[5pt]
=&\frac{a^{-1}_{\infty}\rho^{\frac{\gamma-1}{2}}\bigg(a^{-1}_{\infty}\rho^{\frac{\gamma-1}{2}}v\tau^{2}
\pm\sqrt{\Big(1-(\gamma-1)^{-1}(\gamma+1)a^{-2}_{\infty}\rho^{\gamma-1}\tau^{2}\Big)(1-t\tau^{2})}\bigg)}
{\Big(1-\big(t+a^{-2}_{\infty}\rho^{\gamma-1}\big)\tau^{2}\Big)\sqrt{1-t\tau^{2}}}.
\end{split}
\end{eqnarray}

\end{remark}

\begin{lemma}\label{lem:2.2}
For the eigenvalues $\lambda_{+}$ and $\lambda_{-}$, we have
\begin{eqnarray}\label{eq:2.7}
\begin{split}
&\lambda_{\pm}(U, 0)=v\pm\frac{\rho^{\frac{\gamma-1}{2}}}{a_{\infty}},\ \ \
\lambda_{\pm}(U, 0)+\partial_{v}u(\rho,v,0)=\pm\frac{\rho^{\frac{\gamma-1}{2}}}{a_{\infty}},
\end{split}
\end{eqnarray}
and
\begin{eqnarray}\label{eq:2.8}
\begin{split}
&r_{\pm}(U,0)=(a_{\infty}, \pm \rho^{\frac{\gamma-3}{2}}).
\end{split}
\end{eqnarray}
Moreover,
\begin{eqnarray}\label{eq:2.9}
\partial_{\rho}\lambda_{\pm}(U,0)= \pm\frac{(\gamma-1)\rho^{\frac{\gamma-3}{2}}}{2a_{\infty}},\quad \ \
\partial_{v}\lambda_{\pm}(U,0)=1.
\end{eqnarray}
\end{lemma}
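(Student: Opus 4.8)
The plan is to obtain all three displays \eqref{eq:2.7}, \eqref{eq:2.8}, \eqref{eq:2.9} by evaluating the general expressions \eqref{eq:2.2} and \eqref{eq:2.3}, together with Lemma \ref{lem:2.1}, at $\tau^2=0$, and then differentiating the resulting explicit formulas. Since every quantity involved depends smoothly on $\tau^2$ near $\tau^2=0$, setting $\tau^2=0$ amounts to deleting all terms carrying a factor $\tau^2$ and replacing each radical $\sqrt{1-(\cdots)\tau^2}$ by $1$.

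First I would establish the eigenvalue identities in \eqref{eq:2.7}. Putting $\tau^2=0$ in \eqref{eq:2.2} makes the denominator equal to $1$ and each square root equal to $1$, so $\lambda_\pm(U,0)=v\pm a_\infty^{-1}\rho^{(\gamma-1)/2}$ immediately. For the second identity I would invoke Lemma \ref{lem:2.1}, which gives $\partial_v u(\rho,v,0)=-v$ after setting $\tau^2=0$ in \eqref{eq:2.4}; adding this to $\lambda_\pm(U,0)$ cancels the $v$ term and leaves $\pm a_\infty^{-1}\rho^{(\gamma-1)/2}$. Equivalently, one may read this off directly from \eqref{eq:2.6} by sending $\tau^2\to 0$, in which case the term $a_\infty^{-1}\rho^{(\gamma-1)/2}v\tau^2$ drops out and the two numerator radicals collapse to $1$.

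Next I would compute the eigenvectors \eqref{eq:2.8}. Setting $\tau^2=0$ in \eqref{eq:2.4} also yields $\partial_\rho u(\rho,v,0)=-a_\infty^{-2}\rho^{\gamma-2}$. Substituting this and the value of $\lambda_\pm(U,0)+\partial_v u(\rho,v,0)$ just found into \eqref{eq:2.3}, the second component becomes $a_\infty^2\rho^{-(\gamma-1)/2}\cdot(-a_\infty^{-2}\rho^{\gamma-2})=-\rho^{(\gamma-3)/2}$, using the exponent identity $(\gamma-2)-\tfrac{\gamma-1}{2}=\tfrac{\gamma-3}{2}$, while the first component becomes $\mp a_\infty$. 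Here is the one point requiring care: right eigenvectors are determined only up to a nonzero scalar, so I would fix the normalization by requiring the first component to equal $+a_\infty$. With that convention \eqref{eq:2.3} returns $r_-(U,0)=(a_\infty,-\rho^{(\gamma-3)/2})$ directly, whereas for $r_+$ one multiplies the raw output by $-1$, flipping its second component to $+\rho^{(\gamma-3)/2}$; the two cases combine into the compact form $r_\pm(U,0)=(a_\infty,\pm\rho^{(\gamma-3)/2})$.

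Finally, the derivative formulas \eqref{eq:2.9} follow by differentiating the explicit expression $\lambda_\pm(U,0)=v\pm a_\infty^{-1}\rho^{(\gamma-1)/2}$ obtained in \eqref{eq:2.7} with respect to $\rho$ and to $v$, giving $\partial_\rho\lambda_\pm(U,0)=\pm\tfrac{\gamma-1}{2}a_\infty^{-1}\rho^{(\gamma-3)/2}$ and $\partial_v\lambda_\pm(U,0)=1$. The only genuine obstacle in the whole argument is bookkeeping: keeping the two $\pm$ branches consistent through the eigenvector computation and settling the eigenvector normalization so that the signs in \eqref{eq:2.8} come out as stated. Everything else is direct substitution and elementary differentiation.
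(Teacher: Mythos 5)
Your proposal is correct, and for the identities \eqref{eq:2.7} and \eqref{eq:2.8} it coincides with the paper's argument (direct substitution of $\tau=0$ into \eqref{eq:2.2} and \eqref{eq:2.3}, using \eqref{eq:2.4}); your explicit attention to the eigenvector normalization is a welcome clarification of what the paper dismisses as ``follows directly,'' since the raw formula \eqref{eq:2.3} at $\tau=0$ returns $(\mp a_\infty,-\rho^{(\gamma-3)/2})$ and the $+$ branch must indeed be rescaled by $-1$ to match \eqref{eq:2.8}. Where you genuinely diverge is in \eqref{eq:2.9}: the paper does not differentiate the explicit $\tau=0$ eigenvalue formula, but instead implicitly differentiates the characteristic polynomial \eqref{eq:2.1} with respect to $\rho$ and $v$, solves for $\partial_\rho\lambda$ and $\partial_v\lambda$ in terms of $\lambda$, $\partial_\rho u$, $\partial^2_{\rho\rho}u$, $\partial^2_{vv}u$, and only then sets $\tau=0$ and invokes Lemma \ref{lem:2.1} and \eqref{eq:2.7}. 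Your route is shorter and entirely legitimate: since $\rho$, $v$ and $\tau$ are independent variables and $\lambda_\pm$ is smooth in all of them near $\tau=0$, the partial derivative in $\rho$ (or $v$) commutes with evaluation at $\tau=0$, so $(\partial_\rho\lambda_\pm)(U,0)=\partial_\rho\bigl[\lambda_\pm(U,0)\bigr]=\pm\tfrac{(\gamma-1)\rho^{(\gamma-3)/2}}{2a_\infty}$ and likewise $\partial_v\lambda_\pm(U,0)=1$. What the paper's implicit-differentiation route buys is a template that would also produce $\partial_\rho\lambda_\pm$ and $\partial_v\lambda_\pm$ at general small $\tau$ without ever needing a closed form for $\lambda_\pm(U,\tau^2)$; what your route buys is brevity, at the cost of being tied to the already-computed $\tau=0$ restriction. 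Both yield \eqref{eq:2.9}.
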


\begin{proof}
Firstly, by the definition of $t$, \eqref{eq:2.7} and \eqref{eq:2.8} follow directly from \eqref{eq:2.2} and \eqref{eq:2.3}. For $\partial_{\rho}\lambda_{\pm}(U,0)$, note that the characteristic equation of system \eqref{eq:1.16} is
\begin{eqnarray}\label{eq:2.1}
\big(1+\tau^{2}(u+\rho \partial_{\rho}u)\big)\lambda^{2}+\big((1+\tau^{2}u)\partial_{v}u-v\big)\lambda
+\rho \partial_{\rho}u-v\partial_{v}u=0.
\end{eqnarray}
Taking derivative on \eqref{eq:2.1} with respect to $\rho$
to obtain that
\begin{align*}
&\big[2\big(1+(u+\rho\partial_{\rho}u)\tau^{2}\big)\lambda+(1+\tau^{2}u)\partial_{v}u-v\big]\partial_{\rho}\lambda
+\tau^{2}\lambda^{2}\partial_{\rho}\big(u+\rho\partial_{\rho}u\big)\\[5pt]
&\qquad\qquad\ \ \  + \big[\tau^{2}\partial_{\rho}u\partial_{v}u+(1+\tau^{2}u)\partial^{2}_{\rho v}u\big]\lambda
+\rho\partial^{2}_{\rho\rho}u-v\partial^{2}_{\rho v}u+\partial_{\rho}u=0.
\end{align*}
So take $\tau=0$, we have
\begin{align*}
\partial_{\rho}\lambda(U,0)=-\frac{\rho\partial_{\rho\rho}u(\rho,v,0)+\partial_{\rho}u(\rho,v,0)}{2\lambda(U,0)+\partial_{v}u(\rho,v,0)-v},
\end{align*}
which gives the expression of $\partial_{\rho}\lambda_{\pm}(U,0)$ with the help of Lemma \ref{lem:2.1} and \eqref{eq:2.7}.

In the same way, we can also take derivatives on \eqref{eq:2.1} with respect to $v$ to have
\begin{align*}
\partial_{v}\lambda(U,0)=\frac{\big(1-\partial^{2}_{vv}u(\rho,v,0)\big)\lambda(U,0)+v\partial^{2}_{vv}u(\rho,v,0)+\partial_{v}u(\rho,v,0)}
{2\lambda(U,0)+\partial_{v}u(\rho,v,0)-v},
\end{align*}
which implies the expression of $\partial_{v}\lambda_{\pm}(U,0)$ by employing Lemma \ref{lem:2.1} and \eqref{eq:2.7} again.
\end{proof}

Let
\begin{equation}
\omega(U,\tau^{2})=\big(\omega_{-}, \omega_{+}\big)(U,\tau^{2})\quad \mbox{(or $\omega=(\omega_{-}, \omega_{+})$)}
\end{equation}
be the Riemann invariants satisfying
\begin{eqnarray*}
\begin{split}
\nabla_{U}\omega_{\pm}(U,\tau^{2})\cdot r_{\pm}(U,\tau^{2})=0.
\end{split}
\end{eqnarray*}
Without loss of the generality, we can assume $\omega_{\pm}(U,\tau^{2})$ is defined by solving the following two equations
\begin{eqnarray}\label{eq:2.10}
\begin{split}
&\partial_{\rho}\omega_{\pm}(U,\tau^{2}):=-a^{2}_{\infty}\rho^{-\frac{\gamma-1}{2}}\partial_{\rho}u(\rho,v,\tau^{2})
=\frac{\rho^{\frac{\gamma-3}{2}}}{\sqrt{1-t\tau^{2}}},
\end{split}
\end{eqnarray}
and
\begin{eqnarray}\label{eq:2.11}
\begin{split}
\partial_{v}\omega_{\pm}(U,\tau^{2})
&:=-a^{2}_{\infty}\rho^{-\frac{\gamma-1}{2}}\big(\lambda_{\pm}(U,\tau^{2})+\partial_{v}u(\rho, v, \tau^{2})\big)\\[5pt]
&=-\frac{\rho^{\frac{\gamma-1}{2}}v\tau^{2}
\pm a_{\infty}\sqrt{\Big(1-(\gamma-1)^{-1}(\gamma+1)a^{-2}_{\infty}\rho^{\gamma-1}\tau^{2}\Big)(1-t\tau^{2})}}
{\Big(1-\big(t+a^{-2}_{\infty}\rho^{\gamma-1}\big)\tau^{2}\Big)\sqrt{1-t\tau^{2}}}.
\end{split}
\end{eqnarray}

\begin{remark}\label{rem:2.2}
For $\tau=0$, $\omega_{\pm}(U,0)$ can be expressed explicitly as
\begin{eqnarray}\label{eq:2.12x}
\begin{split}
&r:=\omega_{-}(U, 0)=a_{\infty}v+\frac{2(\rho^{\frac{\gamma-1}{2}}-1)}{\gamma-1},\ \ \
s:=\omega_{+}(U, 0)=-a_{\infty}v+\frac{2(\rho^{\frac{\gamma-1}{2}}-1)}{\gamma-1}.
\end{split}
\end{eqnarray}
\end{remark}

\begin{lemma}\label{lem:2.3}
For $\rho>0$, there exists a constant $\epsilon_{1}>0$ sufficiently small such that for any $\tau\in(0,\epsilon_{1})$,
$U=(\rho,v)$ can be represented as a function of $\omega$. Moreover, the map $U=(\rho, v)\mapsto \omega=(\omega_{-}(U,\tau^{2}), \omega_{+}(U,\tau^{2}))$ is bijective for any fixed parameter $\rho>0$ and sufficiently small parameter $\tau^2$. Moreover
\begin{eqnarray}\label{eq:2.13}
\begin{split}
\nabla_{\omega_{-}}U\big|_{\tau=0}
=\Big(\frac{1}{2}\rho^{-\frac{\gamma-3}{2}}, \frac{1}{2a_{\infty}}\Big), \ \ \
\nabla_{\omega_{+}}U\big|_{\tau=0}
=\Big(\frac{1}{2}\rho^{-\frac{\gamma-3}{2}}, -\frac{1}{2a_{\infty}}\Big).
\end{split}
\end{eqnarray}
\end{lemma}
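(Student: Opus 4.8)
The plan is to read Lemma \ref{lem:2.3} as a direct application of the inverse function theorem to the map $U=(\rho,v)\mapsto\omega=(\omega_{-},\omega_{+})$, whose differential is already furnished by \eqref{eq:2.10}--\eqref{eq:2.11}. First I would assemble the Jacobian
\[
D_{U}\omega=\begin{pmatrix}\partial_{\rho}\omega_{-} & \partial_{v}\omega_{-}\\[3pt]\partial_{\rho}\omega_{+} & \partial_{v}\omega_{+}\end{pmatrix},
\]
reading off its four entries from \eqref{eq:2.10} and \eqref{eq:2.11}, and then specialize to $\tau=0$.

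At $\tau=0$, \eqref{eq:2.10} gives $\partial_{\rho}\omega_{\pm}(U,0)=\rho^{(\gamma-3)/2}$, while the $\tau\to0$ limit in \eqref{eq:2.11} (consistent with the closed forms in Remark \ref{rem:2.2}) gives $\partial_{v}\omega_{\pm}(U,0)=\mp a_{\infty}$. Hence $\det D_{U}\omega\big|_{\tau=0}=-2a_{\infty}\rho^{(\gamma-3)/2}$, which is nonzero for every $\rho>0$. Inverting the $2\times2$ matrix then yields exactly \eqref{eq:2.13}, since by definition $\nabla_{\omega_{-}}U$ and $\nabla_{\omega_{+}}U$ are the columns of $(D_{U}\omega)^{-1}$; equivalently, one may solve the explicit relations $r=a_{\infty}v+\frac{2}{\gamma-1}(\rho^{(\gamma-1)/2}-1)$ and $s=-a_{\infty}v+\frac{2}{\gamma-1}(\rho^{(\gamma-1)/2}-1)$ from Remark \ref{rem:2.2} for $(\rho,v)$ and differentiate.

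For $\tau>0$, the entries of $D_{U}\omega$ in \eqref{eq:2.10}--\eqref{eq:2.11} are smooth (indeed analytic) in $\tau^{2}$ near $\tau=0$ on any region where $1-t\tau^{2}$ and $1-(t+a_{\infty}^{-2}\rho^{\gamma-1})\tau^{2}$ stay positive. Thus $\det D_{U}\omega$ is continuous in $\tau^{2}$ and remains bounded away from $0$ for $\tau\in(0,\epsilon_{1})$ with $\epsilon_{1}$ small, so the inverse function theorem represents $U$ as a smooth function of $\omega$, with $\nabla_{\omega_{\pm}}U$ continuous in $\tau^{2}$ and equal to \eqref{eq:2.13} at $\tau=0$.

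The one step requiring care beyond routine computation is upgrading this \emph{local} invertibility to the asserted bijectivity. At $\tau=0$ the map is globally one-to-one by the explicit inverse above, on the set $\{1+\frac{\gamma-1}{4}(r+s)>0\}$, i.e. $\rho>0$. For $\tau>0$ I would restrict to the compact density range $\rho\in[\rho_{*},\rho^{*}]$ relevant to Theorem \ref{thm:1.1}: there $\det D_{U}\omega$ is uniformly nondegenerate, and a continuity (or degree) argument issuing from $\tau=0$, or equivalently a Hadamard-type global inverse function theorem after checking properness on this slab, promotes the local diffeomorphism to a genuine bijection. This is the only place where both the smallness of $\tau$ and the restriction on $\rho$ are genuinely used, and it is the main obstacle; the remainder is the explicit Jacobian computation above.
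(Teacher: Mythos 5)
Your proposal is correct and follows essentially the same route as the paper: compute the Jacobian of $U\mapsto\omega$ from \eqref{eq:2.10}--\eqref{eq:2.11}, check that its determinant is nonzero at $\tau=0$, invoke the implicit/inverse function theorem for small $\tau$, and obtain \eqref{eq:2.13} by inverting the resulting $2\times 2$ linear system. Your closing observation that local invertibility must still be upgraded to the asserted bijectivity is a point the paper's proof does not address at all (it stops at the implicit function theorem), so that is a refinement of, rather than a departure from, the paper's argument.
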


\begin{proof}
By Lemma \ref{lem:2.1} and Remark \ref{rem:2.1},
\begin{eqnarray*}
\begin{split}
\det\Big(\nabla_{U}\omega_{-}(U, \tau^{2}), \ \nabla_{U}\omega_{+}(U, \tau^{2})\Big)\Big|_{\tau=0}
&=\frac{a^{2}_{\infty}}{\rho^{\frac{\gamma-1}{2}}}
\begin{vmatrix}
-\partial_{\rho}u(\rho,v, 0)& -\lambda_{-}(U,0)-\partial_{v}u(\rho,v, 0)\\[10pt]
-\partial_{\rho}u(\rho,v, 0)&-\lambda_{+}(U,0)-\partial_{v}u(\rho,v, 0)
\end{vmatrix}
\\[10pt]
&=\frac{2\rho^{\gamma-2}}{a_{\infty}}>0.
\end{split}
\end{eqnarray*}
So it follows from the implicit function theorem that there exists a constant $\epsilon_{1}>0$ sufficiently small such that
for any $\tau\in(0,\epsilon_{1})$, $U$ can be solved as a function of $\omega$.

Next, we are going to prove \eqref{eq:2.13}. Taking derivatives as follows and let $\tau=0$
\begin{eqnarray*}
\left\{
\begin{array}{llll}
\partial_{\rho}\omega_{-}(U,0)\frac{\partial \rho}{\partial\omega_{-}}\big|_{\tau=0}
+\partial_{v}\omega_{-}(U,0)\frac{\partial v}{\partial\omega_{-}}\big|_{\tau=0}=1,\\[5pt]
\partial_{\rho}\omega_{+}(U,0)\frac{\partial \rho}{\partial\omega_{-}}\big|_{\tau=0}
+\partial_{v}\omega_{+}(U,0)\frac{\partial v}{\partial\omega_{-}}\big|_{\tau=0}=0,
\end{array}
\right.
\end{eqnarray*}
which gives the expression of $\nabla_{U}\omega_{-}\big|_{\tau=0}$ in \eqref{eq:2.13} by \eqref{eq:2.10} and \eqref{eq:2.11}.
In the same way, one can also get the expression of $\nabla_{U}\omega_{+}\big|_{\tau=0}$. We omit the argument for the shortness.
\end{proof}

Now, we are going to study the elementary wave curves to system
\eqref{eq:2.2} globally. Based on Lemma \ref{lem:2.3}, we will use $\omega_{-},\ \omega_{+}$ as  the variables in the phase plane for the convenience.

The elementary wave curves consist of the rarefaction wave curve and the shock wave curve. First, for the rarefaction wave curve, one of the Riemann invariants corresponding to $\lambda_{+}(U,\tau^{2})$ or $\lambda_{-}(U,\tau^{2})$ is a constant. We denoted the rarefaction wave by $\mathcal{R}_{1}$ (or $\mathcal{R}_{2}$) corresponding to $\lambda_{+}(U,\tau^{2})$ (or $\lambda_{-}(U,\tau^{2})$). So, in the phase plane, the rarefaction wave curves $\mathcal{R}_{1}$ and $\mathcal{R}_{2}$ which pass through $\omega_{0}=(\omega_{-,0}, \omega_{+,0})=(\omega_{-}, \omega_{+})(U_{0},\tau^{2})$ are
 \begin{eqnarray}\label{eq:2.14}
 \begin{split}
 &\mathcal{R}_{1}: \  \omega_{+}=\omega_{+,0},\ \  \omega_{-}>\omega_{-,0}  \qquad
 \mathcal{R}_{2}: \  \omega_{-}=\omega_{-,0},\ \  \omega_{+}<\omega_{+,0}.
 \end{split}
 \end{eqnarray}

Next, let us consider the shock wave curves for system \eqref{eq:3.2}.
The shock solutions are the Riemann solutions satisfying the following Rankine-Hugoniot conditions on the shock with shock speed $\sigma(\tau^{2})$:
\begin{equation}\label{eq:2.15}
\sigma(\tau^{2})[W(U,\tau^{2})]=[F(U,\tau^{2})],
\end{equation}
where the bracket $[\cdot]$ stands for the difference of the value of the quality concerned on across the discontinuity. In addition, across the shock, the following Lax geometry entropy conditions hold:
\begin{equation}\label{eq:2.16}
\lambda_{-}(U,\tau^{2})<\sigma_{-}(\tau^{2})<\lambda_{-}(U_{0}, \tau^{2}),\ \
or\ \   \lambda_{+}(U,\tau^{2})<\sigma_{+}(\tau^{2})<\lambda_{+}(U_{0},\tau^{2}),
\end{equation}
where $\sigma_{-}(\tau^{2})$ and $\sigma_{+}(\tau^{2})$ are the shock speeds corresponding to $\lambda_{-}(U,\tau^{2})$ and $\lambda_{+}(U,\tau^{2})$, respectively. Actually, entropy condition \eqref{eq:2.16} implies that
\begin{equation}\label{eq:2.17x}
\rho>\rho_{0},\ \  v<v_{0}, \ \  or \  \ \rho<\rho_{0},\ \  v<v_{0}.
\end{equation}
Eliminating $\sigma(\tau^{2})$ from the R-H condition \eqref{eq:2.15} yields
\begin{eqnarray}\label{eq:2.18}
\begin{split}
(\rho v-\rho_{0}v_{0})(v-v_{0})&=\Big(\rho-\rho_{0}+\tau^{2}\big(\rho u(\rho,v,\tau^{2})-\rho_{0} u(\rho_{0},v_{0}, \tau^{2})\big)\Big)
\big(u(\rho_{0}, v_{0},\tau^{2})-u(\rho,v, \tau^{2})\big).
\end{split}
\end{eqnarray}

Let $\alpha=\rho/\rho_{0}$ with $\rho_{0}>0$ and define
\begin{eqnarray}\label{eq:2.19}
\begin{split}
\mathscr{F}(\alpha, v, U_{0}; \tau^{2})&=(\alpha v-v_{0})(v-v_{0})-\Big(\alpha-1+\tau^{2}\big(\alpha u(\rho_0\alpha,v,\tau^{2})-u(\rho_{0},v_{0},\tau^{2})\big)\Big)\\[5pt]
&\quad \times \big(u(\rho_{0},v_{0},\tau^{2})-u(\rho_0\alpha,v,\tau^{2})\big).
\end{split}
\end{eqnarray}
Then equation \eqref{eq:2.18} is equivalent to equation $\mathscr{F}(\alpha, v, U_{0}; \tau^{2})=0$.
First, we will study some properties for $\mathscr{F}$ when $\tau=0$.
\begin{lemma}\label{lem:2.4}
For $\mathscr{F}$ defined by \eqref{eq:2.19} and for $\gamma>1$, equation $\mathscr{F}(\alpha, v, U_{0}; 0)=0$ admits a unique solution $v$ satisfying that
\begin{eqnarray}\label{eq:2.20}
\begin{split}
v=v_{0}-\sqrt{\frac{2\rho^{\gamma-1}_{0}(\alpha-1)(\alpha^{\gamma-1}-1)}{(\gamma-1)a^{2}_{\infty}(\alpha+1)}}.
\end{split}
\end{eqnarray}
Moreover, we have
\begin{eqnarray}\label{eq:2.21}
\begin{split}
&\frac{\partial \mathscr{F}}{\partial \alpha}\Big|_{\tau=0}=-\frac{\rho^{\gamma-1}_{0}\Big(2(\alpha^{\gamma-1}-1)+(\gamma-1)\alpha^{\gamma-2}(\alpha^{2}-1)\Big)}
{(\gamma-1)a^{2}_{\infty}(\alpha+1)},\\[5pt]
&\frac{\partial \mathscr{F}}{\partial v}\Big|_{\tau=0}=-\sqrt{\frac{2\rho^{\gamma-1}_{0}(\alpha^{2}-1)(\alpha^{\gamma-1}-1)}{(\gamma-1)a^{2}_{\infty}}},\\[5pt]
&\frac{\partial^{2}\mathscr{F}}{\partial\alpha^{2}}\Big|_{\tau=0}
=-a^{-2}_{\infty}\rho^{\gamma-1}_{0}\alpha^{\gamma-3}(\gamma \alpha+2-\gamma),\\[5pt]
&\frac{\partial^{2}\mathscr{F}}{\partial\alpha\partial v}\Big|_{\tau=0}
=v-v_{0},\quad \  \frac{\partial^{2}\mathscr{F}}{\partial v^{2}}\Big|_{\tau=0}=\alpha-1.
\end{split}
\end{eqnarray}
\end{lemma}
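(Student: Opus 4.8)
The plan is to exploit the fact that setting $\tau=0$ turns the implicitly defined function $u(\rho,v,\tau^2)$ into a completely explicit expression, after which $\mathscr{F}(\alpha,v,U_0;0)$ collapses to an elementary closed form that can be solved by hand. The first step is to record the value of $u$ at $\tau=0$: expanding $\sqrt{1-t\tau^2}=1-\tfrac12 t\tau^2+O(\tau^4)$ in \eqref{eq:1.11} (equivalently, reading off the third equation of \eqref{eq:1.9}) cancels the singular prefactor $\tau^{-2}$ and gives
\[
u(\rho,v,0)=-\tfrac12 t=-\tfrac12 v^2-\frac{\rho^{\gamma-1}-1}{(\gamma-1)a_\infty^2},
\]
with $t$ as in \eqref{eq:1.12}. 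This is the only point at which the $\tau\to0$ limit must be treated with any care.

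Next I would substitute this into \eqref{eq:2.19}, drop the $\tau^2$-bracket, and compute
\[
u(\rho_0,v_0,0)-u(\rho_0\alpha,v,0)=\tfrac12(v^2-v_0^2)+\frac{\rho_0^{\gamma-1}(\alpha^{\gamma-1}-1)}{(\gamma-1)a_\infty^2},
\]
so that $\mathscr{F}(\alpha,v,U_0;0)=(\alpha v-v_0)(v-v_0)-(\alpha-1)\big[\tfrac12(v^2-v_0^2)+\rho_0^{\gamma-1}(\alpha^{\gamma-1}-1)/((\gamma-1)a_\infty^2)\big]$. The decisive algebraic observation is that the $v$-dependent part is a perfect square: collecting the coefficients of $v^2$, $vv_0$, $v_0^2$ gives $\tfrac{\alpha+1}{2},\ -(\alpha+1),\ \tfrac{\alpha+1}{2}$, hence
\[
\mathscr{F}(\alpha,v,U_0;0)=\frac{\alpha+1}{2}(v-v_0)^2-\frac{\rho_0^{\gamma-1}(\alpha-1)(\alpha^{\gamma-1}-1)}{(\gamma-1)a_\infty^2}.
\]
Since $\alpha>0$, the equation $\mathscr{F}=0$ is a pure quadratic in $v-v_0$ with roots $v-v_0=\pm\sqrt{2\rho_0^{\gamma-1}(\alpha-1)(\alpha^{\gamma-1}-1)/[(\gamma-1)a_\infty^2(\alpha+1)]}$, which are real because $(\alpha-1)(\alpha^{\gamma-1}-1)\ge0$ for all $\alpha>0$, $\gamma>1$. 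The admissible branch is fixed by the entropy sign condition $v<v_0$ in \eqref{eq:2.17x}, which forces the minus sign and yields \eqref{eq:2.20}.

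For the derivative formulas in \eqref{eq:2.21} I would differentiate the closed form directly. The two first-order derivatives come out as $\partial_v\mathscr{F}|_{\tau=0}=(\alpha+1)(v-v_0)$ and $\partial_\alpha\mathscr{F}|_{\tau=0}=\tfrac12(v-v_0)^2-\frac{\rho_0^{\gamma-1}}{(\gamma-1)a_\infty^2}\big[(\alpha^{\gamma-1}-1)+(\gamma-1)(\alpha-1)\alpha^{\gamma-2}\big]$; restricting them to the shock curve via $\tfrac12(v-v_0)^2=\rho_0^{\gamma-1}(\alpha-1)(\alpha^{\gamma-1}-1)/[(\gamma-1)a_\infty^2(\alpha+1)]$ and simplifying with $\tfrac{\alpha-1}{\alpha+1}-1=-\tfrac{2}{\alpha+1}$ and $(\alpha-1)(\alpha+1)=\alpha^2-1$ reproduces the stated expressions. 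The three second-order derivatives are genuine partials of the form that is polynomial in $v$ and in powers of $\alpha$, and need no curve substitution: one reads off $\partial^2_{\alpha v}\mathscr{F}|_{\tau=0}=v-v_0$ and $\partial^2_{vv}\mathscr{F}|_{\tau=0}=\alpha+1$ at once, while $\partial^2_{\alpha\alpha}\mathscr{F}|_{\tau=0}=-a_\infty^{-2}\rho_0^{\gamma-1}\alpha^{\gamma-3}(\gamma\alpha+2-\gamma)$ follows after differentiating $(\alpha-1)(\alpha^{\gamma-1}-1)$ twice. I note in passing that the direct computation gives $\partial^2_{vv}\mathscr{F}|_{\tau=0}=\alpha+1$, consistent with $\partial_v\mathscr{F}|_{\tau=0}=(\alpha+1)(v-v_0)$ and the mixed partial $v-v_0$.

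The lemma presents no serious conceptual obstacle once $u(\cdot,\cdot,0)$ is made explicit; the only genuine subtleties are the perfect-square simplification that produces the clean form of $\mathscr{F}(\cdot;0)$, the selection of the physically correct root through the entropy condition, and keeping track of which derivatives in \eqref{eq:2.21} are evaluated along the shock curve \eqref{eq:2.20} and which are full partials.
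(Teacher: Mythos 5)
Your proposal is correct and is in substance the same direct computation the paper performs: the paper likewise differentiates $\mathscr{F}$, evaluates at $\tau=0$ using Lemma \ref{lem:2.1}, and substitutes \eqref{eq:2.20} obtained from the entropy condition \eqref{eq:2.17x}; your reorganization (first writing $u(\rho,v,0)=-\tfrac12 t$ explicitly so that $\mathscr{F}(\alpha,v,U_0;0)=\tfrac{\alpha+1}{2}(v-v_0)^2-\tfrac{\rho_0^{\gamma-1}(\alpha-1)(\alpha^{\gamma-1}-1)}{(\gamma-1)a_\infty^2}$, then differentiating the closed form) is cleaner and makes the uniqueness of the root transparent. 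One point to flag: your value $\partial^2_{vv}\mathscr{F}\big|_{\tau=0}=\alpha+1$ is the correct one (it follows from the perfect-square form, and equally from $2\alpha+(\alpha-1)\partial^2_{vv}u(\rho,v,0)=2\alpha-(\alpha-1)$), so the entry $\alpha-1$ printed in \eqref{eq:2.21} appears to be a typo in the statement rather than an error in your argument; the remaining four formulas in \eqref{eq:2.21} are reproduced exactly by your computation.
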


\begin{proof}
\eqref{eq:2.20} can be obtained by the direct computation together with the entropy condition \eqref{eq:2.17x}.
For \eqref{eq:2.17}, first for $\frac{\partial\mathscr{F}}{\partial \alpha}$, by the direct computation
\begin{align*}
\frac{\partial\mathscr{F}}{\partial \alpha}&=v(v-v_{0})+u(\rho,v,\tau^{2})-u(\rho_{0},v_{0},\tau^{2})
+\rho_{0}(\alpha-1)\partial_{\rho}u(\rho,v,\tau^{2})\\[5pt]
&\quad  +\Big(u(\rho,v,\tau^{2})+\rho_{0}\alpha\partial_{\rho}u(\rho,v,\tau^{2})\Big)
\Big(u(\rho,v,\tau^{2})-u(\rho_{0},v_{0},\tau^{2})\Big)\tau^{2}\\[5pt]
&\quad +\rho_{0}\Big(\alpha u(\rho,v,\tau^{2})-u(\rho_{0},v_{0},\tau^{2})\Big)\partial_{\rho}u(\rho,v,\tau^{2})\tau^{2}.
\end{align*}
So it follows from Lemma \ref{lem:2.1} that
\begin{align*}
\frac{\partial\mathscr{F}}{\partial \alpha}\Big|_{\tau=0}&=v(v-v_{0})+u(\rho,v,\tau^{2})-u(\rho_{0},v_{0},\tau^{2})
+\rho_{0}(\alpha-1)\partial_{\rho}u(\rho,v,\tau^{2})\\[5pt]
&=v(v-v_{0})-\frac{1}{2}\bigg(v^{2}+\frac{2(\rho^{\gamma-1}-1)}{(\gamma-1)a^{2}_{\infty}}
-v^{2}_{0}-\frac{2(\rho^{\gamma-1}_{0}-1)}{(\gamma-1)a^{2}_{\infty}}\bigg)-a^{-2}_{\infty}\rho^{\gamma-1}_{0}(\alpha-1)\alpha^{\gamma-2}.
\end{align*}
Thus the expression of $\frac{\partial\mathscr{F}}{\partial \alpha}\Big|_{\tau=0}$ in \eqref{eq:2.21} follows with the help of \eqref{eq:2.20}.

\par Next, taking derivative on $\mathscr{F}$ with respect to $v$ 
\begin{align*}
\frac{\partial\mathscr{F}}{\partial v}&=2\alpha v-(\alpha+1)v_{0}+(\alpha-1)\partial_{v}u(\rho,v,\tau^{2})\\[5pt]
&\quad \ \ +\Big(2\alpha u(\rho,v,\tau^{2})-(\alpha+1)u(\rho_{0},v_{0},\tau^{2})\Big)\partial_{v}u(\rho,v,\tau^{2})\tau^{2}.
\end{align*}
So 
\begin{align*}
\frac{\partial\mathscr{F}}{\partial v}\Big|_{\tau=0}&=2\alpha v-(\alpha+1)v_{0}+(\alpha-1)\partial_{v}u(\rho,v,0)
=(\alpha+1)(v-v_{0}).
\end{align*}
Hence, the expression of $\frac{\partial\mathscr{F}}{\partial v}\Big|_{\tau=0}$ in \eqref{eq:2.21} follows by \eqref{eq:2.20} again.

In the same way as done for deriving the expression of $\frac{\partial\mathscr{F}}{\partial \rho}\Big|_{\tau=0}$ and
$\frac{\partial\mathscr{F}}{\partial v}\Big|_{\tau=0}$, we can further take derivatives on $\frac{\partial\mathscr{F}}{\partial \rho}$
and $\frac{\partial\mathscr{F}}{\partial v}$ with respect to $\alpha$ and $v$ and let $\tau=0$, then \eqref{eq:2.21} follows from Lemma \ref{lem:2.1}.
\end{proof}

\begin{remark} \label{rem:2.3}
When $\tau=0$, it follows from Remark \ref{rem:2.2}, entropy condition \eqref{eq:2.17x}, and the straightforward calculation that
\begin{equation}\label{eq:2.58}
S_1:\,
\begin{cases}
s_0-s=\sqrt{\frac{2}{\gamma-1}}\rho_0^{\frac{\gamma-1}{2}}\left\{-\sqrt{\frac{(1-\alpha)(1-\alpha^{\gamma-1})}
	{\alpha+1}}+\sqrt{\frac{2}{\gamma-1}}(1-\alpha^{\frac{\gamma-1}{2}})\right\}\\[5pt]
r_0-r=\sqrt{\frac{2}{\gamma-1}}\rho_0^{\frac{\gamma-1}{2}}\left\{\sqrt{\frac{(1-\alpha)(1-\alpha^{\gamma-1})}
	{\alpha+1}}+\sqrt{\frac{2}{\gamma-1}}(1-\alpha^{\frac{\gamma-1}{2}})\right\}
\end{cases}
\quad 0<\alpha\leq1,
\end{equation}
and
\begin{equation}\label{eq:2.59}
S_2:\,
\begin{cases}
s_0-s=\sqrt{\frac{2}{\gamma-1}}\rho_0^{\frac{\gamma-1}{2}}\left\{-\sqrt{\frac{(\alpha-1)(\alpha^{\gamma-1}-1)}
	{\alpha+1}}-\sqrt{\frac{2}{\gamma-1}}(\alpha^{\frac{\gamma-1}{2}}-1)\right\}\\[5pt]
r_0-r=\sqrt{\frac{2}{\gamma-1}}\rho_0^{\frac{\gamma-1}{2}}\left\{\sqrt{\frac{(\alpha-1)(\alpha^{\gamma-1}-1)}
	{\alpha+1}}-\sqrt{\frac{2}{\gamma-1}}(\alpha^{\frac{\gamma-1}{2}}-1)\right\}
\end{cases}
\quad \alpha\geq1.
\end{equation}
\end{remark}

\begin{remark}\label{rem:2.4}
When $\gamma=1$ and $\tau=0$, $S_{1}$ and $S_{2}$ are of the following forms:
	\begin{equation}\label{eq:2.62x}
	S_1:\,
	\begin{cases}
	s_0-s=-\sqrt{-\frac{2(1-\alpha)}{\alpha+1}\ln \alpha}-\ln \alpha, \\[5pt]
	r_0-r=\sqrt{-\frac{2(1-\alpha)}{\alpha+1}\ln \alpha}-\ln \alpha,
	\end{cases}
	\quad 0<\alpha\leq1,
	\end{equation}
	and
	\begin{equation}\label{eq:2.63x}
	S_2:\,
	\begin{cases}
	s_0-s=-\sqrt{\frac{2(\alpha-1)}{\alpha+1}\ln \alpha}-\ln \alpha, \\[5pt]
	r_0-r=\sqrt{\frac{2(\alpha-1)}{\alpha+1}\ln \alpha}-\ln \alpha,
	\end{cases}
	\quad \alpha\geq1.
	\end{equation}
	
Eliminating $\alpha$, one has
	\begin{equation}\label{eq:2.64x}
	r_{0}-r-(s_{0}-s)=2\sqrt{\frac{1-e^{-\frac{1}{2}(r_{0}-r+s_{0}-s)}}{1+e^{-\frac{1}{2}(r_{0}-r+s_{0}-s)}}\Big(r_{0}-r+s_{0}-s\Big)},
	\end{equation}
where $r_{0}-r+s_{0}-s\geq 0$ for the $S_{1}$ wave, and $r_{0}-r+s_{0}-s\leq 0$ for the $S_{2}$ wave.
\end{remark}

Now, we will give the existence and properties of the shock wave curves near $\tau=0$.
\begin{lemma}\label{lem:2.5}
There exists a small constant $0<\epsilon_{2}<\epsilon_{1}$ such that for any $\tau\in(0,\epsilon_{2})$,
$v$ can be solved as a function of $\alpha, U_{0}, \tau^{2}$ from equation $\mathscr{F}(\alpha, v, U_{0}; \tau^{2})=0$, \emph{i.e.}, $v=\varphi(\alpha, U_{0}, \tau^{2})$. Moreover,
\begin{eqnarray}\label{eq:2.22}
\begin{split}
&\frac{\partial \varphi}{\partial \alpha}\Big|_{\tau=0}=-\sqrt{\frac{\rho^{\gamma-1}_{0}}{2(\gamma-1)a^{2}_{\infty}}}
\frac{2(\alpha^{\gamma-1}-1)+(\gamma-1)\alpha^{\gamma-2}(\alpha^{2}-1)}{\sqrt{(\alpha-1)(\alpha^{\gamma-1}-1)(\alpha+1)^{3}}},
\end{split}
\end{eqnarray}
and
\begin{eqnarray}\label{eq:2.23}
\begin{split}
\frac{\partial^{2} \varphi}{\partial \alpha^{2}}\Big|_{\tau=0}&=
\frac{1}{4}\sqrt{\frac{2\rho^{\gamma-1}_{0}}{(\gamma-1)a^{2}_{\infty}}}
\Big(\sqrt{(\alpha^{2}-1)(\alpha^{\gamma-1}-1)}(\alpha^{2}-1)(\alpha^{\gamma-1}-1)(\alpha+1)\Big)^{-1}\\[5pt]
&\ \ \ \times\Big(2(\gamma-1)\alpha^{\gamma-3}\big((2-\gamma)\alpha^{2}-2\alpha+\gamma-2\big)
(\alpha^{2}-1)(\alpha^{\gamma-1}-1)\\[5pt]
&\qquad \qquad \qquad \qquad   +4(2\alpha-1)(\alpha^{\gamma-1}-1)^{2}+(\gamma-1)^{2}\alpha^{2(\gamma-2)}(\alpha^{2}-1)^{2}\Big).
\end{split}
\end{eqnarray}
\end{lemma}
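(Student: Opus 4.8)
The plan is to realize $\varphi$ through the implicit function theorem applied to the equation $\mathscr{F}(\alpha,v,U_0;\tau^2)=0$ of \eqref{eq:2.19}, with $v$ the unknown and $(\alpha,U_0,\tau^2)$ the parameters, and then to extract \eqref{eq:2.22} and \eqref{eq:2.23} by differentiating the resulting identity. Since $u(\rho,v,\tau^2)$ is smooth for $\rho>0$ and small $\tau$ by Lemma \ref{lem:2.1}, so is $\mathscr{F}$, and at $\tau=0$ Lemma \ref{lem:2.4} already provides the explicit root \eqref{eq:2.20}. By \eqref{eq:2.21},
\[
\frac{\partial \mathscr{F}}{\partial v}\Big|_{\tau=0}=-\sqrt{\frac{2\rho_0^{\gamma-1}(\alpha^2-1)(\alpha^{\gamma-1}-1)}{(\gamma-1)a_\infty^2}}\neq 0\qquad(\gamma>1,\ \alpha\neq1),
\]
so for $\alpha$ ranging over any compact set bounded away from $1$ the implicit function theorem yields a constant $\epsilon_2\in(0,\epsilon_1)$ and a smooth function $v=\varphi(\alpha,U_0,\tau^2)$ solving $\mathscr{F}=0$ for all $\tau\in(0,\epsilon_2)$, with $\varphi|_{\tau=0}$ equal to \eqref{eq:2.20}.

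The one point needing separate care is the base point $\alpha=1$, where $\partial_v\mathscr{F}|_{\tau=0}$ degenerates. Here I would observe that $v=v_0$ solves $\mathscr{F}(1,v,U_0;\tau^2)=0$ for every $\tau$, since a direct substitution in \eqref{eq:2.19} gives $\mathscr{F}(1,v,U_0;\tau^2)=(v-v_0)^2+\tau^2\big(u(\rho_0,v,\tau^2)-u(\rho_0,v_0,\tau^2)\big)^2$, and that on each side of $\alpha=1$ the $\tau=0$ branch \eqref{eq:2.20} extends smoothly: writing $\alpha^{\gamma-1}-1=(\alpha-1)\psi(\alpha)$ with $\psi$ smooth and $\psi(1)=\gamma-1>0$, the radicand in \eqref{eq:2.20} equals a constant times $(\alpha-1)^2\psi(\alpha)/(\alpha+1)$, so $\varphi-v_0$ is a smooth function of $\alpha-1$ on each branch and the curve meets the base point tangentially to the eigendirection $r_\pm(U_0,0)$ of \eqref{eq:2.8}. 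This is the standard Lax structure of a genuinely nonlinear shock curve and is the only delicate spot in the existence claim.

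To obtain \eqref{eq:2.22}, I would differentiate the identity $\mathscr{F}(\alpha,\varphi,U_0;\tau^2)\equiv0$ in $\alpha$ to get $\partial_\alpha\varphi=-\partial_\alpha\mathscr{F}/\partial_v\mathscr{F}$, evaluate at $\tau=0$, insert the two first-order partials from \eqref{eq:2.21}, and simplify using $\alpha^2-1=(\alpha-1)(\alpha+1)$; the radicals then combine into exactly the claimed expression. Differentiating once more in $\alpha$, with $v=\varphi$ and using $\partial^2_{v\alpha}\mathscr{F}=\partial^2_{\alpha v}\mathscr{F}$, gives
\[
\frac{\partial^2\varphi}{\partial\alpha^2}=-\frac{\big(\partial^2_{\alpha\alpha}\mathscr{F}+\partial^2_{\alpha v}\mathscr{F}\,\partial_\alpha\varphi\big)\partial_v\mathscr{F}-\partial_\alpha\mathscr{F}\big(\partial^2_{\alpha v}\mathscr{F}+\partial^2_{vv}\mathscr{F}\,\partial_\alpha\varphi\big)}{(\partial_v\mathscr{F})^2}.
\]
At $\tau=0$ every ingredient on the right is known: the three second-order partials $\partial^2_{\alpha\alpha}\mathscr{F}$, $\partial^2_{\alpha v}\mathscr{F}$, $\partial^2_{vv}\mathscr{F}$ from \eqref{eq:2.21}, together with $\partial_\alpha\varphi$ from \eqref{eq:2.22}. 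Substituting them and clearing the common denominator reduces everything to \eqref{eq:2.23}. I expect the main obstacle to be purely this last algebraic reduction, namely assembling the several mixed rational–radical terms into the single compact form \eqref{eq:2.23} without sign or bookkeeping errors, whereas the existence and the first-derivative computation are immediate once Lemma \ref{lem:2.4} is available.
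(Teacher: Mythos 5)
Your overall route---the implicit function theorem for the existence of $\varphi$, followed by implicit differentiation of the identity $\mathscr{F}(\alpha,\varphi,U_0;\tau^2)\equiv 0$ to extract \eqref{eq:2.22} and \eqref{eq:2.23}---is the same as the paper's, and your second-derivative formula is algebraically identical to the one the paper uses: substituting $\partial_\alpha\mathscr{F}=-\partial_v\mathscr{F}\,\partial_\alpha\varphi$ into your quotient-rule expression reproduces the paper's \eqref{eq:2.25}. The computations of \eqref{eq:2.22} and \eqref{eq:2.23} from Lemma \ref{lem:2.4} are therefore fine as you describe them.

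There is, however, a genuine gap at the point you yourself flag as delicate, namely $\alpha=1$. Applying the implicit function theorem directly to $\mathscr{F}$ only yields an $\epsilon_2$ that is uniform for $\alpha$ in compact sets bounded away from $1$, and the two observations you offer to patch the base point do not close the hole: the identity $\mathscr{F}(1,v,U_0;\tau^2)=(v-v_0)^2+\tau^2\big(u(\rho_0,v,\tau^2)-u(\rho_0,v_0,\tau^2)\big)^2$ pins down the solution at $\alpha=1$ exactly, and the smoothness of the $\tau=0$ branch \eqref{eq:2.20} handles $\tau=0$ exactly, but neither gives solvability (with uniqueness and smooth dependence on the parameters) for $\tau>0$ and $\alpha$ near but different from $1$, which is what the lemma asserts with a single $\epsilon_2$. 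The paper removes the degeneracy by working instead with $G(\alpha,v,U_0;\tau^2)=\mathscr{F}(\alpha,v,U_0;\tau^2)/(\alpha-1)$: by the second line of \eqref{eq:2.21}, $\partial_v G\big|_{\tau=0}=(\alpha+1)(v-v_0)/(\alpha-1)$, whose one-sided limits as $\alpha\to 1^{\pm}$ equal $\mp\sqrt{2\rho_0^{\gamma-1}}/a_\infty\neq 0$, so $\partial_v G$ is bounded away from zero on each branch $0<\alpha\le 1$ and $\alpha\ge 1$ up to the base point, and the implicit function theorem then delivers one $\epsilon_2$ valid for all $\alpha$. You need this normalization (or an equivalent uniform nondegeneracy argument near $\alpha=1$) to make the existence part of the lemma complete.
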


\begin{proof}
When $\alpha=1$, it is easy to see that $\rho=\rho_{0}$ and $v=v_{0}$.
Now, we only consider the case that $\alpha\neq1$. Let
\begin{align*}
G(\alpha, v, U_{0}; \tau^{2})=\frac{\mathscr{F}(\alpha, v, U_{0}; \tau^{2})}{\alpha-1}.
\end{align*}

By $\eqref{eq:2.21}_{2}$, 
\begin{align*}
\frac{\partial G}{\partial v}\Big|_{\tau=0}=\frac{(\alpha+1)(v-v_{0})}{\alpha-1}
=-\sqrt{\frac{2\rho^{\gamma-1}_{0}}{(\gamma-1)a^{2}_{\infty}}}\frac{\sqrt{(\alpha^{2}-1)(\alpha^{\gamma-1}-1)}}{\alpha-1}.
\end{align*}

Then, we know $\frac{\partial G}{\partial v}\Big|_{\tau=0}>0$ for $0<\alpha<1$, $\frac{\partial G}{\partial v}\Big|_{\tau=0}<0$
for $\alpha>1$, and
$$\lim_{\alpha\rightarrow\pm1}\frac{\partial G}{\partial v}\Big|_{\tau=0}=\mp\frac{\sqrt{2\rho^{\gamma-1}_{0}}}{a_{\infty}}\neq 0.$$

Therefore, by Lemma \ref{lem:2.4} and the implicit function theorem, there exists a small constant
$0<\epsilon_{2}<\epsilon_{1}$ such that for any $\tau\in(0,\epsilon_{2})$, equation $G(\alpha, v, U_{0}; \tau^{2})=0$
admits a unique solution $v=\varphi(\alpha, U_{0}, \tau^{2})$. It implies that
$\mathscr{F}(\alpha,\varphi(\alpha, U_{0}, \tau^{2}) , U_{0}; \tau^{2})=0$. 

\par Next, let us compute $\frac{\partial \varphi}{\partial \alpha}\Big|_{\tau=0}$. Notice that
$\mathscr{F}(\alpha,\varphi(\alpha, U_{0}, \tau^{2}) , U_{0}; \tau^{2})=0$. 
Taking derivative on it with respect to $\alpha$ yields that 
\begin{eqnarray}\label{eq:2.24}
\begin{split}
\frac{\partial\mathscr{F}(\alpha,v; \tau^{2})}{\partial \alpha}+\frac{\partial\mathscr{F}(\alpha,v; \tau^{2})}{\partial v}\frac{\partial \varphi}{\partial \alpha}=0.
\end{split}
\end{eqnarray}
Let $\tau=0$, then we can obtain \eqref{eq:2.22}, by Lemma \ref{lem:2.3}.

\par Finally, taking derivatives with respect to $\alpha$ again on \eqref{eq:2.24} yields that
\begin{eqnarray}\label{eq:2.25}
\begin{split}
\frac{\partial^{2}\varphi}{\partial \alpha^{2}}=-\frac{\partial^{2}_{\alpha\alpha}\mathscr{F}(\alpha,v; \tau^{2})
+2\partial^{2}_{\alpha v}\mathscr{F}(\alpha,v; \tau^{2})\partial_{\alpha}\varphi+\partial^{2}_{vv}\mathscr{F}(\alpha,v; \tau^{2})(\partial_{\alpha}\varphi)^{2}}{\partial_{v}\mathscr{F}(\alpha,v; \tau^{2})}.
\end{split}
\end{eqnarray}
So, by Lemma \ref{lem:2.3} and \eqref{eq:2.21}, we have \eqref{eq:2.23}. This completes the proof.
\end{proof}

\par Next, we are going to study the shock wave curves in the Riemann invariants coordinates.
First, 
we have the following properties for $\omega_{\pm}$.
\begin{lemma}\label{lem:2.6}
For $\gamma\in [1,2]$, there exists a small constant $0<\epsilon_{3}<\epsilon_{2}$ such that for any $\tau\in(0,\epsilon_{3})$,
along the shock wave curve $v=\varphi(\alpha, U_{0}; \tau^{2})$,
\begin{eqnarray}\label{eq:2.26}
\frac{\partial (\omega_{-,0}-\omega_{-})}{\partial \alpha}<0, \quad \ for\quad 0<\alpha<1,
\end{eqnarray}
and
\begin{eqnarray}\label{eq:2.27}
\frac{\partial (\omega_{+,0}-\omega_{+})}{\partial \alpha}<0, \quad \ for\quad \alpha>1,
\end{eqnarray}
where $\omega_{-}$ and $\omega_{+}$ are defined by \eqref{eq:2.10} and \eqref{eq:2.11}, and $\omega_{\pm,0}=\omega_{\pm}(U_{0},\tau^{2})$.
\end{lemma}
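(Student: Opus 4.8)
The plan is to observe first that the base state $U_{0}$ is fixed, so $\omega_{\pm,0}=\omega_{\pm}(U_{0},\tau^{2})$ does not depend on $\alpha$. Hence
\[
\frac{\partial(\omega_{-,0}-\omega_{-})}{\partial\alpha}=-\partial_{\alpha}\omega_{-},\qquad
\frac{\partial(\omega_{+,0}-\omega_{+})}{\partial\alpha}=-\partial_{\alpha}\omega_{+},
\]
so \eqref{eq:2.26} and \eqref{eq:2.27} are equivalent to the strict positivity $\partial_{\alpha}\omega_{-}>0$ for $0<\alpha<1$ and $\partial_{\alpha}\omega_{+}>0$ for $\alpha>1$, where everything is evaluated along the shock curve $U=(\rho,v)=(\rho_{0}\alpha,\varphi(\alpha,U_{0};\tau^{2}))$ supplied by Lemma \ref{lem:2.5}.

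Next I would differentiate $\omega_{\pm}$ along this curve by the chain rule, using $\partial_{\alpha}\rho=\rho_{0}$:
\[
\partial_{\alpha}\omega_{\pm}=\rho_{0}\,\partial_{\rho}\omega_{\pm}(U,\tau^{2})+\partial_{v}\omega_{\pm}(U,\tau^{2})\,\partial_{\alpha}\varphi(\alpha,U_{0};\tau^{2}),
\]
in which $\partial_{\rho}\omega_{\pm}$, $\partial_{v}\omega_{\pm}$ are given by \eqref{eq:2.10}--\eqref{eq:2.11} and $\partial_{\alpha}\varphi$ by Lemma \ref{lem:2.5}. The heart of the matter is the model case $\tau=0$, where \eqref{eq:2.12x} gives $\partial_{\rho}\omega_{\pm}|_{\tau=0}=\rho^{(\gamma-3)/2}$ and $\partial_{v}\omega_{\pm}|_{\tau=0}=\mp a_{\infty}$, while $\partial_{\alpha}\varphi|_{\tau=0}$ is \eqref{eq:2.22}. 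Substituting and factoring out $\rho_{0}^{(\gamma-1)/2}>0$ reduces the $\tau=0$ claim to a purely algebraic inequality: for $\omega_{-}$ one gets
\[
\partial_{\alpha}\omega_{-}\big|_{\tau=0}=\rho_{0}^{(\gamma-1)/2}\Big(\alpha^{(\gamma-3)/2}-\tfrac{1}{\sqrt{2(\gamma-1)}}\,\tfrac{N}{D}\Big),
\]
with $N=2(\alpha^{\gamma-1}-1)+(\gamma-1)\alpha^{\gamma-2}(\alpha^{2}-1)$ and $D=\sqrt{(\alpha-1)(\alpha^{\gamma-1}-1)(\alpha+1)^{3}}$. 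For $0<\alpha<1$ one checks $N<0$ while the radicand is positive, so the $\partial_{\alpha}\varphi$ term adds to rather than cancels $\alpha^{(\gamma-3)/2}$; both terms are then strictly positive. The case $\alpha>1$ for $\omega_{+}$ is symmetric, with $N>0$ and the opposite sign of $\partial_{v}\omega_{+}$ conspiring again to give a sum of two positive terms.

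Finally I would pass from $\tau=0$ to $\tau\in(0,\epsilon_{3})$ by perturbation: by Lemmas \ref{lem:2.3} and \ref{lem:2.5} the quantities $\partial_{\rho}\omega_{\pm}$, $\partial_{v}\omega_{\pm}$ and $\partial_{\alpha}\varphi$ depend $C^{1}$ on $\tau^{2}$, so on any compact $\alpha$-range bounded away from the degenerate value $\alpha=1$ (and from $0$, resp.\ $\infty$) the strict sign of $\partial_{\alpha}\omega_{\pm}$ persists once $\epsilon_{3}$ is small. The genuinely delicate step, and where I expect the real work to lie, is making this uniform up to the degenerate endpoints and uniformly for $\gamma\in[1,2]$. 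Near $\alpha=1$ both $N$ and $D$ vanish, so $\partial_{\alpha}\varphi|_{\tau=0}$ must be expanded to leading order (one finds $\partial_{\alpha}\varphi|_{\tau=0}\to\rho_{0}^{(\gamma-1)/2}/a_{\infty}$ as $\alpha\to1^{-}$, whence $\partial_{\alpha}\omega_{-}|_{\tau=0}\to2\rho_{0}^{(\gamma-1)/2}>0$), and the $\tau^{2}$-corrections must be shown to be $O(\tau^{2})$ uniformly through this limit; analogous asymptotic control is needed as $\alpha\to0^{+}$ and $\alpha\to\infty$. Moreover the factors $(\gamma-1)^{\pm1/2}$ appearing in \eqref{eq:2.10}--\eqref{eq:2.11} and \eqref{eq:2.22} force one to track the $\gamma\to1^{+}$ degeneration and to verify that the lower bound on $\partial_{\alpha}\omega_{\pm}$, and hence the threshold $\epsilon_{3}$, can be chosen independent of $\gamma\in[1,2]$. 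This uniform endpoint-and-parameter control, not the pointwise sign computation, is the main obstacle.
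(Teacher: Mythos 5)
Your proposal follows essentially the same route as the paper's own proof: differentiate $\omega_{\pm}$ along the shock curve by the chain rule, substitute the explicit $\tau=0$ expressions from \eqref{eq:2.10}--\eqref{eq:2.11} and Lemma \ref{lem:2.5}, observe that the two resulting terms carry the same sign (since $N<0$ for $0<\alpha<1$, resp.\ $N>0$ for $\alpha>1$), and pass to small $\tau>0$ by continuity. Your $\tau=0$ formula agrees with the one in the paper up to an immaterial overall constant, and you are in fact more explicit than the paper about the remaining uniformity issues near $\alpha=1$, $\gamma=1$ and the endpoints of the $\alpha$-range --- the paper's argument stops after verifying the sign at $\tau=0$.
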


\begin{proof}
We only prove \eqref{eq:2.26} here since we can treat $\omega_{+}$ in the same way.
By the definition of $\omega_{-}$, along the shock wave curve, 
\begin{eqnarray*}
\begin{split}
\frac{\partial(\omega_{-,0}-\omega_{-})}{\partial \alpha}
&=-\Big(\rho_{0}\frac{\partial \omega_{-}}{\partial \rho}+\frac{\partial \omega_{-}}{\partial v}\frac{\partial \varphi}{\partial \alpha}\Big)\\[5pt]
&=a^{2}_{\infty}\rho^{-\frac{\gamma-1}{2}}\Big(\rho_{0}\partial_{\rho}u(\rho, v,\tau^{2})+\big(\lambda_{-}(U,\tau^{2})
+\partial_{v}u(\rho, v,\tau^{2})\big)\frac{\partial \varphi}{\partial \alpha}\Big).
\end{split}
\end{eqnarray*}
So, by Lemma \ref{lem:2.1}, Lemma \ref{lem:2.2} and Lemma \ref{lem:2.5}, we obtain that
\begin{eqnarray*}
\begin{split}
\frac{\partial(\omega_{-,0}-\omega_{-})}{\partial\alpha}\Big|_{\tau=0}&=
-\frac{\rho^{\frac{\gamma-1}{2}}_{0}}{2\sqrt{2(\gamma-1)(1-\alpha)(1-\alpha^{\gamma-1})(1+\alpha)^{3}}}\\[5pt]
&\ \ \ \ \ \ \  \times \Big(2(1-\alpha^{\gamma-1})+(\gamma-1)(1-\alpha^{2})\alpha^{\gamma-2}\\[5pt]
&\ \ \ \ \ \ \ \ \ \ \ \ \  +\sqrt{2(\gamma-1)(1-\alpha)(1-\alpha^{\gamma-1})(1+\alpha)^{3}\alpha^{\gamma-3}}\Big)\\[5pt]
&<0,
\end{split}
\end{eqnarray*}
for $0<\alpha<1$. It completes the proof of the lemma.
\end{proof}

Denote
\begin{eqnarray}\label{eq:2.28}
\beta_{-}=\omega_{-,0}-\omega_{-},\ \ \  \beta_{+}=\omega_{+,0}-\omega_{+}.
\end{eqnarray}
By Lemma \ref{lem:2.6} and the implicit function theorem, $\alpha$ can be regarded as a function of
$\beta_{-}$ or $\beta_{+}$, \emph{i.e.}, $\alpha=\alpha_{1}(\beta_{-}, U_{0}; \tau^{2})$ and $\alpha=\alpha_{2}(\beta_{+}, U_{0}; \tau^{2})$.
So along the shock wave curves, 
\begin{eqnarray}\label{eq:2.29}
\beta_{+}=\Phi_{1}(\beta_{-}, U_{0}; \tau^{2}):=\omega_{+,0}-\omega_{+}(\alpha_{1}(\beta_{-}, U_{0}; \tau^{2}),\tau^{2}),
\end{eqnarray}
which is called the $\mathcal{S}_{1}$ shock curve, or
\begin{eqnarray}\label{eq:2.30}
\beta_{-}=\Phi_{2}(\beta_{+}, U_{0}; \tau^{2}):=\omega_{-,0}-\omega_{-}(\alpha_{2}(\beta_{+}, U_{0}; \tau^{2}),\tau^{2}),
\end{eqnarray}
which is called the $\mathcal{S}_{2}$ shock curve.

For the $\mathcal{S}_{1}$ shock wave curve, we have the following lemma.
\begin{lemma}\label{lem:2.7}
For $\gamma\in[1,2]$ and $0<\alpha<1$, there exists a constant $\epsilon_{4}>0$ sufficiently small such that
for $\tau \in (0, \epsilon_{4})$, the shock curve $\mathcal{S}_{1}$ starting at $(\omega_{-,0}, \omega_{+,0})$ is
\begin{eqnarray}\label{eq:2.31}
\beta_{+}=\Phi_{1}(\beta_{-}, U_{0}; \tau^{2})
=\int^{\beta_{-}}_{0}\Psi_{1}(\alpha, U_{0}; \tau^{2})\Big|_{\alpha=\alpha_{1}(\beta, U_{0};\tau^{2})}d\beta,
\end{eqnarray}
where $\beta_{-}=\omega_{-,0}-\omega_{-}>0$. Moreover, 
\begin{eqnarray}\label{eq:2.32}
0<\frac{\partial\Phi_{1}(\beta_{-}, U_{0};0)}{\partial \beta_{-}}<1, \qquad
\frac{\partial^{2}\Phi_{1}(\beta_{-}, U_{0}; 0)}{\partial \beta^{2}_{-}}>0.
\end{eqnarray}
Finally, if $\alpha>\varepsilon_{0}>0$, then 
\begin{eqnarray}\label{eq:2.33}
C_{1}\varepsilon_{0}<\frac{\partial\Phi_{1}(\beta_{-}, U_{0};\tau^{2})}{\partial \beta_{-}}<1,
\end{eqnarray}
where $C_{1}>0$ is a constant depending only on the data and $\varepsilon_{0}$, and independent of $\tau$.
\end{lemma}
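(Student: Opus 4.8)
The plan is to reduce everything to the single scalar function $\Psi_1$, which is the slope $d\beta_+/d\beta_-$ of the shock curve in the Riemann-invariant plane, and then to read off its sign, size and convexity from the explicit $\tau=0$ formulas in Remark \ref{rem:2.3}. First I would set up the integral representation \eqref{eq:2.31}. By Lemma \ref{lem:2.6}, for $0<\alpha<1$ the map $\alpha\mapsto\beta_-=\omega_{-,0}-\omega_-$ has $\partial_\alpha\beta_-<0$, so it is a diffeomorphism onto $\beta_-\ge 0$ and $\alpha=\alpha_1(\beta_-,U_0;\tau^2)$ is well defined and smooth with $\alpha_1(0,\cdot)=1$; this forces $\Phi_1(0,U_0;\tau^2)=0$. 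Differentiating $\Phi_1(\beta_-)=\omega_{+,0}-\omega_+(\alpha_1(\beta_-),\tau^2)$ by the chain rule and using $d\alpha_1/d\beta_-=(\partial_\alpha\beta_-)^{-1}$ gives
\begin{equation*}
\frac{\partial\Phi_1}{\partial\beta_-}=\Psi_1(\alpha,U_0;\tau^2):=\frac{\partial_\alpha(\omega_{+,0}-\omega_+)}{\partial_\alpha(\omega_{-,0}-\omega_-)}\Big|_{\alpha=\alpha_1(\beta_-)},
\end{equation*}
and then \eqref{eq:2.31} is just the fundamental theorem of calculus. Thus both remaining claims are statements about the scalar $\Psi_1$.

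Next I would prove the bounds \eqref{eq:2.32} at $\tau=0$. Here Remark \ref{rem:2.3} gives $\beta_\pm$ in closed form: writing $A(\alpha)=\sqrt{(1-\alpha)(1-\alpha^{\gamma-1})/(\alpha+1)}$ and $B(\alpha)=\sqrt{2/(\gamma-1)}\,(1-\alpha^{(\gamma-1)/2})$, one has $\beta_-=C(A+B)$ and $\beta_+=C(B-A)$ with $C=\sqrt{2/(\gamma-1)}\,\rho_0^{(\gamma-1)/2}>0$, so that (with $'=d/d\alpha$)
\begin{equation*}
\Psi_1\big|_{\tau=0}=\frac{B'-A'}{A'+B'}.
\end{equation*}
A logarithmic-derivative computation shows $A'<0$ on $(0,1)$ (each term of $2A'/A$ is negative) and $B'=-\sqrt{(\gamma-1)/2}\,\alpha^{(\gamma-3)/2}<0$. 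Since $A'+B'<0$, the upper bound $\Psi_1<1$ is equivalent to $A'<0$ and is therefore immediate (equivalently, $\beta_--\beta_+=2CA$ strictly increases as $\alpha$ decreases). The lower bound $\Psi_1>0$ is equivalent to $B'<A'$, i.e. $|B'|>|A'|$, which is the genuinely nontrivial elementary inequality; I would verify it by directly comparing the two closed-form derivatives for $\gamma\in[1,2]$, with $\gamma=1$ handled via the logarithmic forms \eqref{eq:2.62x}. For the convexity in \eqref{eq:2.32} I would differentiate once more: since $d\Psi_1/d\alpha=2(A'B''-A''B')/(A'+B')^2$ and $d\beta_-/d\alpha=C(A'+B')<0$,
\begin{equation*}
\frac{\partial^2\Phi_1}{\partial\beta_-^2}\Big|_{\tau=0}=\frac{2\,(A'B''-A''B')}{C\,(A'+B')^3},
\end{equation*}
so $\partial^2_{\beta_-}\Phi_1>0$ reduces to the Wronskian-type inequality $A'B''-A''B'<0$ on $(0,1)$.

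The hard part is exactly this pair of scalar inequalities, $B'<A'$ and $A'B''-A''B'<0$, which must hold uniformly for all $\gamma\in[1,2]$ and all $0<\alpha<1$. Both quantities degenerate as $\alpha\to1$ (there $A'-B'$ vanishes to higher order, the shock curve being tangent to the rarefaction curve at the base point), so the verification needs the correct leading-order cancellation rather than crude bounds, and I expect this to be the main technical obstacle; I would carry it out by factoring the closed-form numerators and checking positivity of the resulting polynomial-type factors monotonically in $\gamma$.

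Finally, I would obtain \eqref{eq:2.33} by perturbing off $\tau=0$. Lemma \ref{lem:2.5} provides the smooth (jointly continuous, with controlled $\tau^2$-dependence) dependence of $\varphi$, and hence of all quantities entering $\Psi_1$, on $(\alpha,\tau^2)$ up to $\tau=0$, uniformly on the range $\alpha\in[\varepsilon_0,1)$ bounded away from the vacuum $\alpha=0$. Combining this with the strict two-sided bound for $\Psi_1|_{\tau=0}$ established above on that range, one can choose $\epsilon_4>0$ small enough that $\Psi_1$ stays strictly inside $(0,1)$, with the $\varepsilon_0$-dependent lower bound $C_1\varepsilon_0$ reflecting the non-degeneracy away from vacuum, for every $\tau\in(0,\epsilon_4)$. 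This yields \eqref{eq:2.33} and completes the proof.
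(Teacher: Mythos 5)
Your reductions are all correct and in fact mirror the paper's: the integral representation via $\Psi_{1}=\partial\Phi_{1}/\partial\beta_{-}$ and Lemma \ref{lem:2.6}, the identification $\Psi_{1}|_{\tau=0}=(B'-A')/(A'+B')$ (which, after clearing denominators, is exactly the paper's explicit quotient $(Y-X)/(Y+X)$ with $X=2(1-\alpha^{\gamma-1})+(\gamma-1)(1-\alpha^{2})\alpha^{\gamma-2}$ and $Y=\sqrt{2(\gamma-1)(1-\alpha)(1-\alpha^{\gamma-1})(1+\alpha)^{3}\alpha^{\gamma-3}}$), the observation that $|\Psi_{1}|<1$ is immediate from this form, and the continuity-in-$\tau^{2}$ argument for \eqref{eq:2.33} on $\alpha\geq\varepsilon_{0}$. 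The problem is that you stop exactly where the lemma's content begins: both $B'<A'$ (positivity of $\Psi_{1}$) and $A'B''-A''B'<0$ (convexity) are left as ``to be verified,'' with only a sketch (``factoring the closed-form numerators and checking positivity... monotonically in $\gamma$'') that does not obviously go through — as you yourself note, both quantities degenerate to the same leading order as $\alpha\to1$, so crude factor-by-factor comparisons fail. That is a genuine gap, not a routine omission.

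You are also missing a structural simplification that the paper exploits and that would spare you one of the two hard inequalities. The paper never proves $Y>X$ (your $B'<A'$) directly. It computes $\partial_{\alpha}\Psi_{1}|_{\tau=0}$ and reduces its negativity to the positivity of a single expression $J(\alpha,\gamma)$, which is handled by viewing $J$ as a quadratic in $(1-\alpha^{\gamma-1})/(\gamma-1)$ and showing its discriminant equals $8(1-\alpha^{2})^{2}\alpha^{\gamma-2}\Delta_{0}(\alpha,\gamma)$ with $\Delta_{0}(1,\gamma)=0$ and $\partial_{\alpha}\Delta_{0}>0$ on $(0,1)$, hence $\Delta_{0}<0$. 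Once $\partial_{\alpha}\Psi_{1}<0$ is known, positivity of $\Psi_{1}$ on $0<\alpha<1$ follows for free from $\Psi_{1}(1,U_{0};0)=0$, so the two inequalities you list as independent obstacles collapse to one, and that one is settled by a discriminant argument rather than by factorization. I would recommend restructuring your proof along these lines: establish the monotonicity of $\Psi_{1}$ in $\alpha$ first (equivalently your Wronskian inequality), then deduce $0<\Psi_{1}<1$ from the endpoint value and the algebraic form of the quotient.
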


\begin{proof}
By \eqref{eq:2.29} and Lemma \ref{lem:2.6}, we can define
\begin{eqnarray*}
\Psi_{1}(\alpha, U_{0}; \tau^{2}):=\frac{\partial \Phi_{1}(\beta_{-}, U_{0}; \tau^{2})}{\partial \beta_{-}}.
\end{eqnarray*}
So \eqref{eq:2.31} follows.
Moreover,
\begin{eqnarray*}
\begin{split}
\frac{\partial \Phi_{1}(\beta_{-}, U_{0}; \tau^{2})}{\partial \beta_{-}}
=\frac{\frac{\partial (\omega_{+,0}-\omega_{+})}{\partial \alpha}}{\frac{\partial (\omega_{-,0}-\omega_{-})}{\partial \alpha}}
=\frac{\rho_{0}\partial_{\rho}u(\rho,v,\tau^{2})+\big(\lambda_{+}(U,\tau^{2})+\partial_{v}u(\rho,v,\tau^{2})\big)\partial_{\alpha}\varphi}
{\rho_{0}\partial_{\rho}u(\rho,v,\tau^{2})+\big(\lambda_{-}(U,\tau^{2})+\partial_{v}u(\rho,v,\tau^{2})\big)\partial_{\alpha}\varphi}.
\end{split}
\end{eqnarray*}

\par With the help of \eqref{eq:2.6} and \eqref{eq:2.22}, we have 
\begin{eqnarray*}
\begin{split}
\Psi_{1}\big|_{\tau=0}&=\frac{\rho_{0}\partial_{\rho}u(\rho,v,\tau^{2})\big|_{\tau=0}+\big(\lambda_{+}(U,\tau^{2})
+\partial_{v}u(\rho,v,\tau^{2})\big)
\partial_{\alpha}\varphi\big|_{\tau=0}}
{\rho_{0}\partial_{\rho}u(\rho,v,\tau^{2})\big|_{\tau=0}+\big(\lambda_{-}(U,\tau^{2})
+\partial_{v}u(\rho,v,\tau^{2})\big)\partial_{\alpha}\varphi\big|_{\tau=0}}\\[5pt]
&=-\frac{2(1-\alpha^{\gamma-1})+(\gamma-1)(1-\alpha^{2})\alpha^{\gamma-2}
-\sqrt{2(\gamma-1)(1-\alpha)(1-\alpha^{\gamma-1})(1+\alpha)^{3}\alpha^{\gamma-3}}}
{2(1-\alpha^{\gamma-1})+(\gamma-1)(1-\alpha^{2})\alpha^{\gamma-2}
+\sqrt{2(\gamma-1)(1-\alpha)(1-\alpha^{\gamma-1})(1+\alpha)^{3}\alpha^{\gamma-3}}}.
\end{split}
\end{eqnarray*}

By Lemma \ref{lem:2.6},
we know that $\beta_{-}=\omega_{-,0}-\omega_{-}$ is monotonically decreasing with respect to $\alpha$
when $0<\alpha\leq1$. Notice that $\beta_{-}=0$ when $\alpha=1$. Therefore, for $0<\alpha<1$,
$\beta_{-}=\omega_{-,0}-\omega_{-}>0$.

\par Next, let us consider $\frac{\partial^{2}\Phi_{1}(\beta_{-}, U_{0}; \tau^{2})}{\partial \beta^{2}_{-}}$. Note that
\begin{eqnarray*}
\frac{\partial \Psi_{1}(\alpha, U_{0}; \tau^{2})}{\partial \alpha}=\Big(\rho_{0}\partial_{\rho}u(\rho,v,\tau^{2})
+\big(\lambda_{-}(U,\tau^{2})+\partial_{v}u(\rho,v,\tau^{2})\big)\partial_{\alpha}\varphi\Big)^{-2}\mathcal{J}(U,\tau^{2}),
\end{eqnarray*}
where
\begin{eqnarray*}
\begin{split}
\mathcal{J}(U,\tau^{2})&=\rho^{2}_{0}\Big(\big(\lambda_{-}-\lambda_{+}\big)\partial^{2}_{\rho\rho}u(\rho,v,\tau^{2})
+\big(\partial_{\rho}\lambda_{+}-\partial_{\rho}\lambda_{-}\big)\partial_{\rho}u(\rho,v,\tau^{2})\Big)
\partial_{\alpha}\varphi\\[5pt]
&\ \ \ +\rho_{0}\Big(2\big(\lambda_{-}-\lambda_{+}\big)\partial^{2}_{\rho v}u(\rho,v,\tau^{2})
+\big(\lambda_{-}+\partial_{v}u(\rho,v,\tau^{2})\big)\partial_{\rho}\lambda_{+}\\[5pt]
&\ \ \ -\big(\lambda_{+}+\partial_{v}u(\rho,v,\tau^{2})\big)\partial_{\rho}\lambda_{-}
+\big(\partial_{v}\lambda_{+}-\partial_{v}\lambda_{-}\big)\partial_{\rho}u(\rho,v,\tau^{2})\Big)
(\partial_{\alpha}\varphi)^{2}\\[5pt]
&\ \ \ +\Big(\big(\partial_{v}\lambda_{+}+\partial^{2}_{vv}u(\rho,v,\tau^{2})\big)\big(\lambda_{-}+\partial_{v}u(\rho,v,\tau^{2})\big)
\\[5pt]
&\ \ \ -\big(\partial_{v}\lambda_{-}+\partial^{2}_{vv}u(\rho,v,\tau^{2})\big)
\big(\lambda_{+}+\partial_{v}u(\rho,v,\tau^{2})\big)\Big)(\partial_{\alpha}\varphi)^{3}
+\rho_{0}\big(\lambda_{+}-\lambda_{-}\big)\partial_{\rho}u(\rho,v,\tau^{2})\partial^{2}_{\alpha\alpha}\varphi.
\end{split}
\end{eqnarray*}

\par When $\tau=0$, by Lemma \ref{lem:2.1} and Lemma \ref{lem:2.5}, we get
\begin{eqnarray*}
\begin{split}
\mathcal{J}(U,0)&=a^{-3}_{\infty}\rho^{\frac{3(\gamma-1)}{2}}_{0}\alpha^{\frac{3\gamma-7}{2}}
\Big((\gamma-3)\partial_{\alpha}\varphi\big|_{\tau=0}-2\alpha\partial^{2}_{\alpha\alpha}\varphi\big|_{\tau=0}\Big)\\[5pt]
&=\frac{2\big[(\gamma+1)\alpha^{2}-2\alpha+3-\gamma\big]\Big(\frac{1-\alpha^{\gamma-1}}{\gamma-1}\Big)^{2}
-4\alpha^{\gamma-1}(1-\alpha^{2})\frac{1-\alpha^{\gamma-1}}{\gamma-1}+\alpha^{\gamma-2}(\alpha^{2}-1)^{2}}
{\sqrt{2(\gamma-1)(1-\alpha)(1-\alpha^{\gamma-1})(1+\alpha)^{3}}(1-\alpha^{2})(1-\alpha^{\gamma-1})}\\[5pt]
&\quad \ \times \Big(-a^{4}_{\infty}(\gamma-1)^{2}\rho^{2(\gamma-1)}_{0}\alpha^{\frac{3\gamma-7}{2}}\Big).
\end{split}
\end{eqnarray*}

On the other hand, we also have
\begin{eqnarray*}
\begin{split}
&\ \ \  \Big(\rho_{0}\partial_{\rho}u(\rho, v,\tau^{2})
+\big(\lambda_{-}(U,\tau^{2})+\partial_{v}u(\rho, v,\tau^{2})\big)\partial_{\alpha}\varphi\Big)^{-2}\\[5pt]
&=\frac{2(\gamma-1)a^{4}_{\infty}\rho^{-2(\gamma-1)}_{0}\alpha^{-(\gamma-1)}(1-\alpha)(1-\alpha^{\gamma-1})(1+\alpha)^{3}}
{\Big(2(1-\alpha^{\gamma-1})+(\gamma-1)\alpha^{\gamma-2}(1-\alpha^{2})
+\sqrt{2(\gamma-1)(1-\alpha)(1-\alpha^{\gamma-1})(1+\alpha)^{3}\alpha^{\gamma-3}}\Big)^{2}}.
\end{split}
\end{eqnarray*}

With the above two equalities, we have
\begin{eqnarray*}
\begin{split}
\frac{\partial \Psi_{1}}{\partial \alpha}\Big|_{\tau=0}
&=-\frac{(\gamma-1)^{2}\alpha^{\frac{\gamma-5}{2}}\sqrt{2(\gamma-1)(1-\alpha)(1-\alpha^{\gamma-1})}(1+\alpha)}
{(1-\alpha)(1-\alpha^{\gamma-1})}\\[5pt]
& \ \ \ \times \frac{2\big[(\gamma+1)\alpha^{2}-2\alpha+3-\gamma\big]\Big(\frac{1-\alpha^{\gamma-1}}{\gamma-1}\Big)^{2}
-4\alpha^{\gamma-1}(1-\alpha^{2})\frac{1-\alpha^{\gamma-1}}{\gamma-1}+\alpha^{\gamma-2}(\alpha^{2}-1)^{2}}
{\Big(2(1-\alpha^{\gamma-1})+(\gamma-1)\alpha^{\gamma-2}(1-\alpha^{2})
+\sqrt{2(\gamma-1)(1-\alpha)(1-\alpha^{\gamma-1})(1+\alpha)^{3}\alpha^{\gamma-3}}\Big)^{2}}.
\end{split}
\end{eqnarray*}

Let
\begin{align*}
J(\alpha, \gamma):&=2\big[(\gamma+1)\alpha^{2}-2\alpha+3-\gamma\big]\Big(\frac{1-\alpha^{\gamma-1}}{\gamma-1}\Big)^{2}
-4(1-\alpha^{2})\alpha^{\gamma-1}\Big(\frac{1-\alpha^{\gamma-1}}{\gamma-1}\Big)
+\big(1-\alpha^{2}\big)^{2}\alpha^{\gamma-2}.
\end{align*}
Note that
\begin{eqnarray*}
\begin{split}
\Delta&=\Big(-4(1-\alpha^{2})\alpha^{\gamma-1}\Big)^{2}-8\big[(\gamma+1)\alpha^{2}-2\alpha+3-\gamma\big]
\big(1-\alpha^{2}\big)^{2}\alpha^{\gamma-2}\\[5pt]
&=8\big(1-\alpha^{2}\big)^{2}\alpha^{\gamma-2}\big[2\alpha^{\gamma}-(\gamma+1)\alpha^{2}+2\alpha-3+\gamma\big]\\[5pt]
&=8\big(1-\alpha^{2}\big)^{2}\alpha^{\gamma-2}\Delta_{0}(\alpha,\gamma),
\end{split}
\end{eqnarray*}
where $\Delta_{0}(\alpha,\gamma)=2\alpha^{\gamma}-(\gamma+1)\alpha^{2}+2\alpha-3+\gamma$.
Obviously, $\Delta_{0}(1,\gamma)=0$, and for $0<\alpha<1$ and $1\leq\gamma\leq 2$, we have
$\partial_{\alpha}\Delta_{0}(\alpha,\gamma)=2\gamma \alpha(\alpha^{\gamma-2}-1)+2(1-\alpha)>0$.
So $\Delta<0$ when $0<\alpha<1$ and $1\leq\gamma\leq 2$.
Therefore, $J(\alpha,\gamma)>0$ when $0<\alpha<1$ and $1\leq\gamma\leq 2$.
Thus
$$
\frac{\partial \Psi_{1}}{\partial \alpha}\Big|_{\tau=0}<0,
$$
when $0<\alpha<1$ and $1\leq \gamma \leq 2$. So for $0<\alpha<1$ and $1\leq \gamma \leq 2$,
\begin{align*}
\frac{\partial^{2} \Phi_{1}(\beta_{-}, U_{0};0)}{\partial \beta^{2}_{-}}=
\Big(\frac{\partial(\omega_{-,0}-\omega_{-})}{\partial \alpha}\Big)^{-1}\Big|_{\tau=0}
\frac{\partial \Psi_{1}(\alpha, U_{0}; \tau^{2})}{\partial \alpha}\Big|_{\tau=0}>0.
\end{align*}
Moreover, because $\Psi_{1}(1,U_{0}; 0)=0$, and $\big|\Psi_{1}\big|_{\tau=0}\big|<1$ for $0<\alpha<1$, we have
\begin{equation}
0<\frac{\partial\Phi_{1}(\beta_{-}, U_{0};0)}{\partial \beta_{-}}<1.
\end{equation}
Finally, for $\alpha>\varepsilon_{0}>0$, we can choose $\epsilon_{4}>0$ sufficiently small and a constant $C_{1}>0$ independent of $\tau$ such that when $\tau\in(0,\epsilon_{4})$,
$C_{1}\varepsilon_{0}<\Psi_{1}(\alpha, U_{0}; \tau^{2})<1$.
\end{proof}
Based on the proof, actually, when $\tau=0$, we have the following lemma.
\begin{lemma}\label{lem:2.8}
	If $\gamma\in[1,2]$, then the shock curve $S_1$ starting at $(r_0,s_0)$ can be written as
	\begin{equation}\label{eq:2.60}
	s_0-s=g_1(r_0-r,\,\rho_0)=\int_0^{r_0-r}h_1(\alpha)\left|_{\alpha=\alpha_1(\frac{\beta}{\rho_0^{(\gamma-1)/2}})}\right.\textrm{d}\beta,
	\end{equation}
	where $0\leq\frac{\partial g_1(\beta,\rho_0)}{\partial\beta}<1$, $\frac{\partial^2 g_1(\beta,\rho_0)}{\partial^2\beta}\geq0$, and $\beta=r_0-r\geq0$.
\end{lemma}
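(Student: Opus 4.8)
The plan is to recognize that Lemma~\ref{lem:2.8} is the $\tau=0$ specialization of Lemma~\ref{lem:2.7}, upgraded with an explicit scaling structure that can be read off from Remark~\ref{rem:2.3}. The monotonicity and convexity claims $0\le \partial_\beta g_1<1$ and $\partial^2_\beta g_1\ge 0$ are already contained in \eqref{eq:2.32} evaluated at $\tau=0$; the only genuinely new content is the assertion that the inverse map $\alpha_1$ depends on $\beta$ and $\rho_0$ solely through the single combination $\beta/\rho_0^{(\gamma-1)/2}$. I would therefore spend the bulk of the argument exhibiting this scaling and then assemble the integral representation by the fundamental theorem of calculus.

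First I would isolate the $\alpha$-dependence in \eqref{eq:2.58}. Writing the two relations there as $r_0-r=\rho_0^{(\gamma-1)/2}P(\alpha)$ and $s_0-s=\rho_0^{(\gamma-1)/2}Q(\alpha)$, where
\[
P(\alpha)=\sqrt{\tfrac{2}{\gamma-1}}\Big(\sqrt{\tfrac{(1-\alpha)(1-\alpha^{\gamma-1})}{\alpha+1}}+\sqrt{\tfrac{2}{\gamma-1}}\,(1-\alpha^{(\gamma-1)/2})\Big),
\]
and $Q$ is the same expression with the first square root negated, the key observation is that $P$ and $Q$ are functions of $\alpha$ (and the fixed parameter $\gamma$) alone, with the entire $\rho_0$-dependence factored into the prefactor $\rho_0^{(\gamma-1)/2}$. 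Setting $\beta=r_0-r$, this gives $\beta/\rho_0^{(\gamma-1)/2}=P(\alpha)$; since Lemma~\ref{lem:2.6} makes $\beta_-=\omega_{-,0}-\omega_-$ strictly monotone in $\alpha$ on $(0,1]$ (so $P$ is strictly monotone, hence invertible, there), I can solve $\alpha=\alpha_1\big(\beta/\rho_0^{(\gamma-1)/2}\big)$ with $\alpha_1=P^{-1}$ a function of the single stated variable, which is exactly the claimed form of the integrand's subscript.

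Next I would obtain $h_1$ and the integral formula. Differentiating $g_1=s_0-s=\rho_0^{(\gamma-1)/2}Q(\alpha)$ along the curve and using the chain rule, the prefactors cancel and
\[
h_1(\alpha)=\frac{\partial g_1}{\partial\beta}=\frac{Q'(\alpha)}{P'(\alpha)},
\]
so $h_1$ is indeed a function of $\alpha$ alone, and one checks that $Q'(\alpha)/P'(\alpha)$ coincides with $\Psi_1(\alpha,U_0;0)$ computed in the proof of Lemma~\ref{lem:2.7}. Since $\alpha=1$ corresponds to $\beta=0$ and $Q(1)=0$, one has $g_1(0,\rho_0)=0$, and the fundamental theorem of calculus then yields \eqref{eq:2.60}. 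For the borderline case $\gamma=1$ the explicit expressions in \eqref{eq:2.58} degenerate and I would instead invoke the $\gamma=1$ forms in Remark~\ref{rem:2.4} (where the prefactor $\rho_0^{(\gamma-1)/2}$ becomes $1$); the same scaling-and-integration structure applies verbatim.

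Finally, the inequalities follow by transporting \eqref{eq:2.32}: at $\tau=0$ one has $0<\partial_{\beta_-}\Phi_1<1$ and $\partial^2_{\beta_-}\Phi_1>0$ for $0<\alpha<1$, while at the endpoint $\alpha=1$ (that is, $\beta=0$) the identity $\Psi_1(1,U_0;0)=0$ forces $\partial_\beta g_1=0$, which is precisely why the sharp bounds relax to $0\le\partial_\beta g_1<1$ and $\partial^2_\beta g_1\ge0$ on the closed range $\beta\ge0$. I do not expect a substantive obstacle here: the argument is essentially bookkeeping, and the only point requiring genuine care is verifying that the $\rho_0$-dependence in \eqref{eq:2.58} factors cleanly, so that $\alpha_1$ truly depends on $\beta/\rho_0^{(\gamma-1)/2}$ rather than on $\beta$ and $\rho_0$ separately.
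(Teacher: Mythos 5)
Your proposal is correct and follows essentially the same route as the paper: the paper gives no separate argument for Lemma \ref{lem:2.8}, deriving it directly from the $\tau=0$ computations in the proof of Lemma \ref{lem:2.7} together with the explicit formulas of Remark \ref{rem:2.3}, which is exactly what you do. Your only added content — verifying that the $\rho_0$-dependence in \eqref{eq:2.58} factors as the prefactor $\rho_0^{(\gamma-1)/2}$ so that $\alpha_1$ depends on the single variable $\beta/\rho_0^{(\gamma-1)/2}$, and identifying $h_1=Q'/P'=\Psi_1(\cdot,U_0;0)$ — is a correct and worthwhile filling-in of details the paper leaves implicit.
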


Next, let us consider the shock wave curve $\mathcal{S}_{2}$.
\begin{lemma}\label{lem:2.9}
If $\gamma\in[1,2]$ and $\alpha>1$, there exists a constant $\epsilon_{5}>0$ sufficiently small such that
for $\tau \in (0, \epsilon_{5})$, the shock wave curve $\mathcal{S}_{2}$ starting at $(\omega_{-,0}, \omega_{+,0})$
can be expressed as
\begin{eqnarray}\label{eq:2.34}
\beta_{-}=\Phi_{2}(\beta_{+}, U_{0}; \tau^{2})
=\int^{\beta_{+}}_{0}\Psi_{2}(\alpha, U_{0}; \tau^{2})\Big|_{\alpha=\alpha_{2}(\beta, U_{0};\tau^{2})}d\beta,
\end{eqnarray}
where $\beta_{+}=\omega_{+,0}-\omega_{+}<0$ and
\begin{eqnarray}\label{eq:2.35}
0<\frac{\partial\Phi_{2}(\beta_{+}, U_{0}; 0)}{\partial \beta_{+}}<1, \qquad
\frac{\partial^{2}\Phi_{2}(\beta_{+}, U_{0}; 0)}{\partial \beta^{2}_{+}}<0.
\end{eqnarray}
Moreover, if $\alpha<\varepsilon^{-1}_{0}$, it holds that
\begin{eqnarray}\label{eq:2.36}
0<\frac{\partial\Phi_{2}(\beta_{+}, U_{0}; \tau^{2})}{\partial \beta_{+}}<1-C_{2}\varepsilon_{0},
\end{eqnarray}

where constant $C_{2}>0$, depending on the data and $\varepsilon_{0}$, is independent of $\tau$.
\end{lemma}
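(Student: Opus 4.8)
The plan is to follow the argument for the $\mathcal{S}_{1}$ curve in Lemma \ref{lem:2.7} almost verbatim, interchanging the roles of the two characteristic families. First I would invoke the monotonicity \eqref{eq:2.27} of Lemma \ref{lem:2.6}: for $\alpha>1$ the quantity $\beta_{+}=\omega_{+,0}-\omega_{+}$ is strictly decreasing in $\alpha$ and vanishes at $\alpha=1$, so the implicit function theorem yields the inverse $\alpha=\alpha_{2}(\beta_{+},U_{0};\tau^{2})$ from \eqref{eq:2.30} for $\tau\in(0,\epsilon_{5})$ with $\epsilon_{5}$ suitably small. Setting $\Psi_{2}(\alpha,U_{0};\tau^{2}):=\partial\Phi_{2}/\partial\beta_{+}$ then gives the integral representation \eqref{eq:2.34}; note that $\beta_{+}<0$ along this curve, opposite in sign to the $\mathcal{S}_{1}$ case.

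Next I would differentiate \eqref{eq:2.30} to obtain
\[
\Psi_{2}=\frac{\partial(\omega_{-,0}-\omega_{-})/\partial\alpha}{\partial(\omega_{+,0}-\omega_{+})/\partial\alpha}
=\frac{\rho_{0}\partial_{\rho}u+\bigl(\lambda_{-}+\partial_{v}u\bigr)\partial_{\alpha}\varphi}
{\rho_{0}\partial_{\rho}u+\bigl(\lambda_{+}+\partial_{v}u\bigr)\partial_{\alpha}\varphi},
\]
which is the reciprocal of the quotient in Lemma \ref{lem:2.7} with $\lambda_{+}$ and $\lambda_{-}$ swapped. Evaluating at $\tau=0$ with \eqref{eq:2.6} and the value of $\partial_{\alpha}\varphi$ from \eqref{eq:2.22}, the expression collapses to a ratio of the form $(B-A)/(B+A)$, where $A=2(\alpha^{\gamma-1}-1)+(\gamma-1)\alpha^{\gamma-2}(\alpha^{2}-1)>0$ and $B=\sqrt{2(\gamma-1)(\alpha-1)(\alpha^{\gamma-1}-1)(\alpha+1)^{3}\alpha^{\gamma-3}}>0$ for $\alpha>1$. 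This form immediately gives $|\Psi_{2}|<1$ at $\tau=0$, and a short limiting computation shows $\Psi_{2}(1,U_{0};0)=0$.

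The core of the argument is the sign of the second derivative. Differentiating once more and using the chain rule I would write
\[
\frac{\partial^{2}\Phi_{2}}{\partial\beta_{+}^{2}}\Big|_{\tau=0}
=\Bigl(\frac{\partial(\omega_{+,0}-\omega_{+})}{\partial\alpha}\Bigr)^{-1}\Big|_{\tau=0}\,
\frac{\partial\Psi_{2}}{\partial\alpha}\Big|_{\tau=0},
\]
whose prefactor is negative for $\alpha>1$ by \eqref{eq:2.27}; hence it suffices to prove $\partial_{\alpha}\Psi_{2}|_{\tau=0}>0$. Here I would exploit that, as functions of $\alpha$, $\Psi_{2}$ and $\Psi_{1}$ are reciprocal, so that $\partial_{\alpha}\Psi_{2}=-\Psi_{1}^{-2}\,\partial_{\alpha}\Psi_{1}$, reducing the claim to $\partial_{\alpha}\Psi_{1}|_{\tau=0}<0$ on the range $\alpha>1$. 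Following the computation in Lemma \ref{lem:2.7}, this in turn reduces to the positivity of the same polynomial $J(\alpha,\gamma)$, equivalently to $\Delta(\alpha,\gamma)=8(1-\alpha^{2})^{2}\alpha^{\gamma-2}\Delta_{0}(\alpha,\gamma)<0$. The delicate point, which I expect to be the main obstacle, is that the monotonicity argument for $\Delta_{0}$ must be adapted to $\alpha>1$: there $\partial_{\alpha}\Delta_{0}=2\gamma\alpha(\alpha^{\gamma-2}-1)+2(1-\alpha)$ is now negative rather than positive, yet since $\Delta_{0}(1,\gamma)=0$ this reversed monotonicity still forces $\Delta_{0}(\alpha,\gamma)<0$ for $\alpha>1$ and $\gamma\in[1,2]$, so that $J>0$ and $\partial_{\alpha}\Psi_{1}|_{\tau=0}<0$ persist. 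Combined with the negative prefactor, this yields $\partial^{2}\Phi_{2}/\partial\beta_{+}^{2}<0$, the second inequality in \eqref{eq:2.35}.

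Finally, I would assemble the bounds. Since $\Phi_{2}$ is concave with $\Psi_{2}(1,U_{0};0)=0$, and $\beta_{+}$ decreases from $0$ as $\alpha$ increases past $1$, concavity forces $\Psi_{2}$ to increase away from $0$; combined with $|\Psi_{2}|<1$ this gives $0<\partial\Phi_{2}/\partial\beta_{+}<1$, i.e.\ the first inequality in \eqref{eq:2.35}. For the uniform estimate \eqref{eq:2.36} I would restrict to the compact range $1<\alpha<\varepsilon_{0}^{-1}$ and use the continuity of $\Psi_{2}(\alpha,U_{0};\tau^{2})$ in $\tau^{2}$ down to $\tau=0$, together with the strict inequality just obtained, to extract a constant $C_{2}>0$ depending only on the data and $\varepsilon_{0}$ and independent of $\tau$, exactly as in the closing step of Lemma \ref{lem:2.7}.
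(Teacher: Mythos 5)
Your proposal is correct and follows essentially the same route as the paper: define $\Psi_{2}=\partial\Phi_{2}/\partial\beta_{+}$ via Lemma \ref{lem:2.6} and the implicit function theorem, evaluate the quotient at $\tau=0$, reduce the concavity claim to the positivity of the polynomial $J(\alpha,\gamma)$ (the paper's $\tilde{J}$ is identically equal to $J$), and then combine $\Psi_{2}(1,U_{0};0)=0$ with $|\Psi_{2}|<1$ and a compactness/continuity argument for \eqref{eq:2.36}. Your reciprocal shortcut $\Psi_{2}=1/\Psi_{1}$ is a harmless repackaging of the paper's direct computation of $\partial_{\alpha}\Psi_{2}$ via $\tilde{\mathcal{J}}$, and your observation that $\partial_{\alpha}\Delta_{0}$ changes sign for $\alpha>1$ while still forcing $\Delta_{0}<0$ (since $\Delta_{0}(1,\gamma)=0$) correctly supplies the detail the paper compresses into ``similar as the argument in the proof of Lemma \ref{lem:2.7}.''
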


\begin{proof}
By Lemma \ref{lem:2.6} and the implicit function theorem, we define
\begin{eqnarray*}
\Psi_{2}(\alpha, U_{0}; \tau^{2}):=\frac{\partial \Phi_{2}(\beta_{+}, U_{0}; \tau^{2})}{\partial \beta_{+}}.
\end{eqnarray*}
Then \eqref{eq:2.34} follows.
Furthermore, by the straightforward calculation, 
\begin{eqnarray*}
\begin{split}
\frac{\partial \Phi_{2}(\beta_{+}, U_{0}; \tau^{2})}{\partial \beta_{+}}
=\frac{\frac{\partial (\omega_{-,0}-\omega_{-})}{\partial \alpha}}{\frac{\partial (\omega_{+,0}-\omega_{+})}{\partial \alpha}}
=\frac{\rho_{0}\partial_{\rho}u(\rho, v,\tau^{2})+\big(\lambda_{-}(U,\tau^{2})+\partial_{v}u(\rho,v,\tau^{2})\big)\partial_{\alpha}\varphi}
{\rho_{0}\partial_{\rho}u(\rho, v,\tau^{2})+\big(\lambda_{+}(U,\tau^{2})+\partial_{v}u(\rho,v,\tau^{2})\big)\partial_{\alpha}\varphi}.
\end{split}
\end{eqnarray*}

\par When $\tau=0$, it follows from Lemma \ref{lem:2.1}, Remark \ref{rem:2.1} and Lemma \ref{lem:2.5} that
\begin{eqnarray*}
\begin{split}
\Psi_{2}\big|_{\tau=0}
&=-\frac{2(\alpha^{\gamma-1}-1)+(\gamma-1)(\alpha^{2}-1)\alpha^{\gamma-2}
-\sqrt{2(\gamma-1)(\alpha-1)(\alpha^{\gamma-1}-1)(\alpha+1)^{3}\alpha^{\gamma-3}}}
{2(\alpha^{\gamma-1}-1)+(\gamma-1)(\alpha^{2}-1)\alpha^{\gamma-2}
+\sqrt{2(\gamma-1)(\alpha-1)(\alpha^{\gamma-1}-1)(\alpha+1)^{3}\alpha^{\gamma-3}}}.
\end{split}
\end{eqnarray*}

By Lemma \ref{lem:2.5}, we know that $\beta_{+}=\omega_{+,0}-\omega_{+}$
is monotonically decreasing with respect to $\alpha$ when $\alpha>1$.
 Note that $\beta_{+}=0$ when $\alpha=1$, so $\beta_{+}=\omega_{+,0}-\omega_{+}>0$ when $\alpha>1$.

Next, let us consider $\frac{\partial^{2}\Phi_{2}(\beta_{+}, U_{0}; \tau^{2})}{\partial \beta^{2}_{+}}$. Note that
\begin{eqnarray*}
\frac{\partial \Psi_{2}(\alpha, U_{0}; \tau^{2})}{\partial \alpha}=\Big(\rho_{0}\partial_{\rho}u(\rho,v,\tau^{2})
+\big(\lambda_{+}(U,\tau^{2})+\partial_{v}u(\rho,v,\tau^{2})\big)\partial_{\alpha}\varphi\Big)^{-2}\tilde{\mathcal{J}}(U,\tau^{2}),
\end{eqnarray*}
where
\begin{eqnarray*}
\begin{split}
\tilde{\mathcal{J}}(U,\tau^{2})&=\rho^{2}_{0}\Big(\big(\lambda_{+}-\lambda_{-}\big)\partial^{2}_{\rho\rho}u(\rho,v,\tau^{2})
+\big(\partial_{\rho}\lambda_{-}-\partial_{\rho}\lambda_{+}\big)\partial_{\rho}u(\rho,v,\tau^{2})\Big)
\partial_{\alpha}\varphi\\[5pt]
&\ \ \ +\rho_{0}\Big(2\big(\lambda_{+}-\lambda_{-}\big)\partial^{2}_{\rho v}u(\rho,v,\tau^{2})
+\big(\lambda_{+}+\partial_{v}u(\rho,v,\tau^{2})\big)\partial_{\rho}\lambda_{-}\\[5pt]
&\ \ \ -\big(\lambda_{-}+\partial_{v}u(\rho,v,\tau^{2})\big)\partial_{\rho}\lambda_{+}
+\big(\partial_{v}\lambda_{-}-\partial_{v}\lambda_{+}\big)\partial_{\rho}u(\rho,v,\tau^{2})\Big)
(\partial_{\alpha}\varphi)^{2}\\[5pt]
&\ \ \ +\Big(\big(\partial_{v}\lambda_{-}+\partial^{2}_{vv}u(\rho,v,\tau^{2})\big)\big(\lambda_{+}+\partial_{v}u(\rho,v,\tau^{2})\big)\\[5pt]
&\ \ \ -\big(\partial_{v}\lambda_{+}+\partial^{2}_{vv}u(\rho,v,\tau^{2})\big)
\big(\lambda_{-}+\partial_{v}u(\rho,v,\tau^{2})\big)\Big)(\partial_{\alpha}\varphi)^{3}
+\rho_{0}\big(\lambda_{-}-\lambda_{+}\big)\partial_{\rho}u(\rho,v,\tau^{2})\partial^{2}_{\alpha\alpha}\varphi.
\end{split}
\end{eqnarray*}

\par So, for $\tau=0$, by Lemma \ref{lem:2.1} and Lemma \ref{lem:2.5}, we have
\begin{eqnarray*}
\begin{split}
\frac{\partial \Psi_{2}}{\partial \alpha}\Big|_{\tau=0}
&=\frac{(\gamma-1)^{2}\alpha^{\frac{\gamma-5}{2}}\sqrt{2(\gamma-1)(\alpha-1)(\alpha^{\gamma-1}-1)}(\alpha+1)}
{(\alpha-1)(\alpha^{\gamma-1}-1)}\\[5pt]
& \ \ \ \times \frac{2\big[(\gamma+1)\alpha^{2}-2\alpha+3-\gamma\big]\Big(\frac{\alpha^{\gamma-1}-1}{\gamma-1}\Big)^{2}
-4\alpha^{\gamma-1}(1-\alpha^{2})\frac{1-\alpha^{\gamma-1}}{\gamma-1}+\alpha^{\gamma-2}(\alpha^{2}-1)^{2}}
{\Big(2(\alpha^{\gamma-1}-1)+(\gamma-1)\alpha^{\gamma-2}(\alpha^{2}-1)
+\sqrt{2(\gamma-1)(\alpha-1)(\alpha^{\gamma-1}-1)(\alpha+1)^{3}\alpha^{\gamma-3}}\Big)^{2}}.
\end{split}
\end{eqnarray*}
Define
\begin{align*}
\tilde{J}(\alpha,\gamma):&=2\big[(\gamma+1)\alpha^{2}-2\alpha+3-\gamma\big]\Big(\frac{\alpha^{\gamma-1}-1}{\gamma-1}\Big)^{2}
-4(\alpha^{2}-1)\alpha^{\gamma-1}\Big(\frac{\alpha^{\gamma-1}-1}{\gamma-1}\Big)
+\big(\alpha^{2}-1\big)^{2}\alpha^{\gamma-2}.
\end{align*}
Similar as the argument in the proof of Lemma \ref{lem:2.7}, we can show that $\tilde{J}(\alpha,\gamma)>0$ when $\alpha>1$ and $1\leq\gamma\leq 2$. Thus, we have $\frac{\partial \Psi_{2}}{\partial \alpha}\Big|_{\tau=0}>0$ when $\alpha>1$ and $1\leq \gamma \leq 2$. So
\begin{align*}
\frac{\partial^{2} \Phi_{2}(\beta_{+}, U_{0};0)}{\partial \beta^{2}_{+}}=
\Big(\frac{\partial(\omega_{+,0}-\omega_{+})}{\partial \alpha}\Big)^{-1}\Big|_{\tau=0}
\frac{\partial \Psi_{2}(\alpha, U_{0}; \tau^{2})}{\partial \alpha}\Big|_{\tau=0}<0,
\end{align*}
for $\alpha>1$ and $1\leq \gamma \leq 2$.
Moreover, by the facts that $\Psi_{2}(1,U_{0}; 0)=0$ and that $|\Psi_{2}\big|_{\tau=0}|<1$ for $\alpha>1$,
we have $0<\Psi_{2}\big|_{\tau=0}<1$.
For given $\varepsilon_{0}$, we can choose $\epsilon_{5}>0$ sufficiently small and a positive constant $C_{2}$ independent of $\tau$ such that for $\tau\in(0,\epsilon_{5})$ and $\alpha<\varepsilon^{-1}_{0}$
\begin{align*}
0<\Psi_{2}(\alpha, U_{0}; \tau^{2})<1-C_{2}\varepsilon_{0}.
\end{align*}
This completes the proof of the lemma.
\end{proof}
Based on the proof, we actually have the following lemma for $\tau=0$.
\begin{lemma}\label{lem:2.10}
If $\gamma\in[1,2]$, then the shock curve $S_2$ starting at $(r_0,s_0)$ can be rewritten as
\begin{equation}\label{eq:2.61}
r_0-r=g_2(s_0-s,\,\rho_0)\equiv\int_0^{s_0-s}h_2(\alpha)\left|_{\alpha=\alpha_2(\frac{\beta}{\rho_0^{(\gamma-1)/2}})}\right.\textrm{d}\beta,
\end{equation}
where $0<\frac{\partial g_2(\beta,\rho_0)}{\partial\beta}<1$, $\frac{\partial^2 g_2(\beta,\rho_0)}{\partial^2\beta}<0$,
and $\beta=s_0-s\leq0$.
\end{lemma}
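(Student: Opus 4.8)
The plan is to obtain Lemma \ref{lem:2.10} as the $\tau=0$ specialization of Lemma \ref{lem:2.9}, simply re-expressed in the explicit Riemann invariants. First I would invoke Remark \ref{rem:2.2}: at $\tau=0$ one has $\omega_{-}(U,0)=r$ and $\omega_{+}(U,0)=s$, so the quantities $\beta_{-}=\omega_{-,0}-\omega_{-}$ and $\beta_{+}=\omega_{+,0}-\omega_{+}$ introduced in \eqref{eq:2.28} become exactly $\beta_{-}=r_0-r$ and $\beta_{+}=s_0-s$. Setting $\tau=0$ in the representation \eqref{eq:2.34} then turns the relation $\beta_{-}=\Phi_{2}(\beta_{+},U_{0};0)$ along $\mathcal{S}_2$ into $r_0-r=g_2(s_0-s,\rho_0)$ with $g_2(\cdot,\rho_0):=\Phi_2(\cdot,U_0;0)$, and the two derivative bounds in \eqref{eq:2.35} give immediately $0<\partial_\beta g_2<1$ and $\partial^2_\beta g_2<0$.

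The only genuinely new content beyond Lemma \ref{lem:2.9} is the scaling structure in $\rho_0$, and this I would read off the closed form \eqref{eq:2.59} of Remark \ref{rem:2.3}. There both $r_0-r$ and $s_0-s$ equal $\rho_0^{(\gamma-1)/2}$ times a function of $\alpha$ and $\gamma$ alone. Consequently $\beta_{+}=s_0-s$ determines $\alpha$ only through the combination $\beta_{+}/\rho_0^{(\gamma-1)/2}$, i.e. $\alpha=\alpha_2\!\big(\beta_{+}/\rho_0^{(\gamma-1)/2}\big)$ as claimed; moreover, from the explicit formula for $\Psi_2\big|_{\tau=0}$ derived in the proof of Lemma \ref{lem:2.9}, the integrand $\Psi_2(\alpha,U_0;0)$ is itself a function of $\alpha$ and $\gamma$ only, so one may set $h_2(\alpha):=\Psi_2(\alpha,U_0;0)$, independent of $\rho_0$ and $v_0$. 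Substituting these into \eqref{eq:2.34} at $\tau=0$ produces exactly the integral representation \eqref{eq:2.61}. For the borderline case $\gamma=1$ one has $\rho_0^{(\gamma-1)/2}=1$ and the same structure follows directly from \eqref{eq:2.63x} of Remark \ref{rem:2.4}.

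Finally I would carry out the sign and domain bookkeeping: from \eqref{eq:2.59}, for $\alpha\geq1$ and $\gamma\in[1,2]$ both terms in the brace defining $s_0-s$ are nonpositive, so $\beta=s_0-s\leq0$, consistent with the stated range; the strictness $0<\partial_\beta g_2<1$ then follows from $0<\Psi_2\big|_{\tau=0}<1$ for $\alpha>1$ together with $\Psi_2(1,U_0;0)=0$. The main delicate point I expect is the behavior at the sonic endpoint $\alpha\to1$, where the integrand in \eqref{eq:2.34} is formally singular; this is controlled by the asymptotics already recorded in the proof of Lemma \ref{lem:2.9}, namely $\Psi_2(1,U_0;0)=0$ and $\partial_\alpha\Psi_2\big|_{\tau=0}>0$ for $\alpha>1$, which guarantee integrability near $\alpha=1$ and the strict concavity $\partial^2_\beta g_2<0$. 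The remaining steps are the routine change of variables $\beta=s_0-s$ and chain-rule bookkeeping, so the substantive work reduces to verifying the $\rho_0^{(\gamma-1)/2}$-homogeneity in \eqref{eq:2.59}.
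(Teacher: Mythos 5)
Your proposal is correct and follows essentially the same route as the paper, which offers no separate argument for Lemma \ref{lem:2.10} beyond the remark that it is the $\tau=0$ specialization of the proof of Lemma \ref{lem:2.9}; your additional verification of the $\rho_0^{(\gamma-1)/2}$-homogeneity via the closed forms in Remark \ref{rem:2.3}, and the observation that $\Psi_2(\alpha,U_0;0)$ depends only on $\alpha$ and $\gamma$, supply exactly the details the paper leaves implicit. The only cosmetic quibble is that the integrand $h_2=\Psi_2|_{\tau=0}$ is bounded between $0$ and $1$, so there is no integrability concern at $\alpha\to1$ (the degeneracy there sits in $\partial\alpha/\partial\beta$, not in the $d\beta$-integral itself).
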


\subsection{Riemann solutions of equations \eqref{eq:1.16}}
Based on lemma  \ref{lem:2.7} and lemma \ref{lem:2.8}, for any constant state
$\omega_{L}=(\omega_{-,L},\omega_{+,L})$, let
$$
\textsl{z}_{1}=\omega_{-,L}-\omega_{-} \qquad\mbox{and}\qquad \  \textsl{z}_{2}=\omega_{+,L}-\omega_{+}.
$$

Define
\begin{eqnarray}\label{eq:2.37}
\begin{split}
&\mathscr{H}^{(1)}_{1}(\textsl{z}_{1}, \omega_{L}; \tau^{2})=-\textsl{z}_{1}+\omega_{-,L},\\[5pt]
&\mathscr{H}^{(2)}_{1}(\textsl{z}_{1}, \omega_{L}; \tau^{2})=
\left\{
\begin{array}{lllll}
-\Phi_{1}(\textsl{z}_{1}, U_{L}; \tau^{2})+\omega_{+,L},\ \   &\textsl{z}_{1}>0,\\[5pt]
\omega_{+,L},\  \  &\textsl{z}_{1}<0,
\end{array} \right.
\end{split}
\end{eqnarray}
and
\begin{eqnarray}\label{eq:2.38}
\begin{split}
&\mathscr{H}^{(1)}_{2}(\textsl{z}_{2}, \omega_{L};\tau^{2})=
\left\{
\begin{array}{lllll}
-\Phi_{2}(\textsl{z}_{2}, U_{L}; \tau^{2})+\omega_{-,L},\ \   &\textsl{z}_{2}<0,\\[5pt]
\omega_{-,L},\ \  &\textsl{z}_{2}>0,
\end{array}
\right.\\[5pt]
&\mathscr{H}^{(2)}_{2}(\textsl{z}_{2}, \omega_{L}; \tau^{2})=-\textsl{z}_{2}+\omega_{+,L},
\end{split}
\end{eqnarray}
where functions $\Phi_{1}$ and $\Phi_{2}$ are given in Lemma \ref{lem:2.7} and Lemma \ref{lem:2.8}, respectively.
Let
\begin{eqnarray}\label{eq:2.39}
\begin{split}
&\mathscr{H}_{1}(\textsl{z}_{1}, \omega_{L};\tau^{2})=\big(\mathscr{H}^{(1)}_{1}, \mathscr{H}^{(2)}_{1}\big)(\textsl{z}_{1}, \omega_{L}; \tau^{2}),\\[5pt]
&\mathscr{H}_{2}(\textsl{z}_{2}, \omega_{L}, \tau^{2})=\big(\mathscr{H}^{(1)}_{2}, \mathscr{H}^{(2)}_{2}\big)(\textsl{z}_{2}, \omega_{L};\tau^{2}),
\end{split}
\end{eqnarray}
and finally denote
\begin{eqnarray}\label{eq:2.40}
\begin{split}
\mathscr{H}(\boldsymbol{z}, \omega_{L}; \tau^{2})=:\mathscr{H}_{1}(\textsl{z}_{1}, \mathscr{H}_{2}(\textsl{z}_{2}, \omega_{L}; \tau^{2}); \tau^{2}), \ \ \ \boldsymbol{z}=(\textsl{z}_{1}, \textsl{z}_{2}).
\end{split}
\end{eqnarray}

\par Then, we can parameterize the 1-waves by $\textsl{z}_{1}$ and parameterize the 2-waves by $\textsl{z}_{2}$. For the case that $\tau=0$, we set $\boldsymbol{z}:=\mathbf{z}=(z_{1},z_{2})$ and $\omega:=\omega^{0}=(r,s)$.

\smallskip
\par Now, let us consider the Riemann problem of \eqref{eq:1.16} with large initial data at $x=x_{0}$
\begin{eqnarray}\label{eq:2.41}
U(x, y)\big|_{x=x_{0}}=
\left\{
\begin{array}{lllll}
U_{L},\ \ \ &y<y_{0},\\[5pt]
U_{R},\ \ \ &y>y_{0},
\end{array} \right.
\end{eqnarray}
where  $U_{L}=(\rho_L,v_L)$ and $U_{R}=(\rho_R,v_R)$ are two given constant states satisfying $\rho_{L}>0$ and $\rho_{R}>0$ (see Fig. \ref{fig2.1x}). We have the following proposition that gives the solvability and the invariant region of the Riemann problem of \eqref{eq:1.16} and \eqref{eq:2.41}.

\vspace{5pt}
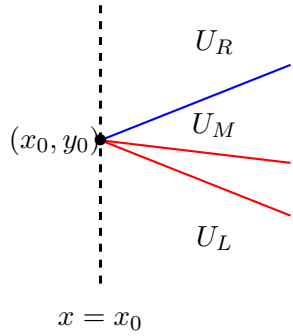
\begin{figure}[ht]
\begin{center}
\begin{tikzpicture}[scale=1]
\draw [line width=0.04cm][dashed](-1.5,-1.4) --(-1.5,2.3);
\draw [thick][blue](-1.5,0.5)--(1,1.5);
\draw [thick][red](-1.5,0.5)--(1,0.2);
\draw [thick][red](-1.5,0.5)--(1,-0.5);
\node at (-1.5, 0.5){$\bullet$};
\node at (-1.5, -1.9){$x=x_{0}$};
\node at (0, -0.8){$U_{L}$};
\node at (0, 0.7){$U_{M}$};
\node at (0, 1.8){$U_{R}$};
\node at (-2.1, 0.5){$(x_{0},y_{0})$};
\end{tikzpicture}
\end{center}
\caption{Riemann problem without boundary}\label{fig2.1x}
\end{figure}

\begin{proposition}\label{prop:2.1}
Suppose that $\omega_{-,L}+\omega_{+,R}>-\frac{4-\varepsilon_{0}}{\gamma-1}$ for some constant $0<\varepsilon_{0}<4$, then there exists a sufficiently small constant $\epsilon_{6}>0$ such that for any $\tau \in [0, \epsilon_{6})$, Riemann problem \eqref{eq:1.16} and \eqref{eq:2.41}
admits a unique piecewise smooth solution $U(x,y)$ without the vacuum state. Moreover solution $U(x,y)$ satisfies
\begin{eqnarray}\label{eq:2.42}
\begin{split}
&\omega_{-}(U(x,y), \tau^{2})+\omega_{+}(U(x,y),\tau^{2})\geq \omega_{-,L}+\omega_{+,R},
\end{split}
\end{eqnarray}
where $\omega_{\pm,L}=\omega_{\pm}(U_L, \tau^{2})$ and $\omega_{\pm,R}=\omega_{\pm}(U_R, \tau^{2})$.
\end{proposition}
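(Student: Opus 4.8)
The plan is to recast the Riemann problem \eqref{eq:1.16}, \eqref{eq:2.41} as the problem of locating a single intermediate state $U_M$, and then to propagate the vacuum‑free lower bound along the resulting wave fan. I work in the Riemann‑invariant coordinates $(\omega_-,\omega_+)$ of Lemma \ref{lem:2.3}; since $\lambda_-<\lambda_+$, the candidate solution consists of a $\lambda_-$‑wave (the $\mathscr{H}_2$ curve) joining $U_L$ to $U_M$, followed by a $\lambda_+$‑wave (the $\mathscr{H}_1$ curve) joining $U_M$ to $U_R$. Using \eqref{eq:2.14} and \eqref{eq:2.37}--\eqref{eq:2.40}, the forward $\lambda_-$‑curve through $U_L$ can be written as a graph $\omega_-=\mathcal{F}(\omega_+)$ (the vertical rarefaction segment $\mathcal{R}_2$ below $\omega_{+,L}$, glued to the shock branch $\mathcal{S}_2$ above it), and the backward $\lambda_+$‑curve through $U_R$ as a graph $\omega_+=\mathcal{G}(\omega_-)$ (the horizontal rarefaction segment $\mathcal{R}_1$ to the left of $\omega_{-,R}$, glued to $\mathcal{S}_1$ to the right). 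Solving the Riemann problem is then equivalent to finding $U_M$ at the intersection of the two graphs, i.e. to the scalar fixed‑point equation $\omega_-=\mathcal{F}(\mathcal{G}(\omega_-))$.

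For uniqueness the decisive input is the slope control of Lemma \ref{lem:2.7} and Lemma \ref{lem:2.9} (and their $\tau=0$ versions, Lemmas \ref{lem:2.8} and \ref{lem:2.10}): on each shock branch $\partial_{\beta_-}\Phi_1,\partial_{\beta_+}\Phi_2\in(0,1)$, so after possibly shrinking $\epsilon_6$ one has $\mathcal{F}'\in[0,1)$ and $\mathcal{G}'\in[0,1)$ uniformly for $\tau\in[0,\epsilon_6)$ by \eqref{eq:2.33} and \eqref{eq:2.36}. Hence $\omega_-\mapsto\mathcal{F}(\mathcal{G}(\omega_-))$ has derivative $\mathcal{F}'\mathcal{G}'<1$, so $\omega_--\mathcal{F}(\mathcal{G}(\omega_-))$ is strictly increasing and the fixed point is unique; the signs of $\textsl{z}_1=\omega_{-,M}-\omega_{-,R}$ and $\textsl{z}_2=\omega_{+,L}-\omega_{+,M}$ read off at $U_M$ then fix the wave types, giving a unique piecewise‑smooth solution. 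For existence, the continuous strictly increasing function $\omega_-\mapsto\omega_--\mathcal{F}(\mathcal{G}(\omega_-))$ has a unique zero; the \emph{a priori} lower bound $\omega_{-,M}+\omega_{+,M}\ge\omega_{-,L}+\omega_{+,R}$ established in the next step, together with the hypothesis $\omega_{-,L}+\omega_{+,R}>-\tfrac{4-\varepsilon_0}{\gamma-1}$, places $U_M$ strictly above the vacuum line $\omega_-+\omega_+=-\tfrac{4}{\gamma-1}$, so the intersection is attained in the physical region; continuity of the wave curves in $\tau$ extends solvability to all $\tau\in[0,\epsilon_6)$.

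It remains to establish \eqref{eq:2.42}, which also rules out vacuum: by Remark \ref{rem:2.2}, $\omega_-+\omega_+=\tfrac{4}{\gamma-1}(\rho^{(\gamma-1)/2}-1)$ is strictly increasing in $\rho$, the vacuum being the value $-\tfrac{4}{\gamma-1}$. The two structural facts I would isolate are that \emph{every} state on the forward $\lambda_-$‑curve through $U_L$ satisfies $\omega_-\ge\omega_{-,L}$, and every state on the backward $\lambda_+$‑curve through $U_R$ satisfies $\omega_+\ge\omega_{+,R}$: on the rarefaction branches $\omega_-$, resp. $\omega_+$, is constant by \eqref{eq:2.14}, while on $\mathcal{S}_2$ one has $\Phi_2=\omega_{-,L}-\omega_{-,M}<0$ for $\alpha>1$ and on $\mathcal{S}_1$, $\Phi_1=\omega_{+,M}-\omega_{+,R}>0$, both obtained by reading off the signs in \eqref{eq:2.59} and \eqref{eq:2.58}. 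In particular the intermediate state, lying on both curves, satisfies $\omega_{-,M}+\omega_{+,M}\ge\omega_{-,L}+\omega_{+,R}$. The vacuum‑critical configuration is the doubly‑rarefactive one, where $U_M=(\omega_{-,L},\omega_{+,R})$ and equality $\omega_{-,M}+\omega_{+,M}=\omega_{-,L}+\omega_{+,R}$ holds; introducing a shock into either family only raises $\omega_{-,M}+\omega_{+,M}$. Since $\omega_-+\omega_+$ varies monotonically inside each rarefaction fan (its variation across the shocks being pinned by $(r_0-r)+(s_0-s)=\pm\tfrac{4}{\gamma-1}\rho_0^{(\gamma-1)/2}(1-\alpha^{(\gamma-1)/2})$ from \eqref{eq:2.58}--\eqref{eq:2.59}), the bound at $U_M$ controls the fan from below, yielding \eqref{eq:2.42} and $\omega_-+\omega_+>-\tfrac{4-\varepsilon_0}{\gamma-1}$, hence the absence of vacuum.

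The principal difficulty is that all of this must be \emph{global} and uniform in the wave strength rather than perturbative: the slope bounds $0<\Phi_1',\Phi_2'<1$ and the convexity signs \eqref{eq:2.32}, \eqref{eq:2.35} must persist along the entire shock curves ($0<\alpha<1$ and $\alpha>1$, with $\rho_L,\rho_R$ possibly far apart), and this is exactly what the discriminant estimate in Lemma \ref{lem:2.7} ($\Delta<0$, hence $J>0$ for $1\le\gamma\le2$) and its analogue in Lemma \ref{lem:2.9} deliver. The remaining subtle point is to keep the construction strictly off the vacuum line \emph{uniformly in} $\tau$ while the data are large; the threshold $\omega_{-,L}+\omega_{+,R}>-\tfrac{4-\varepsilon_0}{\gamma-1}$ is the sharp condition that makes $U_M$ non‑vacuous, and I expect controlling this global vacuum‑avoidance, rather than the local wave analysis, to be the crux.
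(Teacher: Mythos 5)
Your proposal rests on the same ingredients as the paper's proof --- the wave-curve parametrization \eqref{eq:2.37}--\eqref{eq:2.40}, the slope bounds $0<\Phi_1',\Phi_2'<1$ from Lemmas \ref{lem:2.7} and \ref{lem:2.9}, the sign structure of $\omega_\pm$ along $\mathcal{R}_i$ and $\mathcal{S}_i$ for the invariant region, and the identity $\omega_-+\omega_+=\tfrac{4}{\gamma-1}(\rho^{\frac{\gamma-1}{2}}-1)$ to exclude vacuum --- but you organize the solvability differently: you reduce to a scalar monotone equation $\omega_-=\mathcal{F}(\mathcal{G}(\omega_-))$ for the intermediate state, whereas the paper solves the $2\times2$ system $\omega_R=\mathscr{H}(\boldsymbol{z},\omega_L;\tau^2)$ by computing $\det(\nabla_{\boldsymbol{z}}\mathscr{H})$ case by case on the signs of $(\textsl{z}_1,\textsl{z}_2)$ and perturbing off $\tau=0$ by the implicit function theorem. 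The two are equivalent in substance (the paper's determinant $-1+\tfrac12\partial_{\textsl{z}_1}\Phi_1\,(\partial_{\textsl{z}_2}\Phi_2-1)$ in Case (i) is exactly the non-degeneracy of your scalar map), and your invariant-region argument matches the paper's. Two points in your write-up need more care, though neither is fatal. First, the backward $\lambda_+$-curve through $U_R$ is only \emph{implicitly} a graph $\omega_+=\mathcal{G}(\omega_-)$: the shock curve $\Phi_1(\cdot,U_M;\tau^2)$ is based at the unknown state $U_M$, so $\mathcal{G}'$ is not $\Phi_1'$ but must be extracted from an implicit differentiation in which the base-point derivative $\nabla_U\Phi_1\cdot\partial_{\mathscr{H}_2^{(j)}}U=-\tfrac12\partial_{\textsl{z}_1}\Phi_1$ (at $\tau=0$, via Lemma \ref{lem:2.3}) enters; this is precisely the computation the paper carries out, and it still yields a slope in $[0,1)$, but citing \eqref{eq:2.33} and \eqref{eq:2.36} alone does not cover it. Second, for existence your argument is incomplete as stated: a continuous strictly increasing function need not have a zero, and invoking the \emph{a priori} bound on $U_M$ only locates a solution if one exists. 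You need to check that $\omega_-\mapsto\omega_--\mathcal{F}(\mathcal{G}(\omega_-))$ actually changes sign over the admissible range of $\omega_-$ (using the uniform slope bounds for linear growth at one end and the behaviour of the curves near the vacuum boundary, where the hypothesis $\omega_{-,L}+\omega_{+,R}>-\tfrac{4-\varepsilon_0}{\gamma-1}$ is what keeps the crossing in the physical region); the paper sidesteps this by obtaining existence from the explicit $\tau=0$ solution plus the implicit function theorem in $\tau$.
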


\begin{proof}
The existence of the solutions of Riemann problem \eqref{eq:1.16} and \eqref{eq:2.41} is equivalent to the existence of solutions $\boldsymbol{z}$ of the following equation,
\begin{eqnarray}\label{eq:2.43}
\begin{split}
\omega_{R}=\mathscr{H}(\boldsymbol{z}, \omega_{L}; \tau^{2}).
\end{split}
\end{eqnarray}


From \eqref{eq:2.40}, we know that
\begin{eqnarray*}
\begin{split}
\det\big(\nabla_{\boldsymbol{z}}\mathscr{H}\big)(\boldsymbol{z}, \omega_{L};\tau^{2})=
\det\Big(\nabla_{\mathscr{H}_{2}}\mathscr{H}_{1}\cdot \nabla_{\textsl{z}_{2}}\mathscr{H}_{2},
\ \  \nabla_{\textsl{z}_{1}}\mathscr{H}_{1}\Big)(\boldsymbol{z}, \omega_{L}; \tau^{2}).
\end{split}
\end{eqnarray*}

Based on the sign of $\textsl{z}_1$ and $\textsl{z}_2$, we divide the proof into three cases for checking the sign of
the determinant above to show the existence of solution $\boldsymbol{z}$ of equation \eqref{eq:2.43}.

\par \emph{Case} \rm (i). \ \  $\textsl{z}_{1}>0$ and $\textsl{z}_{2}<0$.
By the definition of $\mathscr{H}_1^{(1)}$ and $\mathscr{H}_1^{(2)}$, we know that
\begin{eqnarray*}
\begin{split}
\mathscr{H}^{(1)}_{2}(\textsl{z}_{2}, U_{L}; \tau^{2})=-\Phi_{2}(\textsl{z}_{2}, U_{L}; \tau^{2})+\omega_{-,L},  \ \
\mathscr{H}^{(2)}_{2}(\textsl{z}_{2}, U_{L}; \tau^{2})=-\textsl{z}_{2}+\omega_{+,L},
\end{split}
\end{eqnarray*}
and
\begin{eqnarray*}
\begin{split}
&\mathscr{H}^{(1)}_{1}(\textsl{z}_{1}, \mathscr{H}_{2}(\textsl{z}_{2}, \omega_{L};\tau^{2});\tau^{2})
=-\textsl{z}_{1}+\mathscr{H}^{(1)}_{2}(\textsl{z}_{2}, \omega_{L};\tau^{2}),\\[5pt]
&\mathscr{H}^{(2)}_{1}(\textsl{z}_{1},\mathscr{H}_{2}(\textsl{z}_{2}, \omega_{L};\tau^{2});\tau^{2})
=-\Phi_{1}\Big(\textsl{z}_{1},U\big(\mathscr{H}_{2}(\textsl{z}_{2}, \omega_{L};\tau^{2})\big);\tau^{2}\Big)
+\mathscr{H}^{(2)}_{2}(\textsl{z}_{2}, \omega_{L};\tau^{2}).
\end{split}
\end{eqnarray*}
So
\begin{eqnarray*}
\begin{split}
&\nabla_{\mathscr{H}_{2}}\mathscr{H}_{1}\cdot\nabla_{\textsl{z}_{2}}\mathscr{H}_{2}\\[5pt]
=&\left(
\begin{array}{cccccc}
1 & \ \ 0 \\[15pt]
-\nabla_{U}\Phi_{1}\cdot\partial_{\mathscr{H}^{(1)}_{2}}U
& \ \   -\nabla_{U}\Phi_{1}\cdot\partial_{\mathscr{H}^{(2)}_{2}}U+1
\end{array}
\right)
\cdot\Big(-\partial_{\textsl{z}_{2}}\Phi_{2}(\textsl{z}_{2},U_{L};\tau^{2}),\ -1 \Big)^{\top}\\[5pt]
=&\bigg(-\partial_{\textsl{z}_{2}}\Phi_{2}(\textsl{z}_{2},U_{L};\tau^{2}), \  \  \nabla_{U}\Phi_{1}\cdot\partial_{\mathscr{H}^{(1)}_{2}}U
+\nabla_{U}\Phi_{1}\cdot\partial_{\mathscr{H}^{(2)}_{2}}U\cdot\partial_{\textsl{z}_{2}}\Phi_{2}(\textsl{z}_{2},U_{L};\tau^{2})-1
\bigg)^{\top},
\end{split}
\end{eqnarray*}
and
\begin{eqnarray*}\label{eq:2.12}
\begin{split}
\nabla_{\textsl{z}_{1}}\mathscr{H}_{1}=\bigg(-\partial_{\textsl{z}_{1}}
\Phi_{1}\big(\textsl{z}_{1},U(\mathscr{H}_{2}(\textsl{z}_{2}, \omega_{L};\tau^{2})) ;\tau^{2}\big),\ -1\bigg)^{\top}.
\end{split}
\end{eqnarray*}

Note that it follows from Lemma \ref{lem:2.3} that, 
\begin{align*}
\nabla_{U}\Phi_{1}\cdot\partial_{\mathscr{H}^{(1)}_{2}}U\bigg|_{\tau=0}&=\nabla_{U}\Phi_{1}\cdot\partial_{\mathscr{H}^{(2)}_{2}}U\bigg|_{\tau=0}\\[5pt]
&=\frac{1}{2}\rho^{\frac{3-\gamma}{2}}\partial_{\rho}\Phi_{1}\Big(\textsl{z}_{1}, U\big(\mathscr{H}_{2}(\textsl{z}_{2}, \omega(U_{L},0);0)\big); 0\Big)\\[5pt]
&=\frac{\gamma-1}{4}\partial_{\rho^{\frac{\gamma-1}{2}}}\Phi_{2}\Big(\textsl{z}_{1}, U\big(\mathscr{H}_{1}(\textsl{z}_{2}, \omega(U_{L},0);0)\big); 0\Big)\\[5pt]
&=-\frac{1}{2}\partial_{\textsl{z}_{1}}\Phi_{1}\Big(\textsl{z}_{1}, U\big(\mathscr{H}_{2}(\textsl{z}_{2}, \omega(U_{L},0);0)\big); 0\Big).
\end{align*}

Then, by Lemma \ref{lem:2.7} and Lemma \ref{lem:2.8},
\begin{eqnarray*}
\begin{split}
\det\big(\nabla_{\boldsymbol{z}}\mathscr{H}\big)(\boldsymbol{z}, \omega_{L};\tau^{2})\bigg|_{\tau=0}
=&\det\Big(\nabla_{\mathscr{H}_{2}}\mathscr{H}_{1}\cdot \nabla_{\textsl{z}_{2}}\mathscr{H}_{2},
\ \  \nabla_{\textsl{z}_{1}}\mathscr{H}_{1}\Big)(\boldsymbol{z}, \omega_{L}; \tau^{2})\big|_{\tau=0}\\[5pt]
=&-1+\frac{1}{2}\partial_{\textsl{z}_{1}}\Phi_{1}\Big(\textsl{z}_{1}, U\big(\mathscr{H}_{2}(\textsl{z}_{2}, \omega(U_{L},0);0)\big) ; 0\Big)
\cdot \Big(\partial_{\textsl{z}_{2}}\Phi_{2}\big(\textsl{z}_{2}, U_{L} ; 0\big)-1\Big)\\[5pt]
<&-1.
\end{split}
\end{eqnarray*}
Thus, for $\tau$ sufficiently small, we can get the existence of solution $\boldsymbol{z}$ of equation \eqref{eq:2.43} by applying the implicit function theorem.
Moreover, by the signs of $z_{1}$ and $z_{2}$, 
\begin{eqnarray*}
\begin{split}
\omega_{+}(U(x,y), \tau^{2})&=\omega_{+,R}-\Phi_{1}(\textsl{z}_{1}, U\big(\mathscr{H}_{2}(\textsl{z}_{2}, \omega_{L};\tau^{2})\big); \tau^{2})
&>\omega_{+,R}, \quad\mbox{for } \ \  \textsl{z}_{1}>0,
\end{split}
\end{eqnarray*}
and
\begin{eqnarray*}
\begin{split}
\omega_{-}(U(x,y),\tau^{2})&=\Phi_{1}\Big(\textsl{z}_{2},U_{L};\tau^{2}\Big)+\omega_{-,L}
&>\omega_{-,L}, \quad\mbox{for } \ \  \textsl{z}_{2}<0,
\end{split}
\end{eqnarray*}
which leads to the estimate \eqref{eq:2.42}.

\smallskip
\par \emph{Case} \rm (ii).\ \  $\textsl{z}_{1}<0$ and $\textsl{z}_{2}<0$ (or $\textsl{z}_{1}>0$ and $\textsl{z}_{2}>0$).
Without loss of the generality, we only consider the case that $\textsl{z}_{1}<0$ and $\textsl{z}_{2}<0$,
since the other case can be treated in the same way.
For the case that $\textsl{z}_{1}<0$ and $\textsl{z}_{2}<0$, notice that
\begin{eqnarray*}
\begin{split}
\mathscr{H}^{(1)}_{2}(\textsl{z}_{2},U_{L}; \tau^{2})=-\Phi_{2}(\textsl{z}_{2},U_{L}; \tau^{2})+\omega_{-,L},  \ \ \
\mathscr{H}^{(2)}_{2}(\textsl{z}_{2}, U_{L}; \tau^{2})=-\textsl{z}_{2}+\omega_{+,L},
\end{split}
\end{eqnarray*}
and
\begin{eqnarray*}
\begin{split}
&\mathscr{H}^{(1)}_{1}(\textsl{z}_{1}, \mathscr{H}_{2}(\textsl{z}_{2}, \omega_{L}; \tau^{2});\tau^{2})=-\textsl{z}_{1}
+\mathscr{H}^{(1)}_{2}(\textsl{z}_{2}, \omega_{L};\tau^{2}),\\[5pt]
&\mathscr{H}^{(2)}_{1}(\textsl{z}_{1}, \mathscr{H}_{2}(\textsl{z}_{2}, \omega_{L};\tau^{2});\tau^{2})=-\textsl{z}_{2}+\omega_{+,L}.
\end{split}
\end{eqnarray*}
So
\begin{eqnarray*}
\begin{split}
\det\big(\nabla_{\boldsymbol{z}}\mathscr{H}\big)(\boldsymbol{z}, \omega_{L};\tau^{2})\bigg|_{\tau=0}=\Phi_{2}(\textsl{z}_{2},U_{L}; \tau^{2})-1<-C\varepsilon_{0},
\end{split}
\end{eqnarray*}
and
\begin{eqnarray*}
\begin{split}
\omega_{-, R}=-\textsl{z}_{1}-\Phi_{2}(\textsl{z}_{2},U_{L}; \tau^{2})+\omega_{-,L},\ \ \ \omega_{+,R}=-\textsl{z}_{2}+\omega_{+,L}.
\end{split}
\end{eqnarray*}
Hence we can obtain the existence of solution $\boldsymbol{z}$ of equation \eqref{eq:2.43} directly, and
\begin{eqnarray*}
\begin{split}
\omega_{-}(U(x,y),\tau^{2})>\omega_{-,L}   \quad\mbox{and } \ \ \omega_{+}(U(x,y),\tau^{2})=\omega_{+,R},
\end{split}
\end{eqnarray*}
which leads to the estimate \eqref{eq:2.42}.

\smallskip
\par \emph{Case} \rm (iii).\ \  $\textsl{z}_{1}<0$ and $\textsl{z}_{2}>0$ . In this case, notice that
\begin{eqnarray*}
\begin{split}
\mathscr{H}^{(1)}_{2}(\textsl{z}_{2}, \omega_{L}; \tau^{2})=\omega_{-,L},  \ \ \
\mathscr{H}^{(2)}_{2}(\textsl{z}_{2}, \omega_{L}; \tau^{2})=-\textsl{z}_{2}+\omega_{+,L},
\end{split}
\end{eqnarray*}
and
\begin{eqnarray*}
\begin{split}
&\mathscr{H}^{(1)}_{1}(\textsl{z}_{1}, \mathscr{H}_{2}(\textsl{z}_{2}, \omega_{L}; \tau^{2}); \tau^{2})
=-\textsl{z}_{1}+\mathscr{H}^{(1)}_{2}(\textsl{z}_{2},\omega_{L}; \tau^{2}),\\[5pt]
&\mathscr{H}^{(2)}_{1}(\textsl{z}_{1}, \mathscr{H}_{2}(\textsl{z}_{2}, \omega_{L}; \tau^{2});\tau^{2})
=\mathscr{H}^{(2)}_{2}(\textsl{z}_{2}, \omega_{L}; \tau^{2}).
\end{split}
\end{eqnarray*}

Then
$$
\textsl{z}_{1}=\omega_{-,L}-\omega_{-,R} \qquad\mbox{ and }\qquad \textsl{z}_{2}=\omega_{+,L}-\omega_{+,R},
$$
So we obtain the existence of solution $\boldsymbol{z}$ directly, and in this case it is easy to see
$$\omega_{-}(U,\tau^{2})+\omega_{+}(U,\tau^{2})= \omega_{-,L}+\omega_{+,R}.$$

Moreover, notice that
\begin{eqnarray*}
\begin{split}
\rho^{\frac{\gamma-1}{2}}\Big|_{\tau=0}&=\frac{\gamma-1}{4}\Big(\omega_{-}(U,0)+\omega_{+}(U,0)\Big)+1\\[5pt]
&\geq \frac{\gamma-1}{4}\Big(\omega_{+}(U_{R},0)+\omega_{-}(U_{L},0)\Big)+1\\[5pt]
&>\hat{C}>0.
\end{split}
\end{eqnarray*}
Based on this fact and combining the arguments for \emph{Cases} \rm(i)-\rm(iii) together, we can choose $\epsilon_{6}>0$ sufficiently small such that for $\tau\in(0,\epsilon_{6})$, equation \eqref{eq:2.43} and then Riemann problem \eqref{eq:1.16} and \eqref{eq:2.41} admits a unique solution $\boldsymbol{z}$ without the vacuum states. Moreover, estimate \eqref{eq:2.42} follows. 
It completes the proof of the Proposition.
\end{proof}

\par Next, let us study the Riemann problem involving boundary. 
Define
\begin{eqnarray*}
&&\Omega_{0}=\{(x,y):\  x_{0}\leq x<x_{1}, \ y\leq b_{0}(x-x_{0})+y_{0} \}, \\[5pt]
&&\Gamma_{0}=\{(x,y): \ x_{0}\leq x<x_{1}, y=b_{0}(x-x_{0})+y_{0}\}.
\end{eqnarray*}
\par Let us consider the following Riemann problem (see Fig. \ref{fig2.1w}):
\begin{eqnarray}\label{eq:2.44}
\left\{
\begin{array}{llll}
\partial_{x}W(U,\tau^{2})+\partial_{y}F(U,\tau^{2})=0, & \quad \ in\ \  \Omega_{0},\\[5pt]
U(x,y)=U_{L},                     & \quad \ on\ \  \Omega_{0}\cap\{x=x_{0}\},\\[5pt]
v(x,y)=\big(1+\tau^{2}u(\rho, v,\tau^{2})\big)b_{0},                     &\quad\ on\ \   \Gamma_{0},
\end{array}
\right.
\end{eqnarray}
where $b_{0}<0$ and $U_{L}=(\rho_L,v_L)$ is a given constant state satisfying $\rho_{L}>0$.

\vspace{5pt}
\begin{figure}[ht]
\begin{center}
\begin{tikzpicture}[scale=1]
\draw [thin](-1.5,-1.4) --(-1.5,2.0);
\draw [thin](1.5,-1.4) --(1.5,2.0);
\draw [line width=0.06cm] (-2.5,1.8) --(2.5,1.2);
\draw [thick][red](-1.5,1.66)--(0.5,0.66);
\draw [thick][red](-1.5,1.66)--(0.4,0.36);
\draw [thick][red](-1.5,1.66)--(0.2,0);
\draw [thin](-1.5,1.7)--(-1.28,1.92);
\draw [thin](-1.3,1.65)--(-1.08,1.87);
\draw [thin](-1.1,1.62)--(-0.88,1.84);
\draw [thin](-0.9,1.59)--(-0.68,1.81);
\draw [thin](-0.7,1.56)--(-0.48,1.78);
\draw [thin](-0.5,1.54)--(-0.28,1.75);
\draw [thin](-0.3,1.53)--(-0.08,1.72);
\draw [thin](-0.1,1.49)--(0.12,1.69);
\draw [thin](0.1,1.46)--(0.32,1.66);
\draw [thin](0.3,1.44)--(0.52,1.63);
\draw [thin](0.5,1.42)--(0.72,1.60);
\draw [thin](0.7,1.39)--(0.92,1.57);
\draw [thin](0.9,1.37)--(1.12,1.56);
\draw [thin](1.1,1.35)--(1.32,1.55);
\draw [thin](1.3,1.32)--(1.52,1.51);
\node at (-1.5, 1.68){$\bullet$};
\node at (-1.5, -1.8){$x=x_{0}$};
\node at (1.5, -1.8){$x=x_{1}$};
\node at (-0.4, 0){$U_{L}$};
\node at (-0.4, 1.3){$U$};
\node at (-2.2, 1.4){$(x_{0},y_{0})$};
\node at (2.9, 1.2){$\Gamma_{0}$};
\end{tikzpicture}
\end{center}
\caption{Riemann problem with boundary}\label{fig2.1w}
\end{figure}

\par We have the following lemma on the solvability of Riemann problem \eqref{eq:2.44}.
\begin{proposition}\label{prop:2.2}
Assume that $\omega_{-, L}-a_{\infty}b_{0}>-\frac{2-\varepsilon_0}{\gamma-1}$ for some $0<\varepsilon_0<2$,
then there exists a small constant $\epsilon_{7}>0$ such that for any $\tau\in[0,\epsilon_{7})$, Riemann problem \eqref{eq:2.44} admits a unique piecewise smooth solution $U(x,y)$
consists of a single $2$-shock or a $2$-rarefaction wave without the vacuum states. 
Here, $\omega_{-,L}=\omega_{-}(U_L,\tau^{2})$.
\end{proposition}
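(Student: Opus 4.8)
The plan is to reduce the boundary Riemann problem to a single scalar equation for the strength of the reflected $2$-wave, solve it explicitly when $\tau=0$, and then propagate the solution to small $\tau>0$ by the implicit function theorem. First I would parameterize the reflected $2$-wave issuing from the corner $(x_0,y_0)$ by $\textsl{z}_{2}$ through the map $\mathscr{H}_{2}(\textsl{z}_{2},\omega_{L};\tau^{2})$ of \eqref{eq:2.38}--\eqref{eq:2.39}, and write the adjacent boundary state as $U(\textsl{z}_{2})=(\rho(\textsl{z}_{2}),v(\textsl{z}_{2}))=U\big(\mathscr{H}_{2}(\textsl{z}_{2},\omega_{L};\tau^{2})\big)$; by Lemma \ref{lem:2.9} and Lemma \ref{lem:2.3} this is a well-defined $C^{1}$ curve, an $\mathcal{R}_{2}$ rarefaction for $\textsl{z}_{2}>0$ and an $\mathcal{S}_{2}$ shock for $\textsl{z}_{2}<0$. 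The self-similar solution in $\Omega_{0}$ is constant off this single $2$-wave, equal to $U_{L}$ below it and to $U(\textsl{z}_{2})$ along $\Gamma_{0}$, so imposing the slip condition reduces the entire problem to the scalar equation
\[
\Theta(\textsl{z}_{2},\tau^{2}):=v(\textsl{z}_{2})-\big(1+\tau^{2}u(\rho(\textsl{z}_{2}),v(\textsl{z}_{2}),\tau^{2})\big)b_{0}=0 .
\]

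At $\tau=0$ the boundary condition collapses to $v=b_{0}$, and I would compute $v$ explicitly along the curve using the Riemann invariants \eqref{eq:2.12x}, namely $v=(r-s)/(2a_{\infty})$. On the rarefaction branch $r\equiv\omega_{-,L}$ and $s=\omega_{+,L}-\textsl{z}_{2}$, whence $v=v_{L}+\textsl{z}_{2}/(2a_{\infty})$; on the shock branch Lemma \ref{lem:2.10} gives $\omega_{-,L}-r=g_{2}(\textsl{z}_{2},\rho_{L})$, so $v=v_{L}+\big(\textsl{z}_{2}-g_{2}(\textsl{z}_{2},\rho_{L})\big)/(2a_{\infty})$. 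Since $0<\partial_{\beta}g_{2}<1$ by Lemma \ref{lem:2.10}, in both cases $\partial_{\textsl{z}_{2}}v>0$, so $v(\textsl{z}_{2})$ is strictly increasing and $C^{1}$ across $\textsl{z}_{2}=0$ (where $v=v_{L}$). Consequently $\Theta(\cdot,0)$ has a unique root $\textsl{z}_{2}^{*}$ with $\partial_{\textsl{z}_{2}}\Theta(\textsl{z}_{2}^{*},0)=\partial_{\textsl{z}_{2}}v>0$; this root lies on the shock branch when $v_{L}>b_{0}$ and on the rarefaction branch when $v_{L}<b_{0}$, which is exactly the dichotomy asserted in the statement.

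Since $u(\rho,v,\tau^{2})$ of \eqref{eq:1.11} is smooth in $\tau^{2}$ near $0$ with $\tau^{2}u\to0$, the map $\Theta$ is $C^{1}$ jointly in $(\textsl{z}_{2},\tau^{2})$ and $\partial_{\textsl{z}_{2}}\Theta(\textsl{z}_{2}^{*},0)\neq0$, so the implicit function theorem yields a small $\epsilon_{7}>0$ and a unique root $\textsl{z}_{2}=\textsl{z}_{2}(\tau^{2})$ for every $\tau\in[0,\epsilon_{7})$, depending continuously on $\tau$; this determines the unique reflected $2$-wave and hence the unique piecewise smooth solution. Finally I would rule out vacuum: the only delicate branch is the rarefaction, where $\rho$ decreases, and at the boundary state ($\tau=0$) one has $\rho^{(\gamma-1)/2}=1+\tfrac{\gamma-1}{2}\big(\omega_{-,L}-a_{\infty}b_{0}\big)$, so the hypothesis $\omega_{-,L}-a_{\infty}b_{0}>-\tfrac{2-\varepsilon_{0}}{\gamma-1}$ forces $\rho^{(\gamma-1)/2}>\varepsilon_{0}/2>0$, a uniform lower bound that survives for small $\tau$ by continuity (on the shock branch $\rho>\rho_{L}$, so no vacuum arises there).

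The main obstacle I expect is securing the \emph{strict, uniform} monotonicity $\partial_{\textsl{z}_{2}}v>0$ across the shock branch together with the uniform-in-$\tau$ no-vacuum bound: both rest on the fine estimate $0<\partial_{\beta}g_{2}<1$ from Lemma \ref{lem:2.9}--\ref{lem:2.10} and on the quantitative margin $\varepsilon_{0}$ built into the hypothesis, which is precisely what keeps the implicit-function-theorem constants and the lower bound on $\rho$ controlled as $\tau\to0$. By comparison, the $\tau^{2}$-dependence of the boundary data is harmless, being a smooth $O(\tau^{2})$ perturbation of the limiting condition $v=b_{0}$.
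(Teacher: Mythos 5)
Your argument is essentially the paper's own proof: both reduce the boundary Riemann problem to the scalar equation $v(\textsl{z}_2)=(1+\tau^2 u)b_0$ along the single $2$-wave curve issuing from the corner, use $0<\partial_{\beta}g_2<1$ (equivalently $0<\partial_{\beta_+}\Phi_2<1$ from Lemma \ref{lem:2.9}--\ref{lem:2.10}) to get strict monotonicity of $v$ in $\textsl{z}_2$ at $\tau=0$ with the same shock/rarefaction dichotomy according to the sign of $v_L-b_0$, propagate to small $\tau$ by the implicit function theorem, and exclude vacuum from the hypothesis $\omega_{-,L}-a_\infty b_0>-\tfrac{2-\varepsilon_0}{\gamma-1}$. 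The one point you pass over is that strict monotonicity yields only uniqueness of the root, not existence: on the shock branch one must still check that $v(\textsl{z}_2)$ actually descends to $b_0$ as $\textsl{z}_2\to-\infty$, which the paper secures by verifying $\mathscr{G}(0,\omega(U_L,0),b_0;0)>0$ together with $\lim_{\textsl{z}_2\to-\infty}\mathscr{G}(\textsl{z}_2,\omega(U_L,0),b_0;0)=-\infty$ before invoking the intermediate value theorem.
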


\begin{proof}
It is easy to see that the existence of solutions of Riemann problem \eqref{eq:2.44} is equivalent to the existence of solutions $\textsl{z}_{2}$ of the following system
\begin{eqnarray}\label{eq:2.46}
\left\{
\begin{array}{llll}
\omega=\mathscr{H}_{2}(\textsl{z}_{2}, \omega_{L}; \tau^{2}),\\[5pt]
v=\mathcal{V}(\omega, \tau^{2})=\big(1+\tau^{2}u(\rho,v,\tau^{2})\big)b_{0}.
\end{array}
\right.
\end{eqnarray}
Let
\begin{eqnarray*}
\mathscr{G}(\textsl{z}_{2}, \omega_{L}, b_{0}; \tau^{2})
=\mathcal{V}(\mathscr{H}_{2}(\textsl{z}_{2}, \omega_{L}; \tau^{2}), \tau^{2})-\big(1+\tau^{2}u(\rho, v,\tau^{2})\big)b_{0},
\end{eqnarray*}
and consider equation $\mathscr{G}(\textsl{z}_{2}, \omega_{L}, b_{0}; \tau^{2})=0$ for $\tau$ sufficiently small.

Note that
\begin{eqnarray*}
\begin{split}
\mathscr{G}\big(\textsl{z}_{2}, \omega(U_{L},0), b_{0}; 0\big)
=\frac{1}{2a_{\infty}}\Big(\mathscr{H}^{(1)}_{2}(\textsl{z}_{2}, \omega(U_{L},0); 0)-\mathscr{H}^{(2)}_{2}(\textsl{z}_{2}, \omega(U_{L},0); 0)\Big)-b_{0}.
\end{split}
\end{eqnarray*}

If $b_{0}<v_{L}$, \emph{i.e.}, $\omega_{-}(U_{L},0)-\omega_{+}(U_{L},0)>2a_{\infty}b_{0}$, then
\begin{eqnarray*}
\begin{split}
\mathscr{G}\big(\textsl{z}_{2}, \omega(U_{L},0), b_{0}; 0\big)=\frac{1}{2a_{\infty}}\Big(\textsl{z}_{2}
-\Phi_{2}(\textsl{z}_{2}, U_{L};0)+\omega_{-}(U_{L},0)-\omega_{+}(U_{L},0)\Big)-b_{0}.
\end{split}
\end{eqnarray*}
By Lemma \ref{lem:2.8}, we get that
\begin{eqnarray*}
\begin{split}
\frac{\partial \mathscr{G}\big(\textsl{z}_{2}, \omega(U_{L},0), b_{0}; 0\big)}{\partial \textsl{z}_{2}}
=\frac{1}{2a_{\infty}}\Big(1-\partial_{\textsl{z}_{2}}\Phi_{2}(\textsl{z}_{2}, U_{L};0)\Big)>\frac{C_{2}\varepsilon_{0}}{2a_{\infty}}>0.
\end{split}
\end{eqnarray*}
On the other hand, we notice that $\mathscr{G}\in C^{2}$ with respect to $\textsl{z}_{2}$, then

\begin{eqnarray*}
\begin{split}
\mathscr{G}(0, \omega(U_{L},0), b_{0}; 0)=\frac{1}{2a_{\infty}}\big(\omega_{-}(U_{L},0)-\omega_{+}(U_{L},0)\big)-b_{0}>0,
\end{split}
\end{eqnarray*}
and
\begin{eqnarray*}
\begin{split}
\lim_{\textsl{z}_{2}\rightarrow-\infty}\mathscr{G}(\textsl{z}_{2},\omega(U_{L},0), b_{0}; 0)
&=\frac{1}{2a_{\infty}}\lim_{\textsl{z}_{2}\rightarrow-\infty}\bigg(1-\frac{\Phi_{2}(\textsl{z}_{2}, U_{L}; 0)}{\textsl{z}_{2}}\bigg)\textsl{z}_{2}
+\frac{\omega_{-}(U_{L},0)-\omega_{+}(U_{L},0)}{2a_{\infty}}-b_{0}\\[5pt]
&=-\infty.
\end{split}
\end{eqnarray*}

So, by the intermediate value theorem and the implicit function theorem, there exists a small constant $\epsilon'_{7}>0$ such that when $\tau\in[0,\epsilon'_{7})$, equation \eqref{eq:2.46} admits a unique solution $\textsl{z}_{2}<0$ which consists
of a shock wave belonging to the second family. There is no vacuum state, which can be verified by the observation that
%
$\omega_{-}(U,0)>\omega_{-}(U_{L},0)$, which leads to
\begin{eqnarray*}
\begin{split}
\rho^{\frac{\gamma-1}{2}}\Big|_{\tau=0}&=\frac{\gamma-1}{4}\Big(\omega_{+}(U,0)+\omega_{-}(U,0)\Big)+1\\[5pt]
&=\frac{\gamma-1}{2}\Big(\omega_{-}(U,0)-a_{\infty}b_{0}\Big)+1\\[5pt]
&>\frac{\gamma-1}{2}\Big(\omega_{-}(U_{L},0)-a_{\infty}b_{0}\Big)+1\\[5pt]
&>\tilde{C}.
\end{split}
\end{eqnarray*}

Second, if $b_{0}>v_{L}$, \emph{i.e.}, $\omega_{-}(U_{L},0)-\omega_{+}(U_{L},0)<2a_{\infty}b_{0}$, then states $U_{L}$ and $U$ are connected by a $2$-rarefaction wave $R_{2}$.
So, by \eqref{eq:2.38} and \eqref{eq:2.46}, we know that
\begin{eqnarray*}
\begin{split}
\omega_{-}(U,0)=\omega_{-}(U_{L},0),\ \ \ \omega_{+}(U,0)=\omega_{-}(U_{L},0)-2a_{\infty}b_{0}.
\end{split}
\end{eqnarray*}
This also gives that
\begin{eqnarray*}
\begin{split}
\rho^{\frac{\gamma-1}{2}}\Big|_{\tau=0}&=\frac{\gamma-1}{2}\Big(\omega_{-}(U_{L},0)-a_{\infty}b_{0}\Big)+1>\tilde{C},
\end{split}
\end{eqnarray*}
which means that the vacuum states dose not appear.
Moreover,
\begin{align*}
\mathscr{G}(\textsl{z}_{2}, \omega_{L}, b_{0}; \tau^{2})
=\mathcal{V}( (\omega_{-,L}, -\textsl{z}_{2}+\omega_{+,L}), \tau^{2})-\big(1+\tau^{2}u(\rho,v,\tau^{2})\big)b_{0}.
\end{align*}
so 
\begin{align*}
\frac{\partial \mathscr{G}(\textsl{z}_{2}, \omega_{L}, b_{0}; \tau^{2})}{\partial \textsl{z}_{2}}\Big|_{\tau=0}
=\frac{1}{2a_{\infty}}>0.
\end{align*}

Hence by the implicit function theorem, there exists a small constant
$\epsilon''_{7}>0$ such that for $\tau\in[0,\epsilon''_{7})$, equation \eqref{eq:2.46} admits a unique solution $\textsl{z}_{2}>0$
such that $U_{L}$ and $U$ are connected by a $2$-rarefaction wave $\mathcal{R}_{2}$ without the vacuum state.
\par Finally, take $\epsilon_{7}=\min\{\epsilon'_{7}, \epsilon''_{7}\}$, then when $\tau\in[0,\epsilon_{7})$,
we can get the existence of solutions of Riemann problem \eqref{eq:2.44} without the vacuum states.
\end{proof}

\section{Local interaction estimates}
In order to control the total variation of the approximate solutions which will be constructed in the next section, we need to study the local interaction estimates of the elementary waves of large data. Firstly, let us consider the estimates on the difference of the Riemann invariance of the same family along the corresponding shock wave curve.
\begin{figure}[ht]
\begin{center}
\begin{tikzpicture}[scale=0.9]
\draw [line width=0.05cm] (-1.5,-2.2) --(-1.5,2.2);
\draw [line width=0.05cm] (1.5,-2.2) --(1.5,2.2);
\draw [line width=0.03cm][red] (1.5,1.7)to[out=180, in=50](-1.5,0.5);
\draw [line width=0.03cm][red](1.5,-0.2)to[out=180, in=60](-1.5,-1.7);
\draw [thick][dashed] (1.5,1.72)--(-1.5,1.72);
\draw [thick][dashed] (1.5,-0.18)--(-1.5,-0.18);
\node at (1.5, 1.7){$\bullet$};
\node at (-1.5, 0.5){$\bullet$};
\node at (1.5, -0.2){$\bullet$};
\node at (-1.5, -1.7){$\bullet$};
\node at (2.3, 1.7){$(r_{0},s_{1})$};
\node at (2.3, -0.18){$(r_{0},s_{0})$};
\node at (-2.2, 0.5){$(r,s_2)$};
\node at (-2.2, -1.7){$(r,s)$};
\node at (1.5, -2.5){$r_{0}$};
\node at (-1.5, -2.5){$r$};
\node at (0, 1){$S_{1}$};
\node at (0, -1){$S_{1}$};
\end{tikzpicture}
\end{center}
\caption{Lemma \ref{lem:3.1}}\label{fig2.1xw}
\end{figure}
Let us consider them for the case that $\tau=0$ first. By Remark \ref{rem:2.2}, as shown in Fig. \ref{fig2.1xw}, let
\begin{eqnarray}\label{eq:3.13a}
\begin{split}
&r_{0}:=a_{\infty}v_{0}+ \frac{2(\rho^{\frac{\gamma-1}{2}}_{0}-1)}{\gamma-1}=a_{\infty}v_{1}+ \frac{2(\rho^{\frac{\gamma-1}{2}}_{1}-1)}{\gamma-1},\ \  s_{0}:=-a_{\infty}v_{0}+ \frac{2(\rho^{\frac{\gamma-1}{2}}_{0}-1)}{\gamma-1},\\[5pt]
&r:=a_{\infty}v+ \frac{2(\rho^{\frac{\gamma-1}{2}}-1)}{\gamma-1}=a_{\infty}v_{2}+ \frac{2(\rho^{\frac{\gamma-1}{2}}_{2}-1)}{\gamma-1},\ \  s:=-a_{\infty}v+ \frac{2(\rho^{\frac{\gamma-1}{2}}-1)}{\gamma-1},
\end{split}
\end{eqnarray}
and
\begin{eqnarray}\label{eq:3.13b}
\begin{split}
s_{1}:=-a_{\infty}v_{1}+ \frac{2(\rho^{\frac{\gamma-1}{2}}_{1}-1)}{\gamma-1},\quad
s_{2}:=-a_{\infty}v_{2}+ \frac{2(\rho^{\frac{\gamma-1}{2}}_{2}-1)}{\gamma-1}.
\end{split}
\end{eqnarray}
Then we have the following lemmas.
\begin{lemma}\label{lem:3.1}
Suppose $\tau=0$ and $s_1>s_0$. For two $S_1$ shock wave curves starting at points $(r_0,s_1)$ and $(r_0,s_0)$
and ending at points $(r,s_2)$ and $(r,s)$ respectively, if $0<\rho_{*}<\rho_i<\rho^{*}<\infty$ for $i=0$ and $1$, then there exists a constant $C_{3}>0$ depending only on
$\rho_{*}$ and $\rho^{*}$, such that
\begin{equation}\label{eq:3.13x}
0\leq(s_0-s)-(s_1-s_2)\leq C_{3}(\gamma-1)(s_1-s_{0})(r_0-r).
\end{equation}
\end{lemma}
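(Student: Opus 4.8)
The plan is to exploit a homogeneity (scaling) structure hidden in the $S_1$ shock curve and thereby reduce the two-sided estimate to a convexity argument for a single function of one variable. First I would read off from the explicit formulas in Remark \ref{rem:2.3} (equation \eqref{eq:2.58}) that, along the $S_1$ curve parametrized by $\alpha=\rho/\rho_0\in(0,1]$, both Riemann-invariant increments factor through $\rho_0^{(\gamma-1)/2}$: writing $\mu=\rho_0^{(\gamma-1)/2}$, one has $r_0-r=\mu\,\tilde r(\alpha)$ and $s_0-s=\mu\,\tilde s(\alpha)$ for functions $\tilde r,\tilde s$ that depend only on $\gamma$ (note that \eqref{eq:2.58} contains no $a_\infty$). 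Eliminating $\alpha$ then gives the homogeneous representation $g_1(\beta,\rho_0)=\mu\,G(\beta/\mu)$, where $\beta=r_0-r$ and $G$ is the normalized shock profile defined by $G(\tilde r(\alpha))=\tilde s(\alpha)$. Transferring the qualitative statements of Lemma \ref{lem:2.8} through this scaling, the function $G$ satisfies $G(0)=0$, $0\le G'<1$ and $G''\ge 0$; in particular $G$ is nonnegative, nondecreasing and convex on $[0,\infty)$.

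Next I would rewrite the two quantities appearing in \eqref{eq:3.13x} using this representation. Since both $S_1$ curves end on the same vertical line $r=\mathrm{const}$, they share the same value $\beta=r_0-r\ge 0$. Setting $\mu_i=\rho_i^{(\gamma-1)/2}=\tfrac{\gamma-1}{4}(r_0+s_i)+1$ for $i=0,1$, one obtains $s_0-s=\mu_0\,G(\beta/\mu_0)$ and $s_1-s_2=\mu_1\,G(\beta/\mu_1)$, while the hypothesis $s_1>s_0$ yields $\mu_1>\mu_0$ together with the identity $\mu_1-\mu_0=\tfrac{\gamma-1}{4}(s_1-s_0)$. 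Hence, introducing $F(\mu):=\mu\,G(\beta/\mu)$, the left-hand side of \eqref{eq:3.13x} equals exactly $F(\mu_0)-F(\mu_1)$, and everything reduces to monotonicity and a quantitative bound for $F$ on $[\mu_0,\mu_1]$.

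For the two inequalities I would compute $F'(\mu)=G(t)-t\,G'(t)$ with $t=\beta/\mu$. The lower bound is immediate: convexity of $G$ together with $G(0)=0$ forces the tangent inequality $G(t)-t\,G'(t)\le G(0)=0$, so $F$ is nonincreasing and $F(\mu_0)-F(\mu_1)\ge 0$. For the upper bound I would use $-F'(\mu)=t\,G'(t)-G(t)\le t\,G'(t)<t=\beta/\mu$, which is valid because $G\ge 0$ and $G'<1$, and then integrate:
\begin{align*}
(s_0-s)-(s_1-s_2)&=F(\mu_0)-F(\mu_1)=\int_{\mu_0}^{\mu_1}\big(-F'(\mu)\big)\,d\mu\\
&<\int_{\mu_0}^{\mu_1}\frac{\beta}{\mu}\,d\mu=\beta\,\ln\frac{\mu_1}{\mu_0}\le \frac{\beta\,(\mu_1-\mu_0)}{\mu_0}.
\end{align*}
Substituting $\mu_1-\mu_0=\tfrac{\gamma-1}{4}(s_1-s_0)$ and bounding $\mu_0=\rho_0^{(\gamma-1)/2}$ from below by a constant $c_0>0$ that depends only on $\rho_*$ (uniformly for $\gamma\in[1,2]$) yields \eqref{eq:3.13x} with $C_3=1/(4c_0)$. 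I expect the main obstacle to be the first step, namely recognizing and rigorously justifying the homogeneity $g_1(\beta,\rho_0)=\mu\,G(\beta/\mu)$ and checking that the properties of Lemma \ref{lem:2.8} transfer cleanly to $G$; once this normalization is in place, the remaining monotonicity and integration argument is elementary.
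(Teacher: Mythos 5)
Your proof is correct, and it takes a genuinely different route from the paper's. The paper proves the upper bound by a ``vanishing at $\gamma=1$'' second-difference argument: writing $s_2=s^{*}(\Delta r,\Delta s;\gamma-1)$, it observes that the isothermal ($\gamma=1$) shock relation between $\Delta r$ and $\Delta s$ is independent of the base density, so the double difference vanishes at $\gamma=1$; the fundamental theorem of calculus in the parameter $\gamma-1$ then extracts the factor $(\gamma-1)$, and the remaining integrand, being itself a second difference of $\partial_{\gamma-1}s^{*}$ that vanishes when $\Delta r=0$ or $\Delta s=0$, is declared $\mathcal{O}(1)\Delta r\,\Delta s$ with an implicit constant. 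You instead exploit the exact homogeneity $g_1(\beta,\rho_0)=\mu\,G(\beta/\mu)$ with $\mu=\rho_0^{(\gamma-1)/2}$, which is indeed visible in \eqref{eq:2.58} (the $a_\infty$ cancels), and reduce both inequalities to elementary convexity facts about $F(\mu)=\mu G(\beta/\mu)$: the factor $\gamma-1$ comes out of the algebraic identity $\mu_1-\mu_0=\tfrac{\gamma-1}{4}(s_1-s_0)$ rather than from differentiating in $\gamma$. Your argument is more explicit (it produces the concrete constant $C_3=1/(4c_0)$ with $c_0=\min\{1,\rho_*^{1/2}\}$) and sidesteps the need for uniform $C^2$ bounds on $s^{*}$ in the parameter $\gamma-1$ near the degenerate limit $\gamma=1$; your lower bound is essentially the same in content as the paper's (both rest on the scaling structure and the sign of $h_1'$), just packaged as monotonicity of $F$. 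The trade-off is that the paper's parameter-differencing template is the one that carries over to Lemma \ref{lem:3.3}, where $\tau\neq 0$ destroys the exact homogeneity and one must difference in $\tau^2$ as well, whereas your scaling argument is specific to $\tau=0$ — which is all this lemma requires.
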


\begin{proof}
Let $\Delta r=r_{0}-r$ and $\Delta s=s_{1}-s_{0}$. Notice that
\begin{eqnarray*}
\begin{split}
\rho_{0}-\rho_{1}=\frac{\gamma-1}{4}(s_{0}-s_{1})\leq0.
\end{split}
\end{eqnarray*}

Hence by Lemma \ref{lem:2.2}, for $\xi\in \Big(\frac{\beta}{\rho^{\frac{\gamma-1}{2}}_{1}}, \frac{\beta}{\rho^{\frac{\gamma-1}{2}}_{0}}\Big)$, we have
\begin{eqnarray*}
\begin{split}
s_0-s-(s_1-s_2)=\int^{\Delta r}_{0}\frac{\partial h_{1}}{\partial \alpha}\Big|_{\alpha=\alpha_{1}(\xi)}
\frac{\partial \alpha}{\partial\xi}\Big(\frac{\beta}{\rho^{\frac{\gamma-1}{2}}_{0}}
-\frac{\beta}{\rho^{\frac{\gamma-1}{2}}_{1}}\Big)d\beta\geq 0.
\end{split}
\end{eqnarray*}

So in order to show \eqref{eq:3.13x}, we only need to show
\begin{equation}\label{3.14}
(s_0-s)-(s_1-s_2)\leq C_{1}(\gamma-1)(s_1-s_{0})(r_0-r).
\end{equation}

\par Let $s_{2}=s^{*}(\Delta r, \Delta s; \gamma-1)$.
Then, by Lemma \ref{lem:2.9} and Lemma \ref{lem:2.10}, we know that $s^{*}$ is a $C^{2}$-function
of $\Delta r$, $\Delta s$ and $\gamma-1$.

\par For $\gamma=1$ and $\alpha=\frac{\rho_1}{\rho_0}$, we have
\begin{eqnarray*}
\begin{split}
\Delta s=-\sqrt{-\frac{2(1-\alpha)}{1+\alpha}\ln \alpha}-\ln \alpha,\ \ \
\Delta r=\sqrt{-\frac{2(1-\alpha)}{1+\alpha}\ln \alpha}-\ln \alpha.
\end{split}
\end{eqnarray*}

Notice that
\begin{eqnarray*}
\begin{split}
\frac{\partial\Delta r}{\partial \alpha}=-\frac{-2\alpha \ln\alpha+1-\alpha^{2}
-\sqrt{-2(1-\alpha)(1+\alpha)^{3}\ln \alpha}}{\sqrt{-2(1-\alpha)(1+\alpha)^{3}\ln \alpha}}<0.
\end{split}
\end{eqnarray*}

Then, by the implicit function theorem, $\alpha$ is a function of $\Delta r$ as $\alpha=\alpha(\Delta r)$, which is independent on $\rho_{0}$ and $\rho_{1}$.
Hence $\Delta s$ is a function of $\Delta r$ which is independent on $\rho_{0}$ and $\rho_{1}$.
Based on this observation, we thus deduce that for $\gamma=1$
\begin{eqnarray*}
\begin{split}
&s^{*}(0, 0; 0)-s^{*}(\Delta r, 0;0)-\big(s^{*}(0, \Delta s; 0)-s^{*}(\Delta r, \Delta s; 0) \big)=0.
\end{split}
\end{eqnarray*}
So
\begin{eqnarray}\label{eq:3.15}
\begin{split}
&s_{0}-s-\big(s_{1}-s_{2}\big)\\[5pt]
&\ \ =s^{*}(0, 0; \gamma-1)-s^{*}(\Delta r, 0;\gamma-1)-\big(s^{*}(0, \Delta s; \gamma-1)-s^{*}(\Delta r, \Delta s; \gamma-1) \big)\\[5pt]
&\ \ =s^{*}(0, 0; \gamma-1)-s^{*}(0, 0; 0)-\big(s^{*}(\Delta r, 0;\gamma-1)-s^{*}(\Delta r, 0;0)\big)\\[5pt]
&\ \ \ \ \ -\big(s^{*}(0, \Delta s; \gamma-1)-s^{*}(0, \Delta s; 0)\big)
+\big(s^{*}(\Delta r, \Delta s; \gamma-1)-s^{*}(\Delta r, \Delta s; 0)\big)\\[5pt]
&\ \ =(\gamma-1)\int^{1}_{0}e(\Delta r, \Delta s; \mu (\gamma-1))d \mu,
\end{split}
\label{2.27x}
\end{eqnarray}
where
\begin{eqnarray*}
\begin{split}
e(\Delta r, \Delta s; \mu (\gamma-1))&=\partial_{\gamma-1}s^{*}(0, 0; \mu(\gamma-1))-\partial_{\gamma-1}s^{*}(\Delta r, 0;\mu(\gamma-1))\\[5pt]
&\ \ \ -\partial_{\gamma-1}s^{*}(0, \Delta s; \mu(\gamma-1))+\partial_{\gamma-1}s^{*}(\Delta r, \Delta s;\mu(\gamma-1))\\[5pt]
&=O(1)\Delta r \Delta s.
\end{split}
\end{eqnarray*}

Substituting the estimate for $e(\Delta r, \Delta s; \mu (\gamma-1))$ into \eqref{eq:3.15}, we proved \eqref{eq:3.13x}.
It completes the proof.
\end{proof}

\vspace{5pt}
\begin{figure}[ht]
\begin{center}
\begin{tikzpicture}[scale=0.9]
\draw [line width=0.05cm] (-3.2,-1.5) --(3.6,-1.5);
\draw [line width=0.05cm] (-3.2,1) --(3.6,1);
\draw [line width=0.03cm][red] (-2.5,-1.5)to[out=80, in=195](-0.5,1);
\draw [line width=0.03cm][red](0.5,-1.5)to[out=80, in=190](2.8,1);
\draw [thick][dashed] (-2.52,-1.5)--(-2.52,1);
\draw [thick][dashed] (0.48,-1.5)--(0.48,1);
\node at (-2.5, -1.5){$\bullet$};
\node at (-0.5, 1){$\bullet$};
\node at (0.5, -1.5){$\bullet$};
\node at (2.8, 1){$\bullet$};
\node at (-2.6, -1.8){$(r_{1},s_{0})$};
\node at (0.6, -1.8){$(r_{0},s_{0})$};
\node at (-0.5, 1.3){$(r_2,s)$};
\node at (2.8, 1.3){$(r,s)$};
\node at (4, -1.5){$s_{0}$};
\node at (4, 1){$s$};
\node at (-1.2, 0){$S_{2}$};
\node at (1.8, 0){$S_{2}$};
\end{tikzpicture}
\end{center}
\caption{Lemma \ref{lem:3.2}}\label{fig2.2x}
\end{figure}

\vspace{5pt}
Similarly, we also have the estimate on the difference of $r$ on $S_{2}$ (see Fig. \ref{fig2.2x}).

\begin{lemma}\label{lem:3.2}
Assume $\tau=0$ and $r_0>r_1$. For two $S_2$ shock wave curves starting at points $(r_1,s_0)$ and $(r_0,s_0)$, and ending at points $(r_{2},s)$ and $(r,s)$, respectively, if $0<\rho_{*}<\rho_i<\rho^{*}<\infty$
for $i=0$ and $1$, then there exists a constant $C'_{3}>0$ depending only on
$\rho_{*}$ and $\rho^{*}$, such that
\begin{equation}\label{eq:3.16}
0\leq(r-r_{0})-(r_{2}-r_{1})\leq C'_{3}(\gamma-1)(r_0-r_{1})(s-s_{0}).
\end{equation}
\end{lemma}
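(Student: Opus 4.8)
The plan is to mirror the proof of Lemma \ref{lem:3.1} under the symmetry interchanging the Riemann invariants $r\leftrightarrow s$ and the families $S_1\leftrightarrow S_2$, using the $S_2$ shock-curve representation \eqref{eq:2.61} of Lemma \ref{lem:2.10} in place of the $S_1$ representation. I set $\Delta s=s-s_0>0$ for the common $s$-extent shared by the two $S_2$ shocks and $\Delta r=r_0-r_1>0$ for the offset of their base points. By Remark \ref{rem:2.2} one has $\rho^{\frac{\gamma-1}{2}}=1+\tfrac{\gamma-1}{4}(r+s)$, so the two base densities satisfy $\rho_0^{\frac{\gamma-1}{2}}-\rho_1^{\frac{\gamma-1}{2}}=\tfrac{\gamma-1}{4}(r_0-r_1)\geq0$, i.e. $\rho_0\geq\rho_1$ (the opposite sign to Lemma \ref{lem:3.1}, as expected from the swap). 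Throughout, the data range $0<\rho_{*}\leq\rho_i\leq\rho^{*}<\infty$ keeps all $O(1)$ constants uniform.

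First I would establish the lower bound $(r-r_0)-(r_2-r_1)\geq0$. By \eqref{eq:2.61} the two shocks give $r-r_0=-g_2(-\Delta s,\rho_0)$ and $r_2-r_1=-g_2(-\Delta s,\rho_1)$, hence $(r-r_0)-(r_2-r_1)=g_2(-\Delta s,\rho_1)-g_2(-\Delta s,\rho_0)$. Writing this difference through the integral in \eqref{eq:2.61} and applying the mean value theorem in the argument $\xi$ lying between $\beta/\rho_1^{\frac{\gamma-1}{2}}$ and $\beta/\rho_0^{\frac{\gamma-1}{2}}$, it becomes $\int_0^{-\Delta s}\frac{\partial(h_2\circ\alpha_2)}{\partial\xi}\big|_{\xi}\big(\tfrac{\beta}{\rho_1^{(\gamma-1)/2}}-\tfrac{\beta}{\rho_0^{(\gamma-1)/2}}\big)\,d\beta$. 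I would control its sign using the monotonicity of $h_2\circ\alpha_2$ already extracted in the proof of Lemma \ref{lem:2.9} together with $\rho_0\geq\rho_1$; tracking the sign of $\beta$ on $(-\Delta s,0)$ then yields nonnegativity exactly as in Lemma \ref{lem:3.1}.

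For the upper bound I would reproduce the second-difference/Taylor argument. Writing the ending invariant of the second shock as $r_2=r^{*}(\Delta s,\Delta r;\gamma-1)$, which is $C^2$ by Lemmas \ref{lem:2.9} and \ref{lem:2.10}, the quantity $(r-r_0)-(r_2-r_1)$ is, up to sign, the mixed second difference $r^{*}(0,0;\gamma-1)-r^{*}(\Delta s,0;\gamma-1)-r^{*}(0,\Delta r;\gamma-1)+r^{*}(\Delta s,\Delta r;\gamma-1)$. The crux is that this mixed difference \emph{vanishes at} $\gamma=1$: from the explicit relations \eqref{eq:2.63x} the $\gamma=1$ $S_2$ shock is scale invariant, since only $\alpha=\rho/\rho_0$ enters with no separate factor of $\rho_0$, so $\Delta s$ determines $\alpha$ (the map is invertible because $\partial\Delta s/\partial\alpha>0$ for $\alpha\geq1$, by the implicit function theorem) and hence determines $r-r_0$ independently of the base density; consequently $r-r_0$ and $r_2-r_1$ coincide when $\gamma=1$. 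Subtracting the vanishing $\gamma=1$ value and expanding in $\gamma-1$ gives $(r-r_0)-(r_2-r_1)=(\gamma-1)\int_0^1\tilde e(\Delta s,\Delta r;\mu(\gamma-1))\,d\mu$, where $\tilde e$ is the mixed second difference of $\partial_{\gamma-1}r^{*}$ and is $O(1)\,\Delta s\,\Delta r$ by the regularity of $r^{*}$ (two successive applications of the mean value theorem, one in $\Delta s$ and one in $\Delta r$), with $O(1)$ uniform on $\rho_{*}\leq\rho_i\leq\rho^{*}$. This yields \eqref{eq:3.16} with $C'_{3}$ depending only on $\rho_{*},\rho^{*}$.

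I expect the main obstacle to be the scale-invariance step at $\gamma=1$: one must verify through \eqref{eq:2.63x} that the density ratio $\alpha$ is a function of $\Delta s$ alone, with no residual dependence on $\rho_0$ or $\rho_1$, which is precisely what forces the mixed second difference to vanish and ultimately produces the gain of a full factor $\gamma-1$. Confirming $\partial\Delta s/\partial\alpha\neq0$ to invoke the implicit function theorem, and verifying the regularity and uniform $O(1)$ bounds of $r^{*}$ needed for the Taylor remainder, are the technical points requiring care; the remaining bookkeeping is routine and parallels Lemma \ref{lem:3.1}.
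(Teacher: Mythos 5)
The paper gives no proof of Lemma \ref{lem:3.2} at all (it is stated as a ``similarly'' to Lemma \ref{lem:3.1}), so you are supplying the argument from scratch. Your treatment of the \emph{upper} bound is the right adaptation: the reduction to the mixed second difference of $r^{*}(\Delta s,\Delta r;\gamma-1)$, the scale invariance of \eqref{eq:2.63x} at $\gamma=1$ (only $\alpha$ enters, and $\partial\Delta s/\partial\alpha>0$ for $\alpha\geq1$ makes $\alpha$, hence $r-r_0$, a function of $\Delta s$ alone), and the two mean-value expansions giving $(\gamma-1)\cdot O(1)\,\Delta r\,\Delta s$ are exactly the $S_2$ counterparts of the paper's proof of Lemma \ref{lem:3.1}, and this is the part that is actually used later (Lemmas \ref{lem:3.3}--\ref{lem:3.5} only invoke one-sided/absolute bounds of this type).

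The step that fails is the lower bound. You assert that tracking signs ``yields nonnegativity exactly as in Lemma \ref{lem:3.1},'' but the bookkeeping comes out with the opposite sign. Writing $(r-r_0)-(r_2-r_1)=g_2(-\Delta s,\rho_1)-g_2(-\Delta s,\rho_0)=\int_0^{-\Delta s}\frac{d(h_2\circ\alpha_2)}{d\xi}\big|_{\xi}\,\beta\big(\rho_1^{-(\gamma-1)/2}-\rho_0^{-(\gamma-1)/2}\big)\,d\beta$, three of the sign factors flip relative to Lemma \ref{lem:3.1}: $\frac{d(h_2\circ\alpha_2)}{d\xi}\leq0$ (concavity of $g_2$, versus convexity of $g_1$), the integration variable $\beta$ runs over $(-\Delta s,0)$ and is negative (versus positive), and the integral $\int_0^{-\Delta s}$ is taken with reversed orientation. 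An odd number of flips means the integrand contributes with the opposite overall sign, giving $(r-r_0)-(r_2-r_1)\leq0$ rather than $\geq0$. A direct check confirms this: with $\gamma=3/2$, $\rho_0=1$, $\rho_1=1/2$ and the two $S_2$ shocks matched to $s-s_0=1.5$ via \eqref{eq:2.59}, one finds $r-r_0\approx0.0137$ while $r_2-r_1\approx0.0187$, so the difference is strictly negative. So by this route you can only prove $(r-r_0)-(r_2-r_1)\leq0$ together with $\big|(r-r_0)-(r_2-r_1)\big|\leq C_3'(\gamma-1)(r_0-r_1)(s-s_0)$; the first inequality of \eqref{eq:3.16} as printed appears to carry a sign slip (the reflection $v\mapsto-v$ that exchanges $S_1$ and $S_2$ also exchanges the roles of base point and end point of the shock, which is where the naive symmetry with Lemma \ref{lem:3.1} breaks). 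You should either prove the correct sign or note that only the magnitude estimate is needed downstream; as written, the nonnegativity claim cannot be completed.
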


\vspace{5pt}
\begin{figure}[ht]
\begin{center}\
\begin{tikzpicture}[scale=0.9]
\draw [line width=0.05cm] (-1.5,-2.2) --(-1.5,2.2);
\draw [line width=0.05cm] (1.5,-2.2) --(1.5,2.2);
\draw [line width=0.03cm][green] (1.5,1.7)to[out=180, in=50](-1.5,0.5);
\draw [line width=0.03cm][green](1.5,-0.2)to[out=180, in=60](-1.5,-1.7);
\draw [thick][dashed] (1.5,1.72)--(-1.5,1.72);
\draw [thick][dashed] (1.5,-0.18)--(-1.5,-0.18);
\node at (1.5, 1.7){$\bullet$};
\node at (-1.5, 0.5){$\bullet$};
\node at (1.5, -0.2){$\bullet$};
\node at (-1.5, -1.7){$\bullet$};
\node at (2.6, 1.7){$(\omega_{-,0},\omega_{+,1})$};
\node at (2.6, -0.18){$(\omega_{-,0},\omega_{+,0})$};
\node at (-2.5, 0.5){$(\omega_{-},\omega_{+,2})$};
\node at (-2.4, -1.7){$(\omega_{-},\omega_{+})$};
\node at (1.5, -2.5){$\omega_{-,0}$};
\node at (-1.5, -2.5){$\omega_{-}$};
\node at (0, 1){$\mathcal{S}_{1}$};
\node at (0, -1){$\mathcal{S}_{1}$};
\end{tikzpicture}
\end{center}
\caption{Lemma \ref{lem:3.3}}\label{fig2.1x1}
\end{figure}
\vspace{5pt}

Now, let us consider the case $\tau\neq 0$ in the following lemmas.
\begin{lemma}\label{lem:3.3}(see Fig. \ref{fig2.1x1})
Assume $\omega_{+,1}>\omega_{+,0}$. For two $\mathcal{S}_1$ shock wave curves starting at points $(\omega_{-,0},\omega_{+,1})$
and $(\omega_{-,0},\omega_{+,0})$ corresponding to $(\rho_1,v_1)$ and $(\rho_0,v_0)$ respectively, and ending at points
$(\omega_{-},\omega_{+,2})$ and $(\omega_{-},\omega_{+})$ corresponding to $(\rho_2,v_2)$ and $(\rho,v)$, respectively. If $0<\rho_{*}<\rho_i<\rho^{*}<\infty$ for $i=0$ and $1$, then there exists a constant $C_{4}>0$ depending only on $\rho_{*}$ and $\rho^{*}$,
such that
\begin{equation}\label{eq:3.17}
\omega_{+,0}-\omega_{+}-(\omega_{+,1}-\omega_{+,2})
\leq C_{4}(\gamma-1+\tau^{2})(\omega_{+,1}-\omega_{+,0})(\omega_{-,0}-\omega_{-}).
\end{equation}
\end{lemma}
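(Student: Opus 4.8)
The plan is to imitate the structure of the proof of Lemma \ref{lem:3.1}, reducing the left-hand side to a single difference of the shock-curve function $\Phi_1$ of Lemma \ref{lem:2.7} evaluated at two base states sharing the same $\omega_-$-coordinate, and then to extract the factor $(\gamma-1+\tau^2)$ from the fact that the shock curve becomes base-point independent when $\gamma=1$, $\tau=0$. I set $\Delta\omega_-=\omega_{-,0}-\omega_-$ and $\Delta\omega_+=\omega_{+,1}-\omega_{+,0}$; here $\Delta\omega_+>0$ by hypothesis and $\Delta\omega_->0$ by Lemma \ref{lem:2.6}. Since both $\mathcal{S}_1$ curves start at $\omega_-=\omega_{-,0}$ and terminate at the common value $\omega_-$, they sweep the same interval $[0,\Delta\omega_-]$ of the parameter $\beta_-$, and by \eqref{eq:2.29} and \eqref{eq:2.31} the two $\omega_+$-drops are $\omega_{+,0}-\omega_+=\Phi_1(\Delta\omega_-,U_0;\tau^2)$ and $\omega_{+,1}-\omega_{+,2}=\Phi_1(\Delta\omega_-,U_1;\tau^2)$, where $U_0$ and $U_1$ are the states with Riemann invariants $(\omega_{-,0},\omega_{+,0})$ and $(\omega_{-,0},\omega_{+,1})$, respectively.

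First I would introduce the single scalar function
\[
D(\Delta\omega_-,\Delta\omega_+;\gamma-1,\tau^2):=\Phi_1\big(\Delta\omega_-,\,U(\omega_{-,0},\omega_{+,0}+\Delta\omega_+,\tau^2);\,\tau^2\big),
\]
where $U(\cdot,\cdot,\tau^2)$ denotes the inverse map of Lemma \ref{lem:2.3}, so that the quantity to be bounded is exactly $D(\Delta\omega_-,0)-D(\Delta\omega_-,\Delta\omega_+)=-\int_0^{\Delta\omega_+}\partial_{\Delta\omega_+}D(\Delta\omega_-,\zeta)\,d\zeta$. The heart of the argument is then to show that $\partial_{\Delta\omega_+}D$ vanishes on two coordinate slices. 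On one hand, $\Phi_1(0,\cdot\,;\tau^2)=0$ by \eqref{eq:2.31}, whence $D(0,\cdot)\equiv0$ and $\partial_{\Delta\omega_+}D\big|_{\Delta\omega_-=0}=0$. On the other hand, at $\gamma=1$, $\tau=0$ the explicit relation \eqref{eq:2.64x} of Remark \ref{rem:2.4} expresses $r_0-r-(s_0-s)$ purely in terms of $r_0-r+s_0-s$, i.e. the $\omega_+$-drop along an $S_1$ curve depends only on the $\omega_-$-drop and not on the base point; hence $D(\cdot,\cdot;0,0)$ is independent of its second argument and $\partial_{\Delta\omega_+}D\big|_{\gamma=1,\tau=0}=0$.

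Given these two vanishing properties, I would conclude exactly as in \eqref{eq:3.15}. Using the joint $C^2$ (indeed $C^3$) dependence of $\Phi_1$ on $(\alpha,U_0,\tau^2)$ and of the invariants on $(\rho,v,\tau^2)$ recorded in Lemmas \ref{lem:2.3}, \ref{lem:2.5} and \ref{lem:2.7}, Hadamard's lemma in the parameters $(\gamma-1,\tau^2)$ gives $\partial_{\Delta\omega_+}D=(\gamma-1)A+\tau^2 B$ with $A$, $B$ bounded; since the left-hand side vanishes identically at $\Delta\omega_-=0$, so do $A$ and $B$ there, and a further application of Hadamard's lemma in $\Delta\omega_-$ yields $\partial_{\Delta\omega_+}D=\Delta\omega_-\big((\gamma-1)\tilde A+\tau^2\tilde B\big)$ with $\tilde A,\tilde B$ bounded on the compact region fixed by $\rho_*<\rho_i<\rho^*$. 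Integrating in $\zeta$ over $[0,\Delta\omega_+]$ then produces $|D(\Delta\omega_-,0)-D(\Delta\omega_-,\Delta\omega_+)|\le C_4(\gamma-1+\tau^2)\Delta\omega_-\Delta\omega_+$, which is the desired \eqref{eq:3.17} (only the upper bound is needed).

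The main obstacle will be twofold. First, one must verify that $D$ and its mixed derivatives extend smoothly down to $\gamma=1$, where the invariants degenerate into the logarithmic expressions of Remark \ref{rem:2.4}, and down to $\tau=0$; this is where the $C^2$-structure coming from the implicit-function-theorem constructions in Lemmas \ref{lem:2.5}--\ref{lem:2.7} is indispensable, and it is the genuinely new point relative to Lemma \ref{lem:3.1}, because it demands uniform control of the $\tau^2$-derivatives of $\Phi_1$ and not merely of its $\gamma$-derivatives. Second, the estimate must be uniform in the (possibly large) wave strengths $\Delta\omega_\pm$; this is exactly where the hypothesis $\rho_*<\rho_i<\rho^*$ enters, confining the relevant states to a compact set on which $\tilde A$ and $\tilde B$ are bounded by constants depending only on $\rho_*$ and $\rho^*$.
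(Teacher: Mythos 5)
Your proposal is correct and follows essentially the same route as the paper: both reduce the left-hand side to a mixed second increment of the $\mathcal{S}_1$ shock-curve function in Riemann-invariant coordinates, exploit that this function is base-point independent at $\gamma=1$, $\tau=0$ (Remark \ref{rem:2.4}) and vanishes when $\Delta\omega_-=0$, and then use $C^2$ smoothness in $(\gamma-1,\tau^2)$ to extract the factor $(\gamma-1+\tau^2)\Delta\omega_-\Delta\omega_+$. The only difference is organizational — you treat $\gamma-1$ and $\tau^2$ jointly via Hadamard's lemma, whereas the paper first invokes Lemma \ref{lem:3.1} for the $\tau=0$ case and then adds the $\tau^2$ correction as an integral of a second difference — and your explicit attention to uniformity near $\gamma=1$ and over the compact density range is a point the paper leaves implicit.
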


\begin{proof}
Let $\Delta \omega_{-}=\omega_{-,0}-\omega_{-}$, $\Delta \omega_{+}=\omega_{+,1}-\omega_{+,0}$.
%
%
%
and let $\omega_{+,2}=\omega^{*}(\Delta \omega_{-}, \Delta \omega_{+},\tau^{2})$.
For the case that $\tau=0$, by Lemma \ref{lem:3.1},
\begin{align*}
\omega^{*}(0, 0,0)-\omega^{*}(\Delta \omega_{-}, 0,0)-\omega^{*}(0, \Delta \omega_{+},0)
+\omega^{*}(\Delta \omega_{-}, \Delta \omega_{+},0)=\mathcal{O}(1)(\gamma-1)\Delta \omega_{-}\Delta \omega_{+}.
\end{align*}
Therefore, we have
\begin{align*}
& \omega_{+,0}-\omega_{+}-(\omega_{+,1}-\omega_{+,2})\\[5pt]
&\ \ =\omega^{*}(0, 0, \tau^{2})-\omega^{*}(\Delta \omega_{-}, 0,\tau^{2})
-\Big(\omega^{*}(0, \Delta \omega_{+},\tau^{2})-\omega^{*}(\Delta \omega_{-}, \Delta \omega_{+},\tau^{2})\Big)\\[5pt]
&\ \ =\omega^{*}(0, 0, \tau^{2})-\omega^{*}(0, 0,0)-\Big(\omega^{*}(\Delta \omega_{-},0 ,\tau^{2})-\omega^{*}(\Delta \omega_{-}, 0,0)\Big)\\[5pt]
&\quad \ \ -\Big(\omega^{*}(0,\Delta \omega_{+},\tau^{2})-\omega^{*}(0,\Delta \omega_{-}, 0)\Big)
+\omega^{*}(\Delta \omega_{-}, \Delta \omega_{+},\tau^{2})-\omega^{*}(\Delta \omega_{-}, \Delta \omega_{+},0)\\[5pt]
&\quad \ \ +\omega^{*}(0, 0,0)-\omega^{*}(\Delta \omega_{-}, 0,0)-\omega^{*}(0, \Delta \omega_{+},0)
+\omega^{*}(\Delta \omega_{-}, \Delta \omega_{+},0)\\[5pt]
&\ \ =\tau^{2}\int^{1}_{0}\mathbf{e}(\Delta \omega_{-},\Delta \omega_{+},\mu \tau^{2})d\mu
+\mathcal{O}(1)(\gamma-1)\Delta \omega_{-}\Delta \omega_{+},
\end{align*}
where
\begin{align*}
\mathbf{e}(\Delta \omega_{-}, \Delta \omega_{+},\mu \tau^{2})
&=\partial_{\mu}\omega^{*}(0, 0,\mu \tau^{2})-\partial_{\mu}\omega^{*}(\Delta \omega_{-}, 0,\mu \tau^{2})\\[5pt]
&\ \ \ -\partial_{\mu}\omega^{*}(0, \Delta \omega_{+},\mu \tau^{2})
+\partial_{\mu}\omega^{*}(\Delta \omega_{-}, \Delta \omega_{+},\mu \tau^{2})\\[5pt]
&=\mathcal{O}(1)\Delta \omega_{-}\Delta \omega_{+}.
\end{align*}

Combining the above two estimates together, we have \eqref{eq:3.17}.
\end{proof}

\vspace{5pt}
\begin{figure}[ht]
\begin{center}
\begin{tikzpicture}[scale=0.9]
\draw [line width=0.05cm] (-3.2,-1.5) --(3.6,-1.5);
\draw [line width=0.05cm] (-3.2,1) --(3.6,1);
\draw [line width=0.03cm][blue] (-2.5,-1.5)to[out=80, in=195](-0.5,1);
\draw [line width=0.03cm][blue](0.5,-1.5)to[out=80, in=190](2.8,1);
\draw [thick][dashed] (-2.52,-1.5)--(-2.52,1);
\draw [thick][dashed] (0.48,-1.5)--(0.48,1);
\node at (-2.5, -1.5){$\bullet$};
\node at (-0.5, 1){$\bullet$};
\node at (0.5, -1.5){$\bullet$};
\node at (2.8, 1){$\bullet$};
\node at (-2.6, -1.8){$(\omega_{-,1},\omega_{+,0})$};
\node at (0.6, -1.8){$(\omega_{-,0},\omega_{+,0})$};
\node at (-0.5, 1.3){$(\omega_{-,2},\omega_{+})$};
\node at (2.8, 1.3){$(\omega_{-},\omega_{+})$};
\node at (4, -1.5){$\omega_{+,0}$};
\node at (4, 1){$\omega_{+}$};
\node at (-1.2, 0){$\mathcal{S}_{2}$};
\node at (1.8, 0){$\mathcal{S}_{2}$};
\end{tikzpicture}
\end{center}
\caption{Lemma \ref{lem:3.4}}\label{fig2.2}
\end{figure}

Similarly, we also have the estimate on the difference of $\omega_{-}$ on $\mathcal{S}_{2}$
shock wave curves. 
\begin{lemma}\label{lem:3.4}(see Fig. \ref{fig2.2})
Assume $\omega_{-,0}>\omega_{-,1}$. For $\mathcal{S}_2$ shock wave curves starting at points $(\omega_{-,1},\omega_{+,0})$
and $(\omega_{-,0},\omega_{+,0})$ corresponding to $(\rho_1,v_1)$ and $(\rho_0,v_0)$ respectively, and ending at points
$(\omega_{-,2},\omega_{+})$ and $(\omega_{-},\omega_{+})$ corresponding to $(\rho,v)$ and $(\rho_2,v_2)$, respectively. If $0<\rho_{*}<\rho_i<\rho^{*}<\infty$ for $i=0$ and $1$, then there exists a constant $C'_{4}>0$ depending only on $\rho_{*}$ and $\rho^{*}$,
such that
\begin{equation}\label{eq:3.18}
\omega_{-}-\omega_{-,0}-(\omega_{-,2}-\omega_{-,1})
\leq C'_{4}(\gamma-1+\tau^{2})(\omega_{-,0}-\omega_{-,1})(\omega_{+}-\omega_{+,0}).
\end{equation}
\end{lemma}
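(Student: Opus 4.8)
The plan is to follow the proof of Lemma \ref{lem:3.3} essentially verbatim, interchanging the roles of the two families: everywhere one uses the $\mathcal{S}_1$ shock curve $\Phi_1$ together with its $\tau=0$ counterpart Lemma \ref{lem:3.1}, one instead uses the $\mathcal{S}_2$ curve $\Phi_2$ from Lemma \ref{lem:2.9} together with Lemma \ref{lem:3.2}. Concretely, I would set $\Delta\omega_+=\omega_+-\omega_{+,0}$ for the common $\omega_+$-increment traversed by both $\mathcal{S}_2$ shocks (recall from \eqref{eq:2.34} that the $\mathcal{S}_2$ curve is parametrized in the $\omega_+$ direction), and $\Delta\omega_-=\omega_{-,0}-\omega_{-,1}$ for the gap between the two base points. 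Using Lemma \ref{lem:2.9} I would express the terminal $\omega_-$-coordinate of the $\mathcal{S}_2$ shock of strength $\Delta\omega_+$ issuing from a base point displaced by $\Delta\omega_-$ as a single function $\omega^*(\Delta\omega_+,\Delta\omega_-,\tau^2)$; by Lemmas \ref{lem:2.9}--\ref{lem:2.10} this function is $C^2$ in all of its arguments for $\tau$ small, uniformly for $\gamma\in[1,2]$ and densities confined to $[\rho_*,\rho^*]$. With this notation the four corner values $\omega^*(0,0,\tau^2)=\omega_{-,0}$, $\omega^*(\Delta\omega_+,0,\tau^2)=\omega_-$, $\omega^*(0,\Delta\omega_-,\tau^2)=\omega_{-,1}$, $\omega^*(\Delta\omega_+,\Delta\omega_-,\tau^2)=\omega_{-,2}$ identify the left-hand side of \eqref{eq:3.18} with (minus) the mixed second difference of $\omega^*$ in its two increment variables, so that only an absolute-value bound on that second difference is needed.

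First I would isolate the $\tau=0$ contribution: by Lemma \ref{lem:3.2}, the mixed second difference of $\omega^*(\cdot,\cdot,0)$ is $\mathcal{O}(1)(\gamma-1)\Delta\omega_+\Delta\omega_-$. Then, adding and subtracting the $\tau=0$ values and writing the remaining $\tau$-dependence through the fundamental theorem of calculus in $\tau^2$, I obtain
\begin{align*}
&\omega^*(0,0,\tau^2)-\omega^*(\Delta\omega_+,0,\tau^2)-\omega^*(0,\Delta\omega_-,\tau^2)+\omega^*(\Delta\omega_+,\Delta\omega_-,\tau^2)\\[3pt]
&\quad=\tau^2\int_0^1\mathbf{e}(\Delta\omega_+,\Delta\omega_-,\mu\tau^2)\,d\mu+\mathcal{O}(1)(\gamma-1)\Delta\omega_+\Delta\omega_-,
\end{align*}
where $\mathbf{e}$ is the mixed second difference of $\partial_{\tau^2}\omega^*$ in the two increments. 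Because this second difference vanishes whenever either increment is zero, a two-fold mean value estimate gives $\mathbf{e}=\mathcal{O}(1)\Delta\omega_+\Delta\omega_-$. Combining the two contributions yields the bound $\mathcal{O}(1)(\gamma-1+\tau^2)\Delta\omega_+\Delta\omega_-$, which is exactly \eqref{eq:3.18} with a constant $C'_4$ depending only on $\rho_*$ and $\rho^*$.

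The hard part will be the uniform estimate $\mathbf{e}=\mathcal{O}(1)\Delta\omega_+\Delta\omega_-$, i.e. controlling the mixed partial $\partial_{\Delta\omega_+}\partial_{\Delta\omega_-}\partial_{\tau^2}\omega^*$ uniformly in $\tau$ near $0$, in $\gamma\in[1,2]$, and over the compact density range $[\rho_*,\rho^*]$. This is precisely the joint smoothness in $(\alpha,U_0,\tau^2)$ of the $\mathcal{S}_2$ shock curve furnished by Lemmas \ref{lem:2.9} and \ref{lem:2.10}, together with the nondegeneracy $\partial_\alpha(\omega_{+,0}-\omega_+)\neq 0$ from Lemma \ref{lem:2.6} that makes the implicit-function inversion $C^2$. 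The confinement $\rho_i\in[\rho_*,\rho^*]$ is what keeps all these derivatives bounded away from the degeneracies at $\alpha=1$ and at the vacuum, and the explicit $\tau=0$ formulas behind Lemma \ref{lem:3.2} are what supply the extra factor $(\gamma-1)$ in the first term. Once these uniform bounds are in hand, everything else is a routine repetition of the computation in Lemma \ref{lem:3.3}.
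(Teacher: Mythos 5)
Your proposal is correct and is exactly the argument the paper intends: Lemma \ref{lem:3.4} is stated in the paper without proof precisely because it is the mirror image of Lemma \ref{lem:3.3}, and your plan — writing the left-hand side as (minus) the mixed second difference of the terminal $\omega_-$-coordinate $\omega^*(\Delta\omega_+,\Delta\omega_-,\tau^2)$ of the $\mathcal{S}_2$ curve, extracting the $\tau=0$ part via Lemma \ref{lem:3.2}, and bounding the remainder by $\tau^2\int_0^1\mathbf{e}\,d\mu$ with $\mathbf{e}=\mathcal{O}(1)\Delta\omega_+\Delta\omega_-$ from the $C^2$ regularity of $\Phi_2$ on the compact density range — reproduces the paper's proof of Lemma \ref{lem:3.3} with the two families interchanged. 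No gaps.
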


\begin{figure}[ht]
\begin{center}
\begin{tikzpicture}[scale=0.95]
\draw [line width=0.04cm] (-3.5,-6.5) --(-3.5,0.5);
\draw [line width=0.04cm] (-0.5,-6.5) --(-0.5,0.5);
\draw [line width=0.04cm] (2.5,-6.5) --(2.5,0.5);

\draw [thin] (-3.5,-0.5)--(2.5,-1.5);
\draw [thin] (-3.5,-2.5)--(2.5,-3.5);
\draw [thin] (-3.5,-4.5)--(2.5,-5.5);

\draw [thick][blue](-3.5,-0.5)--(-1.5,-1.5);
\draw [thick][red] (-3.5,-4.5)--(-1.5,-4.3);

\draw [thick][red] (-0.5,-3)--(1.3,-2.5);
\draw [thick][red] (-0.5,-3)--(1.3,-4.2);

\node at (-1.0, -1.6){$\beta(\pi)$};
\node at (-1.0, -4.3){$\nu(o)$};
\node at (1.9, -2.5){$\beta'(\pi')$};
\node at (1.9, -4.2){$\nu'(o')$};

\node at (-2.0, -5.3){$(\omega_{-, L},\omega_{+, L})$};
\node at (-2.0, -2.2){$(\omega_{-, M},\omega_{+, M})$};
\node at (-2.0, -0.2){$(\omega_{-, R},\omega_{+, R})$};

\node at (0.8, -4.7){$(\omega_{-, L},\omega_{+, L})$};
\node at (1.2, -3.2){$(\omega'_{-, M},\omega'_{+, M})$};
\node at (1.1, -1.8){$(\omega_{-, R},\omega_{+, R})$};

\node at (-2.0, -3.5){$\Omega_{\Delta, k-1}$};
\node at (1.0, -0.7){$\Omega_{\Delta, k}$};
\node at (-3.5, -6.9){$x_{k-1}$};
\node at (-0.5, -6.9){$x_{k}$};
\node at (2.5, -6.9){$x_{k+1}$};
\end{tikzpicture}
\end{center}
\caption{Local interaction estimates away from the boundary}\label{fig3.1x1}
\end{figure}
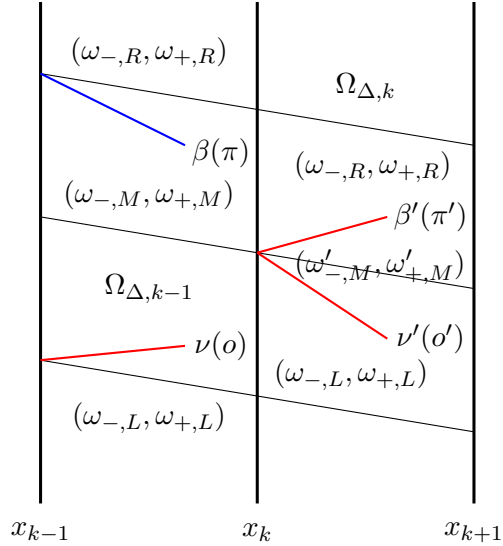

Now we are ready to introduce the local interaction estimates case by case. Let $\nu$ and $\nu'$ be the wave strength of shock wave $\mathcal{S}_1$ before and after the interaction.
Let $\beta$ and $\beta'$ be the wave strength of shock wave $\mathcal{S}_2$ before and after the interaction.
And let $o, \pi$ and $o', \pi'$ be the wave strength of rarefaction wave $\mathcal{R}_{1}$ and $\mathcal{R}_{2}$
before and after the interaction respectively.


\begin{lemma}\label{lem:3.5}
Let $\gamma\in[1,2]$, and let $0<\hat{\rho}<\check{\rho}<\infty$. Then, for $\rho\in [\hat{\rho}, \check{\rho}]$,
there exist positive constants $C_{0}>0$, $C_{5}>0$ and $\delta\in (0,1)$ independent of $\gamma$, $\beta$, $\nu$ and $\rho$,
such that the following interaction estimates hold:
\begin{itemize}
\item[(1)] For the case that $\mathcal{S}_{2}+\mathcal{S}_{1}\rightarrow \mathcal{S}'_{1}+\mathcal{S}'_{2}$,\ \emph{i.e.},
for the wave strength interaction that
$\beta+\nu\rightarrow \nu'+\beta'$, one of the following estimates holds:
\begin{eqnarray*}\label{eq:2.17}
\begin{split}
&(a)\  |\nu'|+|\beta'|\leq |\beta|+|\nu|+C_{5}(\gamma-1+\tau^{2})|\beta||\nu|,\\[5pt]
&(b)\ |\nu'|=|\nu|-\zeta, \quad |\beta'|\leq |\beta|+C_{5}(\gamma-1+\tau^{2})|\beta||\nu|+\eta,\\[5pt]
&(c)\ |\beta'|=|\beta|-\zeta, \quad |\nu'|\leq |\nu|+C_{5}(\gamma-1+\tau^{2})|\beta||\nu|+\eta,
\end{split}
\end{eqnarray*}
where $0\leq \eta \leq \delta\zeta$;

\item[(2)] For the case that $\mathcal{S}_{2}+\mathcal{R}_{1}\rightarrow \mathcal{R}'_{1}+\mathcal{S}'_{2}$,
\ \emph{i.e.}, for the wave interaction that $\beta+o \rightarrow o'+\beta'$, we have  $|\beta'|=|\beta|$;

\item[(3)] For the case that $\mathcal{S}_{2}+\mathcal{S}_{2}\rightarrow \mathcal{R}'_{1}+\mathcal{S}'_{2}$,
\ \emph{i.e.}, for the wave interaction that $\beta_{1}+\beta_{2}\rightarrow o'+\beta'$, we have
$|\beta'|=|\beta_{1}|+|\beta_{2}|$;

\item[(4)] For the case that $\mathcal{S}_{2}+\mathcal{R}_{2}\rightarrow \mathcal{S}'_{1}+\mathcal{S}'_{2}$
(or $\mathcal{S}_{2}+\mathcal{R}_{1}\rightarrow \mathcal{R}'_{1}+\mathcal{S}'_{2}$),\ \emph{i.e.},
for the wave interaction that $\beta+\pi\rightarrow \nu'+\beta'$ (or $\beta+o\rightarrow o'+\beta'$),
there exist $1$-shock wave $\nu_{0}$ and $2$-shock wave $\beta_{0}$ such that the wave interaction
$\beta_{0}+\nu_{0}\rightarrow \nu'+\beta'$ is the same as the one in \rm (1) and the following estimate hold:
\begin{eqnarray*}
\begin{split}
|\nu_{0}|+|\beta_{0}|\leq |\beta|-C_{0}|\nu_{0}|;
\end{split}
\end{eqnarray*}

\item[(5)] For the case that $\mathcal{R}_{2}+\mathcal{S}_{2}\rightarrow \mathcal{S}'_{1}+\mathcal{S}'_{2}$
(or $\mathcal{R}_{2}+\mathcal{S}_{1}\rightarrow \mathcal{S}'_{1}+\mathcal{R}'_{2}$), \ \emph{i.e.},
for the wave interaction that $0+\beta\rightarrow \nu'+\beta'$ (or $\pi+\nu\rightarrow \nu'+0'$),
we have  $|\nu'|+|\beta'|\leq |\beta|-C_{0}|\nu'|$;

\item[(6)] For the case that  $\mathcal{R}_{2}+\mathcal{R}_{1}\rightarrow \mathcal{R}'_{1}+\mathcal{R}'_{2}$,
\ \emph{i.e.}, for the wave interaction that $\pi+o \rightarrow o'+\pi'$, we have $|o|+|\pi|=|o'+|\pi'|$;

\item[(7)] For the case that $\mathcal{S}_{1}+\mathcal{R}_{1}\rightarrow \mathcal{S}'_{1}+\mathcal{S}'_{2}$,
\ \emph{i.e.}, for the wave interaction that $\nu+o\rightarrow \nu'+\beta'$, we have $|\nu'|+|\beta'|\leq |\nu|-C_{0}|\beta'|$;

\item[(8)] For the case that $\mathcal{S}_{1}+\mathcal{S}_{1}\rightarrow\mathcal{S}'_{1}+\mathcal{R}'_{2}$,
\ \emph{i.e.}, for the wave interaction that $\nu_{1}+\nu_{2}\rightarrow \nu'+\pi'$, we have $|\nu'|=|\nu_{1}|+|\nu_{2}|$.
\end{itemize}
\end{lemma}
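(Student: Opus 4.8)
The plan is to carry out all eight interaction computations in the Riemann-invariant plane $(\omega_-,\omega_+)$, using the parametrization from \eqref{eq:2.37}--\eqref{eq:2.40}: a $1$-wave advances $\omega_-$ by $\textsl{z}_1$ and changes $\omega_+$ through $\Phi_1$ (if a shock) or leaves $\omega_+$ fixed (if a rarefaction), while a $2$-wave advances $\omega_+$ by $\textsl{z}_2$ and changes $\omega_-$ through $\Phi_2$ (if a shock) or leaves $\omega_-$ fixed (if a rarefaction). In each case the left state $U_L$ and right state $U_R$ are prescribed by the two incoming waves, the outgoing waves are the unique Riemann solution of $(U_L,U_R)$ guaranteed by Proposition \ref{prop:2.1}, and I measure every strength by the jump of the relevant invariant ($|\beta|=|\Delta\omega_+|$ for $\mathcal{S}_2$, $|\nu|=|\Delta\omega_-|$ for $\mathcal{S}_1$, and analogously for the rarefactions). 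Thus each estimate reduces to comparing the two coordinates of a composition of the maps $\mathscr{H}_1,\mathscr{H}_2$.

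First I would dispatch the cases where one family is a pure rarefaction or where two waves of the same family merge. For the pure rarefaction interaction (6) the two invariants are transported independently along their respective characteristics, so $\omega_\pm$ are conserved and $|o|+|\pi|=|o'|+|\pi'|$ is immediate. Case (2), a $2$-shock crossing a $1$-rarefaction, follows because a $1$-rarefaction leaves $\omega_+$ invariant, so the net change of $\omega_+$ across the whole interaction equals the $2$-shock strength both before and after, forcing $|\beta'|=|\beta|$. For the same-family mergers (3) and (8) the convexity/concavity of the shock curves, namely $\Phi_1''>0$ in \eqref{eq:2.32} and $\Phi_2''<0$ in \eqref{eq:2.35}, forces the two incoming jumps of a single invariant to add \emph{exactly}, producing a shock of the same family with $|\beta'|=|\beta_1|+|\beta_2|$ (resp. $|\nu'|=|\nu_1|+|\nu_2|$); the monotonicity $0<\partial\Phi_i<1$ then guarantees that the accompanying wave of the opposite family has the correct (rarefaction) sign.

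Next come the cancellation cases (4), (5) and (7), in which a rarefaction meets a shock of the same or opposite family. Here a rarefaction partially annihilates a shock, and the strict monotonicity lower bounds $\partial\Phi_1>C_1\varepsilon_0$ in \eqref{eq:2.33} and $\partial\Phi_2<1-C_2\varepsilon_0$ in \eqref{eq:2.36} convert this annihilation into the quantitative gain $-C_0$ multiplying the reflected strength. For case (4) I would explicitly introduce the auxiliary shocks $\nu_0,\beta_0$ as stated, verify that the Riemann data $(U_L,U_R)$ coincides with that of a genuine $\mathcal{S}_2+\mathcal{S}_1$ collision, and then invoke the outcome of case (1) for $\nu_0,\beta_0$, reducing (4) to (1).

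The main obstacle is case (1), the head-on collision $\mathcal{S}_2+\mathcal{S}_1\to\mathcal{S}_1'+\mathcal{S}_2'$, which carries the trichotomy (a)--(c) and the decay parameter $\delta\in(0,1)$. For this I would expand the composition $\mathscr{H}_1(\cdot,\mathscr{H}_2(\cdot,\omega_L))$ and compare the outgoing with the incoming strengths; the second-order interaction term is controlled by the refined same-family estimates of Lemma \ref{lem:3.3} and Lemma \ref{lem:3.4}, which is precisely where the factor $(\gamma-1+\tau^2)$ enters. The delicate point is the sign analysis that decides whether each reflected wave stays a shock or becomes a rarefaction, and, in the partially cancelling sub-cases (b) and (c), the quantitative comparison of the traded strength $\eta$ against the lost strength $\zeta$. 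Producing a uniform $\delta<1$ demands a lower bound on the cancellation rate that is uniform in $\gamma\in[1,2]$, in small $\tau$, and in $\rho\in[\hat\rho,\check\rho]$, which I would extract from the strict monotonicity bounds \eqref{eq:2.33} and \eqref{eq:2.36} together with the $\tau=0$ convexity in \eqref{eq:2.32} and \eqref{eq:2.35}.
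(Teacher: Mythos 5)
Your overall route is the paper's route: everything is done in the $(\omega_-,\omega_+)$ plane via the parametrization $\mathscr{H}_1,\mathscr{H}_2$; cases (2), (3), (6), (8) follow from the invariance of $\omega_\mp$ across $1$-/$2$-rarefactions and the monotonicity of $\Phi_1,\Phi_2$; cases (4), (5), (7) get the coefficient $C_0$ from $C_0=\inf\big(1/\Phi_i'-1\big)>0$, which is exactly what the strict bounds \eqref{eq:2.33} and \eqref{eq:2.36} provide, and case (4) is reduced to case (1) by inserting the auxiliary pair $(\nu_0,\beta_0)$; the quadratic terms $C_5(\gamma-1+\tau^2)|\beta||\nu|$ come from the same-family comparison Lemmas \ref{lem:3.3} and \ref{lem:3.4}. (A minor quibble: in (3) and (8) you have the roles of convexity and monotonicity reversed --- the exact additivity $|\beta'|=|\beta_1|+|\beta_2|$ is forced by the fact that the reflected opposite-family wave is a rarefaction, hence preserves the relevant invariant, while it is the convexity $\Phi_2''<0$, via Lemma \ref{lem:3.4}, that guarantees the reflected wave is indeed a rarefaction.)

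The genuine gap is in case (1), which is the crux of the lemma because the trichotomy (a)--(c) with $\eta\le\delta\zeta$, $\delta<1$, is what later makes the Glimm functional decrease. You correctly name the difficulty (``the sign analysis that decides whether each reflected wave stays a shock or becomes a rarefaction'' and the comparison of $\eta$ against $\zeta$) but do not supply the mechanism that resolves it. The paper's device is to introduce the auxiliary state $(\hat\omega_{-,M},\hat\omega_{+,M})$ where the shock curve $\hat{\mathcal{S}}_1$ issuing from $(\omega_{-,L},\omega_{+,L})$ meets the shock curve $\hat{\mathcal{S}}_2$ issuing from $(\omega_{-,R},\omega_{+,R})$; these two curves, together with the lines $\omega_\pm=\hat\omega_{\pm,M}$ and the true outgoing curves $\mathcal{S}_1',\mathcal{S}_2'$, bound three regions $I$, $II$, $III$, exactly one of which contains the true post-interaction middle state $(\omega'_{-,M},\omega'_{+,M})$, and the three positions correspond precisely to the alternatives (a), (b), (c). In region $I$ both outgoing waves are shocks and Lemmas \ref{lem:3.3}--\ref{lem:3.4} give (a); in regions $II$ and $III$ one incoming shock is partially cancelled by an amount $\zeta$, and the transferred strength is $\eta=\Phi_1(|\nu|,U_L;\tau^2)-\Phi_1(|\nu|-\zeta,U_L;\tau^2)=\Phi_1'(\theta)\zeta$ by the mean value theorem, so $\delta:=\sup\Phi_1'<1$ (uniform over $\rho\in[\hat\rho,\check\rho]$ by \eqref{eq:2.32}--\eqref{eq:2.33}) yields $\eta\le\delta\zeta$. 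Simply ``expanding the composition $\mathscr{H}_1(\cdot,\mathscr{H}_2(\cdot,\omega_L))$'' does not by itself tell you which branch of the piecewise definitions \eqref{eq:2.37}--\eqref{eq:2.38} is active after the interaction; without the region decomposition (or an equivalent a priori localization of $\omega'_M$) the case distinction, and hence the uniform $\delta$, is not established.
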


\begin{proof}
We will show this Lemma case by case.

\par First, let us study the first case.
In this case, an $\mathcal{S}_{2}$ shock wave from the left with wave strength $\beta$ interacts with an $\mathcal{S}_{1}$
shock wave from the right with wave strength $\nu$. Both of them enter into $\Lambda$. Denote by $\nu'$ and $\beta'$
the wave strength of the resulting shock waves $\mathcal{S}'_{1}$ and $\mathcal{S}'_{2}$ issuing out from $\Lambda$
after the wave interaction.

\vspace{5pt}
\begin{figure}[ht]
\begin{center}
\begin{tikzpicture}[scale=1]
\draw [line width=0.05cm] (6,2.5)to[out=-90, in=30](5,0.5);
\draw [line width=0.05cm] (5,0.5)to[out=170, in=50](3.0,-0.2);
\draw [line width=0.05cm] (6,2.5)to[out=150, in=30](3.8,2.3);
\draw [line width=0.05cm] (3.8,2.3)to[out=-90, in=60](3,-0.2);
\draw [thin](6.1,2.5)--(6.6,2.5);
\draw [thin](5.3,0.5)--(6.6,0.5);
\draw [thin](3.6,2.3)--(2.2,2.3);
\draw [thin](2.9,-0.2)--(2.2,-0.2);
\draw [thick][<->](6.3,2.4)--(6.3,0.6);
\draw [thick][<->](2.7,2.2)--(2.7,-0.1);
\draw [thin](6.0,2.6)--(6.0,3.4);
\draw [thin](3.8,2.6)--(3.8,3.4);
\draw [thin](3.0,-0.3)--(3.0,-1.1);
\draw [thin](5.1,0.1)--(5.1,-1.1);
\draw [thick][<->](5.9,3.0)--(3.9,3.0);
\draw [thick][<->](3.1,-0.6)--(5.0,-0.6);

\node at (7.1, 2.7) {$(\omega_{-,L},\omega_{+,L})$};
\node at (2.6, 2.6) {$(\omega'_{-,M},\omega'_{+,M})$};
\node at (6.2, 0.2) {$(\omega_{-,M},\omega_{+,M})$};
\node at (1.8, -0.5) {$(\omega_{-,R},\omega_{+,R})$};

\node at (6.6, 1.6) {$|\beta|$};
\node at (2.3, 1.3) {$|\beta'|$};
\node at (5, 3.4) {$|\nu'|$};
\node at (4, -1) {$|\nu|$};

\node at (5.5, 1.5) {$\mathcal{S}_{2}$};
\node at (4.3, 0.2) {$\mathcal{S}_{1}$};
\node at (5.0, 2.4) {$\mathcal{S}'_{1}$};
\node at (3.4, 1.5) {$\mathcal{S}'_{2}$};
\end{tikzpicture}
\end{center}
\caption{Interactions between $\mathcal{S}_{2}$ and $\mathcal{S}_{1}$ waves}\label{fig3.3}
\end{figure}
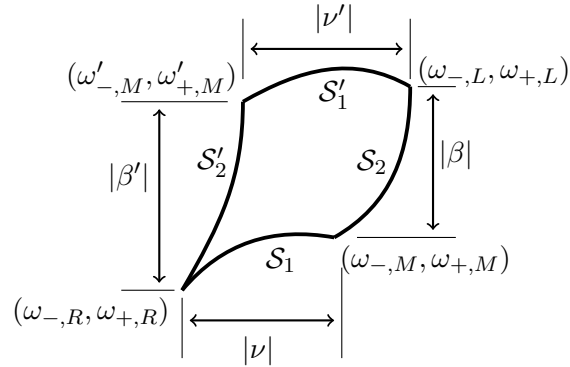
\vspace{5pt}

Let us consider the estimate in the $(\omega_{-},\omega_{+})$ plane.
Let $(\omega_{-,L}, \omega_{+,L}), (\omega_{-,M}, \omega_{+,M}), (\omega_{-, R}, \omega_{+,R})$ be the left, middle and right states before the wave interaction, \emph{i.e.}, $(\omega_{-,L}, \omega_{+,L})$ and $(\omega_{-,M}, \omega_{+,M})$ are connected by $\mathcal{S}_1$ shock, and $(\omega_{-, M}, \omega_{+,M})$ and $(\omega_{-, R}, \omega_{+,R})$ are connected by $\mathcal{S}_2$ shock. 
Let $(\omega'_{-,M}, \omega'_{+,M})$ be middle state after the wave interaction which is uniquely determined by the shock curves $\mathcal{S}'_{1}$ and $\mathcal{S}'_{2}$ which issue from $(\omega_{-,L}, \omega_{+,L})$ and $(\omega_{-,R}, \omega_{+,R})$ respectively
(See Fig. \ref{fig3.3}).

\par In order to derive the wave interaction estimate, 
as shown in Fig. \ref{fig3.4}, we consider the wave curves
$\hat{\mathcal{S}}_{1}$ and $\hat{\mathcal{S}}_{2}$ instead of the wave curves $\mathcal{S}_{1}'$ and $\mathcal{S}_{2}'$,
such that the wave curves $\hat{\mathcal{S}}_{1}$ and $\hat{\mathcal{S}}_{2}$,
issuing from $(\omega_{-,L}, \omega_{+,L})$ and $(\omega_{-,R}, \omega_{+,R})$ respectively, intersect at point
$(\hat{\omega}_{-,M}, \hat{\omega}_{+,M})$.
By Lemma \ref{lem:2.2} and Lemma \ref{lem:2.3}, we know that the wave curves $\hat{\mathcal{S}}_{1}$ and $\hat{\mathcal{S}}_{2}$,
the straight lines $\omega_{-}=\hat{\omega}_{-,M}$ and $\omega_{+}=\hat{\omega}_{+,M}$, and the wave curves $S_1'$ and $\mathcal{S}_2'$ together form the boundaries of subregions $I$, $II$ and $III$. Moreover, $(\omega_{-,M}', \omega_{+,M}')$ must lie in one of them.

\vspace{5pt}
\begin{figure}[ht]
\begin{center}
\begin{tikzpicture}[scale=1.3]
\draw [line width=0.05cm] (6,2.5)to[out=-130, in=90](5,0);
\draw [line width=0.05cm] (5,0)to[out=170, in=30](2.0,-0.5);
\draw [line width=0.05cm] (6,2.5)to[out=150, in=30](2.0,2.2);
\draw [line width=0.05cm] (2.0,2.2)to[out=-130, in=110](2,-0.5);
\draw [line width=0.03cm][dashed][red] (2.95,2.4)to[out=-120, in=100](2.0,-0.5);
\draw [line width=0.03cm][dashed][red] (6,2.5)to[out=170, in=40](2.0,1.35);
\draw [line width=0.05cm][red](6,0.5)to[out=170, in=30](2.6,0);
\draw [line width=0.04cm][blue] (0.7,1.8)--(2.6,1.8);
\draw [line width=0.04cm][blue] (2.6,3.0)--(2.6,-0.7);
\draw [line width=0.04cm][blue] (2.6,2.55)--(0.7,2.55);
\draw [line width=0.04cm][green] (6,2.5)--(6,0.5);
\draw [line width=0.04cm][blue] (6.5,0.75)--(6,0.5)--(5,0)--(4,-0.5);

\draw [thin](6.1,2.5)--(8,2.5);
\draw [thin](5.3,0)--(8,0);
\draw [thin](2.6,2.2)--(-0.7,2.2);
\draw [thin](2.0,-0.5)--(-0.7,-0.5);
\draw [thin](6.0,0.5)--(7.5,0.5);
\draw [thin](5.0,-0.1)--(5.0,-1.6);
\draw [thin](2.0,-0.6)--(2.0,-1.6);
\draw [thin](6.0,2.5)--(6.0,3.8);
\draw [thin](2.0,2.3)--(2.0,3.8);

\draw [thick][<->](7.8,2.4)--(7.8,0.1);
\draw [thick][<->](-0.1,2.1)--(-0.1,-0.4);
\draw [thick][<->](1,1.7)--(1,-0.4);
\draw [thick][<->](1.5,2.5)--(1.5,1.9);
\draw[thick][<->](6.9,2.4)--(6.9,0.6);
\draw[thick][<->](2.7,1.8)--(5.9,1.8);
\draw[thick][<->](2.1,-1.2)--(4.9,-1.2);
\draw[thick][<->](2.1,3.3)--(5.9,3.3);

\node at (6.9, 2.7) {$(\omega_{-,L},\omega_{+,L})$};
\node at (6.9, 0.9){$(\omega_{-,L}, \omega^{1}_{+,M})$};
\node at (1.0, 2.8) {$(\omega'_{-,M},\omega'_{+,M})$};
\node at (6.0, -0.2) {$(\omega_{-,M},\omega_{+,M})$};
\node at (1.1, -0.8) {$(\omega_{-, R},\omega_{+,R})$};
\node at (3.5, 1.6){$(\hat{\omega}_{-,M}, \hat{\omega}_{+,M})$};
\node at (3.0, 3.0){$(\hat{\omega}_{-,M}, \hat{\omega}^{2}_{+,M})$};
\node at (3.4, 0.7){$(\hat{\omega}_{-,M}, \hat{\omega}^{1}_{+,M})$};
\node at (6.1, 0.2){$v=v_{M}$};

\node at (8.2, 1.4) {$|\beta|$};
\node at (-0.4, 0.8) {$|\beta'|$};
\node at (0.7, 0.8) {$|\beta|$};
\node at (1.1, 2.2) {$|\beta''|$};
\node at (7.2, 1.5) {$|\beta_{0}|$};
\node at (4, 1.9) {$|\nu|$};
\node at (3.5, -1.0) {$|\nu|$};
\node at (3.8, 3.6) {$|\nu'|$};

\node at (5.7, 1.5) {$\mathcal{S}_{2}$};
\node at (3.4, -0.3) {$\mathcal{S}_{1}$};
\node at (4.3, 2.3) {$\hat{\mathcal{S}}_{1}$};
\node at (3.3, 2.6) {$\mathcal{S}'_{1}$};
\node at (2.3, 0.7) {$\hat{\mathcal{S}}_{2}$};
\node at (1.4, 0.5) {$\mathcal{S}'_{2}$};
\node at (2.3, 2.0) {$I$};
\node at (2.75, 2.3) {$II$};
\node at (2.1, 1.6) {$III$};

\end{tikzpicture}
\end{center}
\caption{}\label{fig3.4}
\end{figure}
\vspace{5pt}

We first consider the case that
$(\omega'_{-,M},\omega'_{+,M})$ lies in the region $I$, \emph{i.e.}, $\omega'_{-,M}<\hat{\omega}_{-,M}$ and  $\omega'_{+,M}>\hat{\omega}_{+,M}$ (See Fig. \ref{fig3.4}).
In this case, we know that
\begin{eqnarray}\label{eq:3.11}
|\beta'|-|\beta|\leq |\beta''|,\ \ \ \omega_{+,L}-\omega^{1}_{+,L}=\hat{\omega}_{+,M}-\hat{\omega}^{1}_{+,M}=|\beta_{0}|.
\end{eqnarray}

Notice that
\begin{eqnarray}\label{eq:3.12}
\begin{split}
\omega^{1}_{+,M}-\hat{\omega}^{1}_{+,M}-(\omega_{+,L}-\hat{\omega}^{2}_{+,M})
&=\hat{\omega}^{2}_{+,M}-\hat{\omega}^{1}_{+,M}-(\omega_{+,L}-\omega^{1}_{+,M})\\[5pt]
&=\hat{\omega}^{2}_{+,M}-\hat{\omega}^{1}_{+,M}-(\hat{\omega}_{+,M}-\hat{\omega}^{1}_{+,M})\\[5pt]
&=|\beta''|.
\end{split}
\end{eqnarray}

By Lemma \ref{lem:2.4}, there exists a constant $C_{5}>0$ such that
\begin{eqnarray}\label{eq:3.13}
\begin{split}
\omega^{1}_{+,M}-\hat{\omega}^{1}_{+,M}-(\omega_{+,L}-\hat{\omega}^{2}_{+,M})&\leq C_{5}(\gamma-1+\tau^{2})|\nu||\beta_{0}|\\[5pt]
&\leq C_{5}(\gamma-1+\tau^{2})|\nu||\beta|.
\end{split}
\end{eqnarray}

Then combing \eqref{eq:3.11}-\eqref{eq:3.13} together, we have that
\begin{eqnarray*}
\begin{split}
|\beta'|-|\beta|\leq C_{5}(\gamma-1+\tau^{2})|\nu||\beta|.
\end{split}
\end{eqnarray*}

By the same way and by Lemma \ref{lem:2.5}, one can also show that
$$
|\nu'|-|\nu|\leq C_{5}(\gamma-1+\tau^{2})|\nu||\beta|.
$$
Therefore, we show estimate $(a)$ for the first case in Lemma \ref{lem:3.5}.

\vspace{5pt}
\begin{figure}[ht]
\begin{center}
\begin{tikzpicture}[scale=1.3]
\draw [line width=0.05cm] (6,2.5)to[out=-130, in=90](5,0);
\draw [line width=0.05cm] (5,0)to[out=170, in=30](2.0,-0.5);
\draw [line width=0.05cm] (6,2.5)to[out=150, in=30](2.5,2.25);
\draw [line width=0.05cm] (2.7,2.4)to[out=-140, in=120](2,-0.5);
\draw [line width=0.04cm][dashed][red] (2.3,1.6)to[out=-110, in=110](2.0,-0.5);
\draw [line width=0.04cm][dashed][red] (6,2.5)to[out=170, in=50](2.0,1.2);
\draw [line width=0.05cm][red](6,0.5)to[out=170, in=30](2.7,0);
\draw [line width=0.04cm][blue] (-0.7,2.35)--(3,2.35);
\draw [line width=0.04cm][blue] (0.7,1.9)--(3,1.9);
\draw [line width=0.04cm][blue] (0.7,1.45)--(3,1.45);
\draw [line width=0.04cm][green] (2.7,3.7)--(2.7,-1.0);
\draw [line width=0.04cm][green] (2.25,3.0)--(2.25,-1.8);
\draw [line width=0.04cm][green] (6,2.5)--(6,-1.8);
\draw [line width=0.04cm][blue] (6.5,0.75)--(6,0.5)--(5,0)--(4,-0.5);

\draw [thin](6.1,2.5)--(8,2.5);
\draw [thin](5.3,0)--(8,0);
\draw [thin](2.0,-0.5)--(-0.7,-0.5);
\draw [thin](6.0,0.5)--(7.5,0.5);
\draw [thin](6.0,2.5)--(6.0,3.8);

\draw [thick][<->](7.8,2.4)--(7.8,0.1);
\draw [thick][<->](-0.1,2.2)--(-0.1,-0.4);
\draw [thick][<->](1,1.42)--(1,-0.4);
\draw [thick][<->](1.5,1.85)--(1.5,1.5);
\draw[thick][<->](6.9,2.4)--(6.9,0.6);
\draw[thick][<->](2.8,1.8)--(5.9,1.8);
\draw[thick][<->](2.8,-0.7)--(5.9,-0.7);
\draw[thick][<->](2.8,3.3)--(5.9,3.3);
\draw[thick][<->](2.3,-1.5)--(5.9,-1.5);
\draw[thick][<->](2.3,-0.8)--(2.7,-0.8);

\node at (6.9, 2.7) {$(\omega_{-,L},\omega_{+,L})$};
\node at (6.9, 0.8){$(\omega_{-,L}, \omega^{1}_{+,M})$};
\node at (1.8, 2.6) {$(\omega'_{-,M},\omega'_{+,M})$};
\node at (5.6, -0.3) {$(\omega_{-,M},\omega_{+,M})$};
\node at (1.4, -0.8) {$(\omega_{-,R},\omega_{+,R})$};
\node at (3.2, 1.2){$(\hat{\omega}_{-,M}, \hat{\omega}_{-,M})$};
\node at (3.6, 1.6){$(\omega'_{-,M}, \omega'^{2}_{+,M})$};
\node at (3.6, 0.4){$(\omega'_{-,M}, \omega'^{1}_{+,M})$};
\node at (6.1, 0.2){$v=v_{M}$};

\node at (8.2, 1.4) {$|\beta|$};
\node at (-0.4, 0.8) {$|\beta'|$};
\node at (0.7, 0.8) {$|\beta|$};
\node at (1.3, 1.7) {$\eta$};
\node at (7.2, 1.5) {$|\beta_{0}|$};
\node at (4.8, 2.0) {$|\nu'|$};
\node at (4, -0.9) {$|\nu'|$};
\node at (3.8, 3.6) {$|\nu'|$};
\node at (3.7, -1.8) {$|\nu|$};
\node at (2.5, -0.6) {$\zeta$};

\node at (5.4, 1.2) {$\mathcal{S}_{2}$};
\node at (3.4, -0.2) {$\mathcal{S}_{1}$};
\node at (4.3, 2.3) {$\hat{\mathcal{S}}_{1}$};
\node at (4.2, 3.1) {$\mathcal{S}'_{1}$};
\node at (2.2, 0.5) {$\hat{\mathcal{S}}_{2}$};
\node at (1.5, 0.7) {$\mathcal{S}'_{2}$};

\end{tikzpicture}
\end{center}
\caption{}\label{fig3.5}
\end{figure}
\vspace{5pt}

\par Next, let us consider the case that $(\omega'_{-,M},\omega'_{+,M})\in II$.
As shown in Fig. \ref{fig3.5}, we can see that
\begin{eqnarray*}
\begin{split}
|\beta'|=|\beta|-\zeta>0,
\end{split}
\end{eqnarray*}
and
\begin{eqnarray*}
\begin{split}
|\nu'|-|\nu|-\eta=\omega'_{+,M}-\omega'^{2}_{+,M}.
\end{split}
\end{eqnarray*}

For the estimate of $\omega'_{+,M}-\omega'^{2}_{+,M}$, by Lemma \ref{lem:2.4}, there exists a constant $C_{5}>0$ such that
\begin{eqnarray*}
\begin{split}
\omega'_{+,M}-\omega'^{2}_{+,M}&=\omega'_{+,M}-\omega'^{1}_{+,M}-(\omega'^{2}_{+,M}-\omega'^{1}_{+,M})\\[5pt]
&=\omega^{1}_{+,M}-\omega'^{1}_{+,M}-(\omega_{+,M}-\omega'_{+,M})\\[5pt]
&\leq  C_{5}(\gamma-1+\tau^{2})|\nu'||\beta_{0}|\\[5pt]
&\leq C_{5}(\gamma-1+\tau^{2})|\nu||\beta|.
\end{split}
\end{eqnarray*}

For the estimate of $\eta$, by Lemma \ref{lem:2.2}, we have
\begin{eqnarray*}
\begin{split}
\eta=\Phi_{1}(|\nu|, U_{L}; \tau^{2})-\Phi_{1}(|\nu|-\zeta, U_{L}; \tau^{2})=\Phi'_{1}(\theta,U_{L}; \tau^{2})\zeta,
\ \ \ \mbox{where } \theta \in(|\nu|-\zeta, |\nu|),
\end{split}
\end{eqnarray*}
which implies that $\eta\leq \delta \zeta$ by taking
\begin{eqnarray*}
\begin{split}
\delta=\sup_{\theta \in(|\nu|-\zeta, |\nu|), \rho_{L}\in (\hat{\rho},\check{\rho})} \Phi'_{1}(\theta,U_{L}; \tau^{2})\in (0,1).
\end{split}
\end{eqnarray*}

Therefore, combing the estimates above together, we can get estimate $(b)$  in Lemma \ref{lem:3.5} for the second case. 

Finally, by a similar argument as the one for the second case, for the case that $(\omega'_{-,M}, \omega'_{+,M}) \in III$,
we can obtain the estimate $(c)$ in Lemma \ref{lem:3.5}. 

It completes the proof of case $(1)$.

\smallskip
\par Now, let us study case \rm (2).

Similar to case $(1)$, let $(\omega_{-,L}, \omega_{+,L}), (\omega_{-,M}, \omega_{+,M}), (\omega_{-,R}, \omega_{+,R})$ be the left, middle and right states before the wave interaction and let $(\omega'_{-,M}, \omega'_{+,M})$ be the middle state after the wave interaction which is uniquely determined by the rarefaction wave $\mathcal{R}'_{1}$ and the shock wave $\mathcal{S}'_{2}$. Notice that $\omega_{+,L}=\omega'_{+,M}$ and $\omega_{+,M}=\omega_{+,R}$, then by the monotonicity of function $\Phi_{2}$, we have that
\begin{eqnarray*}
\begin{split}
|\beta'|=|\omega_{+,R}-\omega'_{+,M}|=|\omega_{+,M}-\omega_{+,L}|=|\beta|.
\end{split}
\end{eqnarray*}

\smallskip
\par The proof of the estimates for case \rm (3) is similar to the one for case $(2)$. In fact, 
by the monotonicity of function $\Phi_{2}$,
we have $\omega_{+,L}<\omega_{+,M}<\omega_{+,R}$ and $\omega'_{+,M}=\omega_{+,L}$. Then
\begin{eqnarray*}
\begin{split}
|\beta'|=|\omega_{+,R}-\omega'_{+,M}|=|\omega_{+,R}-\omega_{+,L}|=|\omega_{+,R}-\omega_{+,M}|+|\omega_{+,M}-\omega_{+,L}|
=|\beta_{1}|+|\beta_{2}|.
\end{split}
\end{eqnarray*}

\vspace{5pt}
\begin{figure}[ht]
\begin{center}
\begin{tikzpicture}[scale=1.1]
\draw [line width=0.05cm] (6,4.5)to[out=-90, in=10](3.5,0);
\draw [line width=0.05cm] (3.5,0)--(3.5,1.5);
\draw [line width=0.05cm][red] (5.7,2.2)to[out=170, in=40](3.5,1.5);
\draw [line width=0.05cm] (6,4.5)to[out=150, in=50](4.2,4.0);
\draw [line width=0.05cm] (4.2,4.0)to[out=-90, in=60](3.5,1.5);

\draw [thin][dashed](6.1,4.5)--(8,4.5);
\draw [thin][dashed](5.8,2.2)--(7,2.2);
\draw [thin][dashed](3.6,0)--(8,0);
\draw [thin](6.0,4.5)--(6.0,-0.9);
\draw [thin](5.7,2.2)--(5.7,-0.4);
\draw [thin](3.5,0)--(3.5,-0.9);
\draw [thin](4.1,4.0)--(2.0,4.0);
\draw [thin](3.4,1.5)--(2.0,1.5);
\draw [thin](6,4.6)--(6,5.3);
\draw [thin](4.2,4.1)--(4.2,5.3);

\draw [thick][<->](7.8,4.4)--(7.8,0.1);
\draw [thick][<->](6.7,4.4)--(6.7,2.3);
\draw [thick][dashed][<->](3.5,0)--(5.7,0);
\draw [thick][<->](3.5,-0.6)--(6.0,-0.6);
\draw [thick][<->](2.7,3.9)--(2.7,1.6);
\draw [thick][<->](4.3,4.9)--(5.9,4.9);

\node at (7.0, 4.7) {$(\omega_{-,L},\omega_{+,L})$};
\node at (3.1, 4.3) {$(\omega'_{-,M},\omega'_{+,M})$};
\node at (2.3, 0) {$(\omega_{-,M},\omega_{+,M})$};
\node at (6.8, 2.0) {$(\hat{\omega}_{-,M},\hat{\omega}_{+,M})$};
\node at (4.5, 1.4) {$(\omega_{-,R},\omega_{+,R})$};

\node at (8.2, 2.6) {$|\beta|$};
\node at (7.1, 3.2) {$|\beta_{0}|$};
\node at (5, 0.3) {$|\nu_{0}|$};
\node at (5, -0.4) {$|\nu|$};
\node at (2.3, 2.7) {$|\beta'|$};
\node at (5, 5.2) {$|\nu'|$};

\node at (4.3, 0.7) {$\mathcal{S}_{2}$};
\node at (3.2, 0.8) {$\mathcal{R}_{2}$};
\node at (5.0, 2.5) {$\hat{\mathcal{S}}_{1}$};
\node at (5.0, 4.3) {$\mathcal{S}'_{1}$};
\node at (3.8, 2.8) {$\mathcal{S}'_{2}$};
\end{tikzpicture}
\end{center}
\caption{Interactions between $\mathcal{S}_{2}$ and $\mathcal{R}_{2}$ waves}\label{fig3.6}
\end{figure}
\vspace{5pt}

\smallskip
\par Next, let us consider case\ \rm{(4)}.
As shown in Fig. \ref{fig3.6}, we can find a shock wave $\hat{\mathcal{S}}_{1}$ such that $\mathcal{S}_{2}+\hat{\mathcal{S}}_{1}\rightarrow \mathcal{S}'_{1}+\mathcal{S}'_{2}$, and then one can follow the argument for the proof of case $(1)$ exactly to have that
\begin{eqnarray*}
\begin{split}
|\nu'|\leq |\nu_{0}|+C_{5}(\gamma-1+\tau^{2})|\nu_{0}||\beta_{0}|,\ \ \
|\beta'|\leq |\beta_{0}|+C_{5}(\gamma-1+\tau^{2})|\nu_{0}||\beta_{0}|.
\end{split}
\end{eqnarray*}

 Now, we will consider the estimate between $\nu$ and $\nu_{0}$, and the estimate between $\beta$ and $\beta_{0}$. By Lemma \ref{lem:2.3},
 we can have that
\begin{eqnarray*}
\begin{split}
|\nu_{0}|&=\omega_{-,L}-\omega_{-,M}-(\omega_{-,L}-\hat{\omega}_{-,M})\\[5pt]
&=\Phi_{2}(|\beta|, U_{L};\tau^{2})-\Phi_{2}(|\beta_{0}|, U_{L};\tau^{2})\\[5pt]
&=\Phi'_{2}(\theta, U_{L}; \tau^{2})(\beta-\beta_{0}),\ \ \  \theta \in(|\beta_{0}|, |\beta|),
\end{split}
\end{eqnarray*}
 which implies that
\begin{eqnarray*}
\begin{split}
|\nu_{0}|+|\beta_{0}|&=|\beta|-\big(\frac{1}{\Phi'_{2}(\theta,U_{L};\tau^{2})}-1\big)|\nu_{0}|\\[5pt]
&\leq |\beta|-\Big(\frac{1}{\Phi'_{2}(\theta,U_{L};\tau^{2})}-1\Big)|\nu_{0}|.
\end{split}
\end{eqnarray*}

Notice that $\lim_{|\beta|\rightarrow +\infty}\Phi'_{2}(|\beta|,U_{L}; \tau^{2})=1$, then we get that
 \begin{eqnarray*}
\begin{split}
\lim_{|\beta|\rightarrow +\infty}\Big(\frac{1}{\Phi'_{2}(\theta,U_{L};\tau^{2})}-1\Big)=0,
\end{split}
\end{eqnarray*}
which implies that
\begin{eqnarray*}
\begin{split}
C_{0}:=\inf_{\theta\in \{(\omega_{-},\omega_{+}): \ 0<\hat{\rho}<\rho<\check{\rho}\}, 0<\hat{\rho}<\rho_{L}<\check{\rho}}\Big(\frac{1}{\Phi'_{2}(\theta,U_{L};\tau^{2})}-1\Big)>0.
\end{split}
\end{eqnarray*}

\smallskip
\par Now we continue to study case \rm(5), that is the wave interaction between $\mathcal{R}_{2}$ and $\mathcal{S}_{2}$.
Let $(\omega_{-,L}, \omega_{+,L})$, $(\omega_{-,M}, \omega_{+,M})$, $(\omega_{-,R}, \omega_{+,R})$,
and $(\omega'_{-,M}, \omega'_{+,M})$ be defined similarly as before.
Then
\begin{eqnarray*}
\begin{split}
|\nu'|&=\omega'_{-,M}-\omega_{-,R}-(\omega_{-,M}-\omega_{-,R})\\[5pt]
&=\Phi_{2}(-|\beta'|, U_{R}; \tau^{2})-\Phi_{2}(-|\beta|, U_{R}; \tau^{2})\\[5pt]
&=\Phi'_{2}(\theta, U_{R}; \tau^{2})(|\beta|-|\beta'|),\ \ \  \theta \in(-|\beta|, -|\beta'|).
\end{split}
\end{eqnarray*}
So
\begin{eqnarray*}
\begin{split}
|\nu'|+|\beta'|&=|\beta|-\Big(\frac{1}{\Phi'_{2}(\theta,U_{L};\tau^{2})}-1\Big)|\nu'|.
\end{split}
\end{eqnarray*}

Based on the proof for case $(4)$, we know that
\begin{eqnarray*}
\begin{split}
C_{0}:=\inf_{\theta\in \{(\omega_{-},\omega_{+}):  0<\hat{\rho}<\rho<\check{\rho}\}, 0<\hat{\rho}<\rho_{L}<\check{\rho}}\Big(\frac{1}{\Phi'_{2}(\theta,U_{L};\tau^{2})}-1\Big)>0.
\end{split}
\end{eqnarray*}

\smallskip
\par The estimate in case \rm (6) is obviously.

\smallskip
\par Now, we will prove the estimate for case \rm{(7)}. Similarly, 
Let $(\omega_{-,L}, \omega_{+,L})$, $(\omega_{-,M}, \omega_{+,M})$ and $(\omega_{-,R}, \omega_{+,R})$ be the three states
before the wave interaction, and let $(\omega'_{-,M}, \omega'_{+,M})$ be the middle state after the wave interaction.
Then
\begin{eqnarray*}
\begin{split}
|\beta'|&=\omega'_{+,M}-\omega_{+,R}=\omega_{+,L}-\omega_{+,M}-(\omega_{+,L}-\omega'_{+,M})\\[5pt]
&=\Phi_{1}(|\nu|, U_{L};\tau^{2})-\Phi_{1}(|\nu'|, U_{L}; \tau^{2})\\[5pt]
&=\Phi'_{1}(\theta, U_{L}; \tau^{2})(|\nu|-|\nu'|),\ \ \  \theta \in(|\nu'|, |\nu|).
\end{split}
\end{eqnarray*}
So
\begin{eqnarray*}
\begin{split}
|\nu'|+|\beta'|&=|\nu|-\Big(\frac{1}{\Phi'_{1}(\theta, U_{L}; \tau^{2})}-1\Big)|\nu'|.
\end{split}
\end{eqnarray*}
Again, we know that
\begin{eqnarray*}
\begin{split}
C_{0}:=\inf_{\theta\in \{(\omega_{-},\omega_{+}): 0<\hat{\rho}<\rho<\check{\rho}\}, 0<\hat{\rho}<\rho_{L}<\check{\rho}}\Big(\frac{1}{\Phi'_{1}(\theta,U_{L};\tau^{2})}-1\Big)>0.
\end{split}
\end{eqnarray*}

\smallskip
Finally, the proof of the estimate for case \rm{(8)} is exactly the same as the one for case \rmfamily{(3)}.

This completes the proof of this lemma.
\end{proof}

Next, let us consider the interaction estimates near the boundary. First, we study the case that $\mathcal{S}_{1}$ wave
hit the boundary and then reflects (see Fig. \ref{fig3.1xw}).
\begin{figure}[ht]
\begin{center}
\begin{tikzpicture}[scale=0.75]
\draw [line width=0.03cm] (-3.5,-4) --(-3.5,1.5);
\draw [line width=0.03cm] (-0.5,-4) --(-0.5,1.0);
\draw [line width=0.03cm] (2.5,-4) --(2.5,0.5);
\draw [thin] (-3.5,1.5)--(2.5,0.5);
\draw [thin] (-3.5,-1.5)--(2.5,-2.5);

\draw [thin] (-3, 1.4) --(-2.6, 1.8);
\draw [thin] (-2.6, 1.35) --(-2.2, 1.75);
\draw [thin] (-2.2, 1.30) --(-1.8, 1.70);
\draw [thin] (-1.8, 1.23) --(-1.4, 1.63);
\draw [thin] (-1.4, 1.16) --(-1.0, 1.56);
\draw [thin] (-1.0, 1.10) --(-0.6, 1.50);
\draw [thin] (-0.6, 1.03) --(-0.2, 1.43);
\draw [thin] (-0.2, 0.97) --(0.2, 1.37);
\draw [thin] (0.2, 0.9) --(0.6, 1.30);
\draw [thin] (0.6, 0.83) --(1, 1.23);
\draw [thin] (1, 0.76) --(1.4, 1.16);
\draw [thin] (1.4, 0.67) --(1.8, 1.07);
\draw [thin] (1.8, 0.60) --(2.2, 1.0);
\draw [thin] (2.2, 0.55) --(2.6, 0.95);

\draw [thick][blue](-3.5,-1.5)--(-1.2,-0.2);
\draw [thick][red](-0.5,1)--(1.7,-0.5);

\draw [thick][<-] (-1.8,2.7)--(-2.5,2);
\draw [thick][<-] (1.5,2.2)--(0.6,1.5);

\node at (-1.8, 3.0){$\mathbf{n}_{k-1}$};
\node at (1.8, 2.4){$\mathbf{n}_{k}$};

\node at (-1.8, -3.2){$\Omega_{\Delta, k-1}$};
\node at (0.8, -3.2){$\Omega_{\Delta, k}$};

\node at (-2.4, -0.2){$\mathcal{S}_{1}$};
\node at (0.5, -0.4){$\mathcal{S}'_{2}$};

\node at (-0.9, 0){$\nu$};
\node at (2.0, -0.7){$\beta'$};

\node at (-1.9, -1.4){$(\rho_{L}, v_{L})$};
\node at (-2.1 , 0.6){$(\rho_{R}, v_{R})$};

\node at (0.9, -1.4){$(\rho_{L}, v_{L})$};
\node at (1.5 , 0.3){$(\rho'_{R}, v'_{R})$};

\node at (-3.5, -4.5){$x_{k-1}$};
\node at (-0.5, -4.5){$x_{k}$};
\node at (2.5, -4.5){$x_{k+1}$};
\end{tikzpicture}
\end{center}
\caption{$\mathcal{S}_{1}$ wave hits the boundary and $\mathcal{S}'_{2}$ wave reflects}\label{fig3.1xw}
\end{figure}
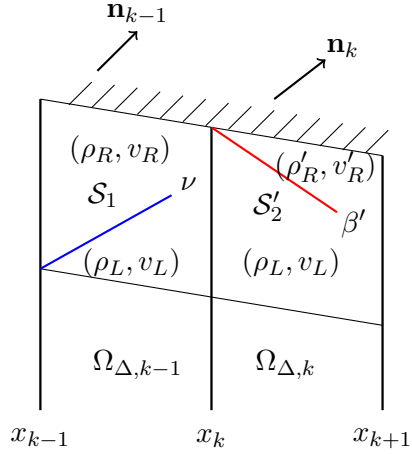

\begin{lemma}\label{lem:3.6}
Let $\gamma\in[1,2]$, $0<\hat{\rho}<\check{\rho}<\infty$ and $b_{0}<0$. Suppose that the constant states $U_{L}, U_{R}\in \mathcal{O}(U_{\infty})$ with $\rho_{L}, \rho_{R}\in [\hat{\rho}, \check{\rho}]$, satisfies that
\begin{eqnarray}\label{eq:3.22}
v_{R}=(1+\tau^{2}u_{R})b_{0}, \qquad \omega_{+, R}=-\Phi_{1}(\nu, U_{L}, \tau^{2})+\omega_{+, L}.
\end{eqnarray}
Then, for constant state $U'_{R}\in \mathcal{O}(U_{\infty})$ with $\rho'_{R}\in [\hat{\rho}, \check{\rho}]$
which satisfies that
\begin{eqnarray}\label{eq:3.23}
v'_{R}=(1+\tau^{2}u'_{R})b_{0}, \qquad \omega'_{-, R}=-\Phi_{2}(\beta', U_{L}, \tau^{2})+\omega_{-, L},
\end{eqnarray}
we have
\begin{eqnarray}\label{eq:3.24}
\beta'=K_{b}\nu,
\end{eqnarray}
where
\begin{eqnarray}\label{eq:3.25}
K_{b}=-1+\mathcal{O}(1)(\gamma-1+\tau^{2}),
\end{eqnarray}
with the bound $\mathcal{O}(1)$ depending only on the system and $U_{L}$.
\end{lemma}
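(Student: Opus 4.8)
The plan is to reduce the reflection relation to two scalar equations in the Riemann-invariant plane and then to treat $\gamma=1$, $\tau=0$ as an exactly solvable base case around which to perturb. First I would record that both boundary states are joined to the common left state $U_L$: the incoming state $U_R$ lies on the $\mathcal{S}_1$ curve through $\omega_L$, so $\omega_{-,R}=\omega_{-,L}-\nu$ and $\omega_{+,R}=\omega_{+,L}-\Phi_1(\nu,U_L,\tau^2)$, while the reflected state $U_R'$ lies on the $\mathcal{S}_2$ curve through $\omega_L$, so $\omega_{+,R}'=\omega_{+,L}-\beta'$ and $\omega_{-,R}'=\omega_{-,L}-\Phi_2(\beta',U_L,\tau^2)$. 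Imposing the impermeable condition $v=(1+\tau^2u)b_0$ at each of them through the map $\mathcal{V}$ of Proposition~\ref{prop:2.2} turns \eqref{eq:3.22} and \eqref{eq:3.23} into two scalar equations $\mathscr{G}_1(\nu;U_L,\tau^2)=0$ and $\mathscr{G}_2(\beta';U_L,\tau^2)=0$. Unique solvability of $\nu>0$ and $\beta'<0$, together with their $C^2$ dependence on $(U_L,\gamma-1,\tau^2)$, follows from Proposition~\ref{prop:2.2} and from the monotonicity and regularity of $\Phi_1,\Phi_2$ in Lemma~\ref{lem:2.7} and Lemma~\ref{lem:2.9}; the derivatives $\partial_\nu\mathscr{G}_1$ and $\partial_{\beta'}\mathscr{G}_2$ are nonzero, so the implicit function theorem applies.

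Next I would solve the base case $\gamma=1$, $\tau=0$ explicitly. There the boundary curve is the straight line $\omega_--\omega_+=2a_\infty b_0$, and the two equations collapse to $\nu-\Phi_1(\nu,U_L,0)=D$ and $\beta'-\Phi_2(\beta',U_L,0)=-D$ with $D:=2a_\infty(v_L-b_0)$. Writing the isothermal shock relations \eqref{eq:2.62x} and \eqref{eq:2.63x} with $P(\alpha)=\sqrt{-\tfrac{2(1-\alpha)}{\alpha+1}\ln\alpha}$ on the $\mathcal{S}_1$ branch ($0<\alpha\le1$) and $P'(\alpha')=\sqrt{\tfrac{2(\alpha'-1)}{\alpha'+1}\ln\alpha'}$ on the $\mathcal{S}_2$ branch ($\alpha'\ge1$), these equations read $2P(\alpha)=D$ and $2P'(\alpha')=D$. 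The identity $P'(1/\alpha)=P(\alpha)$ forces $\alpha'=1/\alpha$ by monotonicity, whence $\ln\alpha'=-\ln\alpha$ and $\beta'=-P'-\ln\alpha'=-(P-\ln\alpha)=-\nu$. Thus $K_b=-1$ exactly when $\gamma=1$ and $\tau=0$, which is the leading term in \eqref{eq:3.25}.

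To obtain \eqref{eq:3.24}–\eqref{eq:3.25} I would then perturb off this base case. Regarding $\beta'$ as a $C^2$ function $B(\nu;\rho_L,\gamma-1,\tau^2)$ of the incoming strength (with $\rho_L$ a parameter and $v_L$ adjusted to realize $\nu$), set $g=B+\nu$. The previous step gives $g\equiv0$ on $\{(\gamma-1,\tau^2)=(0,0)\}$, and the trivial reflection of a null wave ($U_L$ on the boundary) gives $B(0;\cdot)=0$, hence $g\equiv0$ on $\{\nu=0\}$. Writing $\epsilon=(\gamma-1,\tau^2)$ and using $g(\nu,0)=0$ and $g(0,\epsilon)=0$ in a double integral representation, $g(\nu,\epsilon)=\nu\int_0^1\!\!\int_0^1\partial_\nu\nabla_\epsilon g(s\nu,t\epsilon)\cdot\epsilon\,ds\,dt$, so that $g=\mathcal{O}(1)(\gamma-1+\tau^2)\,\nu$. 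Dividing by $\nu$ yields $\beta'=K_b\nu$ with $K_b=-1+\mathcal{O}(1)(\gamma-1+\tau^2)$, the $\mathcal{O}(1)$ depending only on the system and $U_L$, as claimed.

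The main obstacle is this last step: the double-integral bound requires the mixed second derivative $\partial_\nu\nabla_\epsilon g$ to be bounded uniformly over the large-data region $\rho\in[\hat{\rho},\check{\rho}]$, and in particular uniformly as $\nu\to0$. This is exactly where the delicate structure of the shock curves enters, since $\Phi_1,\Phi_2$ carry a square-root behaviour at $\alpha=1$ (zero wave strength); the needed uniform $C^2$ bounds on $\Phi_1,\Phi_2$ and on their dependence on $\gamma-1$ and $\tau^2$ are supplied by the second-derivative estimates in Lemma~\ref{lem:2.7} and Lemma~\ref{lem:2.9} together with the smooth dependence of the boundary map $\mathcal{V}$ on $\tau^2$. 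The hard part will be verifying that these bounds survive differentiation in the parameters, uniformly down to $\nu=0$, which is the technical crux of the proof.
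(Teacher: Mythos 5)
Your proposal is correct and follows essentially the same route as the paper: reduce the reflection to an implicit equation in the Riemann invariants, observe that at $\gamma=1$, $\tau=0$ the isothermal Hugoniot symmetry $\Phi_{2}(\beta)=-\Phi_{1}(-\beta)$ forces $\beta'=-\nu$ exactly, and then recover $K_{b}=-1+\mathcal{O}(1)(\gamma-1+\tau^{2})$ from the implicit function theorem together with an integral representation that vanishes to first order both at $\nu=0$ and at $(\gamma-1,\tau^{2})=(0,0)$. The only cosmetic differences are that you keep two scalar boundary equations with common right-hand side $D=2a_{\infty}(v_{L}-b_{0})$ where the paper subtracts them into the single relation $\mathscr{L}_{0}(\beta',\nu,\gamma-1,\tau^{2})=0$, and that you verify the base case by the explicit substitution $\alpha'=1/\alpha$ rather than by quoting the odd symmetry of the isothermal shock curves from Remark \ref{rem:2.4}.
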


\begin{proof}
Denote
\begin{eqnarray}\label{eq:3.23x}
\mathscr{L}_{0}(\beta', \nu, \gamma-1, \tau^{2}):=(1+\tau^{2}u_{R})v'_{R}-(1+\tau^{2}u'_{R})v_{R}.
\end{eqnarray}

When $\gamma=1$ and $\tau=0$, \eqref{eq:3.23x} is reduced to
\begin{eqnarray*}\label{eq:3.23a}
\mathscr{L}_{0}(\beta', \nu, \gamma-1, \tau^{2})\Big|_{\gamma=1, \tau=0}
=\frac{1}{2a_{\infty}}\Big(\beta'+\nu+g(-\beta')-g(\nu)\Big),
\end{eqnarray*}
where $g(\nu):=\Phi_{1}(\nu, U_{L};\tau^{2})\Big|_{\gamma=1,\tau=0}$ with $0<g'<1, g''>0$, and $\Phi_{2}(\beta', U_{L};\tau^{2})\Big|_{\gamma=1,\tau=0}=-g(-\beta')$.
In this case, equation $\mathscr{L}_{0}(\beta', \nu, \gamma-1, \tau^{2})\Big|_{\gamma=1, \tau=0}=0$ admits a unique solution $\beta'=-\nu$.
Note that
\begin{eqnarray*}
\frac{\partial\mathscr{L}_{0}(\beta', \nu, 0, 0)}{\partial \beta'}\Big|_{\gamma=1,\tau=0, \beta'=-\nu}=\frac{1}{2a_{\infty}}\Big(1-g'(-\nu)\Big)>C>0,
\end{eqnarray*}
where constant $C$ depends only on $\hat{\rho}$ and $\check{\rho}$.
So it follows from the implicit function theorem that $\beta'$ can be solved as a $C^{2}$ function of $\nu$, $\gamma-1$, $\tau^{2}$, $b_{0}$
and $U_{L}$.
Moreover,
\begin{eqnarray*}
\beta'=\beta'(\nu, \gamma-1, \tau^{2})
=\nu\int^{1}_{0}\partial_{\nu}\beta'(\mu\nu, \gamma-1, \tau^{2})d\mu,
\end{eqnarray*}
where we have used the fact that $\beta'(0, \gamma-1, \tau^{2})=0$.

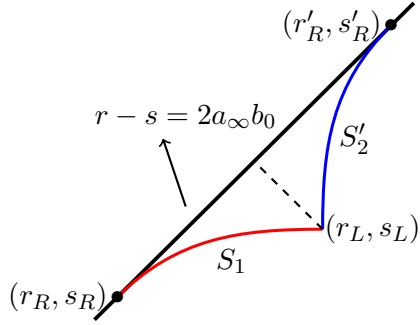
\begin{figure}[ht]
\begin{center}
\begin{tikzpicture}[scale=0.6]
\draw [line width=0.05cm] (-6.5,-4.5) --(0.5,2.5);
\draw [line width=0.04cm][red](-1.5,-2.5)to[out=180, in=45](-6,-4);
\draw [line width=0.04cm][blue](-1.5,-2.5)to[out=90, in=-135](0,2);
\draw [thick][dashed] (-1.5,-2.5)--(-3,-1);

\draw [thick][->] (-4.5,-2)--(-5,-0.5);

\node at (-6,-4){$\bullet$};
\node at (0,2){$\bullet$};

\node at (-7.3,-4){$(r_{R}, s_{R})$};
\node at (-1.3,2){$(r'_{R}, s'_{R})$};

\node at (-0.4, -2.5){$(r_{L}, s_{L})$};
\node at (-3.5, -3.2){$S_{1}$};
\node at (-0.8, -0.5){$S'_{2}$};
\node at (-4.5, 0){$r-s=2a_{\infty}b_{0}$};
\end{tikzpicture}
\end{center}
\caption{$S_{1}$ wave hits on the boundary and $S'_{2}$ wave reflects}\label{fig3.1x2}
\end{figure}

Since $\beta'(\nu, 0, 0)=-\nu$, then $\partial_{\nu}\beta'(\nu, 0, 0)=-1$, which gives that
\begin{eqnarray*}
\begin{split}
\beta'&=\int^{1}_{0}\Big(\partial_{\nu}\beta'(\mu\nu, \gamma-1, \tau^{2})
-\partial_{\nu}\beta'(\mu\nu, 0, 0)\Big)d\mu\nu-\nu
=\big(-1+\mathcal{O}(1)(\gamma-1+\tau^{2})\big)\nu.
\end{split}
\end{eqnarray*}

So by taking $K_{b}=-1+\mathcal{O}(1)(\gamma-1+\tau^{2})$, we have equality \eqref{eq:3.24}.
\end{proof}

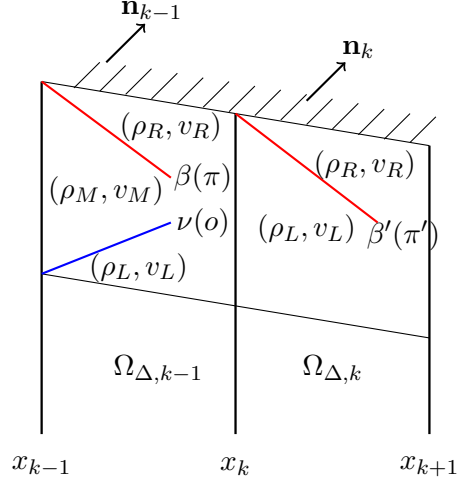
\begin{figure}[ht]
\begin{center}
\begin{tikzpicture}[scale=0.85]
\draw [line width=0.03cm] (-3.5,-4) --(-3.5,1.5);
\draw [line width=0.03cm] (-0.5,-4) --(-0.5,1.0);
\draw [line width=0.03cm] (2.5,-4) --(2.5,0.5);
\draw [thin] (-3.5,1.5)--(2.5,0.5);
\draw [thin] (-3.5,-1.5)--(2.5,-2.5);

\draw [thin] (-3, 1.4) --(-2.6, 1.8);
\draw [thin] (-2.6, 1.35) --(-2.2, 1.75);
\draw [thin] (-2.2, 1.30) --(-1.8, 1.70);
\draw [thin] (-1.8, 1.23) --(-1.4, 1.63);
\draw [thin] (-1.4, 1.16) --(-1.0, 1.56);
\draw [thin] (-1.0, 1.10) --(-0.6, 1.50);
\draw [thin] (-0.6, 1.03) --(-0.2, 1.43);
\draw [thin] (-0.2, 0.97) --(0.2, 1.37);
\draw [thin] (0.2, 0.9) --(0.6, 1.30);
\draw [thin] (0.6, 0.83) --(1, 1.23);
\draw [thin] (1, 0.76) --(1.4, 1.16);
\draw [thin] (1.4, 0.67) --(1.8, 1.07);
\draw [thin] (1.8, 0.60) --(2.2, 1.0);
\draw [thin] (2.2, 0.55) --(2.6, 0.95);

\draw [thick][red] (-3.5,1.5)--(-1.5,0);
\draw [thick][blue](-3.5,-1.5)--(-1.5,-0.7);
\draw [thick][red](-0.5,1)--(1.7,-0.7);

\draw [thick][<-](-1.9,2.4)--(-2.5,1.8);
\draw [thick][<-] (1.2,1.8)--(0.6,1.2);

\node at (-1.8, 2.6){$\mathbf{n}_{k-1}$};
\node at (1.4, 2.0){$\mathbf{n}_{k}$};

\node at (-1.7, -3.0){$\Omega_{\Delta, k-1}$};
\node at (1.0, -3.0){$\Omega_{\Delta, k}$};

\node at (-2.0, -1.4){$(\rho_{L}, v_{L})$};
\node at (-2.5, -0.2){$(\rho_{M}, v_{M})$};
\node at (-1.5, 0.8){$(\rho_{R}, v_{R})$};

\node at (0.6, -0.8){$(\rho_{L}, v_{L})$};
\node at (1.5, 0.2){$(\rho_{R}, v_{R})$};

\node at (-1.0, -0.7){$\nu(o)$};
\node at (-1.0, 0){$\beta(\pi)$};
\node at (2.1, -0.9){$\beta'(\pi')$};

\node at (-3.5, -4.5){$x_{k-1}$};
\node at (-0.5, -4.5){$x_{k}$};
\node at (2.5, -4.5){$x_{k+1}$};
\end{tikzpicture}
\end{center}
\caption{Local interaction estimates near the boundary}\label{fig3.1}
\end{figure}

Now, let us consider the local interaction estimates near the boundary.
\begin{lemma}\label{lem:3.7}
Let $\gamma\in[1,2]$, and let $0<\hat{\rho}<\check{\rho}<\infty$. Suppose constant states $U_{L}, U_{M}, U_{R}\in \mathcal{O}(U_{\infty})$ with $\rho_{L}, \rho_{M}, \rho_{R}\in [\hat{\rho}, \check{\rho}]$, satisfy that
\begin{eqnarray}\label{eq:3.26}
v_{R}=(1+\tau^{2}u_{R})b_{0}, \quad \omega_{R}=\mathscr{H}_{2}(\textsl{z}_{2}, \omega_{M}, \tau^{2}), \quad
\omega_{M}=\mathscr{H}_{1}(\textsl{z}_{1}, \omega_{L}, \tau^{2}).
\end{eqnarray}
Then, there exist constants $C_{b0}>0$, $C_{b1}>0$ and $C_{6}>0$ independent of $\gamma$,$\tau$, $\textsl{z}_{1}$, $\textsl{z}_{2}$
such that for any constant state $U'_{R}\in \mathcal{O}(U_{\infty})$
with $\rho'_{R}\in [\hat{\rho}, \check{\rho}]$ which satisfies that
\begin{eqnarray}\label{eq:3.27}
v'_{R}=(1+\tau^{2}u'_{R})b_{0}, \qquad \omega'_{R}=\mathscr{H}_{2}(\textsl{z}'_{2}, \omega_{L}, \tau^{2}),
\end{eqnarray}
the following interaction estimates hold:
\begin{itemize}
\item[(1)] For the case that $\mathcal{S}_{1}+\mathcal{S}_{2}\rightarrow \mathcal{S}'_{2}$,\ \emph{i.e.},
for the wave strength interaction that $\nu+\beta\rightarrow \beta'$, it holds that
\begin{eqnarray}\label{eq:3.28}
\begin{split}
|\beta'|\leq K_{b0}|\nu|+|\beta|+C_{6}(\gamma-1+\tau^{2})|\beta||\nu|,
\end{split}
\end{eqnarray}
with
\begin{eqnarray}\label{eq:3.29}
K_{b0}\Big|_{\gamma=1, \tau=0}=1+C_{b0}.
\end{eqnarray}

\item[(2)] For the case that $\mathcal{R}_{1}+\mathcal{S}_{2}\rightarrow \mathcal{S}'_{2}$
(or $\mathcal{R}_{1}+\mathcal{S}_{2}\rightarrow \mathcal{R}'_{2}$),\ \emph{i.e.},
for the wave interaction that $o + \beta \rightarrow \beta'$, it holds that
\begin{eqnarray}\label{eq:3.30}
\begin{split}
|\beta'|\leq |\beta|+C_{6}(\gamma-1+\tau^{2})|\beta||o|-C_{b1}|o|.
\end{split}
\end{eqnarray}

\item[(3)] For the case that $\mathcal{S}_{1}+\mathcal{R}_{2}\rightarrow \mathcal{S}'_{2}$
(or $\mathcal{S}_{1}+\mathcal{R}_{2}\rightarrow \mathcal{R}'_{2}$),\ \emph{i.e.},
for the wave interaction that $\nu+\pi\rightarrow \beta'$ (or $\beta+o\rightarrow \beta'$),
it holds that
\begin{eqnarray}\label{eq:3.31}
\begin{split}
|\beta'|\leq K_{b1}|\nu|+C_{6}(\gamma-1+\tau^{2})|\nu|^{2},
\end{split}
\end{eqnarray}
where
\begin{eqnarray}\label{eq:3.31x}
\begin{split}
K_{b1}\Big|_{\gamma=1, \tau=0}=1.
\end{split}
\end{eqnarray}

\item[(4)] For the case that  $\mathcal{R}_{1}+\mathcal{R}_{2}\rightarrow \mathcal{R}'_{2}$,
\ \emph{i.e.}, for the wave interaction that $o +\pi\rightarrow \pi'$, it holds that $|o|+|\pi|=|\pi'|$.
\end{itemize}
\end{lemma}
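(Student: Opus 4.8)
The plan is to reduce each of the four cases to solving, by the implicit function theorem, the single scalar constraint that the outgoing state $U'_R$ lies on $\Gamma$, namely $v'_R=(1+\tau^2 u'_R)b_0$, while being joined to $U_L$ by the composite wave pattern recorded in \eqref{eq:3.26}--\eqref{eq:3.27}. The two building blocks are the interior interaction estimates of Lemma~\ref{lem:3.5} together with the boundary reflection law of Lemma~\ref{lem:3.6}. As in the proof of Lemma~\ref{lem:3.6}, I would first treat the base point $\gamma=1$, $\tau=0$, where the Riemann invariants are explicit (Remark~\ref{rem:2.2} and Remark~\ref{rem:2.4}) and the boundary condition collapses to the linear relation $\omega_{-}-\omega_{+}=2a_{\infty}b_0$; there the reflected strength can be computed or estimated in closed form, fixing the leading coefficients $K_{b0}|_{\gamma=1,\tau=0}=1+C_{b0}$ and $K_{b1}|_{\gamma=1,\tau=0}=1$. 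The dependence on $(\gamma-1)$ and $\tau^2$ would then be captured by showing the reflected strength is a $C^2$ function of the incoming strengths and of $(\gamma-1,\tau^2)$ and Taylor expanding around the base point, exactly as the factor $-1+\mathcal{O}(1)(\gamma-1+\tau^2)$ was obtained in \eqref{eq:3.25}.

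For cases (1) and (3), in which a $1$-shock is present, I would factor the interaction into three elementary steps: the incoming $\mathcal{S}_1(\nu)$ first crosses the neighbouring $2$-wave, producing a transmitted $\mathcal{S}'_1$ and an adjusted $2$-wave with the bilinear error controlled by Lemma~\ref{lem:3.5}(1); the transmitted $1$-shock then reflects off $\Gamma$ into a $2$-shock governed by the reflection coefficient $K_b=-1+\mathcal{O}(1)(\gamma-1+\tau^2)$ of Lemma~\ref{lem:3.6}; finally the reflected $2$-shock merges with the adjusted $2$-wave, whose strengths add to leading order by the same monotonicity of $\Phi_2$ used in Lemma~\ref{lem:3.5}(3). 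Collecting the three contributions and using $|K_b|=1+\mathcal{O}(\gamma-1+\tau^2)$ yields the form \eqref{eq:3.28} with the amplification factor $K_{b0}=1+C_{b0}$ when the partner is a $2$-shock (case (1)) and the non-amplifying factor $K_{b1}=1$ when it is a $2$-rarefaction (case (3), giving \eqref{eq:3.31}). The remaining nonlinear contributions are absorbed into the single bilinear term $C_6(\gamma-1+\tau^2)|\beta||\nu|$, respectively $C_6(\gamma-1+\tau^2)|\nu|^2$, by means of the convexity estimates \eqref{eq:2.32} and \eqref{eq:2.35} of the shock curves.

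Case (2), where the incoming $1$-wave is a rarefaction $o$, is handled the same way, but now the reflection of a $1$-rarefaction off $\Gamma$ strictly \emph{decreases} the outgoing $2$-shock; quantifying this with the strict inequality $0<\partial_{\beta_+}\Phi_2<1-C_2\varepsilon_0$ from Lemma~\ref{lem:2.9} (the boundary analogue of the gains in Lemma~\ref{lem:3.5}(5),(7)) produces the genuine gain $-C_{b1}|o|$ in \eqref{eq:3.30}, with the cross term again bounded by $C_6(\gamma-1+\tau^2)|\beta||o|$. Case (4) is immediate: two rarefactions of opposite families pass through each other and through the reflection, so bookkeeping of the Riemann invariants $\omega_{\pm}$ gives the exact identity $|o|+|\pi|=|\pi'|$ with no error.

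The main obstacle I expect is not the algebra of the expansions but establishing, uniformly in $\tau\in[0,\epsilon_7)$ and $\gamma\in[1,2]$, the \emph{strict signs and uniform positivity} of the structural constants $C_{b0}$, $C_{b1}$ and $C_0$: it is precisely the strict amplification $K_{b0}=1+C_{b0}>1$ of a reflected shock, weighed against the strict gain $-C_{b1}|o|<0$ of a reflected rarefaction, that must later be balanced by the boundary weight $\mathscr{K}_b$ in the Glimm functional \eqref{eq:4.3g}. Securing these signs requires the global convexity and monotonicity of the shock curves (Lemmas~\ref{lem:2.7} and \ref{lem:2.9}) to persist up to the boundary and uniformly as $\tau\to0$, which is the delicate point and the reason the base-point computation at $\gamma=1$, $\tau=0$ must be carried out explicitly rather than perturbatively.
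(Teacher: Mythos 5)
Your opening paragraph matches the paper's actual strategy: each case is reduced to the single scalar constraint $(1+\tau^{2}u_{R})v'_{R}=(1+\tau^{2}u'_{R})v_{R}$, solved by the implicit function theorem, with the base point $\gamma=1$, $\tau=0$ treated explicitly through the function $g(\cdot)=\Phi_{1}(\cdot,U;\tau^{2})|_{\gamma=1,\tau=0}$ (so that $\Phi_{2}(\beta,U;\tau^{2})|_{\gamma=1,\tau=0}=-g(-\beta)$) and the $(\gamma-1,\tau^{2})$-dependence recovered by a $C^{2}$ Taylor expansion exactly as in \eqref{eq:3.25}. Your treatment of cases (2) and (4) is also essentially the paper's.

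However, the mechanism you propose for case (1) — factor the boundary interaction into (i) an interior collision $\mathcal{S}_{1}+\mathcal{S}_{2}$ controlled by Lemma~\ref{lem:3.5}(1), (ii) a pure reflection with coefficient $K_{b}=-1+\mathcal{O}(1)(\gamma-1+\tau^{2})$ from Lemma~\ref{lem:3.6}, and (iii) an additive merging of the two outgoing $2$-shocks — does not work, and the failure is quantitative, not just technical. At $\gamma=1$, $\tau=0$ that composition yields $|\beta'|\leq|\beta|+|\nu|$, i.e.\ $K_{b0}|_{\gamma=1,\tau=0}=1$. But the exact base-point relation \eqref{eq:3.37} can be rewritten, with $h(x)=x-g(x)$, as $h(|\beta'_{0}|)=h(|\nu|)+h(|\beta|)$; since $h(0)=0$, $h'=1-g'\in(0,1)$ and $h''=-g''<0$, the function $h$ is increasing, concave and hence subadditive, which forces $|\beta'_{0}|\geq|\nu|+|\beta|$ with strict inequality in general. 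So the reflected strength genuinely exceeds $|\beta|+|\nu|$ already in the isothermal, $\tau=0$ case, and the strictly positive amplification constant $C_{b0}=\sup g'(\xi_{2})/(1-g'(\xi_{1}))$ in \eqref{eq:3.29} is unavoidable: it must be extracted from the coupled equation $\beta'+g(-\beta')=\beta+g(-\beta)-(\nu-g(\nu))$ by the mean-value argument the paper uses, not from composing the interior and pure-reflection estimates. (A second, related defect of the factoring is that step (iii) is governed by Lemma~\ref{lem:3.5}(3), which emits an extra $\mathcal{R}'_{1}$ that travels back to the boundary and re-reflects, so the decomposition spawns an uncontrolled cascade rather than terminating in three steps.) Your concluding remark that the positivity of $C_{b0}$ and $C_{b1}$ is the delicate point to be balanced against $\mathscr{K}_{b}$ in \eqref{eq:4.3g} is correct — but the derivation you sketch would deliver $C_{b0}=0$, so the key constant of the lemma would be missed.
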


\begin{proof}
For the notational simplicity, for $\gamma=1$ and $\tau=0$, let
\begin{eqnarray}\label{eq:3.32a}
\Phi_{1}(\alpha, U; \tau^{2})\big|_{\gamma=1, \tau=0}=:g(\alpha),
\end{eqnarray}
for some $\alpha>0$ and $U\in \mathcal{O}(U_{\infty})$. Then function $g$ satisfies the properties that
\begin{eqnarray}\label{eq:3.32b}
0<g'(\alpha)<1, \ \  \ \  g''(\alpha)>0,
\end{eqnarray}
for $\alpha>0$. As shown in Remark \ref{rem:2.4},
\begin{eqnarray}\label{eq:3.32c}
\Phi_{2}(\beta, U; \tau^{2})\big|_{\gamma=1, \tau=0}=-g(-\beta),
\end{eqnarray}
for some $\beta<0$ and $U\in \mathcal{O}(U_{\infty})$.

For the first case $\mathcal{S}_{1}+\mathcal{S}_{2}\rightarrow \mathcal{S}'_{2}$, from \eqref{eq:3.26}-\eqref{eq:3.27}, we have
\begin{eqnarray}\label{eq:3.32}
v_{R}=(1+\tau^{2}u_{R})b_{0}, \quad \omega_{-,M}-\omega_{-,R}=\Phi_{2}(\beta, U_{M}, \tau^{2}), \quad
\beta=\omega_{+,M}-\omega_{+,R},
\end{eqnarray}
\begin{eqnarray}\label{eq:3.33}
\omega_{+,L}-\omega_{+,M}=\Phi_{1}(\nu, U_{L}, \tau^{2}), \quad \nu=\omega_{-,L}-\omega_{-,M},
\end{eqnarray}
and
\begin{eqnarray}\label{eq:3.34}
v'_{R}=(1+\tau^{2}u'_{R})b_{0}, \quad \omega_{-,L}-\omega'_{-,R}=\Phi_{2}(\beta', U_{L}, \tau^{2}),
\quad \beta'=\omega_{+,L}-\omega'_{+,R}.
\end{eqnarray}
Then,
\begin{eqnarray}\label{eq:3.35}
(1+\tau^{2}u_{R})v'_{R}=(1+\tau^{2}u'_{R})v_{R}.
\end{eqnarray}
where $(u_{R},v_{R})=(u,v)(\beta, \nu,  \gamma-1,\tau^{2}, U_{L})$ and $(u'_{R},v'_{R})=(u',v')(\beta', \gamma-1,\tau^{2}, U_{L})$.
Let
\begin{eqnarray}\label{eq:3.35x}
\mathscr{L}_{1}(\beta',\beta, \nu,  \gamma-1,\tau^{2},b_{0}, U_{L}):=(1+\tau^{2}u_{R})v'_{R}-(1+\tau^{2}u'_{R})v_{R}.
\end{eqnarray}
When $\gamma=1$ and $\tau=0$, equation \eqref{eq:3.35x} is
\begin{eqnarray}\label{eq:3.35a}
\mathscr{L}_{1}\big|_{\gamma=1, \tau=0}=
\frac{1}{2a_{\infty}}\Big(\beta'+g(-\beta')-\beta-g(-\beta)+\nu-g(\nu)\Big),
\end{eqnarray}
for $\beta<0$ and $\nu>0$.

Notice that $\frac{\partial\big(\mathscr{L}_{1}\big|_{\gamma=1, \tau=0}\big)}{\partial \beta'}=\frac{1-g'(-\beta')}{2a_{\infty}}>0$, $\lim_{\beta'\rightarrow-\infty}\mathscr{L}_{1}\big|_{\gamma=1, \tau=0}=-\infty$, and
\begin{eqnarray*}
\begin{split}
\mathscr{L}_{1}\big|_{\gamma=1, \tau=0,\beta'=\beta-\nu}&=\frac{1}{2a_{\infty}}\Big(g(\nu-\beta)-g(-\beta)-g(\nu)\Big)
=-\beta\nu\int^{1}_{0}\int^{1}_{0}g''(\xi\nu-\eta\beta)d\xi d\eta>0.
\end{split}
\end{eqnarray*}

So equation $\mathscr{L}_{1}(\beta',\beta, \nu, 0,0, b_{0}, U_{L})=0$ admits a unique root $\beta'_{0}$.
By Lemma \ref{lem:2.3},
\begin{eqnarray*}
\frac{\partial \mathscr{L}_{1}(\beta',\beta, \nu,  \gamma-1,\tau^{2},b_{0}, U_{L})}{\partial \beta'}\Big|_{\gamma=1, \tau=0, \beta'=\beta'_{0}}
=\frac{1}{2a_{\infty}}\Big(1-g'(-\beta'_{0})\Big)>C>0,
\end{eqnarray*}
for some $C>0$ depends only on the $\hat{\rho}$ and $\check{\rho}$.

Therefore, it follows from the implicit function theorem that $\beta'$ can be solved as a $C^{2}$ function of
$\beta, \nu,  \gamma-1,\tau^{2},b_{0}$ and $U_{L}$, that is 
\begin{eqnarray*}
\begin{split}
\beta'&=\beta'(\beta, \nu, \gamma-1, \tau^{2})\\[5pt]
&=\beta'(0, \nu, \gamma-1, \tau^{2})+\beta'(\beta,0, \gamma-1, \tau^{2})+\mathcal{O}(\beta, \nu, \gamma-1, \tau^{2})\beta\nu\\[5pt]
&=K_{b}\nu+\beta+\mathcal{O}(\beta, \nu, \gamma-1, \tau^{2})\beta\nu,
\end{split}
\end{eqnarray*}
where coefficient $K_{b}$ is given by \eqref{eq:3.24} in Lemma \ref{lem:3.6}. Moreover,
\begin{eqnarray*}
\begin{split}
\beta'_{0}&:=\beta'(\beta, \nu, 0, 0)=\beta'(0, \nu, 0, 0)+\beta'(\beta,0, 0, 0)+\mathcal{O}(\beta, \nu, 0, 0)\beta\nu
=-\nu+\beta+\mathcal{O}(\beta, \nu, 0, 0)\beta\nu.
\end{split}
\end{eqnarray*}

Subtracting the two identities above implies that
\begin{eqnarray}\label{eq:3.36}
\begin{split}
\beta'=\beta'_{0}+\mathcal{O}(1)(\gamma-1+\tau^{2})\nu+\mathcal{O}(1)(\gamma-1+\tau^{2})\beta\nu.
\end{split}
\end{eqnarray}

\begin{figure}[ht]
\begin{center}
\begin{tikzpicture}[scale=0.6]
\draw [line width=0.05cm] (-6,-4.0) --(0.5,2.5);
\draw [line width=0.04cm][red](-1.5,-2.5)to[out=180, in=45](-4.5,-4);
\draw [line width=0.04cm][blue](-4.5,-4)to[out=90, in=-135](-4.0,-2.0);
\draw [line width=0.04cm][blue](-1.5,-2.5)to[out=90, in=-135](0,2);

\draw [thick][->] (-4.5,-2)--(-5,-0.5);

\node at (-1.5, -2.5){$\bullet$};
\node at (-4.5, -4.0){$\bullet$};
\node at (0,2){$\bullet$};
\node at (-4.0,-2.0){$\bullet$};

\node at (-0.3, -2.5){$(r_{L}, s_{L})$};
\node at (-4.3, -4.3){$(r_{M}, s_{M})$};
\node at (1.3,1.9){$(r'_{R}, s'_{R})$};
\node at (-2.7,-2.0){$(r_{R}, s_{R})$};

\node at (-3.0, -3.5){$S_{1}$};
\node at (-4.0, -2.8){$S_{2}$};
\node at (-0.8, -0.5){$S'_{2}$};
\node at (-4.5, 0){$r-s=2a_{\infty}b_{0}$};
\end{tikzpicture}
\end{center}
\caption{$S_{1}$ and $S_{2}$ waves interaction and reflection on the boundary}\label{fig20}
\end{figure}
So the remaining task is to estimate $\beta'_{0}$ more carefully (see Fig. \ref{fig20}) for the case that $\gamma=1$ and $\tau=0$. By \eqref{eq:3.32}-\eqref{eq:3.35}, we have the relation that
\begin{eqnarray}\label{eq:3.37}
\begin{split}
\beta'_{0}+\nu-\beta=g(\nu)+g(-\beta)-g(-\beta'_{0}),
\end{split}
\end{eqnarray}
where $\nu=\big(\omega_{-,L}-\omega_{-,M}\big)\big|_{\gamma=1,\tau=0}>0$, $\beta=\big(\omega_{+,M}-\omega_{+,R}\big)\big|_{\gamma=1,\tau=0}<0$ and $\beta'_{0}=\big(\omega_{+,L}-\omega'_{+,R}\big)\big|_{\gamma=1,\tau=0}<0$.
Direct computation shows that
\begin{eqnarray*}
\begin{split}
g(\nu)-g(-\beta)-g(-\beta'_{0})&=g(\nu-\beta)-g(-\beta'_{0})+g(\nu)+g(-\beta)-g(\nu-\beta)\\[5pt]
&\geq g'(\xi_{1})(\nu-\beta+\beta'_{0})+g(-\beta)-g(\nu-\beta)\\[5pt]
&\geq g'(\xi_{1})(\nu-\beta+\beta'_{0})+g'(\xi_{2})(-\nu),
\end{split}
\end{eqnarray*}
where $\xi_{1}\in (-\beta'_{0}, \nu-\beta)$ and $\xi_{2}\in (\nu-\beta,-\beta')$.
This together with \eqref{eq:3.37} yields that
\begin{eqnarray*}
\begin{split}
-\beta'_{0}-\nu+\beta\leq \frac{g'(\xi_{2})}{1-g'(\xi_{1})}\nu.
\end{split}
\end{eqnarray*}

Let $C_{b0}=\sup_{\xi_{1}\in (-\beta'_{0}, \nu-\beta), \xi_{2}\in (\nu-\beta,-\beta')}
\frac{g'(\xi_{2})}{1-g'(\xi_{1})}$, then we have
\begin{eqnarray*}
\begin{split}
|\beta'_{0}|\leq (1+C_{b0})|\nu|+\beta.
\end{split}
\end{eqnarray*}

So it follows from \eqref{eq:3.36} that
\begin{eqnarray*}
\begin{split}
|\beta'|&
&\leq\Big(1+C_{b0}+\mathcal{O}(1)(\gamma-1+\tau^{2})\Big)|\nu|+\beta+\mathcal{O}(1)(\gamma-1+\tau^{2})|\nu||\beta|.
\end{split}
\end{eqnarray*}
This completes the proof for the first case.

Next, for the second case $\mathcal{R}_{1}+\mathcal{S}_{2}\rightarrow \mathcal{S}'_{2}$, note that $\beta$ and $\beta'$ satisfy \eqref{eq:3.32} and \eqref{eq:3.34}, 
\begin{eqnarray}\label{eq:3.38}
\omega_{+,M}=\omega_{+,L}, \quad o=\omega_{-,L}-\omega_{-,M}<0,\qquad\mbox{on }\mathcal{R}_{1},
\end{eqnarray}
and equality \eqref{eq:3.35} holds on the boundaries $\Gamma_{k}$ and $\Gamma_{k+1}$
with $(u_{R},v_{R})=(u,v)(\beta, o,  \gamma-1,\tau^{2}, U_{L})$ and $(u'_{R},v'_{R})=(u',v')(\beta', \gamma-1,\tau^{2}, U_{L})$.
Let
\begin{eqnarray*}
\mathscr{L}_{2}(\beta',\beta, o,  \gamma-1,\tau^{2},b_{0}, U_{L}):=(1+\tau^{2}u_{R})v'_{R}-(1+\tau^{2}u'_{R})v_{R}.
\end{eqnarray*}

As done for the first case, similarly, it follows from the implicit function theorem that $\beta'$ can be solved as a $C^{2}$ function of
$\beta, \nu,  \gamma-1,\tau^{2},b_{0}, U_{L}$ with the estimate that
\begin{eqnarray}\label{eq:3.39}
\begin{split}
|\beta'|\leq |\beta'(\beta, o, 0, 0)|+\mathcal{O}(1)(\gamma-1+\tau^{2})|\beta| |o|.
\end{split}
\end{eqnarray}

\begin{figure}[ht]
\begin{center}
\begin{tikzpicture}[scale=0.6]
\draw [line width=0.05cm] (-5.5,-3.5) --(0.5,2.5);
\draw [line width=0.04cm][blue](-1.5,-3)--(-3.5,-3);
\draw [line width=0.04cm][red](-3.5,-3)to[out=90, in=-135](-3.0,-1);
\draw [line width=0.04cm][red](-1.5,-3)to[out=90, in=-135](0,2);

\draw [thick][->] (-2.5, 0.5)--(-3.5,1.5);

\node at (-1.5, -3){$\bullet$};
\node at (-3.5, -3){$\bullet$};
\node at (-3,-1){$\bullet$};
\node at (0,2){$\bullet$};

\node at (-3.9, -3.5){$(r_{L}, s_{L})$};
\node at (-0.1, -3.0){$(r_{M}, s_{M})$};
\node at (1.4,2){$(r_{R}, s_{R})$};
\node at (-4.3,-0.9){$(r'_{R}, s'_{R})$};

\node at (-2.3, -3.5){$R_{1}$};
\node at (-0.8, -0.5){$S_{2}$};
\node at (-3.0, -2.1){$S'_{2}$};
\node at (-4.0, 2){$r-s=2a_{\infty}b_{0}$};
\end{tikzpicture}
\end{center}
\caption{$R_{1}$ and $S_{2}$ waves interaction and reflection on the boundary}\label{fig21}
\end{figure}

Now, we will estimate $\beta'(\beta, 0, 0, 0)$ (see Fig.\ref{fig21}). 
Let $\beta'_{1}=\beta'(\beta, 0, 0, 0)$. Then
\begin{eqnarray*}
\begin{split}
\beta'_{1}-\beta+g(-\beta'_{1})-g(-\beta)=|o|.
\end{split}
\end{eqnarray*}

By the mean value theorem, we further have 
\begin{eqnarray*}
\begin{split}
\beta'_{1}-\beta=\frac{|o|}{1-g'(\xi_{3})}, \quad \xi_{3}\in(-\beta,-\beta'_{1}),
\end{split}
\end{eqnarray*}
which implies that
\begin{eqnarray*}
\begin{split}
|\beta'_{1}|\leq |\beta|-C_{b1}|o|,
\end{split}
\end{eqnarray*}
where $C_{b1}=\inf_{\xi_{3}\in(-\beta,-\beta'_{1})}\frac{1}{1-g'(\xi_{3})}$.
This together with \eqref{eq:3.39} yields estimate \eqref{eq:3.30}.

For the third case that $\mathcal{S}_{1}+\mathcal{R}_{2}\rightarrow \mathcal{S}'_{2}$, we know that \eqref{eq:3.33}
and \eqref{eq:3.34} hold on $\mathcal{S}_{1}$ and $\mathcal{S}'_{2}$, 
\begin{eqnarray*}
\begin{split}
\omega_{-, M}=\omega_{-,R}, \quad \pi=\omega_{+, M}-\omega_{+,R}>0, \qquad\mbox{on }\mathcal{R}_{2},
\end{split}
\end{eqnarray*}
and equality \eqref{eq:3.35} holds on boundaries $\Gamma_{k}$ and $\Gamma_{k+1}$
with $(u_{R},v_{R})=(u,v)(\pi, \nu,  \gamma-1,\tau^{2}, U_{L})$ and $(u'_{R},v'_{R})=(u',v')(\beta', \gamma-1,\tau^{2}, U_{L})$.
Let
\begin{eqnarray*}
\mathscr{L}_{3}(\beta',\beta, o,  \gamma-1,\tau^{2},b_{0}, U_{L}):=(1+\tau^{2}u_{R})v'_{R}-(1+\tau^{2}u'_{R})v_{R}.
\end{eqnarray*}

Then similarly as done for the first case, by the implicit function theorem, $\beta'$ can be solved as a $C^{2}$ function of
$\pi, \nu,  \gamma-1,\tau^{2},b_{0}, U_{L}$, with the following estimate 
\begin{eqnarray}\label{eq:3.40}
\begin{split}
|\beta'|\leq |\beta'(\pi, \nu, 0, 0)|+\mathcal{O}(1)(\gamma-1+\tau^{2})|\nu|+\mathcal{O}(1)(\gamma-1+\tau^{2})|\nu|^{2}.
\end{split}
\end{eqnarray}

\begin{figure}[ht]
\begin{center}
\begin{tikzpicture}[scale=0.6]
\draw [line width=0.05cm] (-6.5,-4.5) --(0.5,2.5);
\draw [line width=0.04cm][red](-0.5,-1.5)to[out=180, in=35](-6.0,-2.5);
\draw [line width=0.04cm][blue](-6.0,-2.5)--(-6.0,-4.0);
\draw [line width=0.04cm][blue](-0.5,-1.5)to[out=90, in=-125](0,2);

\draw [thick][->] (-2.5, 0.5)--(-3.5,1.5);

\node at (-0.5, -1.5){$\bullet$};
\node at (-6.0, -2.5){$\bullet$};
\node at (0,2){$\bullet$};
\node at (-6.0,-4.0){$\bullet$};

\node at (-0.4, -2.0){$(r_{L}, s_{L})$};
\node at (-7.4, -2.4){$(r_{M}, s_{M})$};
\node at (1.4,1.8){$(r'_{R}, s'_{R})$};
\node at (-4.5,-4.0){$(r_{R}, s_{R})$};

\node at (-3.0, -2.0){$S_{1}$};
\node at (-6.5, -3.2){$R_{2}$};
\node at (0, 0){$S'_{2}$};
\node at (-4.0, 2){$r-s=2a_{\infty}b_{0}$};
\end{tikzpicture}
\end{center}
\caption{$S_{1}$ and $R_{2}$ waves interaction and reflection on the boundary}\label{fig22}
\end{figure}

For the term $\beta'_{2}=\beta'(\pi, \nu, 0, 0)$ (see Fig.\ref{fig22}), we have that
\begin{eqnarray*}
\begin{split}
\beta'_{2}+\nu=g(\nu)-g(-\beta'_{2})+\pi\geq g'(\xi_{4})(\beta'_{2}+\nu), \quad \xi_{4}\in(-\beta'_{2}, \nu)
\end{split}
\end{eqnarray*}
which implies that
$|\beta'_{2}|\leq |\nu|$.
Thus, it with \eqref{eq:3.40} yields estimate \eqref{eq:3.31}.

Finally, for the fourth case that $\mathcal{R}_{1}+\mathcal{R}_{2}\rightarrow \mathcal{R}'_{2}$, estimate is obvious since across the rarefaction waves the strength of the waves is unchanged.
\end{proof}

\section{Global entropy solutions with large data}
In this section, we first construct the approximate solution for the initial-boundary value problem \eqref{eq:1.16}--\eqref{eq:1.18}
by employing the modified Glimm scheme in an approximate domain $\Omega_{\Delta}$ which will be defined below, and then show the existence of global entropy solutions with large data.

\subsection{Modified Glimm scheme for the problem \eqref{eq:1.16}--\eqref{eq:1.18}}\label{sec:3.1}
Since $T.V. (U_{0})<\infty$, limits $\lim_{y\rightarrow \pm \infty}U_{0}(y)$ exist, which are denoted by $U_{\pm}$. Let
\begin{eqnarray}\label{eq:3.1}
\begin{split}
\mathcal{O}(U_{\pm})=\big\{U: |U-U_{-}|+|U-U_{+}|< 4T.V. (U_{0})\big\}.
\end{split}
\end{eqnarray}

\begin{figure}[ht]
	\begin{center}
		\begin{tikzpicture}[scale=0.6]
		\draw [line width=0.02cm] (-3.5,-6.5) --(-3.5,1.5);
		\draw [line width=0.02cm] (-0.5,-6.5) --(-0.5,1.0);
		\draw [line width=0.02cm] (2.5,-6.5) --(2.5,0.5);
		\draw [line width=0.03cm] (-3.5,1.5)--(2.5,0.5);
		\draw [thin] (-3.5,-0.5)--(2.5,-1.5);
		\draw [thin] (-3.5,-2.5)--(2.5,-3.5);
		\draw [thin] (-3.5,-4.5)--(2.5,-5.5);
		
		\draw [thin] (-3, 1.4) --(-2.6, 1.8);
		\draw [thin] (-2.6, 1.35) --(-2.2, 1.75);
		\draw [thin] (-2.2, 1.30) --(-1.8, 1.70);
		\draw [thin] (-1.8, 1.23) --(-1.4, 1.63);
		\draw [thin] (-1.4, 1.16) --(-1.0, 1.56);
		\draw [thin] (-1.0, 1.10) --(-0.6, 1.50);
		\draw [thin] (-0.6, 1.03) --(-0.2, 1.43);
		\draw [thin] (-0.2, 0.97) --(0.2, 1.37);
		\draw [thin] (0.2, 0.9) --(0.6, 1.30);
		\draw [thin] (0.6, 0.83) --(1, 1.23);
		\draw [thin] (1, 0.76) --(1.4, 1.16);
		\draw [thin] (1.4, 0.67) --(1.8, 1.07);
		\draw [thin] (1.8, 0.60) --(2.2, 1.0);
		\draw [thin] (2.2, 0.55) --(2.6, 0.95);
		
		\draw [thick][red] (-3.5,1.5)--(-1.5,0.5);
		\draw [thick][blue](-3.5,-0.5)--(-1.5,-0.3);
		\draw [thick][blue](-3.5,-0.5)--(-1.5,-1.2);
		\draw [thick][red] (-3.5,-4.5)--(-1.5,-4.3);
		\draw [thick][red] (-3.5,-4.5)--(-1.5,-5.3);
		
		\draw [thick][blue](-0.5,1)--(1.2,0);
		\draw [thick][red] (-0.5,-3)--(1.3,-2.8);
		\draw [thick][red] (-0.5,-3)--(1.3,-3.9);
		
		\draw [thick][->] (-1.8,2.7)--(-2.5,2);
		\draw [thick][->] (1.5,2.2)--(0.6,1.5);
		
		\node at (-1.8, 3.0){$\mathbf{n}_{k-1}$};
		\node at (1.8, 2.4){$\mathbf{n}_{k}$};
		\node at (-1.7, -2.0){$\Omega_{\Delta, k-1}$};
		\node at (0.8, -2.0){$\Omega_{\Delta, k}$};
		\node at (-3.5, -6.9){$x_{k-1}$};
		\node at (-0.5, -6.9){$x_{k}$};
		\node at (2.5, -6.9){$x_{k+1}$};
		\end{tikzpicture}
	\end{center}
	\caption{The modified Glimm scheme}\label{fig3.1x}
\end{figure}

\par Let $\Delta x$ be the mesh length in the $x$-direction. Choose
a set of points $\{A_{k}\}_{k=0}$ with $A_{k}=(x_{k}, b_{k})=(k\Delta x, b_{0}k\Delta x)$ on the straight
boundary $y=b_{0}x$ in order.
As shown in Fig. \ref{fig3.1x}, define
\begin{eqnarray}\label{eq:3.2}
\begin{split}
&b_{\Delta}(x)=b_{k}+(x-x_{k})b_{0},  \quad  \forall x\in[k\Delta x, \ (k+1)\Delta x),\  k\geq0, \\[5pt]
&\Omega_{\Delta, k}=\{(x,y):\ k\Delta x \leq x<(k+1)\Delta x,\ y<b_{\Delta}(x) \},\\[5pt]
&\Gamma_{\Delta, k}=\{(x,y):\ k\Delta x \leq x<(k+1)\Delta x,\ y=b_{\Delta}(x) \},\\[5pt]
&\Omega_{\Delta}=\bigcup_{k\geq0}\Omega_{\Delta, k},\ \ \  \Gamma_{\Delta}=\bigcup_{k\geq0}\Gamma_{\Delta, k}.
\end{split}
\end{eqnarray}
Let $\mathbf{n}_{k}$ be the outer unit normal vector to $\Gamma_{\Delta, k}$ as
\begin{eqnarray}\label{eq:3.3}
\mathbf{n}_{k}=\frac{(b_{k+1}-b_{k}, -x_{k+1}+x_{k})}{\sqrt{(b_{k+1}-b_{k})^{2}+(x_{k+1}-x_{k})^{2}}}
=\frac{(b_{0}, -1)}{\sqrt{1+b^{2}_{0}}}.
\end{eqnarray}

\par We choose the mesh length in the $y$-direction as $\Delta y$ such that the following Courant-Friedrichs-Lewy condition holds:
\begin{eqnarray}\label{eq:3.4}
\begin{split}
\frac{\Delta y}{\Delta x}<
\sup_{U\in \mathcal{O}(U_{\pm}), \tau\in(0, \epsilon_{*})}\big\{\max_{l=\pm}|\lambda_{l}(U,\tau^{2})|\big\}-b_{0},
\end{split}
\end{eqnarray}
where $\epsilon_{*}=\min\{\epsilon_{6}, \epsilon_{7}, \epsilon_{8}, \epsilon_{9}\}$.
\par For any non-negative integer $k$ and negative integer $n$, \emph{i.e.},  for $k\geq 0$ and $n\leq -1$, define
\begin{eqnarray}\label{eq:3.5}
\begin{split}
y_{k,n}=b_{k}+(2n+1+\theta_{k})\Delta y,
\end{split}
\end{eqnarray}
where $\theta_{k}$ is randomly chosen in $(-1,1)$. Then, let
\begin{eqnarray}\label{eq:3.6}
\begin{split}
P_{k,n}=(x_{k}, y_{k,n}),
\end{split}
\end{eqnarray}
be the mesh points and define the approximate solutions $U_{\Delta, \theta}(x,y)$ in $\Omega_{\Delta}$ in $\Omega_{\Delta}$
for any $\mathbf{\theta}=(\theta_{0}, \theta_{1}, \cdot\cdot\cdot)$ via the Glimm Scheme inductively as follows.

\par $Step\ 1.$ For $k=0$, we approximate the initial data by piecewise constant functions. 
\begin{eqnarray}\label{eq:3.7}
&U_{\Delta, \theta}(x=0, y)=
\left\{
\begin{array}{lllll}
U_{0}(y_{0,n}),\ \ \ &b_{k}+2(n+1)\Delta y\leq y\leq b_{k}+2n\Delta y,\\[5pt]
U_{0}(y_{0,n+1}),\ \ \ &b_{k}+2(n+2)\Delta y\leq y\leq b_{k}+2(n+1)\Delta y,
\end{array}
\right.
\end{eqnarray}
where $U_{0}(y_{0,n})$ and $U_{0}(y_{0,n+1})$ are constant states.

\par $Step\ 2.$ Assume the approximate solution $U_{\Delta,\theta}(x,y)$ has been defined in
$\Omega_{\Delta}\cap \{0<x<x_{k}\}$ for $k>0$. Then, for any $n\leq -1$ and $y\in(b_{k}+2n\Delta y, b_{k}+2(n+1)\Delta y)$, define $U^{0}_{k,n}$ by
\begin{eqnarray}\label{eq:3.8}
U^{0}_{k,n}=U_{\Delta, \theta}(x_k-,y_{k,n}),
\end{eqnarray}

\par Now, we first solve the Riemann problem in the diamond $T_{k,0}$ whose vertices are $(x_{k}, b_{k})$, $(x_{k}, b_{k}-\Delta y)$,
$(x_{k+1}, b_{k})$ and $(x_{k+1}, b_{k}-\Delta y)$ with initial data $U_{\Delta,\theta}=U^{0}_{k,0}$, that is
\begin{eqnarray}\label{eq:3.9}
\left\{
\begin{array}{lllll}
\partial_{x}W(U_{k,0},\tau^{2})+\partial_{y}F(U_{k,0},\tau^{2})=0, \ \ \ &\mbox{in} \ \ T_{k,0},  \\[5pt]
U_{k,0}|_{x=x_{k}}=U^{0}_{k,0},  \ \ \ &\mbox{on}\ \ \{b_{k}-\Delta y<y<b_{k}\},  \\[5pt]
\big((1+\tau^{2}u(\rho_{k,0},v_{k,0},\tau^{2})), v_{k,0}\big)\cdot\mathbf{n}_{k}=0,
\ \ \ &\mbox{on} \ \ \Gamma_{k},
\end{array}
\right.
\end{eqnarray}
to obtain the Riemann solution $U_{k,0}$ in $T_{k,0}$ by Proposition \ref{prop:2.2}.
Define
\begin{eqnarray}\label{eq:3.10}
U_{\Delta, \theta}=U_{k,0}, \ \ \ \mbox{in}\  T_{k,0}.
\end{eqnarray}

\par Next, we solve the Riemann problem in each diamond $T_{k,n}$ for $n\leq -2$ whose
vertices are $(x_{k},b_{k}+2n\Delta y)$, $(x_{k},b_{k}+2(n+1)\Delta y)$, $(x_{k+1},b_{k}+2n\Delta y)$
and $(x_{k+1},b_{k}+2(n+1)\Delta y)$
\begin{eqnarray}\label{eq:3.11x}
\left\{
\begin{array}{lllll}
\partial_{x}W(U_{k,n},\tau^{2})+\partial_{y}F(U_{k,n},\tau^{2})=0,  \qquad \  \ \  \mbox{in} \ \ T_{k,n},  \\[5pt]
U_{k,n}|_{x=x_{k}}=\left\{
\begin{array}{lllll}
U^{0}_{k, n},\ \ \ &b_{k}+2n\Delta y<y< b_{k}+2(n+1)\Delta y,\\[5pt]
U^{0}_{k, n-1},\ \ \ & b_{k}+2(n-1)\Delta y<y< b_{k}+2n\Delta y.
\end{array}
\right.
\end{array}
\right.
\end{eqnarray}
\par By Proposition \ref{prop:2.1}, Riemann problem \eqref{eq:3.11x} admits a Riemann solution $U_{k,n}$ in $T_{k,n}$. Define
\begin{eqnarray}\label{eq:3.12x}
U_{\Delta, \theta}=U_{k,n}, \ \ \ \mbox{in}\ \ \  T_{k,n}.
\end{eqnarray}

Therefore, we can construct the approximate solution $U_{\Delta, \theta}(x,y)$ globally provided that we can obtain the uniform bound of the approximate solutions, which will be the main goal in the next subsection.

\subsection{Glimm-type functional and the global existence of entropy solutions}
In this subsection, we will introduce the weighted Glimm-type functional and apply the functional to show the convergence of the approximation solutions and then obtain the global existence of entropy solutions of problem \eqref{eq:1.16}-\eqref{eq:1.18} of large data.
To obtain it, as done in \cite{glimm}, we introduce mesh curves $J$ which is space-like, and consists
of the line segments jointing the random points $P_{k,n}$ one by one in the order of $n$.
Obviously, region $\Omega_{\Delta}$ is the union of the diamonds whose boundaries are the line segements of the mesh curves with four adjacant random points as their vertices. Moreover, $J$ divides the region $\Omega_{\Delta}$
into two subregions denoted by $J^{-}$ and $J^{+}$, where $J^{-}$ denotes the subregion containing the $y$-axis and $J^{+}=\Omega_{\Delta}\backslash J^{-}$.
Now we can define the order of the mesh curves.
\begin{definition}\label{def:4.1}
Assume that $I$ and $J$ are two mesh curves, we call $J>I$ if and only if every mesh point of the curve $J$ is either on $I$
or contained in $I^{+}$. Moreover, if $J>I$ and every mesh points of $J$ except one lie on $I$, then we call $J$ is an immediate
successor to $I$.
\end{definition}

For the approximate solution $U_{\Delta, \theta}(x,y)$, let $S_{j}(J)$, where $j=1$ or $2$, be the set of $j$-shock
waves which go across the mesh curve $J$. Let $S(J):=S_{1}(J)\cap S_{2}(J)$. Define the Glimm-type functional
\begin{eqnarray}\label{eq:4.1}
\begin{split}
F(J)=L(J)+4C_{*}(\gamma-1+\tau^{2})Q(J),
\end{split}
\end{eqnarray}
where
\begin{eqnarray}\label{eq:4.2x}
\begin{split}
L(J)=\mathscr{K}_{b}L_{1}(J)+L_{2}(J),
\end{split}
\end{eqnarray}
\begin{eqnarray}\label{eq:4.2}
\begin{split}
L_{1}(J)=\sum \big\{|\alpha|: \alpha\in S_{1}(J) \big\}, \quad L_{2}(J)=\sum \big\{|\beta|: \alpha\in S_{2}(J) \big\},
\end{split}
\end{eqnarray}
and
\begin{eqnarray}\label{eq:4.3}
\begin{split}
Q(J)=\sum \big\{|\alpha||\beta|: \alpha\in S_{1}(J),\ \beta\in S_{2}(J)\ and \ \alpha, \beta\ are\ approaching \big\}.
\end{split}
\end{eqnarray}

Constants $\mathscr{K}_{b}$ and $C_{*}$ satisfy that
\begin{eqnarray}\label{eq:4.3g}
\begin{split}
\max\big\{ K_{b0}, \ K_{b1}, \ 1 \big\}<\mathscr{K}_{b}< \min\big\{\frac{1}{\delta}, \ 1+C_{0},\ 4 \big\}, \quad \
C_{*}>\max\big\{C_{5}, \  C_{6}, \mathscr{K}_{b}\big\}.
\end{split}
\end{eqnarray}

Then, we have the following lemma for functional $F(J)$, which ensures the uniform bound of the approximate solutions.
\begin{lemma}\label{lem:4.1}
Suppose that $I$ and $J$ are any two space-like mesh curves satisfying $J>I$. There exists a constant $C_{7}>0$
depending only on $C_{0}$ and $\delta$, such that if $C_{*}(\gamma-1+\tau^{2})F(I)\leq C_{7}$, then it holds that
\begin{eqnarray}\label{eq:4.5}
\begin{split}
F(J)< F(I).
\end{split}
\end{eqnarray}
\end{lemma}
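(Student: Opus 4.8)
The plan is to follow Glimm's original scheme: reduce the general comparison $J>I$ to the elementary case in which $J$ is an \emph{immediate successor} of $I$, establish the decrease there by a case-by-case application of the local interaction estimates of Section 3, and then iterate. First I would note that any $J>I$ is joined to $I$ by a finite chain of immediate successors $I=I_0<I_1<\cdots<I_m=J$, and that the smallness hypothesis $C_*(\gamma-1+\tau^2)F(I)\le C_7$ is preserved along the chain because $F$ does not increase; hence it suffices to prove $F(I_{j+1})\le F(I_j)$ for each single step. Throughout, the a priori bounds $\rho\in[\hat\rho,\check\rho]$ required by the Section 3 lemmas are maintained by the invariant-region estimates of Proposition~\ref{prop:2.1} and Proposition~\ref{prop:2.2}, coupled into the same induction as the bound on $F$.

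For an immediate successor, $I$ and $J$ differ only across a single diamond $\Delta$, so $\Delta L$ and $\Delta Q:=Q(J)-Q(I)$ are localized, and I split into the interior and the boundary case. In the interior case I invoke Lemma~\ref{lem:3.5}: writing $\Delta L=\mathscr{K}_b\,\Delta L_1+\Delta L_2$ in terms of outgoing-minus-incoming shock strengths, each interaction type (1)--(8) produces $\Delta L\le \mathcal{O}(1)(\gamma-1+\tau^2)|\alpha||\beta|$ together with a genuinely negative contribution (the $-|\nu||\beta|$ absorbed into $Q$ in type (1)(a); the $-\zeta$ with $\eta\le\delta\zeta$ mechanism of (1)(b),(c) controlled by $\mathscr{K}_b<1/\delta$; the $-C_0$ mechanism of (4),(5),(7) controlled by $\mathscr{K}_b<1+C_0$). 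For the potential I use the standard bound
\begin{equation*}
\Delta Q \le -\,|\alpha||\beta| + C_5(\gamma-1+\tau^2)\,|\alpha||\beta|\,L(I),
\end{equation*}
since the two incoming waves cease to approach one another after the interaction, while the outgoing waves differ from them only by the quadratic error and can approach at most the waves already crossing $I$, of total strength $\le L(I)$. Then $\Delta F=\Delta L+4C_*(\gamma-1+\tau^2)\Delta Q$, and the choice $C_*>\max\{C_5,C_6,\mathscr{K}_b\}$ together with the factor $4$ makes the negative $Q$-contribution dominate the positive $L$-error once $C_7$ is small enough that $C_5(\gamma-1+\tau^2)L(I)\le\tfrac12$.

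In the boundary case I apply Lemma~\ref{lem:3.6} and Lemma~\ref{lem:3.7}. The crucial mechanism is that an incoming $1$-shock $\nu$ is reflected as a $2$-shock $\beta'$ with $|\beta'|\le K_{b0}|\nu|+\cdots$ (type (1), cf.\ \eqref{eq:3.28}) or $|\beta'|\le K_{b1}|\nu|+\cdots$ (type (3), cf.\ \eqref{eq:3.31}); hence
\begin{equation*}
\Delta L = -\mathscr{K}_b|\nu| + (|\beta'|-|\beta|) \le (K_{b0}-\mathscr{K}_b)|\nu| + C_6(\gamma-1+\tau^2)|\beta||\nu|,
\end{equation*}
and since $\mathscr{K}_b>\max\{K_{b0},K_{b1},1\}$ the leading term is negative with a definite gap $-(\mathscr{K}_b-K_{b0})|\nu|$, which absorbs both the quadratic error and $4C_*(\gamma-1+\tau^2)\Delta Q$ under the smallness of $C_7$. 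Summing the elementary steps gives $F(J)\le F(I)$, the non-strict form being exactly what the uniform BV bound needs; the strict inequality $F(J)<F(I)$ of the statement then follows because any nontrivial passage from $I$ to $J$ carries at least one genuine interaction or boundary reflection contributing a strictly negative term.

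The main obstacle is the simultaneous calibration of the weights in \eqref{eq:4.3g}: the admissible interval for $\mathscr{K}_b$ must be nonempty, i.e.\ $\max\{K_{b0},K_{b1},1\}<\min\{1/\delta,\,1+C_0,\,4\}$. This is precisely where the boundary-reflection constants ($K_{b0}|_{\gamma=1,\tau=0}=1+C_{b0}$ and $K_{b1}|_{\gamma=1,\tau=0}=1$) must be reconciled with the interior gaps ($\delta\in(0,1)$ from the rarefaction-slope bound and $C_0>0$ from the shock-reflection estimate), the smallness of $\gamma-1+\tau^2$ being used to keep $K_{b0},K_{b1}$ near their base values. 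Producing one $\mathscr{K}_b$ that forces \emph{every} interior case (notably (1)(b),(c) through $1/\delta$ and (4),(5),(7) through $1+C_0$) and \emph{every} boundary case to decrease at once, while still leaving room for $C_*$ to dominate the quadratic potential, is the delicate point; the remaining work is the routine bookkeeping of $\Delta Q$.
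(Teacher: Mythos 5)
Your proposal is correct and follows essentially the same route as the paper: reduction to an immediate successor, a case-by-case application of Lemma \ref{lem:3.5} in the interior and Lemmas \ref{lem:3.6}--\ref{lem:3.7} at the boundary, the bound $\Delta Q\le -|\alpha||\beta|+\mathcal{O}(1)(\gamma-1+\tau^{2})|\alpha||\beta|F(I)$, and the calibration of $\mathscr{K}_{b}$ and $C_{*}$ via \eqref{eq:4.3g} to make every case decrease, with $C_{7}$ collecting the resulting smallness thresholds exactly as in \eqref{eq:4.6}. Your explicit remarks on propagating the density bounds through the induction and on the non-strict versus strict inequality are sensible refinements of points the paper leaves implicit, but they do not change the argument.
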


\begin{proof}
Without loss of the generality, we only consider the case that  $J$ is an immediate successor to $I$, since the other cases can be treated easily by the induction method. Let $\Lambda$ be the diamond between $I$ and $J$, \emph{i.e.}, $\Lambda=I'\cup J'$, where $I=I_{0}\cup I'$
and $J=I_{0}\cup J'$. The proof is devided into two cases depending the location of $\Lambda$.

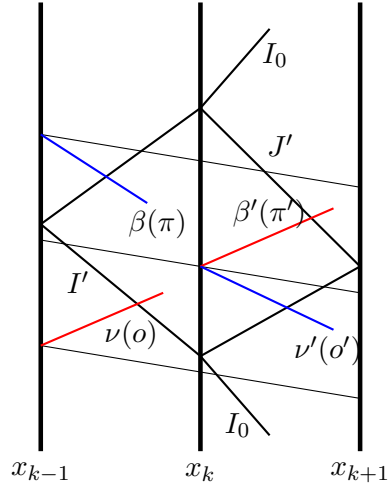
\begin{figure}[ht]
\begin{center}
\begin{tikzpicture}[scale=0.7]
\draw [line width=0.06cm] (-3.5,-6.5) --(-3.5,2.0);
\draw [line width=0.06cm] (-0.5,-6.5) --(-0.5,2.0);
\draw [line width=0.06cm] (2.5,-6.5) --(2.5,2.0);

\draw [thick] (-3.5,-2.2)--(-0.5,0);
\draw [thick] (-3.5,-2.2)--(-0.5,-4.7);
\draw [thick] (-0.5,0)--(2.5,-3.0);
\draw [thick] (-0.5,-4.7)--(2.5,-3.0);
\draw [thick] (-0.5,0)--(0.8,1.5);
\draw [thick] (-0.5,-4.7)--(0.8, -6.2);

\draw [thin] (-3.5,-0.5)--(2.5,-1.5);
\draw [thin] (-3.5,-2.5)--(2.5,-3.5);
\draw [thin] (-3.5,-4.5)--(2.5,-5.5);

\draw [thick][blue](-3.5,-0.5)--(-1.5,-1.8);
\draw [thick][red] (-3.5,-4.5)--(-1.2,-3.5);

\draw [thick][red] (-0.5,-3)--(2.0,-1.9);
\draw [thick][blue] (-0.5,-3)--(2.0,-4.2);

\node at (-1.3, -2.2){$\beta(\pi)$};
\node at (-1.8, -4.3){$\nu(o)$};
\node at (0.8, -2.0){$\beta'(\pi')$};
\node at (1.9, -4.6){$\nu'(o')$};

\node at (1.0, -0.7){$J'$};
\node at (-2.8, -3.3){$I'$};
\node at (0.9, 1.0){$I_{0}$};
\node at (0.2, -6.0){$I_{0}$};
\node at (-3.5, -6.9){$x_{k-1}$};
\node at (-0.5, -6.9){$x_{k}$};
\node at (2.5, -6.9){$x_{k+1}$};
\end{tikzpicture}
\end{center}
\caption{$\Lambda$ lies in $\Omega_{\Delta}$}\label{fig23}
\end{figure}

\par $\mathbf{Case}$ $\mathbf{1}.$\ $\Lambda$ lies in the interior of $\Omega_{\Delta}$ (see Fig.\ref{fig23}).
Let us start with case $(1)$ as listed in Lemma \ref{lem:3.5}. For the subcase $(a)$, we have
\begin{eqnarray*}
\begin{split}
L(J)-L(I)\leq C_{5}(\mathscr{K}_{b}+1)(\gamma-1+\tau^{2})|\beta||\nu|.
\end{split}
\end{eqnarray*}

For $Q(J)$, we have that
\begin{eqnarray*}
\begin{split}
Q(J)-Q(I)&=Q(J',I_{0})+Q(I_{0})-Q(I',I_{0})-Q(I_{0})-Q(I')\\[5pt]
&\leq \sum_{\mu\in S(I)}|\mu|\big(|\beta'|+|\nu'|-|\beta|-|\nu|\big)-|\beta||\nu|\\[5pt]
&\leq \Big(C_{5}(\gamma-1+\tau^{2})F(I)-1\Big)|\beta||\nu|.
\end{split}
\end{eqnarray*}

Then 
\begin{eqnarray*}
\begin{split}
F(J)-F(I)&=C_{5}(\mathscr{K}_{b}+1)(\gamma-1+\tau^{2})|\beta||\nu|
+4C_{*}(\gamma-1+\tau^{2})\big(C_{5}(\gamma-1+\tau^{2})F(I)-1\big)|\beta||\nu|\\[5pt]
&\leq 2C_{*}(\gamma-1+\tau^{2})\Big(2C_{*}(\gamma-1+\tau^{2})F(I)-1\Big)|\beta||\nu|.
\end{split}
\end{eqnarray*}

Therefore, if we choose $(\gamma-1+\tau^{2})F(I)<\frac{1}{2C_{*}}$, then we have
$F(J)<F(I)$.

Next, let us consider subcase $(b)$ of case $(1)$ as listed in Lemma \ref{lem:3.5}. By Lemma \ref{lem:3.5}, we have
\begin{eqnarray*}
\begin{split}
L(J)-L(I)\leq &-\mathscr{K}_{b}\zeta +C_{5}(\gamma-1+\tau^{2})|\beta||\nu|+\eta\\[5pt]
\leq&-(\mathscr{K}_{b}-\delta)\zeta + C_{5}(\gamma-1+\tau^{2})|\beta||\nu|,
\end{split}
\end{eqnarray*}
and
\begin{eqnarray*}
\begin{split}
Q(J)-Q(I)&=Q(J',I_{0})+Q(I_{0})-Q(I',I_{0})-Q(I_{0})-Q(I')\\[5pt]
&\leq \sum_{\mu\in S(I)}|\mu|\big(|\beta'|-|\beta|\big)+\sum_{\mu'\in S(I)}|\mu'|\big(|\nu'|-|\nu|\big)-|\beta||\nu|\\[5pt]
&\leq \sum_{\mu\in S(I)}|\mu|\big(\eta +C_{5}(\gamma-1+\tau^{2})|\beta||\nu|\big)-\sum_{\mu'\in S(I)}|\mu'|\zeta-|\beta||\nu|\\[5pt]
&\leq \Big(\delta\zeta +C_{5}(\gamma-1+\tau^{2})|\beta||\nu|\Big)F(I)-|\beta||\nu|.
\end{split}
\end{eqnarray*}
Then
\begin{eqnarray*}
\begin{split}
&F(J)-F(I)\\[5pt]
\leq& -(\mathscr{K}_{b}-\delta)\zeta + C_{5}(\gamma-1+\tau^{2})|\beta||\nu|
+4C_{*}(\gamma-1+\tau^{2})\Big(\big(\delta\zeta +C_{3}(\gamma-1+\tau^{2})|\beta||\nu|\big)F(I)-|\beta||\nu|\Big)\\[5pt]
\leq& 4\delta\zeta \Big(C_{*}(\gamma-1+\tau^{2})F(I)-\frac{\mathscr{K}_{b}-\delta}{4\delta}\Big)
+4C_{*}(\gamma-1+\tau^{2})\Big(C_{*}(\gamma-1+\tau^{2})F(I)-\frac{3}{4}\Big)|\beta||\nu|.
\end{split}
\end{eqnarray*}
Therefore, if $(\gamma-1+\tau^{2})F(I)<\min\big\{\frac{3}{4C_{*}}, \frac{\mathscr{K}_{b}-\delta}{4\delta C_{*}}\big\}$, then
$F(J)<F(I)$.

Finally, let us consider subcase $(c)$ of case $(1)$ at listed in Lemma \ref{lem:3.5}. Note that 
\begin{eqnarray*}
\begin{split}
L(J)-L(I)
\leq-(1-\mathscr{K}_{b}\delta)\zeta + \mathscr{K}_{b}C_{5}(\gamma-1+\tau^{2})|\beta||\nu|,
\end{split}
\end{eqnarray*}
and
\begin{eqnarray*}
\begin{split}
Q(J)-Q(I)&=Q(J',I_{0})+Q(I_{0})-Q(I',I_{0})-Q(I_{0})-Q(I')\\[5pt]
&\leq \sum_{\mu\in S(I)}|\mu|\big(|\nu'|-|\nu|\big)+\sum_{\mu'\in S(I)}|\mu'|\big(|\beta'|-|\beta|\big)-|\beta||\nu|\\[5pt]
&\leq \sum_{\mu\in S(I)}|\mu|\big(\eta +C_{5}(\gamma-1+\tau^{2})|\beta||\nu|\big)-\sum_{\mu'\in S(I)}|\mu'|\zeta-|\beta||\nu|\\[5pt]
&\leq \Big(\delta\zeta +C_{5}(\gamma-1+\tau^{2})|\beta||\nu|\Big)F(I)-|\beta||\nu|.
\end{split}
\end{eqnarray*}
So, we deduce that
\begin{eqnarray*}
\begin{split}
&F(J)-F(I)\\[5pt]
\leq& -(1-\mathscr{K}_{b}\delta)\zeta + \mathscr{K}_{b}C_{5}(\gamma-1+\tau^{2})|\beta||\nu|
+4C_{*}(\gamma-1+\tau^{2})\Big(\big(\delta\zeta +C_{5}(\gamma-1+\tau^{2})|\beta||\nu|\big)F(I)-|\beta||\nu|\Big)\\[5pt]
\leq& 4\delta\zeta \Big(C_{*}(\gamma-1+\tau^{2})F(I)-\frac{1-\mathscr{K}_{b}\delta}{4\delta}\Big)
+4C_{*}(\gamma-1+\tau^{2})\Big(C_{*}(\gamma-1+\tau^{2})F(I)-\frac{4-\mathscr{K}_{b}}{4}\Big)|\beta||\nu|.
\end{split}
\end{eqnarray*}

If we choose $(\gamma-1+\tau^{2})F(I)<\min\big\{\frac{1-\mathscr{K}_{b}\delta}{4\delta C_{*}}, \frac{4-\mathscr{K}_{b}}{4C_{*}}\big\}$,
then
$F(J)<F(I)$.

\smallskip
\par For case \rm (2) as listed in Lemma \ref{lem:3.5}, we have 
\begin{eqnarray*}
\begin{split}
L(J)-L(I)=0,\ \ \ Q(J)-Q(I)=Q(J',I_{0})+Q(I_{0})-Q(I',I_{0})-Q(I_{0})=0.
\end{split}
\end{eqnarray*}
Therefore
$F(J)=F(I)$.

\par Next, let us consider case \rm(3) as listed in Lemma \ref{lem:3.5}. By Lemma \ref{lem:3.5}, we have
\begin{eqnarray*}
\begin{split}
L(J)-L(I)=0,
\end{split}
\end{eqnarray*}
and
\begin{eqnarray*}
\begin{split}
 Q(J)-Q(I)&=Q(J',I_{0})+Q(I_{0})-Q(I',I_{0})-Q(I_{0})-Q(I')\\[5pt]
&\leq \sum_{\mu\in S(I)}|\mu|\big(|\beta'|-|\beta_{1}|-|\beta_{2}|\big)-|\beta_1||\beta_2|\\[5pt]
&=-|\beta_1||\beta_2|<0.
\end{split}
\end{eqnarray*}
So $F(J)<F(I)$.

\par Now, for case \rm(4) as listed in Lemma \ref{lem:3.5}, with the notations introduced in Lemma \ref{lem:3.1}, we introduce a new mesh curve $\tilde{J}$ between the mesh curves $I$ and $J$ such that we have the local wave interaction $\beta+0\rightarrow\beta_{0}+\nu_{0}$ from $I$ to $\tilde{J}$, and
the local wave interaction $\beta_{0}+\nu_{0}\rightarrow \nu'+\beta'$ from $\tilde{J}$ to $J$.
Then by Lemma \ref{lem:3.5}, we have
\begin{eqnarray*}
\begin{split}
F(J)<F(\tilde{J}),
\end{split}
\end{eqnarray*}
provided that $(\gamma-1+\tau^{2})F(\tilde{J})<\frac{1}{2C_{*}}$.
Next, we also have that
\begin{eqnarray*}
\begin{split}
L(\tilde{J})-L(I)&\leq (\mathscr{K}_{b}-1-C_{0})|\nu_{0}|,\\[5pt]
Q(\tilde{J})-Q(I)&\leq \sum_{\mu\in S(I)}|\mu|\big(|\beta_{0}|+|\nu_{0}|-|\beta|\big)+|\beta_{0}||\nu_{0}|\\[5pt]
&\leq -C_{0}|\nu_{0}|F(I)+|\beta_{0}||\nu_{0}|.
\end{split}
\end{eqnarray*}
So
\begin{eqnarray*}
\begin{split}
F(\tilde{J})-F(I)&\leq (\mathscr{K}_{b}-1-C_{0})|\nu_{0}|+4C_{*}(\gamma-1+\tau^{2})\Big(-C_{0}|\nu_{0}|F(I)+|\beta_{0}||\nu_{0}|\Big)\\[5pt]
&\leq |\nu_{0}|\Big(4C_{*}(\gamma-1+\tau^{2})|\beta_{0}|+(\mathscr{K}_{b}-C_{0})-4C_{*}(\gamma-1+\tau^{2})C_{0}F(I)\Big)\\[5pt]
&\leq 4|\nu_{0}| \Big(C_{*}(\gamma-1+\tau^{2})F(I)-\frac{C_{0}+1-\mathscr{K}_{b}}{4}\Big).
\end{split}
\end{eqnarray*}
Then, if we choose $(\gamma-1+\tau^{2})F(I)<\frac{C_{0}+1-\mathscr{K}_{b}}{4C_{*}}$, then $F(\tilde{J})<F(I)$. Therefore,
$$
F(J)<F(\tilde{J})<F(I).
$$

\par For case \rm(5) as listed in Lemma \ref{lem:3.5}, we have that
\begin{eqnarray*}
\begin{split}
L(J)-L(I)&\leq \mathscr{K}_{b}|\nu'|+|\beta'|-|\beta|
\leq(\mathscr{K}_{b}-1-C_{0})|\nu'|<0,
\end{split}
\end{eqnarray*}
and
\begin{eqnarray*}
\begin{split}
Q(J)-Q(I)&\leq \sum_{\mu\in S(I)}|\mu|\big(|\nu'|+|\beta'|-|\beta|\big)
\leq -C_{0}F(I)|\nu'|.
\end{split}
\end{eqnarray*}
It follows that
$F(J)<F(I)$.

\par For case \rm(6) as listed in Lemma \ref{lem:3.5}, obviously, we have
$F(J)=F(I)$.

\par For case $(7)$ as listed in Lemma \ref{lem:3.5}, we have
\begin{eqnarray*}
\begin{split}
L(J)-L(I) &\leq \mathscr{K}_{b}(|\nu'|-|\nu|)+|\beta'|
\leq(1-\mathscr{K}_{b}-C_{0}\mathscr{K}_{b})|\beta'|<0,
\end{split}
\end{eqnarray*}
and
\begin{eqnarray*}
\begin{split}
Q(J)-Q(I)&\leq \sum_{\mu\in S(I)}|\mu|\big(|\nu'|+|\beta'|-|\nu|\big)
\leq -C_{0}F(I)|\beta'|.
\end{split}
\end{eqnarray*}
It implies that
\begin{eqnarray*}
\begin{split}
F(J)-F(I)&\leq \Big(1-\mathscr{K}_{b}-C_{0}\mathscr{K}_{b}-4C_{*}C_{0}(\gamma-1+\tau^{2})F(I)\Big)|\beta'|<0.
\end{split}
\end{eqnarray*}
Therefore,
$F(J)<F(I)$.

Finally, for case \rm{(8)} as listed in Lemma \ref{lem:3.5}, it can be treated similarly as the argument above for case $(3)$ at listed in Lemma \ref{lem:3.5} to obtain \eqref{eq:4.5}.

\begin{figure}[ht]
\begin{center}
\begin{tikzpicture}[scale=0.7]
\draw [line width=0.06cm] (-3.5,-4) --(-3.5,1.5);
\draw [line width=0.06cm] (-0.5,-4) --(-0.5,1.0);
\draw [line width=0.06cm] (2.5,-4) --(2.5,0.5);
\draw [thin] (-3.5,1.5)--(2.5,0.5);
\draw [thin] (-3.5,-0.5)--(2.5,-1.5);

\draw [thick] (-2.8,1.4)--(-0.5,-2.5);
\draw [thick] (-0.5,-2.5)--(1.5,0.7);
\draw [thick] (-0.5,-2.5)--(1.5,-3.5);

\draw [thin] (-3, 1.4) --(-2.6, 1.8);
\draw [thin] (-2.6, 1.35) --(-2.2, 1.75);
\draw [thin] (-2.2, 1.30) --(-1.8, 1.70);
\draw [thin] (-1.8, 1.23) --(-1.4, 1.63);
\draw [thin] (-1.4, 1.16) --(-1.0, 1.56);
\draw [thin] (-1.0, 1.10) --(-0.6, 1.50);
\draw [thin] (-0.6, 1.03) --(-0.2, 1.43);
\draw [thin] (-0.2, 0.97) --(0.2, 1.37);
\draw [thin] (0.2, 0.9) --(0.6, 1.30);
\draw [thin] (0.6, 0.83) --(1, 1.23);
\draw [thin] (1, 0.76) --(1.4, 1.16);
\draw [thin] (1.4, 0.67) --(1.8, 1.07);
\draw [thin] (1.8, 0.60) --(2.2, 1.0);
\draw [thin] (2.2, 0.55) --(2.6, 0.95);

\draw [thick][red] (-3.5,1.5)--(-1.5,0.5);
\draw [thick][blue](-3.5,-0.5)--(-1.1,-0.1);
\draw [thick][red](-0.5,1)--(1.7,-0.2);

\node at (-1.9, -1.0){$I'$};
\node at (0.6, -1.5){$J'$};
\node at (0.6, -3.5){$I_{0}$};
\node at (-1.9, -3.0){$\Omega_{\Delta, k-1}$};
\node at (1.5, -2.0){$\Omega_{\Delta, k}$};
\node at (-1.0, -0.4){$\nu(o)$};
\node at (-1.1, 0.7){$\beta(\pi)$};
\node at (1.7, -0.5){$\beta'(\pi')$};
\node at (-3.5, -4.5){$x_{k-1}$};
\node at (-0.5, -4.5){$x_{k}$};
\node at (2.5, -4.5){$x_{k+1}$};
\end{tikzpicture}
\end{center}
\caption{$\Lambda$ covers part of the approximate boundary $\Gamma_{\Delta}$}\label{fig24}
\end{figure}
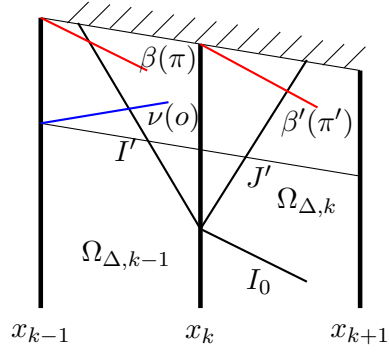

$\mathbf{Case}$ $\mathbf{2}$. $\Lambda$ covers part of the approximate boundary $\Gamma_{\Delta}$ (see Fig. \ref{fig24}).
For case \rm (1) as listed in Lemma \ref{lem:3.7}, we have
$L_{2}(J)-L_{2}(I)\leq K_{b}|\nu|+C_{6}(\gamma-1+\tau^{2})|\nu||\beta|$ and $L_{1}(J)-L_{1}(I)\leq-|\nu|$.
So
\begin{eqnarray*}
\begin{split}
L(J)-L(I)\leq -(\mathscr{K}_{b}-K_{b})|\nu|+C_{6}(\gamma-1+\tau^{2})|\nu||\beta|.
\end{split}
\end{eqnarray*}

For $Q(J)$, we have that
\begin{eqnarray*}
\begin{split}
Q(J)-Q(I)&=Q(J',I_{0})+Q(I_{0})-Q(I',I_{0})-Q(I_{0})-Q(I')\\[5pt]
&\leq \sum_{\mu\in S(I)}|\mu|\big(|\beta'|-|\beta|-|\nu|\big)-|\beta||\nu|\\[5pt]
&\leq \big(K_{b}-1\big)F(I)|\nu|+\Big(C_{6}(\gamma-1+\tau^{2})F(I)-1\Big)|\beta||\nu|.
\end{split}
\end{eqnarray*}
Then, it follows from the estimates of $L(J)$ and $Q(J)$ that
\begin{eqnarray*}
\begin{split}
F(J)-F(I)&\leq\Big(4C_{*}(K_{b}-1)(\gamma-1+\tau^{2})F(I)-(\mathscr{K}_{b}-K_{b})\Big)|\nu|\\[5pt]
&\ \ \  \ +C_{6}(\gamma-1+\tau^{2})|\nu||\beta|
+4C_{*}(\gamma-1+\tau^{2})\Big(C_{6}(\gamma-1+\tau^{2})F(I)-1\Big)|\beta||\nu|\\[5pt]
&\leq \Big(2C_{*}K_{b}(\gamma-1+\tau^{2})F(I)-(\mathscr{K}_{b}-K_{b})\Big)|\nu|\\[5pt]
&\qquad \ \  \ +4C_{*}(\gamma-1+\tau^{2})\Big(C_{*}(\gamma-1+\tau^{2})F(I)-\frac{3}{4}\Big)|\beta||\nu|.
\end{split}
\end{eqnarray*}

Therefore, if we choose $(\gamma-1+\tau^{2})F(I)<\min\{\frac{3}{4C_{*}}, \frac{\mathscr{K}_{b}-K_{b}}{2K_{b}C_{*}}\}$,
then $F(J)<F(I)$.

Next, let's consider case \rm (2) as listed in Lemma \ref{lem:3.7}. Note that
\begin{eqnarray*}
\begin{split}
L(J)-L(I)\leq C_{6}(\gamma-1+\tau^{2})|\beta||o|-C_{b1}|o|
\end{split}
\end{eqnarray*}
and
\begin{eqnarray*}
\begin{split}
Q(J)-Q(I)&=Q(J',I_{0})+Q(I_{0})-Q(I',I_{0})-Q(I_{0})-Q(I')\\[5pt]
&\leq \sum_{\mu\in S(I)}|\mu|\big(|\beta'|-|\beta|\big)\\[5pt]
&\leq \Big(C_{6}(\gamma-1+\tau^{2})|\beta||o|-C_{b1}|o|\Big)F(I)\\[5pt]
&\leq \Big(C_{6}(\gamma-1+\tau^{2})F(I)-C_{b1}\Big)F(I)|o|.
\end{split}
\end{eqnarray*}

So if $C_{*}(\gamma-1+\tau^{2})F(I)\leq C_{b1}$, then
\begin{eqnarray*}
\begin{split}
F(J)-F(I)&\leq C_{6}(\gamma-1+\tau^{2})|\beta||o|-C_{b1}|o|\\[5pt]
&\ \ \ \  +4C_{*}(\gamma-1+\tau^{2})\Big(C_{6}(\gamma-1+\tau^{2})F(I)-C_{b1}\Big)F(I)|o|\\[5pt]
&\leq \Big(C_{*}(\gamma-1+\tau^{2})F(I)-C_{b1}\Big)|o|\\[5pt]
&\ \ \ \ +4C_{*}(\gamma-1+\tau^{2})\Big(C_{*}(\gamma-1+\tau^{2})F(I)-C_{b1}\Big)F(I)|o|\\[5pt]
&\leq 0.
\end{split}
\end{eqnarray*}

Finally, let us consider case \rm (3) as listed in Lemma \ref{lem:3.7}. By direct computations,
\begin{eqnarray*}
\begin{split}
L(J)-L(I)\leq -\big(\mathscr{K}_{b}-K_{b1}\big)|\nu|+C_{6}(\gamma-1+\tau^{2})|\nu|^{2}
\end{split}
\end{eqnarray*}
and
\begin{eqnarray*}
\begin{split}
Q(J)-Q(I)&=Q(J',I_{0})+Q(I_{0})-Q(I',I_{0})-Q(I_{0})-Q(I')\\[5pt]
&\leq \sum_{\mu\in S(I)}|\mu|\big(|\beta'|-|\nu|\big)\\[5pt]
&\leq \Big((K_{b1}-1)|\nu|+C_{6}(\gamma-1+\tau^{2})|\nu|^{2}\Big)F(I).
\end{split}
\end{eqnarray*}

So
\begin{eqnarray*}
\begin{split}
F(J)-F(I)&\leq -\big(\mathscr{K}_{b}-K_{b1}\big)|\nu|+C_{6}(\gamma-1+\tau^{2})|\nu|^{2}\\[5pt]
&\ \ \ \ +4C_{*}(\gamma-1+\tau^{2})F(I)\Big((K_{b1}-1)|\nu|+C_{6}(\gamma-1+\tau^{2})|\nu|^{2}\Big)\\[5pt]
&\leq \Big(-\big(\mathscr{K}_{b}-K_{b1}\big)+C_{6}(\gamma-1+\tau^{2})F(I)+4C_{*}(K_{b1}-1)(\gamma-1+\tau^{2})F(I)\\[5pt]
&\ \ \ \  +4C_{*}C_{6}(\gamma-1+\tau^{2})^{2}F^{2}(I)\Big)|\nu|\\[5pt]
&\leq \Big(-\big(\mathscr{K}_{b}-K_{b1}\big)+4K_{b1}C_{*}(\gamma-1+\tau^{2})F(I)+\big(2C_{*}(\gamma-1+\tau^{2})F(I)\big)^{2}\Big)|\nu|.
\end{split}
\end{eqnarray*}

So, if we choose $(\gamma-1+\tau^{2})F(I)\leq \min\{\frac{1}{C_{*}}, \frac{\mathscr{K}_{b}-K_{b1}}{4K_{b1}C_{*}}\}$, then
$F(J)-F(I)\leq 0$.

Based on all the arguments above, let
\begin{eqnarray}\label{eq:4.6}
\begin{split}
C_{7}&=\min\Bigg\{ \frac{1}{2}, \ \min\Big\{\frac{3}{4},\frac{\mathscr{K}_{b}-\delta}{4\delta} \Big\}, \ \min\Big\{\frac{1-\mathscr{K}_{b}\delta}{4\delta},\frac{4-\mathscr{K}_{b}}{4} \Big\},\ \frac{1+C_{0}-\mathscr{K}_{b}}{4},\\[5pt]
&\qquad \qquad \ \min\Big\{\frac{3}{4}, \frac{\mathscr{K}_{b}-K_{b}}{2K_{b}} \Big\},\ C_{b1}, \
\min\Big\{1,\frac{\mathscr{K}_{b}-K_{b1}}{4K_{b}} \Big\} \Bigg\}.
\end{split}
\end{eqnarray}

So if $(\gamma-1+\tau^{2})F(I)<\frac{C_{7}}{C_{*}}$, we can get estimate \eqref{eq:4.5}.
\end{proof}

\par Let $O$ stand for the initial mesh curve, \emph{i.e.}, for any mesh curve $J$, we have $O\leq J$.
Then, by Lemma \ref{lem:4.1}, we know that if $C_{*}(\gamma-1+\tau^{2})F(O)<C_{7}$, then
$$
F(J)<F(O).
$$

\par Next choose $\gamma_{0}\in (1,2)$ and $\epsilon_{*}>0$ such that $C_{*}(\gamma_0-1+\epsilon_*^{2})L(O)<1$ and $C_{*}(\gamma_0-1+\epsilon_*^{2})F(O)<C_{7}$. Then
for any $\gamma\in[1, \gamma_{0}]$ and $\tau\in (0,\epsilon_{*})$, we have
\begin{eqnarray*}
\begin{split}
F(J)<F(O)=L(O)+4C_{*}(\gamma-1+\tau^{2})Q(O)\leq L(O)+4C_{*}(\gamma-1+\tau^{2})L^{2}(O)<5L(O).
\end{split}
\end{eqnarray*}

\par Notice that $L(O)\leq C\big(T.V.\{U_{0}(\cdot); (-\infty, 0]\}+\|b_{0}\|_{L^{\infty}}\big)$ for some constant $C$ depending only $\mathscr{K}_{b}$
and $C_{*}$. So by the standard argument, (see \cite{glimm, smoller}), we have the following proposition.
\begin{proposition}\label{prop:4.1}
Suppose that $\rho_{0}\in [\rho_{*},\rho^{*} ]$ for some constant states $\rho_{*}$ and $\rho^{*}$ with
$0<\rho_{*}<\rho^{*}<\infty$. Then there exist constants $C_{8}>0$, $\gamma_{0}\in (1, 2)$ and $\epsilon_{*}>0$ such that
for any $\gamma\in [1,\gamma_{0}]$, $\tau\in (0,\epsilon_{*})$ and $\theta\in\prod_{k=0}^{\infty}\theta_k$ if
\begin{eqnarray}\label{eq:4.4}
(\gamma-1+\tau^{2})\Big(T.V.\big\{U_{0}(\cdot); (-\infty, 0] \big\}+\|b_{0}\|_{L^{\infty}}\Big)\leq C_{8},
\end{eqnarray}
then, a sequence of global approximate solutions $U_{\Delta,\theta}(x,y)$ for all $(x,y)\in\Omega_{\Delta}$
is constructed via the Glimm scheme as given in \S \ref{sec:3.1}.
Moreover, there exist positive constants $C_{9}>0$ and $C_{10}>0$ which is independent of $\Delta$ and $\theta$ such that 
\begin{eqnarray}\label{eq:4.5x}
\sup_{x>0}T.V.\big\{U_{\Delta,\theta}(x,\cdot); (-\infty, b_{0}x]\big\}
+\sup_{x>0}\|U_{\Delta,\theta}(x,\cdot)\|_{L^{\infty}((-\infty, b_{0}x])}\leq C_{9},
\end{eqnarray}
and
\begin{eqnarray}\label{eq:4.6x}
\int^{0}_{-\infty}\big|U_{\Delta,\theta}(x_{1},y+b_{0}x_{1})-U_{\Delta,\theta}(x_{2},y+b_{0}x_{2})\big|dy\leq C_{10}\Big(\Delta x+|x_{1}-x_{2}|\Big),
\end{eqnarray}
for any $x_{1}, x_{2}>0$.
\end{proposition}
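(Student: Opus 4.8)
The plan is to establish the construction of the approximate solutions and the uniform estimates \emph{simultaneously} by induction on the mesh curves, with the monotonicity of the Glimm-type functional proved in Lemma \ref{lem:4.1} serving as the engine. First I would fix the threshold $C_{8}$ together with $\gamma_{0}\in(1,2)$ and $\epsilon_{*}>0$ so that, whenever \eqref{eq:4.4} holds, the initial smallness $C_{*}(\gamma_{0}-1+\epsilon_{*}^{2})L(O)<1$ and $C_{*}(\gamma_{0}-1+\epsilon_{*}^{2})F(O)<C_{7}$ are satisfied; this is possible because $L(O)\le C\big(T.V.\{U_{0}(\cdot);(-\infty,0]\}+\|b_{0}\|_{L^{\infty}}\big)$ and $Q(O)\le L(O)^{2}$. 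With these choices, every Riemann problem arising in an interior diamond $T_{k,n}$ is solvable by Proposition \ref{prop:2.1} and every boundary diamond $T_{k,0}$ by Proposition \ref{prop:2.2}, provided the adjacent states stay in the region where those propositions apply. The a priori bound derived below keeps the states there, so the scheme never breaks down and $U_{\Delta,\theta}$ is defined on all of $\Omega_{\Delta}$.

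Next I would derive the uniform bound \eqref{eq:4.5x}. Since $F$ decreases along every refinement, the smallness $C_{*}(\gamma-1+\tau^{2})F(I)\le C_{7}$ required by Lemma \ref{lem:4.1} is preserved automatically: it holds on the initial curve $O$ and hence on every $J>O$ because $F(J)<F(O)$. Iterating Lemma \ref{lem:4.1} gives $F(J)<F(O)<5L(O)$ for all space-like mesh curves $J$, which bounds the total strength of the shock waves crossing any such curve. To upgrade this to a total-variation bound on $(\omega_{-},\omega_{+})$, I would exploit the monotone structure of the invariants: the positive variation of $\omega_{+}$ along $J$ is produced only by $2$-shocks and is therefore dominated by $L_{2}(J)$, while its negative variation is controlled by the positive variation together with the oscillation of $\omega_{+}$, which the invariant-region estimate \eqref{eq:2.42} of Proposition \ref{prop:2.1} and its boundary counterpart in Proposition \ref{prop:2.2} confine to a fixed compact set bounded away from the vacuum; the symmetric argument controls $\omega_{-}$ through $L_{1}(J)$. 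Since the total variation of $U_{\Delta,\theta}(x,\cdot)$ is equivalent to that of $(\omega_{-},\omega_{+})$ and $\rho$ stays in a fixed interval $[\hat{\rho},\check{\rho}]$ away from $0$ and $\infty$, this yields \eqref{eq:4.5x} with $C_{9}$ depending only on $\mathscr{K}_{b}$, $C_{*}$ and the data.

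Finally, for the $L^{1}$ continuity \eqref{eq:4.6x} I would run the classical Glimm argument in the slanted strips $\Omega_{\Delta,k}$. For abscissae $x_{1}<x_{2}$ I would compare $U_{\Delta,\theta}$ on the two boundary-parallel lines $y\mapsto y+b_{0}x_{i}$ by summing the changes across the intervening layers; within one layer the Courant–Friedrichs–Lewy restriction \eqref{eq:3.4} confines the influence of each wave to finitely many diamonds, so the $L^{1}$ variation contributed by a single layer is $O(\Delta x)$ with a constant proportional to the total wave strength, itself uniformly bounded by the previous step. Summing over the $O(|x_{1}-x_{2}|/\Delta x)$ layers and adding the $O(\Delta x)$ error from the grid offset of the sampling points yields \eqref{eq:4.6x} with $C_{10}$ independent of $\Delta$ and $\theta$. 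I expect the genuine difficulty to lie in the second step rather than the first or third: converting the shock-strength control $L(J)\le F(J)$ into a bona fide total-variation bound forces one to bound the rarefaction content and to exclude the vacuum at the same time, and this is exactly where the invariant-region structure of the Riemann solutions (Propositions \ref{prop:2.1}--\ref{prop:2.2}) and the weighted choice of $\mathscr{K}_{b}$ and $C_{*}$ in \eqref{eq:4.3g} must cooperate; once the functional is known to be both monotone and equivalent to the total variation, the construction and the $L^{1}$ estimate follow by the standard procedure of \cite{glimm,smoller}.
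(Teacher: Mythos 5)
Your proposal is correct and follows essentially the same route as the paper, which after fixing $\gamma_{0},\epsilon_{*}$ so that $C_{*}(\gamma_{0}-1+\epsilon_{*}^{2})L(O)<1$ and $C_{*}(\gamma_{0}-1+\epsilon_{*}^{2})F(O)<C_{7}$ deduces $F(J)<F(O)<5L(O)$ from Lemma \ref{lem:4.1} and then invokes ``the standard argument'' of Glimm and Smoller for the construction, the total-variation bound and the $L^{1}$ Lipschitz estimate. You merely spell out that standard argument in more detail (solvability of each Riemann problem via Propositions \ref{prop:2.1}--\ref{prop:2.2}, conversion of the shock-strength bound into a TV bound via the one-sided monotonicity of the Riemann invariants across rarefactions together with the invariant-region/no-vacuum estimates, and the CFL-based layer-by-layer $L^{1}$ estimate), which is consistent with what the paper intends.
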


\par Proposition \ref{prop:4.1} implies the compactness of the approximate solutions $\{U_{\Delta,\theta}(x,y)\}$ in $L^{1}_{loc}$
(see Theorem 2.4 of Chapter 2 in \cite{bressan}). Then, by the standard arguments as done in \cite{glimm, smoller, czz, zh1, zh2},
we can obtain the global existence of the entropy solutions of initial boundary value problem \eqref{eq:1.16}--\eqref{eq:1.18}.
\begin{theorem}\label{thm:4.1}
Assume that the range of the initial density $\rho_{0}$ lies in the interval $[\rho_{*},\rho^{*}]$ for some constants $\rho_{*}$ and $\rho^{*}$ with $0<\rho_{*}<\rho^{*}<\infty$. There exist constants $C_{11}>0$ , $C_{12}>0$, $C_{13}>0$ independent of $\gamma$, $\tau$, and $\gamma_{0}\in (1, 2)$,$\epsilon_{*}>0$ and a null set $\mathcal{N}$  such that for any $\gamma\in [1,\gamma_{0}]$, $\tau\in (0,\epsilon_{*})$ and $\theta\in\Big(\prod_{k=0}^{\infty}\theta_k\backslash \mathcal{N}\Big)$ if
\begin{eqnarray}\label{eq:4.7}
(\gamma-1+\tau^{2})\Big(T.V.\big\{U_{0}(\cdot); (-\infty, 0] \big\}+\|b_{0}\|_{L^{\infty}}\Big)\leq C_{11},
\end{eqnarray}
then, there exist a subsequence $\{\Delta_{i}\}^{\infty}_{i=0}$ and a function $U_{\theta}(x,y)$ with bounded total variation
such that $U_{\Delta_{i},\theta}\rightarrow U_{\theta}(x,y)$
in $L^{1}_{loc}((-\infty, b_{0}x])$ as $\Delta_{i}\rightarrow 0$ for every $x>0$.
The function $U_{\theta}(x,y)$ is a global entropy solution of the initial boundary value problem \eqref{eq:1.16}--\eqref{eq:1.18}
with the properties that
\begin{eqnarray}\label{eq:4.12}
\sup_{x>0}T.V.\big\{U_{\theta}(x,\cdot); (-\infty, b_{0}x]\big\}
+\sup_{x>0}\|U_{\theta}(x,\cdot)\|_{L^{\infty}((-\infty, b_{0}x])}\leq C_{12},
\end{eqnarray}
and
\begin{eqnarray}\label{eq:4.13}
\int^{0}_{-\infty}\big|U_{\theta}(x_{1},y+b_{0}x_{1})-U_{\theta}(x_{2},y+b_{0}x_{2})\big|dy\leq C_{13}|x_{1}-x_{2}|, \ \ \ \forall x_{1}, x_{2}>0.
\end{eqnarray}
\end{theorem}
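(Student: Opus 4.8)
The plan is to derive Theorem \ref{thm:4.1} as the by now standard consequence of the uniform estimates collected in Proposition \ref{prop:4.1}, following the classical Glimm scheme convergence argument adapted to the present initial--boundary value setting. First I would exploit the compactness of the family $\{U_{\Delta,\theta}\}$. Estimate \eqref{eq:4.5x} provides a uniform bound on the total variation in the $y$-direction along each slice $x=\mathrm{const}$, while \eqref{eq:4.6x} furnishes a uniform modulus of continuity in the $x$-direction; together these are precisely the hypotheses of the $BV$ compactness theorem (Helly's theorem in the form of Theorem 2.4, Chapter 2 of \cite{bressan}). Hence for fixed $\theta$ there exist a subsequence $\{\Delta_{i}\}_{i=0}^{\infty}$ with $\Delta_{i}\to 0$ and a limit function $U_{\theta}(x,y)$ of bounded total variation such that $U_{\Delta_{i},\theta}\to U_{\theta}$ in $L^{1}_{loc}((-\infty,b_{0}x])$ for every $x>0$. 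Passing to the limit in \eqref{eq:4.5x} yields \eqref{eq:4.12} with $C_{12}=C_{9}$, and letting $\Delta_{i}\to 0$ in \eqref{eq:4.6x}, so that the $\Delta x$ term drops out, yields \eqref{eq:4.13} with $C_{13}=C_{10}$; the smallness condition \eqref{eq:4.7} is inherited from \eqref{eq:4.4} with $C_{11}=C_{8}$.

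The crucial step is to verify that $U_{\theta}$ is genuinely a weak entropy solution for almost every random sequence $\theta$, which is where the null set $\mathcal{N}$ enters. Following Glimm \cite{glimm}, I would measure the residual obtained by inserting the piecewise-defined approximate solution into the weak formulation against a test function $\phi$; this residual splits into a sum of local errors, one per diamond, each controlled by the interaction estimates of Section 3 and the total strength of the waves crossing the corresponding mesh curve. Using the equidistribution of the random sampling sequence $\theta=(\theta_{0},\theta_{1},\dots)$ in the product probability space $\prod_{k=0}^{\infty}(-1,1)$ --- together with the uniform Glimm functional bound $F(J)<5L(O)$ established after Lemma \ref{lem:4.1} --- one shows that the averaged residual tends to zero, and a Borel--Cantelli type argument produces a null set $\mathcal{N}$ off which the residual vanishes in the limit. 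This simultaneously handles the entropy inequality \eqref{eq:1.19}, since across each shock the Lax geometric condition \eqref{eq:2.16} holds by the very construction of the Riemann solutions in Propositions \ref{prop:2.1} and \ref{prop:2.2}, so the approximate entropy production carries the correct sign and passes to the limit.

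The main technical obstacle I expect is the treatment of the boundary contribution. Unlike the purely interior Glimm argument, the diamonds adjacent to $\Gamma_{\Delta}$ are solved by the boundary Riemann problem of Proposition \ref{prop:2.2}, and the weak formulation \eqref{eq:1.19} contains the boundary integral over $\Gamma$. I would therefore verify separately that the boundary flux terms generated by the modified scheme \eqref{eq:3.9} converge to the correct boundary integral, exploiting the reflection estimate of Lemma \ref{lem:3.6} and the fact that $b_{\Delta}(x)\to b_{0}x$ uniformly, so that the approximate boundary condition in \eqref{eq:3.9} passes to \eqref{eq:1.18} in the limit. Once both the interior and the boundary residuals are shown to vanish off $\mathcal{N}$, the limit $U_{\theta}$ satisfies the weak form of \eqref{eq:1.16}, the entropy inequality \eqref{eq:1.19}, and the boundary condition \eqref{eq:1.18}, together with the bounds \eqref{eq:4.12} and \eqref{eq:4.13}, which completes the proof.
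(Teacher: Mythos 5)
Your proposal follows exactly the route the paper takes: the paper's proof of Theorem \ref{thm:4.1} consists precisely of invoking the compactness supplied by Proposition \ref{prop:4.1} (via Theorem 2.4 of Chapter 2 in \cite{bressan}) and then appealing to the standard Glimm consistency argument of \cite{glimm, smoller, czz, zh1, zh2} for the residual estimate, the null set $\mathcal{N}$, the entropy inequality, and the boundary terms. Your write-up actually spells out more of the standard machinery than the paper does, but the approach and all the key ingredients are the same.
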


\begin{remark}\label{rem:4.1}
As the notations introduced in the last sentence in the introduction, \emph{i.e.}, in Section 1, solution $U_{\theta}(x,y)$ to the initial boundary value problem \eqref{eq:1.16}--\eqref{eq:1.18} which are obtained in Theorem \ref{thm:4.1} actually depends on $\tau$. So in order to pass the limit $\tau\rightarrow0$ to prove Theorem \ref{thm:1.1}, we will use the notations $U^{(\tau)}_{\theta}(\bar{x},\bar{y})$ and $(\bar{x},\bar{y})$ again as done in the introduction except the last sentence. 
\end{remark}

Now we are ready to prove Theorem \ref{thm:1.1}.
\begin{proof}[Proof of Theorem \ref{thm:1.1}]
First, the global existence of the entropy solutions $U^{(\tau)}_{\theta}$ to the initial boundary value problem \eqref{eq:1.16}--\eqref{eq:1.18} follows from the Theorem \ref{thm:4.1}. Since solution $U^{(\tau)}_{\theta}$
satisfies estimates \eqref{eq:4.12} and \eqref{eq:4.13} which is independent of $\tau$, we can further apply the Helly's compactness theorem to obtain a subsequence $\{\tau_{i}\}^{\infty}_{i=1}$ such that
$U^{(\tau_{i})}_{\theta}$ converges to $U_{\theta}^{(0)}$ \emph{a.e.} in $\Omega$ as $\tau_{i}\rightarrow 0$. 
Hence, $U^{(\tau_{i})}_{\theta}\rightarrow U_{\theta}^{(0)}$ in $L^{1}(\Omega\cap B_{\bar{R}}(O))$ as $\tau_{i}\rightarrow 0$ for any $\bar{R}>0$, where
$B_{\bar{R}}(O)=\big\{(\bar{x}, \bar{y}): \bar{x}^{2}+\bar{y}^{2}\leq \bar{R}^{2}\big\}$.
Then, by the definition of entropy \eqref{eq:1.19}, we can show that
$U_{\theta}^{(0)}$ is an entropy solution to the initial-boundary value problem \eqref{eq:1.14},
\eqref{eq:1.17} and \eqref{eq:1.10} with $(\mathcal{E}(W^{(0)},0), \mathcal{Q}(W^{(0)},0))$, defined by \eqref{eq:1.24}, being its convex entropy pair with entropy inequality \eqref{eq:1.25} in the distribution sense. This completes the proof of Theorem \ref{thm:1.1}.
\end{proof}

\bigskip

$\textbf{Ackowledgments}:$
The research of Jie Kuang was supported in part by the NSFC Project 11801549 and the Initial Scientific Research Fund
for Wuhan Institute of Physics and Mathematics, Chinese Academy of Sciences. 
The research of Wei Xiang was supported in part by the Research Grants Council of the HKSAR, China (Project No. CityU 21305215, Project No. CityU 11332916, Project No. CityU 11304817, and Project No. CityU 11303518).
The research of Yongqian Zhang was supported in part by the NSFC Project 11421061, NSFC Project 11031001, NSFC Project 11121101,
the 111 Project B08018(China) and the Shanghai Natural Science Foundation 15ZR1403900.

\end{document}